\documentclass[11pt]{amsart}

\usepackage[pdftex]{graphicx}
\usepackage{tikz}
\usepackage{ascmac}
\usepackage{amssymb,amsmath,amsthm}
\usepackage{enumerate}
\usepackage[margin=25truemm]{geometry}
\usepackage{mathrsfs}

\usepackage{mycommand}
\usetikzlibrary{arrows.meta}

\theoremstyle{plain}
\newtheorem{theo}{Theorem}[section]
\newtheorem{lemm}[theo]{Lemma}
\newtheorem{corr}[theo]{Corollary}
\newtheorem{prop}[theo]{Proposition}

\theoremstyle{definition}
\newtheorem{defi}[theo]{Definition}
\newtheorem{exam}[theo]{Example}
\newtheorem{rema}[theo]{Remark}
\newtheorem{ques}{Question}

\usepackage[pdftex]{hyperref}
\hypersetup{%
setpagesize=false,%
bookmarks=true,%
bookmarksdepth=tocdepth,%
bookmarksnumbered=true,%
colorlinks=true,%
linkcolor=purple,%
citecolor=teal,%
pdftitle={},%
pdfsubject={},%
}

\begin{document}
\title{A characterization of 2-fold torsion classes induced by $\tau$-rigid modules}

\author{Yuki Uchida}	
\address{Department~of~Mathematics, Graduate~School~of~Science, Osaka~Metropolitan~University, 3-3-138, sugimoto, sumiyoshi, Osaka~558-8585, Japan}
\email{yukiuchidamath@gmail.com}
\subjclass{16G30(Primary),16G20,16S90,18E40,18E10(Secondary)}
\keywords{abelian categories; n-fold torsion-free classes; $\K^{n}\E$-closed subcategories; hereditary cotorsion pairs; resolving subcategories; $\tau$-rigid modules}

\maketitle

\begin{abstract}
 We introduce $n$-fold torsion(-free) classes of an abelian category. These are a generalization of ordinary torsion(-free) classes in the sense that $1$-fold torsion(-free) classes coincide with torsion(-free) classes.
 In the category of finitely generated modules over a finite dimensional algebra, we can naturally construct $n$-fold torsion classes from $\tau$-rigid modules.
 We characterize the $2$-fold torsion classes induced by $\tau$-rigid modules.
\end{abstract}

\small
\tableofcontents
\normalsize


\section{Introduction}
In an abelian category, there are several kinds of subcategories which have been studied so far, for instance, \textit{Serre subcategories, torsion classes, torsion-free classes} and so on.
These subcategories are defined to be closed under certain operations, for example, taking extensions, subobjects, quotients, kernels, cokernels, etc.
Recently, the following characterization of \textit{$\KE$-closed subcategories} ($\resp$ \textit{$\CE$-closed subcategories}), which are subcategories closed under kernels ($\resp$ cokernels) and extensions, has been proved in \cite{kobayashi2024ke}:

\begin{theo}[{\cite[Propositions 3.1 and 3.2]{kobayashi2024ke}}] \label{thm A}
	Let $\ca$ be an abelian category and $\cx$ its subcategory.
	\begin{enumerate}[$(1)$]
		\setlength{\itemsep}{0pt}
		\item The following are equivalent$\colon$
		\begin{enumerate}[$(a)$]
			\setlength{\parskip}{0pt}
				\item $\cx$ is a $\KE$-closed subcategory of $\ca$.
				\item There exists some torsion-free class $\cf$ of $\ca$ such that $\cx$ is a torsion-free class of $\cf$ in the sense of exact categories (see Definition \ref{def ex subcat}).
		\end{enumerate}
		
		\item The following are equivalent$\colon$
		\begin{enumerate}[$(a)$]
			\setlength{\parskip}{0pt}
				\item $\cx$ is a $\CE$-closed subcategory of $\ca$.
				\item There exists some torsion class $\ct$ of $\ca$ such that $\cx$ is a torsion class of $\ct$ in the sense of exact categories.
		\end{enumerate}

	\end{enumerate}
\end{theo}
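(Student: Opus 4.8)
My plan is to treat parts (1) and (2) as formally dual and to prove only (1). Passing to the opposite category $\ca^{\mathrm{op}}$ interchanges kernels with cokernels and subobjects with quotients, so a $\KE$-closed subcategory of $\ca$ is exactly a $\CE$-closed subcategory of $\ca^{\mathrm{op}}$, a torsion-free class of $\ca$ becomes a torsion class of $\ca^{\mathrm{op}}$, and the notion of a torsion-free class of an exact category dualizes to that of a torsion class; hence (2) for $\ca$ will follow from (1) for $\ca^{\mathrm{op}}$. Throughout I will use that a torsion-free class $\cf$ of $\ca$ is extension-closed, so that it carries the canonical exact structure whose conflations are the short exact sequences of $\ca$ with all three terms in $\cf$ (the structure of Definition \ref{def ex subcat}).

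For the direction $(b)\Rightarrow(a)$ I would argue by the two relevant closure properties. \emph{Extensions:} given a short exact sequence $0\to X_1\to B\to X_2\to 0$ in $\ca$ with $X_1,X_2\in\cx$, the middle term $B$ lies in $\cf$ since $\cf$ is extension-closed in $\ca$, so the sequence is a conflation in $\cf$, and closure of $\cx$ under extensions in the exact category $\cf$ forces $B\in\cx$. \emph{Kernels:} for $f\colon X_1\to X_2$ in $\cx$, the image $\operatorname{im}f$ is a subobject of $X_2\in\cf$, hence lies in $\cf$ because torsion-free classes of $\ca$ are subobject-closed; thus $0\to\ker f\to X_1\to\operatorname{im}f\to0$ is a conflation in $\cf$ exhibiting $\ker f$ as an admissible subobject of $X_1\in\cx$, and closure of $\cx$ under admissible subobjects gives $\ker f\in\cx$.

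For the converse $(a)\Rightarrow(b)$ I would take $\cf$ to be the smallest torsion-free class of $\ca$ containing $\cx$ (the intersection of all such), equipped with the inherited exact structure, and claim that $\cx$ is a torsion-free class of $\cf$, with complementary class $\ct':=\{F\in\cf:\operatorname{Hom}_\ca(F,\cx)=0\}$. Closure of $\cx$ under extensions in $\cf$ is immediate. The crucial point is closure under admissible subobjects: given a conflation $0\to A\to X\to Q\to0$ in $\cf$ with $X\in\cx$, I must produce $A\in\cx$. The intended mechanism is that $Q$, as an object of $\cf$, admits a monomorphism $Q\rightarrowtail X'$ into some $X'\in\cx$; then the composite $X\twoheadrightarrow Q\rightarrowtail X'$ is a morphism between objects of $\cx$ whose kernel is precisely $A$, so $A\in\cx$ by kernel-closedness of $\cx$. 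Everything else — that $\ct'$ is closed under admissible quotients and extensions, and the Hom-orthogonality $\operatorname{Hom}(\ct',\cx)=0$ — is then routine.

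The hard part will be to control the chosen $\cf$ enough to guarantee the monomorphism $Q\rightarrowtail X'$ used above, that is, to show that every object of $\cf$ embeds into an object of $\cx$; equivalently, that the subobject-closure $\operatorname{Sub}(\cx)$ is already extension-closed, so that $\cf=\operatorname{Sub}(\cx)$. When $\ca$ is hereditary this is automatic, since an embedding $A\rightarrowtail X_A$ extends across any extension of $A$ because the obstruction lives in $\operatorname{Ext}^2$, which vanishes. In the general non-hereditary case the extension problem need not be solvable, and I expect one must instead build $\cf$ by transfinite closure and prove, by induction along its generation from $\cx$ under subobjects and extensions, that every object of $\cf$ still admits an embedding into an object of $\cx$. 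This embedding statement, fed by the kernel-closedness of $\cx$, is where the genuine content of the theorem lies; once it is in place, both closure properties above hold, $(a)\Rightarrow(b)$ follows, and part (2) is obtained by the duality above.
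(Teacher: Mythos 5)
Your reduction of (2) to (1) by passing to $\ca^{\mathrm{op}}$ is fine, and your proof of $(b)\Rightarrow(a)$ is correct and is essentially the paper's (it is the $n=2$ case of Proposition \ref{prop nfold vs Kn-1Eclosed}). Your choice of $\cf$ in $(a)\Rightarrow(b)$ — the smallest torsion-free class $\F{1}{\cx}$ containing $\cx$ — is also the paper's choice in Theorem \ref{thm KE=2torf}. But the proof of $(a)\Rightarrow(b)$ has a genuine gap, exactly where you flag it: everything rests on the unproven lemma that every object of $\cf$ admits a monomorphism into an object of $\cx$, equivalently that $\clos{\cx}{\sub}{}$ is already extension-closed, so that $\cf=\clos{\cx}{\sub}{}$. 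By Proposition \ref{prop clos} one has $\F{1}{\cx}=\clos{\clos{\cx}{\sub}{}}{\ext}{}$, and there is no reason an iterated extension of subobjects of $\cx$-objects should embed into an object of $\cx$: the $\Ext^{2}$-obstruction you yourself identify blocks precisely the extension step of your proposed transfinite induction, and it does not vanish for a merely $\KE$-closed $\cx$ in a non-hereditary $\ca$. So as written your argument establishes $(a)\Rightarrow(b)$ only in the hereditary case; the embedding statement is strictly stronger than what the theorem asserts and is not available in general.

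The paper closes this gap without any embedding lemma, by iterating your one-step mechanism along a filtration of the cokernel instead of embedding the cokernel itself. Given a conflation $0\to A\to X\to Q\to 0$ in $\F{1}{\cx}$ with $X\in\cx$, one has $Q\in(\clos{\cx}{\sub}{})^{*n}$ for some $n$; choosing $0\to F'\to Q\to F''\to 0$ with $F'\in\clos{\cx}{\sub}{}$ and $F''\in(\clos{\cx}{\sub}{})^{*(n-1)}$ and pulling $X\to Q$ back along $F'\to Q$ produces $0\to M'\to X\to F''\to 0$ and $0\to A\to M'\to F'\to 0$; only the bottom layer $F'$ needs to embed into an object $X'\in\cx$ (which it does by definition of $\clos{\cx}{\sub}{}$), exhibiting $A$ as the kernel of $M'\to X'$, and induction on $n$ shows $A\in\Ker^{n}_{\cx}(\cx)$. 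This is Proposition \ref{prop kerx = adsub}: $A$ is an admissible subobject of an object of $\cx$ in $\F{1}{\cx}$ if and only if $A\in\Ker^{n}_{\cx}(\cx)$ for some $n$. Since $\clos{\cx}{\kr_{\cx}}{}\subset\clos{\cx}{\kr}{}=\cx$ when $\cx$ is kernel-closed, closure under admissible subobjects follows (Corollary \ref{cor kerx equal adsub} and Theorem \ref{thm charac of kernel closure}). So the missing idea is: replace ``embed $Q$'' by ``filter $Q$ and pull back, embedding one layer at a time''. Two minor points: your auxiliary class $\ct'$ and the Hom-orthogonality are unnecessary, since Definition \ref{def ex subcat} defines a torsion-free class of $\cf$ purely by closure under conflations and admissible subobjects; and in your kernel argument for $(b)\Rightarrow(a)$ you should note that $\Ker f\in\cf$ as well, so that the sequence $0\to\Ker f\to X_1\to\Ima f\to 0$ really is a conflation in $\cf$ — this is immediate since $\cf$ is subobject-closed.
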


Inspired by the theorem, we introduce \textit{$n$-fold torsion(-free) classes} as follows:

\begin{defi} [{Definition \ref{def nfold torf}}]
	Let $\ca$ be an abelian category, $\cx$ its subcategory and $n$ a positive integer. 
	Then $\cx$ is said to be an \textit{$n$-fold torsion-free class of $\ca$} if there exist subcategories $\cf_{0}, \cf_{1}, \cdots, \cf_{n}$ of $\ca$ satisfying the following two conditions:
	\begin{enumerate}[$(1)$]
	\setlength{\itemsep}{0pt} 
		\item $\cf_{0} = \ca$ and $\cf_{n} = \cx$,
		\item $\cf_{i+1}$ is a torsion-free class of $\cf_{i}$ for $i \in \{0, 1, \cdots, n-1\}$.
	\end{enumerate}

	Dually, $\cx$ is said to be an \textit{$n$-fold torsion class of $\ca$} if there exist subcategories $\ct_{0}, \ct_{1}, \cdots, \ct_{n}$ of $\ca$ satisfying the following two conditions:
	\begin{enumerate}[$(1)$]
		\setlength{\itemsep}{0pt} 
			\item $\ct_{0} = \ca$ and $\ct_{n} = \cx$,
			\item $\ct_{i+1}$ is a torsion class of $\ct_{i}$ for $i \in \{0, 1, \cdots, n-1\}$.
	\end{enumerate}

\end{defi}

We wonder if $n$-fold torsion(-free) classes of an abelian category are characterized by being closed under certain operations in the abelian category. 
We consider \textit{$\K^{n} \E$-closed subcategories} ($\resp$ \textit{$\Cc^{n} \E$-closed subcategories}) (Definition \ref{def abelian subcat} (6)) of $\ca$ which are a generalization of $\KE$-closed subcategories ($\resp$ $\CE$-closed subcategories) of $\ca$. 
The following implication is easier:

\begin{prop} [Proposition {\ref{prop nfold vs Kn-1Eclosed}} and the dual of it]
	Let $\ca$ be an abelian category, $\cx$ its subcategory and $n$ a positive integer. 
	If $\cx$ is an $n$-fold torsion-free class ($\resp$ $n$-fold torsion class) of $\ca$, then  $\cx$ is a $\K^{(n-1)}\E$-closed ($\resp$ $\Cc^{(n-1)}\E$-closed) subcategory of $\ca$.
\end{prop}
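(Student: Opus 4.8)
The plan is to prove the torsion-free statement and obtain the torsion statement for free by passing to the opposite category, which interchanges torsion-free classes with torsion classes and $\K^{(n-1)}\E$-closedness with $\Cc^{(n-1)}\E$-closedness. So fix a chain $\cf_{0} = \ca \supseteq \cf_{1} \supseteq \cdots \supseteq \cf_{n} = \cx$ witnessing that $\cx$ is an $n$-fold torsion-free class, each $\cf_{i+1}$ being a torsion-free class of the exact category $\cf_{i}$. Unwinding Definition \ref{def abelian subcat}(6), I must check two things: that $\cx$ is extension-closed, and that $\cx$ absorbs the kernel coming from every defining exact sequence $0 \to A \to X_{n-1} \to \cdots \to X_{0}$ with all $X_{i}\in\cx$. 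Extension-closedness is the easy half: each $\cf_{i+1}$ is extension-closed in $\cf_{i}$, and a short exact sequence of $\ca$ whose outer terms lie in $\cf_{n}$ is a conflation at every level of the chain, so its middle term propagates all the way up into $\cf_{n}$.

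The content is the kernel condition. Given an exact sequence $0 \to A \to X_{n-1} \xrightarrow{f_{n-1}} \cdots \xrightarrow{f_{1}} X_{0}$ in $\ca$ with every $X_{i}\in\cf_{n}$, set $I_{p} = \operatorname{im} f_{p}$ for $1\le p\le n-1$ and $I_{n}=A$. Exactness breaks the sequence into the short exact sequences $0 \to I_{p+1} \to X_{p} \to I_{p} \to 0$ for $1\le p\le n-1$, together with the monomorphism $I_{1}\hookrightarrow X_{0}$. I would then prove the diagonal statement that $I_{p}\in\cf_{\ell}$ for all integers with $0\le\ell\le p\le n$, by induction on the chain level $\ell$. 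The base $\ell=0$ is trivial since $\cf_{0}=\ca$. For the inductive step fix $p\ge\ell+1$ and feed the short exact sequence $0 \to I_{p} \to X_{p-1} \to I_{p-1} \to 0$ into the pair $\cf_{\ell+1}\subseteq\cf_{\ell}$: by the inductive hypothesis all three of its terms lie in $\cf_{\ell}$ (for $I_{p-1}$ one uses $p-1\ge\ell$, for $I_{p}$ one uses $p\ge\ell$), so it is a conflation of the exact category $\cf_{\ell}$ exhibiting $I_{p}$ as an admissible subobject of $X_{p-1}\in\cf_{\ell+1}$; since $\cf_{\ell+1}$ is a torsion-free class of $\cf_{\ell}$ it is closed under admissible subobjects, whence $I_{p}\in\cf_{\ell+1}$. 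The boundary case $p=1$ forces $\ell=0$ and is settled by the genuine subobject-closure of the torsion-free class $\cf_{1}$ in the abelian category $\ca$. Taking $\ell=p=n$ yields $A=I_{n}\in\cf_{n}=\cx$, as required.

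The step that demands the most care — and which I expect to be the main obstacle — is the exact-category bookkeeping underlying the phrase ``conflation of $\cf_{\ell}$''. I would first record the transitivity of the induced exact structures, namely that a conflation of $\cf_{\ell}$ is precisely a short exact sequence of $\ca$ all three of whose terms lie in $\cf_{\ell}$; this is proved by induction along the chain, using that each $\cf_{i+1}$ inherits its exact structure from $\cf_{i}$ (Definition \ref{def ex subcat}). Only with this identification do the sequences $0 \to I_{p} \to X_{p-1} \to I_{p-1} \to 0$ qualify as conflations in $\cf_{\ell}$ and their left-hand maps as admissible monomorphisms, which is exactly what the admissible-subobject closure of $\cf_{\ell+1}$ requires. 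Aligning the ranges of the double induction, so that both $I_{p}$ and $I_{p-1}$ are already known to sit in $\cf_{\ell}$ before closure is invoked at level $\ell+1$, is the other place where the argument must be verified carefully; beyond this, no genuinely new idea is needed past Theorem~\ref{thm A}, which is the case $n=2$.
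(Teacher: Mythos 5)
Your proof is correct and follows essentially the same route as the paper: the paper also factors the sequence into the short exact sequences $0 \to \Ima d^{p} \to X^{p+1} \to \Ima d^{p+1} \to 0$ and runs the same staged induction along the chain, showing that the first $n-m$ of these are conflations in $\cf_{m}$ (which is exactly your diagonal claim $I_{p} \in \cf_{\ell}$ for $\ell \le p$), with the base case handled by subobject-closure of $\cf_{1}$ in $\ca$ and extension-closedness propagated up the chain via Lemma \ref{lem extension closed}. Your worry about the exact-structure bookkeeping dissolves here because the paper defines a conflation in $\cf_{\ell}$ outright as a short exact sequence of $\ca$ with all three terms in $\cf_{\ell}$ (Definition \ref{def ex subcat}), so no transitivity lemma is needed.
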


We do not know whether the converse of the proposition holds in general, but it holds under specific conditions$\colon$

\begin{theo}[Theorem \ref{thm resolving}]
	Let $\ca$ be an abelian category with enough projectives and injectives, $(\cx, \cy)$ a hereditary cotorsion pair in $\ca$ and $n$ a positive integer. Then the following are equivalent$\colon$
\begin{enumerate}[$(a)$]
\setlength{\parskip}{0pt}
	\item $\cx$ is an $n$-fold torsion-free class of $\ca$.
	\item $\cx$ is a $\K^{(n-1)}\E$-closed subcategory of $\ca$.
	\item The resolution dimension of $\ca$ with respect to $\cx$ is at most $n$.
\end{enumerate}
Dually, the following are equivalent$\colon$
\begin{enumerate}[$(a)^{\mathrm{op}}$]
\setlength{\parskip}{0pt}
	\item \textit{$\cy$ is an $n$-fold torsion class of $\ca$.}
	\item \textit{$\cy$ is a $\Cc^{(n-1)}\E$-closed subcategory of $\ca$.}
	\item The coresolution dimension of $\ca$ with respect to $\cy$ is at most $n$.
\end{enumerate}
\end{theo}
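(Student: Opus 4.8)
The plan is to prove the cycle (a) $\Rightarrow$ (b) $\Rightarrow$ (c) $\Rightarrow$ (a); since (a) $\Rightarrow$ (b) is exactly the cited Proposition, the work lies in (b) $\Rightarrow$ (c) and (c) $\Rightarrow$ (a). The only features of the hypotheses I expect to use are that a hereditary cotorsion pair makes $\cx$ a resolving subcategory, that enough projectives let one resolve every object by objects of $\cx$, and the resulting Ext-interpretation of the resolution dimension: writing $d(A)$ for the $\cx$-resolution dimension, dimension shifting through a projective resolution together with heredity gives $d(A)\le k \iff \operatorname{Ext}^{k+1}(A,Y)=0$ for all $Y\in\cy$. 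From this I extract, for every short exact sequence $0\to A'\to A\to A''\to 0$, the inequalities $d(A)\le\max\{d(A'),d(A'')\}$, $d(A')\le\max\{d(A),d(A'')-1\}$ and $d(A'')\le\max\{d(A),d(A')+1\}$, which will drive everything.

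For (c) $\Rightarrow$ (a) I would set $\cf_i:=\{A\in\ca : d(A)\le n-i\}$ for $0\le i\le n$, so that $\cf_0=\ca$ by hypothesis (c) and $\cf_n=\{A:d(A)=0\}=\cx$. Each $\cf_i$ is extension-closed by the first inequality, hence an exact category with the inherited structure, and I would check that $\cf_{i+1}$ is closed in $\cf_i$ under admissible subobjects: if $0\to A'\to A\to A''\to 0$ is a conflation in $\cf_i$ with $A\in\cf_{i+1}$, then $d(A)\le n-i-1$ and $d(A'')\le n-i$, so the second inequality gives $d(A')\le n-i-1$, i.e. $A'\in\cf_{i+1}$; closure under extensions is the first inequality again. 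It then remains to produce, for each $A\in\cf_i$, a torsion decomposition $0\to tA\to A\to FA\to 0$ inside $\cf_i$ with $FA\in\cf_{i+1}$, so that $\cf_{i+1}$ is a torsion-free class of $\cf_i$ in the sense of Definition \ref{def ex subcat}; chaining these realizes $\cx$ as an $n$-fold torsion-free class. Equivalently I can phrase this as an induction on $n$: prove the single statement that $\cf_1=\{d\le n-1\}$ is a torsion-free class of $\ca$, observe that $\cx$ sits inside $\cf_1$ as a resolving subcategory of resolution dimension $\le n-1$ (again by the Ext-criterion, now computed in the exact category $\cf_1$, whose projectives still include those of $\ca$), and apply the inductive hypothesis to $\cf_1$.

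For (b) $\Rightarrow$ (c) I would argue by induction on $n$, unwinding the recursive definition of a $\K^{(n-1)}\E$-closed subcategory. The base cases pin down the bookkeeping: a $\K^{0}\E$-closed subcategory is one closed under subobjects and extensions, and for such an $\cx$ the sequence $0\to\Omega A\to P\to A\to 0$ with $P$ projective exhibits $\Omega A$ as a subobject of an object of $\cx$, whence $\Omega A\in\cx$ and $d(A)\le 1$; the case $n=2$ matches Theorem A, and there closure under kernels applied to the map $P_1\to P_0$ in a projective resolution gives $\Omega^{2}A=\ker(P_1\to P_0)\in\cx$, i.e. $d(A)\le 2$. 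For the inductive step I would peel off one layer of the $\K^{(n-1)}\E$-structure to land in a $\K^{(n-2)}\E$-closed situation, pass to first syzygies, and use dimension shifting ($d(A)\le n \iff d(\Omega A)\le n-1$, immediate from the Ext-criterion) to reduce the bound from $n$ to $n-1$, closing the induction.

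The step I expect to be the real obstacle is the construction of the torsion decompositions in (c) $\Rightarrow$ (a): closure under subobjects and extensions alone does not make $\cf_{i+1}$ a torsion-free class, and $\cf_{i+1}=\{d\le n-i-1\}$ is emphatically not closed under arbitrary quotients, so the torsion radical $tA$ (the smallest subobject with $A/tA\in\cf_{i+1}$) must be produced by hand. The natural route is to show the relevant subobjects are closed under intersection --- using that $A/(S_1\cap S_2)$ embeds in $A/S_1\oplus A/S_2$ and invoking closure of $\cf_{i+1}$ under subobjects within $\cf_i$ --- and then to extract a smallest such $tA$ from the ambient structure, where the hypotheses of enough projectives and injectives (equivalently, the approximation sequences attached to the cotorsion pair) are what guarantee the required coreflection exists. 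Finally, the dual chain (a)$^{\mathrm{op}}$--(c)$^{\mathrm{op}}$ needs no separate argument: applying the above in $\ca^{\mathrm{op}}$ interchanges $\cx$ with $\cy$, projectives with injectives, resolving with coresolving, $\K$ with $\Cc$, and torsion-free classes with torsion classes.
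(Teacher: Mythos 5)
Your core route coincides with the paper's: the Ext-criterion $\rdim_{\cx}A\le k \iff \Ext_{\ca}^{k+1}(A,Y)=0$ for all $Y\in\cy$, and the chain $\cf_{i}=\{A\in\ca\mid \rdim_{\cx}A\le n-i\}$ with conflation-closure and admissible-subobject-closure checked via the long exact sequence, is precisely Lemma \ref{lem hereditary} (with the indexing reversed), and your cycle $(a)\Rightarrow(b)\Rightarrow(c)\Rightarrow(a)$ with $(a)\Rightarrow(b)$ cited is the paper's. However, you have misread Definition \ref{def ex subcat}: in this paper a torsion-free class of an extension-closed subcategory is \emph{by definition} a subcategory closed under conflations and admissible subobjects --- nothing more. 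No torsion radical or decomposition $0\to tA\to A\to FA\to 0$ is required. So the step you flag as ``the real obstacle'' is vacuous: once you have verified the two closure properties (which you do, correctly), $\cf_{i+1}$ \emph{is} a torsion-free class of $\cf_{i}$ and $(c)\Rightarrow(a)$ is complete. This is fortunate, because your sketch for producing $tA$ would not go through as stated: the monomorphism $A/(S_{1}\cap S_{2})\hookrightarrow A/S_{1}\oplus A/S_{2}$ need not be admissible in $\cf_{i}$, the class $\cf_{i+1}$ is closed only under \emph{admissible} subobjects (indeed $\{\rdim_{\cx}\le j\}$ is generally not closed under arbitrary subobjects), and a smallest subobject with quotient in $\cf_{i+1}$ need not exist in an arbitrary abelian category.

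The one genuine gap is your inductive step in $(b)\Rightarrow(c)$. ``Peeling off one layer'' of the $\K^{(n-1)}\E$-structure is not meaningful: $\K^{(n-1)}\E$-closedness is a single closure condition, not a recursively layered one, and it does not imply $\K^{(n-2)}\E$-closedness (one cannot in general pad an exact sequence $0\to M\to X^{0}\to\cdots\to X^{n-2}$ to length $n-1$ keeping exactness and terms in $\cx$), so after passing to syzygies there is no induction hypothesis available. But no induction is needed: since the cotorsion pair is hereditary, $\cx$ is resolving by Proposition \ref{prop chrac of hereditary}, so every $M$ admits an $\cx$-resolution; truncating it yields an exact sequence $0\to \Ima d_{n}\to X_{n-1}\to\cdots\to X_{0}$ with all $n$ displayed terms $X_{i}$ in $\cx$, and closure under $(n-1)$-kernels gives $\Ima d_{n}\in\cx$, hence $\rdim_{\cx}M\le n$. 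This one-line argument is the paper's proof and is exactly the generalization of your own $n=2$ base case (where you take the kernel of $P_{1}\to P_{0}$). With these two repairs --- deleting the spurious coreflection step and replacing the induction by the direct truncation --- your proof becomes the paper's; the dual chain then follows by passing to $\ca^{\mathrm{op}}$, as you say.
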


In order to understand why the case $n=2$ works well as in \cite{kobayashi2024ke}, we conduct a more detailed observation 
and obtain the following result on the kernel closure:

\begin{theo}[{Theorem \ref{thm charac of kernel closure}}]
	Let $\ca$ be an abelian category and $\cx$ its additive subcategory. Then the following three subcategories coincide$\colon$
	\begin{enumerate}[$(a)$]
	\setlength{\parskip}{0pt}
		\item the admissible-subobject closure of $\cx$ in $\F{1}{\cx}$,
		\item $\clos{\cx}{\kr_{\cx}}{}$, and
		\item $\clos{\cx}{\kr}{}$.
	\end{enumerate}
	In particular, the following are equivalent$\colon$
	\begin{enumerate}[$(a)$]
		\setlength{\parskip}{0pt}
			\item $\cx$ is an admissible-subobject-closed subcategory of $\F{1}{\cx} \mathrm{;}$
			\item $\cx$ is a $\mathrm{Ker}_{\cx}$-closed subcategory of $\ca$ (see Definition \ref{def kery closed})$\mathrm{;}$ and
			\item $\cx$ is a kernel-closed subcategory of $\ca$.
		\end{enumerate}
	\end{theo}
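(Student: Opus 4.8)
The plan is to show that the three subcategories in (a), (b), (c) coincide; the ``in particular'' statement is then immediate, since ``$\cx$ is $\ast$-closed'' means precisely that $\cx$ equals its $\ast$-closure, and once the three closures agree, $\cx$ equals one of them exactly when it equals all three. Write $\mathcal S$ for the admissible-subobject closure of $\cx$ in $\F{1}{\cx}$. I would prove the cycle of inclusions $\mathcal S \subseteq \clos{\cx}{\kr}{} \subseteq \clos{\cx}{\kr_{\cx}}{} \subseteq \mathcal S$.

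First, for $\mathcal S \subseteq \clos{\cx}{\kr}{}$ I argue by induction along the construction of $\mathcal S$. The base case $\cx \subseteq \clos{\cx}{\kr}{}$ is trivial, and if $L \rightarrowtail M$ is an admissible monomorphism in $\F{1}{\cx}$ with $M \in \mathcal S$, then $L = \ker(M \twoheadrightarrow M/L)$; here $M \in \clos{\cx}{\kr}{}$ by induction, while $M/L \in \F{1}{\cx} \subseteq \clos{\cx}{\kr}{}$ because every object of $\F{1}{\cx}$ is, by definition, a kernel of a morphism between objects of $\cx$, so the kernel-closedness of $\clos{\cx}{\kr}{}$ gives $L \in \clos{\cx}{\kr}{}$. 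Next, for $\clos{\cx}{\kr}{} \subseteq \clos{\cx}{\kr_{\cx}}{}$ I would show that $\clos{\cx}{\kr_{\cx}}{}$ is already closed under all kernels. This rests on a reduction lemma: given $f \colon D \to D'$ with $D, D' \in \clos{\cx}{\kr_{\cx}}{}$, I induct on the number of $\kr_{\cx}$-steps needed to build the target $D'$ (as in Definition~\ref{def kery closed}, a $\kr_{\cx}$-step forms the kernel of a morphism into an object of $\cx$). If $D' \in \cx$, then $\ker f$ is itself a $\kr_{\cx}$-step and we are done; otherwise $D' = \ker(g \colon D'' \to X)$ with $X \in \cx$ and $D''$ built in fewer steps, so $D' \hookrightarrow D''$ is monic, the composite $f' \colon D \to D''$ satisfies $\ker f' = \ker f$, and the induction hypothesis applied to $f'$ gives $\ker f \in \clos{\cx}{\kr_{\cx}}{}$. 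Since $\clos{\cx}{\kr}{}$ is the smallest kernel-closed subcategory containing $\cx$, the inclusion follows.

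It remains to prove $\clos{\cx}{\kr_{\cx}}{} \subseteq \mathcal S$. As $\cx \subseteq \mathcal S$, it suffices to check that $\mathcal S$ is $\kr_{\cx}$-closed, i.e.\ that $\ker f \in \mathcal S$ whenever $f \colon S \to X$ with $S \in \mathcal S \subseteq \F{1}{\cx}$ and $X \in \cx$. The idea is to realize $\ker f$ as an admissible subobject of $S$ inside $\F{1}{\cx}$: I consider the short exact sequence $0 \to \ker f \to S \to \operatorname{im} f \to 0$ of $\ca$, note that $\operatorname{im} f$ embeds into $X \in \cx$, and aim to show that both $\ker f$ and $\operatorname{im} f$ lie in $\F{1}{\cx}$, so that this sequence is a conflation of the exact category $\F{1}{\cx}$ and hence exhibits $\ker f \rightarrowtail S$ as an admissible monomorphism; admissible-subobject-closedness of $\mathcal S$ then yields $\ker f \in \mathcal S$. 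I expect this to be the main obstacle: producing, from the defining length-one $\cx$-coresolution of $S$, the corresponding coresolutions of $\ker f$ and of $\operatorname{im} f$ that place them in $\F{1}{\cx}$, and verifying that the displayed $\ca$-sequence is genuinely admissible for the exact structure of $\F{1}{\cx}$. My plan for this is to compare the coresolution $0 \to S \to X^0 \to X^1 \to 0$ of $S$ with the morphism $f$ by a pullback/pushout construction, using additivity of $\cx$ to assemble the comparison maps into morphisms with target in $\cx$. Once the conflation is established, the rest of the argument is the routine bookkeeping already indicated.
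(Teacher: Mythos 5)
Your first inclusion, $\mathcal{S} \subseteq \clos{\cx}{\kr}{}$, rests on the claim that every object of $\F{1}{\cx}$ is ``by definition a kernel of a morphism between objects of $\cx$,'' hence that $\F{1}{\cx} \subseteq \clos{\cx}{\kr}{}$. Both claims are false: $\F{1}{\cx} = \clos{\cx}{\tf}{}$ is the smallest torsion-free class of $\ca$ containing $\cx$, which equals $\clos{\clos{\cx}{\sub}{}}{\ext}{}$ by Remark \ref{rem torf clos} --- no coresolution enters its definition. Concretely, let $\Lambda$ be the path algebra of the quiver $1 \to 2$ over $\mathbb{K}$, let $P$ be the length-two indecomposable projective, $S = \mathrm{soc}\, P$ the simple projective, and $\cx = \add P$ in $\ca = \md \Lambda$. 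Every submodule of $P^{n}$ is isomorphic to some $P^{a} \oplus S^{b}$, so $\F{1}{\cx} = \add(P \oplus S)$ (the projectives, which are extension-closed); but $\End_{\Lambda}(P) = \mathbb{K}$, so every morphism $P^{a} \to P^{b}$ is a scalar matrix whose kernel again lies in $\add P$, whence $\clos{\cx}{\kr}{} = \add P \not\ni S$. Thus $\F{1}{\cx} \not\subseteq \clos{\cx}{\kr}{}$ and your step-one argument collapses. Since your steps two and three together only yield $\clos{\cx}{\kr}{} = \clos{\cx}{\kr_{\cx}}{} \subseteq \clos{\cx}{\ads}{\F{1}{\cx}}$, the cycle does not close. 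What is missing is exactly the hard inclusion $\clos{\cx}{\ads}{\F{1}{\cx}} \subseteq \clos{\cx}{\kr_{\cx}}{}$, which the paper obtains as the ``if'' direction of Proposition \ref{prop kerx = adsub}: given $0 \to M \to X \to F \to 0$ with $X \in \cx$ and $F \in (\clos{\cx}{\sub}{})^{*n}$, one inducts on $n$, pulls the sequence back along $F' \rightarrowtail F$ with $F' \in \clos{\cx}{\sub}{}$, and uses an embedding of $F'$ into an object of $\cx$ to exhibit $M$ as the kernel of a morphism into $\cx$. Your proposal contains no substitute for this induction.

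The rest is in better shape. The same misreading of $\F{1}{\cx}$ makes your step three look much harder than it is: since $\F{1}{\cx}$ is a torsion-free class of $\ca$, it is closed under subobjects, so for $f \colon S \to X$ with $S \in \F{1}{\cx}$ and $X \in \cx$, both $\Ker f$ and $\Ima f$ lie in $\F{1}{\cx}$ automatically, the sequence $0 \to \Ker f \to S \to \Ima f \to 0$ is a conflation in $\F{1}{\cx}$, and $\Ker f \in \clos{\cx}{\ads}{\F{1}{\cx}}$ follows at once --- no pullback/pushout comparison is needed, and there is no ``defining length-one $\cx$-coresolution'' of $S$ to compare with. Your step two, by contrast, is correct and genuinely different from the paper: the paper never shows directly that $\clos{\cx}{\kr_{\cx}}{}$ is kernel-closed, instead identifying it with $\clos{\cx}{\ads}{\F{1}{\cx}}$ via Proposition \ref{prop kerx = adsub} and proving the latter kernel-closed by a chain of three pushouts; your reduction lemma --- inducting on the number of $\Ker_{\cx}$-steps building the target, replacing $f \colon D \to D' = \Ker(D'' \to X)$ by $\iota f \colon D \to D''$ and using $\Ker(\iota f) = \Ker f$ for the monomorphism $\iota$ --- gives a shorter direct proof that (b) equals (c). With step two kept, step three repaired as above, and a Proposition-\ref{prop kerx = adsub}-style induction supplied for the remaining inclusion, your outline would become a complete proof.
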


In the category of finitely generated modules over an finite dimensional $\mathbb{K}$-algebra $\Lambda$, where $\mathbb{K}$ is an algebraically closed field, 
there are bijections between ``nice'' subcategories and certain modules. In particular, one of the most eminent results arises in \textit{$\tau$-tilting theory} established in \cite{Adachi_Iyama_Reiten_2014}.
They gave a bijection between functorially finite torsion classes and basic support $\tau$-tilting modules as follows:

\begin{theo}[{\cite[Theorem 2.7]{Adachi_Iyama_Reiten_2014}}] \label{thm AIR}
	The following maps are mutually inverse$\colon$
	\[
	\begin{tikzpicture}[auto]
	\node at (-2.4, 1.8) {$\stautilt \Lambda$}; \node  at (2.4, 1.8) {$\ftors \Lambda$}; 
	\node[rotate=90]  at (-2.4, 1.3) {$\in$}; \node[rotate=90]  at (2.4, 1.3) {$\in$}; 
	\node  at (-2.4, 0.8) {$T$}; \node  at (2.4, 0.8) {$\Fac T$}; 
	\node  at (-2.4, 0.2) {$P(\ct)$}; \node  at (2.4, 0.2) {$\ct$}; 
	\node(a1) at (-1.8, 1.9) {}; \node(a2) at (1.8, 1.9) {};
	\node(01) at (-1.8, 1.7) {}; \node(02) at (1.8, 1.7) {};

	\node(11) at (-1.8, 0.8) {}; \node(12) at (1.8, 0.8) {};
	\node(21) at (-1.8, 0.2) {}; \node(22) at (1.8, 0.2) {};
	\node at (2.6,0.1) {,};

	\draw[->, thick] (a1) --node {$\Fac$} (a2); \draw[->, thick] (02) --node {$P(-)$} (01); \draw[|->, thick] (11) to (12); \draw[|->, thick] (22) -- (21); 
	
	\end{tikzpicture}
	\]
	where $P(\ct)$ is the basic $\Ext$-progenerator of $\ct$.
\end{theo}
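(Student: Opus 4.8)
The statement asserts that $T\mapsto\Fac T$ and $\ct\mapsto P(\ct)$ are mutually inverse bijections between basic support $\tau$-tilting modules and functorially finite torsion classes, so the plan is to check that each map lands in the claimed target and that the two composites are the identity. The tool I would set up first, and then use everywhere, is the Auslander--Smal{\o} reformulation of $\tau$-rigidity. Via Auslander--Reiten duality $\Ext^1_{\Lambda}(M,Y)\cong D\,\overline{\operatorname{Hom}}_{\Lambda}(Y,\tau M)$ one shows that for $M\in\operatorname{mod}\Lambda$ one has $\operatorname{Hom}_{\Lambda}(M,\tau M)=0$ if and only if $\Ext^1_{\Lambda}(M,\Fac M)=0$. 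For the direction I actually need, if $M$ is $\tau$-rigid then any $Y\in\Fac M$ admits a surjection $M^{n}\twoheadrightarrow Y$, so $\operatorname{Hom}_{\Lambda}(Y,\tau M)$ embeds into $\operatorname{Hom}_{\Lambda}(M^{n},\tau M)=\operatorname{Hom}_{\Lambda}(M,\tau M)^{n}=0$, and hence its injective-stable quotient also vanishes, giving $\Ext^1_{\Lambda}(M,\Fac M)=0$.

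Granting this, the well-definedness of $\Fac$ is the easy half. The subcategory $\Fac T$ is automatically closed under quotients and finite direct sums, so it suffices to prove it is extension-closed. Given a short exact sequence $0\to A\to B\to C\to0$ with $A,C\in\Fac T$, I pull back along a surjection $T^{n}\twoheadrightarrow C$ to obtain $0\to A\to B'\to T^{n}\to0$ together with a surjection $B'\twoheadrightarrow B$; since $\Ext^1_{\Lambda}(T,A)=0$ by the criterion above, this sequence splits, so $B'\cong A\oplus T^{n}\in\Fac T$ and $B$, being a quotient of $B'$, also lies in $\Fac T$. Functorial finiteness is then immediate: a right $\operatorname{add}T$-approximation of any module yields a right $\Fac T$-approximation, so $\Fac T$ is contravariantly finite, and a contravariantly finite torsion class is automatically functorially finite.

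For the inverse map, given a functorially finite torsion class $\ct$ I take $P(\ct)$ to be the basic direct sum of the indecomposable Ext-projective objects of $\ct$, i.e. those $P\in\ct$ with $\Ext^1_{\Lambda}(P,\ct)=0$. Functorial finiteness guarantees there are enough of these: every object of $\ct$ receives a surjection from an object of $\operatorname{add}P(\ct)$, which already gives $\Fac P(\ct)=\ct$. Moreover each such $P$ satisfies $\Fac P\subseteq\ct$, hence $\Ext^1_{\Lambda}(P,\Fac P)=0$, so $P$ is $\tau$-rigid by the criterion of the first paragraph; thus $P(\ct)$ is $\tau$-rigid.

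The remaining, and hardest, point is to upgrade $P(\ct)$ from $\tau$-rigid to \emph{support} $\tau$-tilting, which is the crux of the whole theorem: I must show that the number of non-isomorphic indecomposable Ext-projectives of $\ct$ equals $|\Lambda|$ minus the number of simples cut out by the support, i.e. that $P(\ct)$ meets the maximality/size condition defining $\stautilt\Lambda$. I would extract this by comparing the indecomposable Ext-projectives of $\ct$ against the indecomposable projectives through the minimal left $\ct$-approximations of $\Lambda=\bigoplus_i P_i$, combined with a Bongartz-type completion showing that a $\tau$-rigid module admitting no proper $\tau$-rigid extension is exactly a support $\tau$-tilting one. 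With this in hand the two composites close up: $\Fac P(\ct)=\ct$ was already noted, and for $P(\Fac T)=T$ one identifies the Ext-projectives of $\Fac T$ with $\operatorname{add}T$ — the inclusion $\operatorname{add}T\subseteq\{\text{Ext-projectives}\}$ holds because $\Ext^1_{\Lambda}(T,\Fac T)=0$, while the reverse inclusion follows from Ext-projectivity forcing any such object to split off a module generated by $T$. I expect this counting/completion step to be the main obstacle, since it is precisely where the homological data of the torsion class must be matched with the numerical definition of support $\tau$-tilting.
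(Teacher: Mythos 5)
First, a note on the comparison: the paper does not prove this theorem at all — it imports it from Adachi--Iyama--Reiten as a black box, and only supplies adjacent ingredients itself (Proposition \ref{prop Extprogen of ftors}, which constructs the $\Ext$-progenerator $P(\ct)$ from the minimal left $\ct$-approximation of $\Lambda$ via Wakamatsu's lemma, and Theorem \ref{thm charac ftors}, cited for the characterization of functorial finiteness). Measured against the standard AIR argument your outline is broadly the right shape, but it has one outright error and one essential unproved step. The error is the functorial-finiteness claim: you show $\Fac T$ is contravariantly finite and then assert that ``a contravariantly finite torsion class is automatically functorially finite.'' This is backwards. \emph{Every} torsion class of $\md \Lambda$ is contravariantly finite — the inclusion of the torsion submodule $tM \hookrightarrow M$ is already the minimal right approximation (cf.\ Example \ref{ex covariantly fin}) — so your implication would make every torsion class functorially finite, which fails for any algebra that is not $\tau$-tilting finite. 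The nontrivial content is \emph{covariant} finiteness of $\Fac T$, i.e.\ the existence of left $\Fac T$-approximations; this is the Smal{\o}/Auslander--Smal{\o} theorem underlying Theorem \ref{thm charac ftors}, and nothing in your proposal establishes it. (Your contravariant-finiteness argument is itself slightly off: a right $\add T$-approximation $f\colon T'\to M$ need not be a right $\Fac T$-approximation — a map $Y\to M$ with $Y\in\Fac T$ factors through the inclusion $\Ima f = tM\hookrightarrow M$, not through $f$ itself — though this is harmless since the trace inclusion does the job.)

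The second and decisive gap is the step you yourself flag: that $P(\ct)$ is \emph{support} $\tau$-tilting, and dually that every $\Ext$-projective of $\Fac T$ lies in $\add T$, so that $P(\Fac T)=T$. Here the proposal offers only a declared intention (``I would extract this by comparing \ldots combined with a Bongartz-type completion'') rather than an argument, and this is precisely the content of the AIR theorem: one needs the theory of $\tau$-rigid pairs, the (co-)Bongartz completion, and the count $|T| = |\Lambda/\langle e\rangle|$ characterizing maximal $\tau$-rigid modules. Your sketch for the reverse inclusion $\cp(\Fac T)\subset \add T$ — ``Ext-projectivity forcing any such object to split off a module generated by $T$'' — does not work as stated: from a surjection $T^{n}\twoheadrightarrow X$ one cannot split off $X$ unless the kernel is known to lie in $\Fac T$, which is exactly what is at issue. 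The correct route applies the Auslander--Smal{\o} criterion in \emph{both} directions to conclude that $T\oplus X$ is $\tau$-rigid whenever $X$ is $\Ext$-projective in $\Fac T$, and then invokes maximality to force $X\in\add T$; your proposal never proves that maximality mechanism. Likewise your assertion that functorial finiteness yields ``enough'' $\Ext$-projectives with $\Fac P(\ct)=\ct$ is stated without proof; it requires the approximation argument of Proposition \ref{prop Extprogen of ftors}. So the proposal correctly handles the easy half ($\tau$-rigidity of $\Ext$-projectives, extension-closedness of $\Fac T$) but defers exactly the hard numerical core of the theorem.
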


We can naturally construct $n$-fold torsion classes from a certain class of modules called \textit{$\tau$-rigid modules}, which includes support $\tau$-tilting modules.

\begin{prop} [Proposition {\ref{prop coknU is n+1fold tors}}]
	Let $U$ be a $\tau$-rigid module in $\md \Lambda$. For every nonnegative integer $n$, $\cok_{n} U$ is an $(n+1)$-fold torsion class of $\md \Lambda$, 
	where $\cok_{n} U$ is the subcategory consisting of all modules which are the 
	$n$-cokernel of some $n$-term exact sequence in $\add U$.
\end{prop}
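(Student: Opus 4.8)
The plan is to argue by induction on $n$, producing an explicit chain of torsion classes. Recall that a subcategory is an $(n+1)$-fold torsion class exactly when it sits at the bottom of a chain $\md\Lambda = \ct_0 \supseteq \ct_1 \supseteq \cdots \supseteq \ct_{n+1}$ in which each $\ct_{i+1}$ is a torsion class of $\ct_i$. I would take $\ct_{k+1} = \cok_{k} U$ for $0 \le k \le n$, so that the whole statement reduces to two claims: $(\mathrm{i})$ $\cok_0 U = \Fac U$ is a torsion class of $\md\Lambda$, and $(\mathrm{ii})$ for every $k \ge 1$, $\cok_k U$ is a torsion class of $\cok_{k-1} U$ in the sense of exact categories. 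The bookkeeping that makes this run is the recursive description
\[
\cok_k U = \{\, X \mid \text{there is an exact sequence } 0 \to X_1 \to U_0 \to X \to 0 \text{ with } U_0 \in \add U,\ X_1 \in \cok_{k-1} U \,\},
\]
obtained by splitting off the top term of an $\add U$-resolution, together with the two elementary observations that $\add U \subseteq \cok_k U$ (use the trivial resolution) and that the chain $\Fac U = \cok_0 U \supseteq \cok_1 U \supseteq \cdots$ is decreasing (drop the leftmost term of a resolution). In particular every conflation appearing above has all three terms in $\cok_{k-1} U$, so it is a genuine conflation of the exact category $\cok_{k-1} U$.

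The single input driving everything is that $\add U$ consists of $\Ext$-projective objects of $\Fac U$. Indeed, since $U$ is $\tau$-rigid, any surjection $U^m \twoheadrightarrow X$ induces an injection $\Hom(X, \tau U) \hookrightarrow \Hom(U, \tau U)^m = 0$, so $\Fac U \subseteq {}^{\perp}(\tau U)$; the Auslander--Reiten formula, by which $\Hom(Y,\tau U)=0$ forces $\Ext^1(U,Y)=0$, then yields $\Ext^1(\add U, \Fac U) = 0$. Since each $\cok_k U$ is contained in $\Fac U$, we get $\Ext^1(\add U, \cok_k U) = 0$ as well. For the base case $(\mathrm{i})$, $\Fac U$ is closed under quotients for free, and it is closed under extensions by the standard horseshoe argument: given $0 \to A \to B \to C \to 0$ with $A, C \in \Fac U$, lift a surjection $U_0^C \twoheadrightarrow C$ along $B \twoheadrightarrow C$ using $\Ext^1(U_0^C, A) = 0$ and combine it with a surjection $U_0^A \twoheadrightarrow A$ to surject $U_0^A \oplus U_0^C$ onto $B$. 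Hence $\Fac U$ is a torsion class of $\md\Lambda$.

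For the inductive step $(\mathrm{ii})$, write $\mathcal{Y} = \cok_{k-1} U$ and $\mathcal{Z} = \cok_k U$; by the induction hypothesis $\mathcal{Y}$ is assembled from torsion classes, hence extension-closed in $\md\Lambda$ and thus an exact category, and by the recursion $\mathcal{Z}$ is precisely the class of objects of $\mathcal{Y}$ admitting an $\add U$-deflation in $\mathcal{Y}$. Closure of $\mathcal{Z}$ under admissible quotients is immediate: composing an $\add U$-deflation $U_0 \twoheadrightarrow X$ with a deflation $X \twoheadrightarrow X''$ yields an $\add U$-deflation onto $X''$ whose kernel is an extension of two objects of $\mathcal{Y}$, hence lies in $\mathcal{Y}$. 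Closure under extensions is the horseshoe argument again, now performed inside $\mathcal{Y}$: given a conflation $0 \to X' \to X \to X'' \to 0$ in $\mathcal{Y}$ with $X', X'' \in \mathcal{Z}$, the $\Ext$-projectivity $\Ext^1(\add U, \mathcal{Y}) = 0$ lets me lift an $\add U$-deflation of $X''$ along $X \twoheadrightarrow X''$ and assemble an $\add U$-deflation of $X$ with kernel in $\mathcal{Y}$. Appending $\mathcal{Z}$ to the chain witnessing that $\mathcal{Y}$ is $n$-fold then exhibits $\cok_n U$ as an $(n+1)$-fold torsion class.

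The step that needs the most care is verifying that these closure properties actually upgrade $\mathcal{Z}$ to a torsion class of the \emph{exact} category $\mathcal{Y}$, i.e.\ that it is the torsion part of a torsion pair: one must produce, for each $X \in \mathcal{Y}$, an admissible conflation $0 \to tX \to X \to X/tX \to 0$ with $tX \in \mathcal{Z}$ and $\Hom(\mathcal{Z}, X/tX) = 0$. Because $\md\Lambda$ is a length category and $\mathcal{Y}$ is closed under the relevant subobjects and quotients, $tX$ can be taken to be the largest admissible subobject of $X$ lying in $\mathcal{Z}$, and the finite-length hypothesis guarantees it exists and is admissible; this is exactly the point where the ambient finiteness of $\md \Lambda$ is used, and I would isolate it as a lemma on torsion pairs in exact subcategories of a length category. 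The remaining verifications---that the lifted and combined maps are genuine admissible epimorphisms and that all intermediate kernels stay inside $\mathcal{Y}$---are routine once the recursion and the $\Ext$-projectivity of $\add U$ are in hand.
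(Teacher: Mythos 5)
Your first three paragraphs constitute a correct proof and follow essentially the same route as the paper: the paper also inducts on $n$, uses the recursive description $\cok_{k}U = \clos{\add U}{\adq}{\cok_{k-1}U}$ (Remark \ref{rem cor and ker}), and drives everything with $\Ext^{1}_{\Lambda}(\add U, \Fac U)=0$ from $\tau$-rigidity (Proposition \ref{prop taurigid}). The only cosmetic difference is in the extension-closure step: the paper pulls back twice and splits the resulting sequence $0 \to T_{1} \to M \to U_{2} \to 0$ using the Ext-vanishing, which is exactly your horseshoe lift in disguise; in both cases the point is that the new kernel is an extension of two kernels lying in $\cok_{k-1}U$, which is extension-closed by induction. (The paper packages the conclusion slightly differently, showing $\cok_{n-1}U = \T{n}{\add U}$, the $n$-fold torsion closure of $\add U$, but the content is the same.)

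Your final paragraph, however, addresses a requirement that is not part of the statement. In this paper a torsion class of an extension-closed subcategory $\ce$ is \emph{defined} (Definition \ref{def ex subcat}(6)) as a subcategory closed under conflations and admissible quotients of $\ce$; no torsion pair, and in particular no functorial decomposition $0 \to tX \to X \to X/tX \to 0$, is demanded. So that entire step is superfluous. Moreover, the argument you sketch for it would not go through as stated: since $\cok_{k}U$ is closed only under \emph{admissible} quotients in $\cok_{k-1}U$, not under arbitrary quotients in $\md \Lambda$, the sum of two admissible subobjects lying in $\cok_{k}U$ need not lie in $\cok_{k}U$ (nor be admissible, i.e.\ have its cokernel in $\cok_{k-1}U$), so a largest admissible subobject belonging to $\cok_{k}U$ need not exist, and finite length alone does not rescue this. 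Had the stronger torsion-pair conclusion actually been needed, this would be a genuine gap; for the proposition as stated it is harmless, and the rest of your argument stands.
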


The main goal of this paper is to provide a bijection between basic $\tau$-rigid modules and $2$-fold torsion classes characterized by a certain condition, similar to Theorem \ref{thm AIR}.
More precisely, the condition for a $2$-fold torsion class $\cc$ is the following$\colon$
\begin{itemize}
	\item The smallest torsion class $\T{1}{\cc}$ of $\md \Lambda$ containing $\cc$ is functorially finite; and
	\item $\conast \colon$ For any $C \in \cc$ and the right minimal $(\add P)$-approximation $f \colon P_{C} \to C$ of $C$, we have  $P_{C} \in \cc$, 
	where $P$ is the basic $\mathrm{Ext}$-progenerator of $\T{1}{\cc}$. 
\end{itemize}

We denote by $\fltwotors^{\ast} \Lambda$ the set of $2$-fold torsion classes satisfying the condition and by $\taurigid \Lambda$ the set of isomorphism classes of basic $\tau$-rigid $\Lambda$-modules.

\begin{theo}[Theorem \ref{thm charac of 2tors}] \label{thm C}
The following maps are mutually inverse$\colon$

\[
\begin{tikzpicture}[auto]
	\node at (-2.4, 2.0) {$\taurigid \Lambda$}; \node  at (2.4, 2.0) {$\fltwotors^{\ast} \Lambda$}; 
	\node[rotate=90]  at (-2.4, 1.5) {$\in$}; \node[rotate=90]  at (2.4, 1.5) {$\in$}; 
	\node  at (-2.4, 1.0) {$U$}; \node  at (2.4, 1.0) {$\cok_{1} U$}; 
	\node  at (-2.4, 0.4) {$\Phi(\cc)$}; \node  at (2.4, 0.4) {$\cc$}; 
	\node at (2.6, 0.3) {,};
	\node(011) at (-1.7, 2.1) {}; \node(021) at (1.5, 2.1) {};
	\node(012) at (-1.7, 1.9) {}; \node(022) at (1.5, 1.9) {};
	\node(11) at (-1.7, 1.0) {}; \node(12) at (1.5, 1.0) {};
	\node(21) at (-1.7, 0.4) {}; \node(22) at (1.5, 0.4) {};

	\draw[->, thick] (011) --node{$\cok_{1}$} (021); \draw[->, thick] (022) --node{$\Phi$} (012); \draw[|->, thick] (11) to (12); \draw[|->, thick] (22) -- (21); 
	
\end{tikzpicture}
\]
where $\Phi(\cc)$ is the basic module such that $\add \Phi(\cc) = \cc \cap \add P$. 
\end{theo}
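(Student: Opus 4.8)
The plan is to check that both maps are well defined and mutually inverse, isolating one genuinely hard inclusion. For the well-definedness of $U \mapsto \cok_{1} U$, Proposition \ref{prop coknU is n+1fold tors} already gives that $\cok_{1} U$ is a $2$-fold torsion class, so only the two defining conditions of $\fltwotors^{\ast} \Lambda$ remain. Since $\add U \subseteq \cok_{1} U$ (present $U$ as the $1$-cokernel of $0 \to U$) and every object of $\cok_{1} U$ is a quotient of an object of $\add U$, the smallest torsion class $\T{1}{\cok_{1} U}$ coincides with the smallest torsion class $T(U)$ containing $U$; this is functorially finite because $U$ is $\tau$-rigid, and by Theorem \ref{thm AIR} it has a basic $\Ext$-progenerator $P$ with $\add U \subseteq \add P$ (as $U$ is $\Ext$-projective in $T(U)$). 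To verify $\conast$ for $\cok_{1} U$, take $C \in \cok_{1} U \subseteq \Fac U$ and any surjection $U_{0} \twoheadrightarrow C$ with $U_{0} \in \add U$; its kernel lies in $\Fac U \subseteq T(U)$, so $\Ext$-projectivity of $\add P$ in $T(U)$ makes every map from $\add P$ to $C$ factor through $U_{0} \twoheadrightarrow C$. Hence $U_{0} \twoheadrightarrow C$ is itself a right $\add P$-approximation, the minimal approximant $P_{C}$ is a summand of $U_{0}$, and $P_{C} \in \add U \subseteq \cok_{1} U$. Finally $\Phi(\cc)$ is well defined and $\tau$-rigid because $\add \Phi(\cc) = \cc \cap \add P \subseteq \add P$ and $P$ is (support) $\tau$-tilting.

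Next I would prove $\Phi(\cok_{1} U) = U$, i.e. $\cok_{1} U \cap \add P = \add U$. The inclusion $\supseteq$ is immediate. For $\subseteq$, let $M \in \cok_{1} U \cap \add P$, presented by $U_{1} \to U_{0} \to M \to 0$, and put $K = \ker(U_{0} \twoheadrightarrow M)$. Then $K$ is a quotient of $U_{1}$, so $K \in \Fac U \subseteq T(U)$; since $M \in \add P$ is $\Ext$-projective in $T(U)$ we get $\Ext^{1}(M,K)=0$, the sequence $0 \to K \to U_{0} \to M \to 0$ splits, and $M \in \add U_{0} \subseteq \add U$. This splitting principle is the cleanest part of the argument and the reason I would treat it early.

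The heart is $\cok_{1}(\Phi(\cc)) = \cc$ for $\cc \in \fltwotors^{\ast} \Lambda$; write $U = \Phi(\cc)$, so $\add U = \cc \cap \add P$ with $P$ the $\Ext$-progenerator of $\ct := \T{1}{\cc}$. The inclusion $\cok_{1} U \subseteq \cc$ follows because a $2$-fold torsion class is closed under cokernels of morphisms between its objects and under extensions by Proposition \ref{prop nfold vs Kn-1Eclosed}, while $\add U \subseteq \cc$, and every object of $\cok_{1} U$ is such a cokernel. For the reverse inclusion, fix $C \in \cc$ and its right minimal $\add P$-approximation $f \colon P_{C} \to C$. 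As $C \in \cc \subseteq \ct = \Fac P$ the map $f$ is surjective, and $\conast$ gives $P_{C} \in \cc \cap \add P = \add U$; in particular every object of $\cc$ is $U$-generated, so $\cc \subseteq \Fac U$. Writing $K = \ker f$, to conclude $C \in \cok_{1} U$ it suffices to produce a surjection $U_{1} \twoheadrightarrow K$ with $U_{1} \in \add U$, that is, to show $K \in \Fac U$; for then the $1$-cokernel of $U_{1} \to P_{C}$ is $C$, so $C \in \cok_{1} U$.

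Establishing $K \in \Fac U$ is the main obstacle, and it is exactly where the full $2$-fold structure and $\conast$ must be used rather than mere closedness under cokernels and extensions. My intended route is to view $\cc$ as a torsion class of the exact category $\ct = \Fac P$, whose relative projectives are $\add P$, with associated torsion pair $(\cc, \cf')$ in $\ct$; the statement then becomes the assertion that $\cc$ is the class of relative cokernels of maps between its relative projectives $\add U = \cc \cap \add P$, an AIR-type description internal to $\ct$. Concretely, I would try to show the stronger statement $K \in \cc$ (which gives $K \in \Fac U$ since $\cc \subseteq \Fac U$) by arguing that the syzygy sequence $0 \to K \to P_{C} \to C \to 0$ is admissible in $\ct$, i.e. $K \in \ct$, and then decomposing $K$ through the torsion pair $(\cc, \cf')$ and inducting on length via a shorter resolution. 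Controlling this relative syzygy — keeping $K$ inside $\ct$ and ultimately inside $\cc$ — is the delicate point; this is precisely where the hypothesis $\conast$ (which pins all relative-projective approximants into $\cc \cap \add P = \add U$) is indispensable, and where the case $n = 2$, amounting to a length-one resolution relative to $\add P$, makes the induction terminate, in the spirit of Theorem \ref{thm resolving}.
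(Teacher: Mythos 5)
Your treatment of the well-definedness of both maps and of $\Phi \circ \cok_{1} = \mathrm{id}$ is correct and essentially coincides with the paper's Propositions \ref{prop prepare charac of 2tors} and \ref{prop star condition}: your observation that any surjection $U_{0} \twoheadrightarrow C$ coming from a presentation of $C \in \cok_{1}U$ is already a right $(\add P)$-approximation (because its kernel lies in $\Fac U$ and $\add P$ is $\mathrm{Ext}$-projective there), and your splitting argument showing $\cok_{1} U \cap \add P = \add U$, are the same ideas the paper runs through Proposition \ref{prop taurigid}, only packaged slightly differently.

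The genuine gap is in the decisive inclusion $\cc \subseteq \cok_{1}\Phi(\cc)$. You correctly reduce it to showing $K := \Ker f \in \Fac U$ for the right minimal $(\add P)$-approximation $f \colon P_{C} \to C$ of $C \in \cc$, but at exactly this point you stop proving and start proposing: you say you ``would try'' to show the stronger claim $K \in \cc$ by first arguing that $0 \to K \to P_{C} \to C \to 0$ is admissible in $\ct := \T{1}{\cc}$, i.e.\ $K \in \ct$, and then running an induction on length, while yourself flagging that keeping $K$ inside $\ct$ is ``the delicate point.'' That admissibility is precisely the whole content of the step, and nothing in your text supplies it; moreover your detour through $K \in \cc$ is both unproven and unnecessary --- the paper only ever establishes $K \in \T{1}{\cc}$, which already suffices, since a surjection $U_{1} \twoheadrightarrow K$ with $U_{1} \in \add \Phi(\cc)$ then exhibits $C$ as a $1$-cokernel --- and your proposed induction has no specified inductive quantity or base case, so even as a program it is not well-founded as stated. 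The paper closes the step in a few lines with no relative homological machinery: since $P(\T{1}{\cc})$ is an $\mathrm{Ext}$-progenerator, there is a conflation $0 \to F \to P' \xrightarrow{g} C \to 0$ in $\T{1}{\cc}$; the approximation property of $f$ lifts $g$ through $f$, giving a morphism of short exact sequences over $\mathrm{id}_{C}$, and the resulting pushout--pullback square yields an exact sequence $0 \to F \to P' \oplus K \to P_{C} \to 0$ (Lemma \ref{lem often used lemma for a torsion class}). After noting $\T{1}{\cc} = \Fac \Phi(\cc)$ (which follows, as you essentially observe, from $\cc \subseteq \Fac \Phi(\cc) \subseteq \T{1}{\cc}$ and minimality), both $F$ and $P_{C}$ lie in the torsion class $\Fac \Phi(\cc)$, so closure under extensions and quotients gives $K \in \Fac \Phi(\cc)$. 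You should replace your sketched relative-syzygy induction with this two-sequence comparison.
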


Theorem \ref{thm AIR} and Theorem \ref{thm C} have the following relationship:

\begin{prop} [{Proposition \ref{prop relationship of AIR and mine}}]
	The following diagram commutes$\colon$
\[
\begin{tikzpicture}[auto]
\node (0) at (-2.0, 2.0) {$\stautilt \Lambda$}; \node (00) at (2.0, 2.0) {$\ftors \Lambda$}; 
\node (1) at (-2.0, 0.0) {$\taurigid \Lambda$}; \node (01) at (2.0, 0.0) {$\fltwotors^{\ast} \Lambda$}; 
\node (2) at (-2.0, -2.0) {$\stautilt \Lambda$}; \node (02) at (2.0, -2.0) {$\ftors \Lambda$}; 
\node at (2.7, -2.1) {,};
\node(011) at (-1.2, 2.0) {}; \node(021) at (1.3, 2.0) {};
\node(111) at (-1.2, 0.0) {}; \node(121) at (1.1, 0.0) {};
\node(211) at (-1.2, -2.0) {}; \node(221) at (1.3, -2.0) {};

\draw[->, thick] (111) --node{$\cok_{1}$} (121);
\draw[->, thick] (011) --node{$\Fac$} (021);
\draw[->, thick] (211) --node{$\Fac$} (221);

\draw[{Hooks[right]}->, thick] (0) -- (1); \draw[->, thick] (1) --node[swap]{$\overline{(-)}$} (2);
\draw[{Hooks[right]}->, thick] (00) -- (01); \draw[->, thick] (01) --node{$\T{1}{-}$} (02);
\draw[->, thick] (0) to[bend right=70, edge label'={$\mathrm{id}$}] (2);
\draw[->, thick] (00) to[bend left=70, edge label={$\mathrm{id}$}] (02);
\draw[->, thick] (1) -- (02);
\draw[fill=white, draw=white] (0, -1.0) circle[x radius=10pt, y radius=10pt] ;
\node at (0, -1.0) {$\Fac$};

\end{tikzpicture}
\]
where $\overline{(-)}$ is the map that sends $U \in \taurigid \Lambda$ to the co-Bongartz completion of $U$. 
\end{prop}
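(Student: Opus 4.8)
The plan is to check that each individual cell of the diagram commutes, treating the two curved identity arrows, the two inner squares, and the diagonal separately; once all of these commute, any two directed paths with common endpoints agree, which is exactly the assertion. The two triangles cut out by the curved arrows are essentially formal. For the right column, if $\ct \in \ftors \Lambda$ then $\ct$ is already a torsion class, so the smallest torsion class $\T{1}{\ct}$ containing it is $\ct$ itself; moreover the inclusion $\ftors \Lambda \hookrightarrow \fltwotors^{\ast} \Lambda$ is well defined because a torsion class satisfies $\conast$ trivially (its basic $\Ext$-progenerator $P$ lies in $\ct$, so every right minimal $(\add P)$-approximation of an object of $\ct$ already lies in $\ct$), whence $\T{1}{-}\circ(\text{inclusion})=\mathrm{id}$. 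For the left column, the co-Bongartz completion $\overline{U}$ is by definition the basic support $\tau$-rigid module with $\Fac \overline{U} = \T{1}{\add U}$, the smallest torsion class containing $U$; when $U = T$ is already support $\tau$-tilting, $\Fac T$ is a torsion class and $\T{1}{\add T} = \Fac T$, so $\overline{T} = T$ by the injectivity of the bijection of Theorem \ref{thm AIR}.

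Next I would treat the bottom square together with the diagonal. For any $\tau$-rigid $U$ one has $\add U \subseteq \cok_{1} U$ (use the trivial presentation $0 \to U$ and additivity of $\cok_{1}U$) and $\cok_{1} U \subseteq \Fac U \subseteq \T{1}{\add U}$, so that the smallest torsion class containing $\cok_{1} U$ is exactly $\T{1}{\add U}$; that is, $\T{1}{\cok_{1} U} = \T{1}{\add U}$. Since also $\Fac \overline{U} = \T{1}{\add U}$ by the definition of the co-Bongartz completion, we obtain $\T{1}{\cok_{1} U} = \Fac \overline{U}$, which is precisely the commutativity of the bottom square. Reading this same identity as a factorization through the corner $\stautilt \Lambda$ shows that the diagonal $\Fac$-arrow agrees with both the lower-left triangle ($\Fac \circ \overline{(-)}$) and the lower square path ($\T{1}{-}\circ \cok_{1}$).

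The remaining and genuinely substantial point is the top square, namely the identity $\cok_{1} T = \Fac T$ for a support $\tau$-tilting module $T$ (regarded on the right as a $2$-fold torsion class via the inclusion). The inclusion $\cok_{1} T \subseteq \Fac T$ is immediate, since any $1$-cokernel of objects of $\add T$ is a quotient of an object of $\add T$. For the reverse inclusion I would argue as follows: given $M \in \Fac T$, the minimal right $(\add T)$-approximation $g \colon T_{0} \to M$ is surjective because $T$ generates $\Fac T$, and its kernel again lies in $\Fac T$; a surjection from $\add T$ onto $\ker g$ then produces a presentation $T_{1} \to T_{0} \to M \to 0$ in $\add T$, so $M \in \cok_{1} T$. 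The crux — and the step I expect to be the main obstacle — is precisely the claim $\ker g \in \Fac T$, i.e. that the functorially finite torsion class $\Fac T$, viewed as an exact category, has enough projectives given by $\add T$; this is exactly where the $\tau$-rigidity of $T$ (equivalently $\Ext^{1}(T, \Fac T) = 0$) enters, and it is the $n = 2$ shadow of the resolution phenomenon in Theorem \ref{thm resolving}. This fact is available from the analysis underlying Proposition \ref{prop coknU is n+1fold tors} and Theorem \ref{thm charac of 2tors}, so I would cite it rather than reprove it. Once $\cok_{1} T = \Fac T$ is established, applying $\T{1}{-}$ returns $\Fac T$ (it is already a torsion class), so the top square is also compatible with the right column.

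Having verified the two squares, the two curved triangles, and the diagonal, every pair of directed paths in the diagram with the same source and target coincides, which completes the proof of the proposition.
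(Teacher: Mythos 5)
Your proposal is correct and takes essentially the same route as the paper: the paper's proof likewise verifies the diagram cell by cell, via $\overline{T} = P(\Fac T) \cong T$, $\Fac \overline{U} = \Fac U$, $\T{1}{\cok_{1} U} = \T{1}{\T{2}{\add U}} = \T{1}{\add U} = \Fac U$, $\T{1}{\ct} = \ct$, and a citation of the latter part of Theorem \ref{thm charac of 2tors} for the top square $\cok_{1} T = \Fac T$, exactly as you do. One small caveat on your commentary: the crux $\Ker g \in \Fac T$ does not follow from $\tau$-rigidity of $T$ alone (in Example \ref{exam 2tors vs taurigid}, $P_{3}$ is $\tau$-rigid yet $\cok_{1} P_{3} = \add P_{3} \subsetneq \Fac P_{3}$); it uses that the support $\tau$-tilting module $T$ is an $\mathrm{Ext}$-progenerator of $\Fac T$ (Proposition \ref{prop Extprogen of ftors}, Theorem \ref{thm stautilt ftors}), which is indeed covered by the results you cite.
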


The subcategories induced by $\tau$-rigid modules satisfy good properties.

\begin{theo}[{Theorem \ref{thm properties of 2tors induced by taurigid}}] \label{thm D}
	Let $\cc \in \fltwotors^{\ast} \Lambda$. Then $\cc$ has the following properties$\colon$
	\begin{enumerate}[$(a)$]
		\setlength{\itemsep}{0pt}
		\item $\cc$ is functorially finite in $\md \Lambda$. 
		\item There exist some subcategories $\cf_{1}, \cf_{2}$ of $\md \Lambda$ such that $(\cc, \T{1}{\cc} ; \cf_{2}, \cf_{1})$ is a $2$-fold torsion pair of $\md \Lambda$ (see Definition \ref{def nfold torspair}).
		\item $\cc$ admits an $\mathrm{Ext}$-progenerator.
	\end{enumerate} 
\end{theo}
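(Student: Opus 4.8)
The plan is to run everything through the bijection of Theorem~\ref{thm charac of 2tors}. Writing $U := \Phi(\cc)$, that theorem identifies $\cc = \cok_{1} U$ for a basic $\tau$-rigid module $U$, and since $U \in \cok_{1} U \subseteq \Fac U$ one checks that the enveloping torsion class is $\T{1}{\cc} = \Fac U = \Fac P$, where $P$ is the basic $\Ext$-progenerator of $\T{1}{\cc}$ (a support $\tau$-tilting module, the co-Bongartz completion of $U$) and $U$ is a direct summand of $P$ because $\add U = \cc \cap \add P$. Thus $\T{1}{\cc}$ is a functorially finite torsion class of $\md \Lambda$ by Theorem~\ref{thm AIR}, hence both contravariantly and covariantly finite, and $U$ is $\Ext$-projective in $\T{1}{\cc}$ by $\tau$-rigidity (i.e.\ $\Ext^{1}_{\Lambda}(U, \Fac U) = 0$). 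I will work inside the exact category $\ct := \T{1}{\cc}$, using that functorial finiteness is transitive along $\cc \subseteq \ct \subseteq \md \Lambda$, so that (a) reduces to showing $\cc$ is functorially finite in $\ct$.

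For (a), contravariant finiteness is immediate: $\cc$ is a torsion class of $\ct$, so for $N \in \ct$ the torsion subobject $tN \hookrightarrow N$ is a right $\cc$-approximation, and composing with a right $\ct$-approximation yields right $\cc$-approximations in $\md \Lambda$. Covariant finiteness is the crux. The plan is to analyze the torsion-free class $\cf_{2} := \{\, Y \in \ct : \operatorname{Hom}_{\Lambda}(\cc, Y) = 0 \,\}$ of the torsion pair $(\cc, \cf_{2})$ in $\ct$. Since $\add U \subseteq \cc \subseteq \Fac U$, one gets $\cf_{2} = \ct \cap U^{\perp}$, where $U^{\perp} = \{\, Y : \operatorname{Hom}_{\Lambda}(U, Y) = 0 \,\}$ is the torsion-free class of the functorially finite torsion pair $(\Fac U, U^{\perp})$; in particular $U^{\perp}$ is contravariantly finite in $\md \Lambda$ and closed under subobjects. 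From these two facts I will build right $\cf_{2}$-approximations in $\ct$ (take a right $U^{\perp}$-approximation of $N$, then pass to its $\ct$-torsion subobject, which still lies in $U^{\perp}$), so that $\cf_{2}$ is contravariantly finite in $\ct$. The torsion-free class of a torsion pair being contravariantly finite forces the torsion class to be covariantly finite; applied to $(\cc, \cf_{2})$ in $\ct$ this gives that $\cc$ is covariantly finite. I expect this last implication --- the approximation duality between the two halves of the torsion pair, now in the exact category $\ct$ rather than in an abelian category --- to be the main technical obstacle, and the place where the functorially finite exact structure of $\ct$ must be used carefully.

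For (b), set $\cf_{1} := \{\, Y \in \md \Lambda : \operatorname{Hom}_{\Lambda}(\ct, Y) = 0 \,\}$, so that $(\ct, \cf_{1}) = (\T{1}{\cc}, \cf_{1})$ is a genuine torsion pair of $\md \Lambda$ (because $\ct$ is a functorially finite torsion class), and keep $\cf_{2}$ as above. By (a) the subcategory $\cc$ is a functorially finite torsion class of $\ct$, so every $N \in \ct$ admits its torsion conflation with respect to $(\cc, \cf_{2})$, making $(\cc, \cf_{2})$ a torsion pair in the exact category $\ct$. By Definition~\ref{def nfold torspair} the quadruple $(\cc, \T{1}{\cc}; \cf_{2}, \cf_{1})$ is then a $2$-fold torsion pair; the only nonformal input is the existence of these decompositions, which (a) supplies.

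For (c), I will produce the $\Ext$-progenerator by the relative version of the construction behind Theorem~\ref{thm AIR}. Writing $P = \bigoplus_{i} P_{i}$ as a sum of indecomposable projectives of the exact category $\ct$, let $P_{i} \to R_{i}$ be the minimal left $\cc$-approximation in $\ct$ (these exist by (a)), and set $R := \bigoplus_{i} R_{i}$. Each $R_{i}$ lies in $\cc$ and is $\Ext$-projective in $\cc$ by the standard minimal-approximation argument, the approximation reducing $\Ext^{1}_{\Lambda}(R_{i}, \cc)$ to $\Ext^{1}_{\Lambda}(P_{i}, \cc) = 0$; conversely every $\Ext$-projective of $\cc$ arises this way, so $\add R$ is exactly the class of $\Ext$-projective objects. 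Generation $\cc \subseteq \Fac R$ follows from the $\cok_{1} U$-presentation together with condition $\conast$: for $C \in \cc$ a deflation $P_{0} \twoheadrightarrow C$ with $P_{0} \in \add P$ factors as $P_{0} \to R_{0} \to C$ through $\add R$, whence $R_{0} \to C$ is again a deflation. The delicate points here are verifying $\Ext$-projectivity of the $R_{i}$ and that only finitely many indecomposables occur, which I expect to follow from minimality of the approximations and $\operatorname{Hom}$-finiteness exactly as in the classical case. I note that $R$ is in general strictly larger than $U = \Phi(\cc)$, since the $\Ext$-projectives of $\cc$ need not lie in $\add P$.
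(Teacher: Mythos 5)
Your proposal has genuine gaps in all three parts, and in each case the missing step is exactly the technical core of the paper's argument. For (a), contravariant finiteness is not ``immediate'': $\cc$ is closed under \emph{admissible} quotients in $\ct := \T{1}{\cc}$ but not under arbitrary quotients in $\md \Lambda$, so the trace of $\cc$ in an object $N$ need not lie in $\cc$, and a torsion subobject $tN$ with $tN \in \cc$ and $N/tN \in \cf_{2}$ is precisely the torsion conflation whose existence you only establish later, in (b), \emph{as a consequence of} (a) --- the argument is circular. (In fact the right $\cc$-approximation of $N \in \ct$ is an epimorphism, not a subobject inclusion: the paper's Proposition \ref{prop contravariantly finiteness} builds it by taking a right $(\add U)$-approximation $U_{N} \to N$, splitting its kernel along the torsion pair $(\Fac U, \cf)$, and pushing out; and the paper gets covariant finiteness not by your torsion-pair duality in $\ct$, but by first approximating the $\mathrm{Ext}$-progenerator $P$ of $\Fac U$ via Lemma \ref{lem approx} and pushing out, Proposition \ref{prop covariantly finiteness}.) For (b), you have misread Definition \ref{def nfold torspair}: a $2$-fold torsion pair is not a torsion pair of $\md \Lambda$ together with a torsion pair in the exact category $\ct$; it demands the orthogonality equations in $\md \Lambda$, in particular $\cf_{2} = \T{1}{\cc}^{\perp_{0}} \cap \cc^{\perp_{1}} \subseteq \cf_{1}$. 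Your $\cf_{2}$ is contained in $\ct$, and $\ct \cap \ct^{\perp_{0}} = 0$, so your choice cannot satisfy the definition unless it is zero. The paper instead sets $\cf_{2} := (\Fac U)^{\perp_{0}} \cap (\cok_{1} U)^{\perp_{1}}$, and the hard inclusion ${}^{\perp_{0}}\cf_{1} \cap {}^{\perp_{1}}\cf_{2} \subseteq \cok_{1} U$ rests on Corollary \ref{corr ker of right minimal} (kernels of right minimal $(\cok_{1} U)$-approximations lie in $\cf_{2}$), which again needs the explicit approximations from (a) plus Wakamatsu's lemma.

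Part (c) fails outright. By Lemma \ref{lem approx} a left $(\add U)$-approximation of $P$ is already a left $\cc$-approximation, so by minimality each of your $R_{i}$ is a direct summand of a module in $\add U$; since the summands of $U = \Phi(\cc)$ occur among the $P_{i}$ with identity approximations, one gets $\add R = \add U$. This contradicts your closing remark, and --- more importantly --- it omits the essential new generator: the paper's $\mathrm{Ext}$-progenerator is $U^{P} \oplus C_{1}^{P}$ with $C_{1}^{P} = \Cok f$ for the minimal left approximation $f \colon P \to U^{P}$, where $C_{1}^{P}$ is $\mathrm{Ext}$-projective by the dual of Wakamatsu's lemma and shares no indecomposable summands with $U^{P}$ (Proposition \ref{prop existence of Extprogen of 2tors}); whenever $C_{1}^{P} \neq 0$, your $R$ misses an $\mathrm{Ext}$-projective module of $\cc$ and so cannot be an $\mathrm{Ext}$-progenerator by Remark \ref{rem Ext proj} (2). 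Your $\mathrm{Ext}$-projectivity step is also invalid as stated: $P_{i} \to R_{i}$ is in general neither mono nor epi, so $\Ext_{\Lambda}^{1}(R_{i}, \cc)$ does not ``reduce'' to $\Ext_{\Lambda}^{1}(P_{i}, \cc)$; in the paper, $\mathrm{Ext}$-projectivity of $U^{P}$ comes from $\tau$-rigidity via Proposition \ref{prop taurigid}. Finally, an $\mathrm{Ext}$-progenerator requires conflations $0 \to T \to R' \to C \to 0$ \emph{in} $\cc$, i.e.\ with $T \in \cc$; your factored epimorphism $R_{0} \to C$ gives no control over its kernel, and producing a presentation with kernel in $\cc$ is exactly where the paper invokes the snake lemma together with the four-term exact sequence of Proposition \ref{prop 4term exact sequence of pushout}.
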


$2$-fold torsion pairs are a generalization of torsion pairs.
Since $1$-fold torsion classes induced by $\tau$-rigid modules satisfy the properties $(a)$ and $(c)$ of Theorem \ref{thm D} (see Theorem \ref{thm charac ftors}), we can regard Theorem \ref{thm D} as a generalization.

In the hereditary case, Theorem \ref{thm C} recovers the following bijection constructed by Enomoto.
\begin{theo}[{\cite[Theorem 2.3]{enomoto2022rigid}}] \label{thm Enomoto}
	Assume that $\Lambda$ is hereditary.
  The following maps are mutually inverse$\colon$
\[
\begin{tikzpicture}[auto]
	\node at (-2.4, 2.0) {$\rigid \Lambda$}; \node  at (2.4, 2.0) {$\icep \Lambda$}; 
	\node[rotate=90]  at (-2.4, 1.5) {$\in$}; \node[rotate=90]  at (2.4, 1.5) {$\in$}; 
	\node  at (-2.4, 1.0) {$U$}; \node  at (2.4, 1.0) {$\cok_{1} U$}; 
	\node  at (-2.4, 0.4) {$P(\cc)$}; \node  at (2.4, 0.4) {$\cc$}; 
	\node at (2.6, 0.3) {,};
	\node(011) at (-1.7, 2.1) {}; \node(021) at (1.6, 2.1) {};
	\node(012) at (-1.7, 1.9) {}; \node(022) at (1.6, 1.9) {};
	\node(11) at (-1.7, 1.0) {}; \node(12) at (1.6, 1.0) {};
	\node(21) at (-1.7, 0.4) {}; \node(22) at (1.6, 0.4) {};	

	\draw[->, thick] (011) --node{$\cok_{1}$} (021); \draw[->, thick] (022) --node{$P(-)$} (012); \draw[|->, thick] (11) to (12); \draw[|->, thick] (22) -- (21); 
	
\end{tikzpicture}
\]
where $\rigid \Lambda$ is the set of isomorphism classes of basic rigid $\Lambda$-modules and $\icep \Lambda$ is the set of $\ICE$-closed subcategoies of $\md \Lambda$ with enough $\mathrm{Ext}$-projectives.
\end{theo}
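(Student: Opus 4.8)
The plan is to deduce this statement from Theorem \ref{thm C} by matching, in the hereditary case, the two sides and the two inverse maps of the respective bijections. Since both bijections are given by the same rule $U \mapsto \cok_1 U$, it suffices to establish three identifications: \emph{(i)} $\taurigid \Lambda = \rigid \Lambda$ as sets; \emph{(ii)} $\fltwotors^{\ast}\Lambda = \icep \Lambda$ as sets; and \emph{(iii)} $\Phi(\cc) = P(\cc)$ for every $\cc$ in this common set. Once these are in place, the assertion that $\cok_1$ and $P(-)$ are mutually inverse is inherited verbatim from Theorem \ref{thm C}.

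For \emph{(i)} I would invoke Auslander--Reiten duality, which in the hereditary case takes the clean form $\operatorname{Hom}_\Lambda(U, \tau U) \cong D\Ext^1_\Lambda(U,U)$; hence $U$ is $\tau$-rigid if and only if it is rigid, giving $\taurigid \Lambda = \rigid \Lambda$. Identification \emph{(iii)} is then short: for a rigid $U$ and $C = \cok(U_1 \xrightarrow{d} U_0) \in \cok_1 U$, the long exact sequence obtained from $0 \to \operatorname{im} d \to U_0 \to C \to 0$ gives $\Ext^1_\Lambda(U,C)$ squeezed between $\Ext^1_\Lambda(U,U_0) = 0$ (rigidity) and $\Ext^2_\Lambda(U, \operatorname{im} d) = 0$ (hereditariness), so $U$ is $\Ext$-projective in $\cok_1 U$ and, generating it by construction, is its basic $\Ext$-progenerator. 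Combined with Theorem \ref{thm D}\,$(c)$ and the uniqueness of the basic $\Ext$-progenerator, this identifies $\Phi(\cc)$ with Enomoto's $P(\cc)$.

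The substance is \emph{(ii)}. For $\fltwotors^{\ast}\Lambda \subseteq \icep \Lambda$, take $\cc \in \fltwotors^{\ast}\Lambda$; by Theorem \ref{thm C} and \emph{(i)} we may write $\cc = \cok_1 U$ with $U = \Phi(\cc)$ rigid. Being a $2$-fold torsion class, $\cc$ is $\CE$-closed by Proposition \ref{prop nfold vs Kn-1Eclosed} (case $n=2$), i.e.\ closed under cokernels and extensions; the remaining requirement for $\cc \in \icep \Lambda$ is closure under images, and once that is known the conflations $0 \to \operatorname{im} d \to U_0 \to C \to 0$ (now with all terms in $\cc$, and $U_0$ $\Ext$-projective) show that $\cc$ has enough $\Ext$-projectives. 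For the reverse inclusion $\icep \Lambda \subseteq \fltwotors^{\ast}\Lambda$, let $\cc$ be $\ICE$-closed with $\Ext$-progenerator $U := P(\cc)$. Then $U \in \cc$ is $\Ext$-projective, so $\Ext^1_\Lambda(U,U) = 0$ and $U$ is rigid, hence $\tau$-rigid. Enough $\Ext$-projectivity supplies, for each $C \in \cc$, a conflation $0 \to C' \to U_0 \to C \to 0$ with $U_0 \in \add U$ and $C' \in \cc$; applying the same to $C'$ yields a presentation $U_1 \to U_0 \to C \to 0$, so $\cc = \cok_1 U$, and Theorem \ref{thm C} places $\cc$ in $\fltwotors^{\ast}\Lambda$.

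The main obstacle is the image-closure assertion inside the first inclusion. Cokernel- and extension-closure come for free from the $2$-fold torsion structure, but closure under images genuinely uses hereditariness and is not formal. I would prove it from the explicit description $\cc = \cok_1 U$: given $f \colon X \to Y$ with $X, Y \in \cc$, precompose with a $U$-epimorphism $U_0 \twoheadrightarrow X$ to reduce to the image of a map $g \colon U_0 \to Y$ with $U_0 \in \add U$, and then assemble a two-term $U$-presentation of $\operatorname{im} g$ from presentations of $U_0$ and $Y$, the rigidity of $U$ together with $\Ext^2_\Lambda = 0$ controlling the obstruction to lifting $g$ through the presentation of $Y$. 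This is the step where I expect the real work to lie.
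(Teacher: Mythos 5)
Your overall strategy coincides with the paper's own: the paper likewise deduces Theorem \ref{thm Enomoto} from Theorem \ref{thm charac of 2tors} via exactly your three identifications, namely $\rigid \Lambda = \taurigid \Lambda$ (Proposition \ref{prop rigid=taurigid}), $\icep \Lambda = \fltwotors^{\ast} \Lambda$ (Lemma \ref{lemm fltwotors = icep}), and $\Phi(\cc) = P(\cc)$ (Lemma \ref{lem Phi = P}). Your arguments for (i), for the inclusion $\icep \Lambda \subseteq \fltwotors^{\ast} \Lambda$ (splicing two conflations supplied by enough $\mathrm{Ext}$-projectives, with $\cok_{1} P(\cc) \subseteq \cc$ from cokernel-closure), and for (iii) (the long exact sequence squeezing $\Ext_{\Lambda}^{1}(U, C)$ between $\Ext_{\Lambda}^{1}(U, U_{0}) = 0$ and $\Ext_{\Lambda}^{2}(U, \Ima d) = 0$, then uniqueness of the basic $\mathrm{Ext}$-progenerator via Remark \ref{rem Ext proj} (2)) are all sound.

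There is, however, a genuine gap at exactly the step you flag: closure of $\cc = \cok_{1} U$ under images is left as a plan, and the plan as stated does not go through. Lifting $g \colon U_{0} \to Y$ through a presentation $0 \to K \to V_{0} \to Y \to 0$ does work, since $\Ext_{\Lambda}^{1}(U_{0}, K) = 0$ because $K \in \Fac U$ and $U$ is $\tau$-rigid; but it only yields a map $\widetilde{g} \colon U_{0} \to V_{0}$ over $g$. To conclude $\Ima g \in \cok_{1} U$ you need an epimorphism from a module in $\add U$ onto $\Ima g$ whose kernel lies in $\Fac U$, and neither $\Ker g$ nor the intersections of $\Ima \widetilde{g}$ with $K$ are visibly in $\Fac U$ --- submodules of modules in $\Fac U$ need not lie in $\Fac U$, and $\Ext^{2} = 0$ gives no handle on this, so ``controlling the obstruction to lifting'' does not produce the required two-term presentation. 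The paper circumvents the difficulty entirely: by Remark \ref{rem knE is direct summands closed} every $\CE$-closed subcategory is closed under direct summands, and Proposition \ref{prop Iclosed = direct summands closed} (after Stanley) shows that over a hereditary algebra an extension-closed subcategory is image-closed if and only if it is summand-closed. The proof is a pushout trick independent of the description $\cc = \cok_{1} U$: extend $0 \to \Ker f \to X \to \Ima f \to 0$ along the surjection $\Ext_{\Lambda}^{1}(Y, \Ker f) \to \Ext_{\Lambda}^{1}(\Ima f, \Ker f)$ (hereditariness) to obtain an exact sequence $0 \to X \to M \oplus \Ima f \to Y \to 0$, exhibiting $\Ima f$ as a direct summand of an extension of $Y$ by $X$; this gives Corollary \ref{cor ICE=CE}, i.e.\ $\CE$-closed implies $\ICE$-closed. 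Substituting this argument for your sketched presentation-assembly closes the gap; note it is also needed, as you partially acknowledge, to know $\Ima d \in \cc$ so that the conflations witnessing enough $\mathrm{Ext}$-projectives in your steps (ii) and (iii) really lie in $\cc$.
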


This paper is organized as follows. In section \ref{sec Preliminaries}, we recall several classes of subcategoies of an abelian category and subcategories of an extension-closed subcategory of an abelian category.
In section \ref{sec nfold torsionfree classes}, we define $n$-fold torsion(-free) classes, show basic properties of them and give an equivalent condition for a subcategory of an abelian category to be an $n$-fold torsion-free class. In section \ref{sec 2fold torsionfree classes and kernel closures}, we provide a different proof of Kobayashi-Saito's result (Theorem \ref{thm A}) with other descriptions of the kernel closures. 
In section \ref{sec nfold torsionfree classes from the viewpoint of hereditary cotorsion pairs}, we give an equivalent condition for the left subcategory of a hereditary cotorsion pair of an abelian category to be an $n$-fold torsion-free class.
In section \ref{sec Preliminaries for the main theorem}, we recall notions and properties about the category of finitely generated modules over a finite dimensional algebra.
In subsection \ref{subsec approximation}, we recall the definitions and properties of left (or right) approximations and functorially finiteness.
In subsection \ref{subsec the bijection by AIR}, we review the bijection constructed by \cite{Adachi_Iyama_Reiten_2014} (Theorem \ref{thm AIR}) and several properties of functorially finite torsion classes for comparison with sections \ref{sec charac of 2tors} and \ref{sec several properties}. 
In section \ref{sec charac of 2tors}, we prove the main theorem in this paper, that is, we give a characterization of $2$-fold torsion classes induced by $\tau$-rigid modules.
In section \ref{sec several properties}, we show that $2$-fold torsion classes induced by $\tau$-rigid modules satisfy nice properties which $1$-fold torsion classes satisfy.
In section \ref{sec the hereditary case}, we recover Enomoto's result (Theorem \ref{thm Enomoto}) with our results in sections \ref{sec charac of 2tors} and \ref{sec several properties}.
In section \ref{sec example}, we give a few examples of $n$-fold torsion(-free) classes.

\subsection*{Conventions and notations}
Throughout this paper, all subcategories are assumed to be full and closed under isomorphisms.
We write $X \in \cx$ if $X$ is an object of a category $\cx$.
A ring refers to an associative ring with identity, which is not necessarily commutative. 
When clear from the context, for a ring $R$, a right $R$-module is simply referred to as a module.

For a ring $R$, we denote by $\Mod R$ the category of right $R$-modules and by $\md R$ that of finitely generated right $R$-modules.

\subsection*{Acknowledgments}
The author would like to extend sincere gratitude to his supervisor Ryo Kanda for his generous teachings, guidance and encouragement. 
He would also like to thank Shunya Saito and Arashi Sakai for helpful comments and Haruhisa Enomoto for letting the author include his proof for Proposition \ref{prop Extprogen of ftors} and for helpful comments.
The author would like to thank his seminar members Yoshihiro Tanaka and Ryosuke Hashimoto for stimulating discussions, valuable comments and encouragement. 


\section{Preliminaries} \label{sec Preliminaries}
We recall several operations in an abelian category.
\begin{defi} \label{def abelian operation}

Let $\ca$ be an abelian category, $\cx$ its subcategory and $n$ a nonnegative integer.
\begin{enumerate}[(1)]
\setlength{\itemsep}{0pt} 

	\item $\cx$ is said to be \textit{closed under extensions} if for any exact sequence $0 \to A \to B \to C \to 0$ in $\ca$ with $A, C \in \cx$, we have that $B \in \cx$.

	\item $\cx$ is said to be \textit{closed under direct summands} if for any $A, B \in \ca$ with $A \oplus B \in \cx$, we have that $A \in \cx$.
	
	\item $\cx$ is said to be \textit{closed under finite direct sums} if for any $A, B \in \cx$, we have that $A \oplus B \in \cx$.

	\item $\cx$ is said to be \textit{closed under subobjects} ($\resp$ \textit{quotients}) if for any subobject ($\resp$ quotient) $A$ of an object $X \in \cx$, we have that $A \in \cx$.

	\item $\cx$ is said to be \textit{closed under kernels} ($\resp$ \textit{cokernels}) if for any morphism $f \colon X \to Y$ in $\ca$ with $X,Y \in \cx$, we have that $\Ker f \in \cx$ ($\resp$ $\Cok f \in \cx$).

	\item $\cx$ is said to be \textit{closed under epi-kernels} ($\resp$ \textit{mono-cokernels}) if for any epimorphism $f \colon X \twoheadrightarrow Y$ ($\resp$ monomorphism $X \hookrightarrow Y$) in $\ca$ with $X,Y \in \cx$, we have that $\Ker f \in \cx$ ($\resp$ $\Cok f \in \cx$).

	\item $\cx$ is said to be \textit{closed under images} if for any morphism $f \colon X \to Y$ in $\ca$ with $X,Y \in \cx$, we have that $\Ima f \in \cx$.

	\item ({\cite[Definition 3.1]{10.1093/imrn/rnaa284}}) $\cx$ is said to be \textit{closed under $n$-kernels} ($\resp$ \textit{$n$-cokernels}) if for any exact sequence $0 \to M \to X^{0} \to X^{1} \to \cdots \to X^{n}$ ($\resp$ $X_{n} \to X_{n-1} \to \cdots \to X_{0} \to M \to 0$) in $\ca$  such that each $X^{i}$($\resp X_{i}$) belongs to $\cx$, we have that $M \in \cx$.\footnote{Our definitions of $n$-kernels and $n$-cokernels are introduced in \cite{10.1093/imrn/rnaa284} and are different from those in \cite{jasso2016n}.}
\end{enumerate}

\end{defi}

We define several classes of subcategories of an abelian category.

\begin{defi} \label{def abelian subcat}
Let $\ca$ be an abelian category, $\cx$ its subcategory and $n$ a nonnegative integer.

\begin{enumerate}[(1)]
\setlength{\itemsep}{0pt} 

	\item $\cx$ is called an \textit{additive subcategory of $\ca$}  if it is closed under finite direct sums and contains the zero object of $\ca$.

	\item $\cx$ is called a \textit{torsion-free class of $\ca$} ($\resp$ \textit{torsion class}) if it is closed under subobjects ($\resp$ quotients) and extensions.

	\item (\cite[Definition 2.1]{enomoto2022rigid})  $\cx$ is said to be \textit{$\KE$-closed} ($\resp$ \textit{$\CE$-closed}) if it is closed under kernels ($\resp$ cokernels) and extensions.
	
	\item (\cite[Definition 2.1]{enomoto2022rigid}) $\cx$ is said to be \textit{$\IKE$-closed} ($\resp$ \textit{$\ICE$-closed}) if it is closed under images, kernels ($\resp$ cokernels) and extensions.

	\item  $\cx$ is said to be \textit{$\K^{n}\E$-closed} ($\resp$ \textit{$\Cc^{n}\E$-closed}) if it is closed under $n$-kernels ($\resp$ $n$-cokernels) and extensions.

\end{enumerate}
\end{defi}

\begin{rema}
The following are obvious:
	\begin{itemize}
	\setlength{\itemsep}{0pt} 
		
		\item $\K^{0}\E$-closed subcategory $ = $ torsion-free class.
		\item $\K^{1}\E$-closed subcategory $ = $ $\KE$-closed subcategory.
		
	\end{itemize}
	Dually, 
	\begin{itemize}
	\setlength{\itemsep}{0pt} 
		\item  $\Cc^{0}\E$-closed subcategory $ = $ torsion class.
		\item  $\Cc^{1}\E$-closed subcategory $ = $ $\CE$-closed subcategory.
	\end{itemize}
\end{rema}

\begin{rema}[{\cite[Remark 3.2 (4)]{10.1093/imrn/rnaa284}}] \label{rem knE is direct summands closed}
One can easily show that any $\K^{n} \E$-closed ($\resp$ $\Cc^{n} \E$-closed) subcategory $\cx$ in an abelian category $\ca$ is closed under direct summands. 
\end{rema}

$\K^{n}\E$-closed and $\Cc^{n}\E$-closed subcategories naturally appear in the sense of Example \ref{ex KnE}.

We often use the following notations about subcategories of an abelian category.

\begin{defi} \label{def closure in abelian cat}
Let $\cx$ be a subcategory of an abelian category $\ca$. 
\begin{enumerate}[(1)]
\setlength{\itemsep}{0pt} 

	\item We denote by $\clos{\cx}{\sub}{}$ the smallest subobject-closed subcategory of $\ca$ containing $\cx$.
	 
Note that
\[
\clos{\cx}{\sub}{} = \lbrace S \in \ca \mid S \text{ is a subobject of an object of }\cx \rbrace
\]holds.

	\item We denote by $\clos{\cx}{\quot}{}$ the smallest quotient-closed subcategory of $\ca$ containing $\cx$.
	 
Note that 
\[
\clos{\cx}{\quot}{} = \lbrace Q \in \ca \mid Q \text{ is a quotient of an object of }\cx \rbrace
\]holds.

	\item We denote by $\cx * \cy$ the subcategory of $\ca$ consisting of $M \in \ca$ such that there exists an exact sequence $0 \to X \to M \to Y \to 0$ in $\ca$ with $X \in \cx$ and $Y \in \cy$.

	\item We denote by $\clos{\cx}{\ext}{}$ the smallest extension-closed subcategory of $\ca$ containing $\cx$.
	
	Note that
	\[ \clos{\cx}{\ext}{} = \bigcup_{n \ge 0} \cx^{*n} \]
	holds, where $\cx^{*0} = {0} \text{ and } \cx^{*(n+1)} = \cx^{*n} * \cx$ for $n \geq 0$.

	\item We denote by  $\clos{\cx}{\kr}{}$ ($\resp$ $\clos{\cx}{\mathrm{cok}}{}$) the smallest kernel-closed ($\resp$ cokernel-closed) subcategory of $\ca$ containing $\cx$.

	\item We denote by  $\clos{\cx}{\tf}{}$ ($\resp$ $\clos{\cx}{\ts}{}$) the smallest torsion-free class ($\resp$ torsion class) of $\ca$ containing $\cx$.

	\item We denote by $\clos{\cx}{\KE}{}$ ($\resp$ $\clos{\cx}{\CE}{}$) the smallest $\KE$-closed ($\resp$ $\CE$-closed) subcategory of $\ca$ containing $\cx$.

\end{enumerate}
\end{defi}

We recall some important properties of these closures.

\begin{prop}[{see, for example, \cite[Lemmas 2.5 and 2.6]{enomoto2023ie}}] \label{prop clos}
Let $\ca$ be an abelian category and $\cx$ its subcategories.
It holds that $\clos{\cx}{\tf}{} = \clos{\clos{\cx}{\sub}{}}{\ext}{}$ and $\clos{\cx}{\ts}{} = \clos{\clos{\cx}{\quot}{}}{\ext}{}$.
\end{prop}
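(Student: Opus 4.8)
The plan is to prove the torsion-free identity $\clos{\cx}{\tf}{} = \clos{\clos{\cx}{\sub}{}}{\ext}{}$ first; the torsion identity $\clos{\cx}{\ts}{} = \clos{\clos{\cx}{\quot}{}}{\ext}{}$ then follows by passing to the opposite category $\ca^{\mathrm{op}}$, under which subobjects turn into quotients and torsion-free classes turn into torsion classes. For the inclusion $\clos{\clos{\cx}{\sub}{}}{\ext}{} \subseteq \clos{\cx}{\tf}{}$ I would argue purely by minimality: since $\clos{\cx}{\tf}{}$ is a torsion-free class containing $\cx$, it is closed under subobjects and hence contains $\clos{\cx}{\sub}{}$; being also closed under extensions, it contains the extension closure $\clos{\clos{\cx}{\sub}{}}{\ext}{}$.

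For the reverse inclusion I would show that $\clos{\clos{\cx}{\sub}{}}{\ext}{}$ is itself a torsion-free class containing $\cx$, after which minimality of $\clos{\cx}{\tf}{}$ forces $\clos{\cx}{\tf}{} \subseteq \clos{\clos{\cx}{\sub}{}}{\ext}{}$. This subcategory visibly contains $\cx$ and is closed under extensions by construction, so the one thing to check — and the main obstacle — is that it is closed under subobjects. Writing $\mathcal{S} := \clos{\cx}{\sub}{}$, which is subobject-closed, and recalling $\clos{\mathcal{S}}{\ext}{} = \bigcup_{n \ge 0} \mathcal{S}^{*n}$, I would prove by induction on $n$ that every subobject of an object of $\mathcal{S}^{*n}$ lies in $\clos{\mathcal{S}}{\ext}{}$. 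The cases $n=0,1$ are immediate, since $\mathcal{S}^{*0} = 0$ and $\mathcal{S}^{*1} = \mathcal{S}$ is already subobject-closed.

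For the inductive step, take $M \in \mathcal{S}^{*(n+1)}$ sitting in a short exact sequence $0 \to X \to M \to Y \to 0$ with $X \in \mathcal{S}^{*n}$ and $Y \in \mathcal{S}$, and let $N \hookrightarrow M$ be a subobject. Forming the intersection $N \cap X$ inside $M$ as the pullback of $N \hookrightarrow M \hookleftarrow X$ yields a short exact sequence $0 \to N \cap X \to N \to N/(N \cap X) \to 0$, in which $N \cap X$ is a subobject of $X \in \mathcal{S}^{*n}$ and hence lies in $\clos{\mathcal{S}}{\ext}{}$ by the induction hypothesis, while $N/(N \cap X)$ is isomorphic to the image of the composite $N \hookrightarrow M \twoheadrightarrow Y$ and hence is a subobject of $Y \in \mathcal{S}$, so it lies in $\mathcal{S} \subseteq \clos{\mathcal{S}}{\ext}{}$. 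Since $\clos{\mathcal{S}}{\ext}{}$ is extension-closed, $N \in \clos{\mathcal{S}}{\ext}{}$, completing the induction.

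The only genuinely delicate point is the abelian-category bookkeeping in the inductive step: one must check that $N \cap X$, defined as a pullback, is simultaneously a subobject of $N$ and of $X$, and that the kernel of the induced map $N \to Y$ is exactly $N \cap X$, so that $N/(N \cap X)$ embeds into $Y$. This is the standard second-isomorphism-theorem computation, valid in any abelian category, and I expect it to be the crux; everything else is formal closure manipulation. Finally, the torsion identity is obtained verbatim from the dual argument, or equivalently by applying the torsion-free identity to $\ca^{\mathrm{op}}$.
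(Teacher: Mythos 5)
Your proof is correct, and it is essentially the standard argument: the paper itself gives no proof of this proposition but cites \cite[Lemmas 2.5 and 2.6]{enomoto2023ie}, where the content is precisely your key step, namely that the extension closure of a subobject-closed class stays subobject-closed, proved by intersecting a subobject $N\hookrightarrow M$ with the kernel term $X$ via the pullback and identifying $N/(N\cap X)$ with a subobject of $Y$. Your handling of the delicate point is sound ($X\to M$ mono forces the pullback $N\cap X$ to be $\Ker(N\to Y)$, so $N/(N\cap X)\hookrightarrow Y$), and the dualization to the torsion identity via $\ca^{\mathrm{op}}$ is exactly right, so nothing is missing.
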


Next, we define some classes of subcategories of an extension-closed subcategory of an abelian category.

\begin{defi} \label{def ex subcat}
Let $\cx$ be an extension-closed subcategory of an abelian category $\ca$. 
\begin{enumerate}[(1)]
\setlength{\itemsep}{0pt}

	\item A \textit{conflation} in $\cx$ is an exact sequence  $0 \to A \to B \to C \to 0$ in $\ca$ such that $A,B \text{ and } C \in \cx$.

	\item A subcategory $\cs$ of $\cx$ is said to be \textit{closed under conflations in $\cx$} if for any conflation  $0 \to A \to B \to C \to 0$ in $\cx$ with $A,C \in \cs$, we have that $B \in \cs$.

	\item A subcategory $\cs$ of $\cx$ is said to be \textit{closed under admissible subobjects} ($\resp$ \textit{admissible quotients}) in $\cx$ if for any conflation  $0 \to A \to B \to C \to 0$ in $\cx$ with $B \in \cs$, we have that $A \in \cs$ ($\resp$ $C \in \cs$).

	\item A \textit{Serre subcategory} of $\cx$ is a subcategory of $\cx$ closed under conflations,  admissible subobjects and admissible quotients of $\cx$.

	\item A \textit{torsion-free class} of $\cx$ is a subcategory of $\cx$ closed under conflations and admissible subobjects of $\cx$.

	\item A \textit{torsion class} of $\cx$ is a subcategory of $\cx$ closed under conflations and admissible quotients of $\cx$.
\end{enumerate}
\end{defi}

Let $\ca$ be an abelian category, $\ce$ its extension-closed subcategory and $\cx$ a subcategory of $\ca$. 
We write $\cx \torf \ce$ ($\resp$ $\cx \tors \ce$) if $\cx$ is a torsion-free class ($\resp$ torsion class) of $\ce$.

Similarly to Definition \ref{def closure in abelian cat}, let us introduce the following notations:

\begin{defi}
Let $\cf$ be an extension-closed subcategory of an abelian category $\ca$ and $\cx$ a subcategory of $\ca$ such that $\cx \subset \cf$.
\begin{enumerate}[(1)]
\setlength{\itemsep}{0pt} 

	\item We denote by $\clos{\cx}{\ads}{\cf}$ the smallest admissible-subobject-closed subcategory of $\cf$ containing $\cx$. 
	It is easy to show that
	\[
	\clos{\cx}{\ads}{\cf} = \lbrace S \in \cf \mid S \text{ is an admissible subobject of an object of }\cx \text{ in }\cf \rbrace.
	\]

	\item We denote by $\clos{\cx}{\adq}{\cf}$ the smallest admissible-quotient-closed subcategory of $\cf$ containing $\cx$. It is easy to show that
	\[
	\clos{\cx}{\adq}{\cf} = \lbrace S \in \cf \mid S \text{ is an admissible quotient of an object of }\cx \text{ in }\cf \rbrace.
	\]

	\item We denote by $\clos{\cx}{\tf}{\cf}$ ($\resp$ $\clos{\cx}{\ts}{\cf}$) the smallest torsion-free class ($\resp$ torsion class) of $\cf$ containing $\cx$.\\

\end{enumerate}
\end{defi}

\begin{rema}
Let $\ca$ be an abelian category and $\cx$ its subcategory. We can regard the abelian category $\ca$ as an extension-closed category of itself. Then 
\[
\clos{\cx}{\tf}{\ca} = \clos{\cx}{\tf}{}
\]
holds.
\end{rema}

We often use the next lemma as naturally as breathing in this paper.
\begin{lemm} \label{lem extension closed}
Let $\cf$ be an extension-closed subcategory of an abelian category $\ca$ and $\cx$ a subcategory of $\ca$ such that $\cx \subset \cf$. 
Then $\cx$ is closed under extensions in $\ca$ if and only if it is closed under conflations in $\cf$.
\end{lemm}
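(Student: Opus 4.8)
The plan is to prove both directions of the equivalence in Lemma~\ref{lem extension closed} by unwinding definitions and exploiting the fact that conflations in $\cf$ are \emph{precisely} those exact sequences of $\ca$ all of whose three terms lie in $\cf$. First I would fix notation: suppose $\cx \subset \cf$, and recall from Definition~\ref{def ex subcat}~(1) that a conflation in $\cf$ is an exact sequence $0 \to A \to B \to C \to 0$ in $\ca$ with $A, B, C \in \cf$, and from Definition~\ref{def abelian operation}~(1) that closure under extensions in $\ca$ concerns exact sequences $0 \to A \to B \to C \to 0$ in $\ca$ with $A, C \in \cx$.

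For the forward direction, assume $\cx$ is closed under extensions in $\ca$, and take any conflation $0 \to A \to B \to C \to 0$ in $\cf$ with $A, C \in \cx$. This is in particular an exact sequence in $\ca$ with $A, C \in \cx$, so closure under extensions in $\ca$ immediately gives $B \in \cx$; hence $\cx$ is closed under conflations in $\cf$. This direction is essentially free and requires no use of the hypothesis that $\cf$ is extension-closed.

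For the converse, assume $\cx$ is closed under conflations in $\cf$, and take any exact sequence $0 \to A \to B \to C \to 0$ in $\ca$ with $A, C \in \cx$. Since $\cx \subset \cf$, we have $A, C \in \cf$. Here is where the hypothesis that $\cf$ is extension-closed in $\ca$ is essential: because $A, C \in \cf$ and $\cf$ is closed under extensions, we conclude $B \in \cf$. Thus all three terms of the sequence lie in $\cf$, so by Definition~\ref{def ex subcat}~(1) the sequence is a conflation in $\cf$; and since $A, C \in \cx$, closure under conflations yields $B \in \cx$. This shows $\cx$ is closed under extensions in $\ca$.

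The only subtlety—and the step I would flag as the crux—is the converse direction's use of the extension-closedness of $\cf$ to guarantee that $B \in \cf$, so that the given $\ca$-exact sequence genuinely qualifies as a conflation in $\cf$; without this, closure under conflations in $\cf$ would say nothing about $B$. Everything else is a direct translation between the two definitions, so no real obstacle remains beyond recording this observation carefully.
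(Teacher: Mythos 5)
Your proof is correct and is exactly the direct unwinding of definitions that the paper intends when it declares the proof ``straightforward''; in particular, you correctly identify that the only nontrivial point is using extension-closedness of $\cf$ to get $B \in \cf$ in the converse direction. Nothing further is needed.
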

\begin{proof}
It is straightforward.
\end{proof}


\section{$n$-fold torsion-free classes} \label{sec nfold torsionfree classes}
In this section, we define $n$-fold torsion(-free) classes and show basic properties of them. 

\begin{defi} \label{def nfold torf}
Let $\ca$ be an abelian category, $\cx$ its subcategory and $n$ a nonnegative integer. 
Then $\cx$ is said to be an \textit{n-fold torsion-free class of $\ca$} if there exist subcategories $\cf_{0}, \cf_{1}, \cdots, \cf_{n}$ of $\ca$ satisfying the following two conditions:
\begin{enumerate}[(1)]
\setlength{\itemsep}{0pt} 
	\item $\cf_{0} = \ca$ and $\cf_{n} = \cx$,
	\item $\cf_{i+1}$ is a torsion-free class of $\cf_{i}$ for $i \in \{0, 1, \cdots, n-1\}$.
\end{enumerate}
In summary,
\[
\cx = \cf_{n} \torf \cdots \torf \cf_{1} \torf \cf_{0} = \ca.
\]

Dually, $\cx$ is said to be an \textit{n-fold torsion class of $\ca$} if there exist subcategories $\ct_{0}, \ct_{1}, \cdots, \ct_{n}$ of $\ca$ satisfying the following two conditions:
\begin{enumerate}[(1)]
	\setlength{\itemsep}{0pt} 
		\item $\ct_{0} = \ca$ and $\ct_{n} = \cx$,
		\item $\ct_{i+1}$ is a torsion class of $\ct_{i}$ for $i \in \{0, 1, \cdots, n-1\}$.
\end{enumerate}
In summary,
\[
\cx = \ct_{n} \tors \cdots \tors \ct_{1} \tors \ct_{0} = \ca.
\]
\end{defi}

\begin{rema} \label{rem nfold implies n+1fold}
\begin{enumerate}[(1)]
	\item $1$-fold torsion-free classes of an abelian category $\ca$ are nothing but torsion-free classes of $\ca$. 
	\item Let $\ca$ be an abelian category and $i$ a nonnegative integer.
	Every $i$-fold torsion-free class of $\ca$ is $(i+1)$-fold torsion-free class of $\ca$.

Indeed, let $\cx$ a subcategory of $\ca$. Assume that $\cx$ is an $i$-fold torsion-free class of $\ca$. 
Then there exist subcategories $\cf_{0}, \cf_{1}, \cdots, \cf_{i}$ of $\ca$ such that 
\[
\cx = \cf_{i} \torf \cdots \torf \cf_{1} \torf \cf_{0} = \ca.
\] 
By putting $\cf_{i+1} := \cx$, it follows that $\cx$ is an $(i+1)$-fold torsion-free class of $\ca$.
\end{enumerate}
\end{rema}

Let $\ca$ be an abelian category. For all positive integers $i$, we can define the functors $\mathrm{Ext}_{\ca}^{i}(-,-)$ without projectives and injectives; see, for example, \cite[Chapter VII]{mitchell1965theory}.
Let $\cc$ be a subcategory of $\ca$.
For any nonnegative integer $j$, we let 
\[
\cc^{\perp_{j}} := \{M \in \ca \mid \Ext_{\ca}^{j}(\cc ,M) = 0\},
\] 
\[
{}^{\perp_{j}}\cc := \{M \in \ca \mid \Ext_{\ca}^{j}(M ,\cc) = 0\}.
\] 
We can construct a lot of examples of $n$-fold torsion(-free) classes using the following proposition:

\begin{prop} \label{prop how to make nfold tors}
  Let $\ca$ be an abelian category and $\cx_{n} \subset \cx_{n-1} \subset \cdots \subset \cx_{1} \subset \ca$ a chain of subcategories of $\ca$, where $n$ is a positive integer.
  The following hold$\colon$
  \begin{enumerate}[$(a)$]
    \setlength{\itemsep}{0pt}
    \item For each $i \in \{ 1, \cdots , n\}$, put $\ct_{i}$ as follows:
    \[
		\ct_{i} := {}^{\perp_{0}}\cx_{1} \cap {}^{\perp_{1}}\cx_{2} \cap \cdots \cap {}^{\perp_{(i-1)}}\cx_{i}.
    \]
    Then we have 
    \[
    \ct_{n} \tors \ct_{n-1} \tors \cdots \tors \ct_{1} \tors \ca.
    \]
    Thus $\ct_{i}$ is an $i$-fold torsion class of $\ca$ for every $i = 1, \cdots , n$.
    \item For each $i \in \{ 1, \cdots , n\}$, put $\cf_{i}$ as follows:
    \[
			\cf_{i} := \cx_{1}^{\perp_{0}} \cap \cx_{2}^{\perp_{1}} \cap \cdots \cap \cx_{i}^{\perp_{(i-1)}}.
		\]
    Then we have 
    \[
    \cf_{n} \torf \cf_{n-1} \torf \cdots \torf \cf_{1} \torf \ca.
    \]
    Thus $\cf_{i}$ is an $i$-fold torsion-free class of $\ca$ for every $i = 1, \cdots , n$.
  \end{enumerate}
\end{prop}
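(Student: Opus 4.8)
The plan is to check, straight from the exact-category definition of torsion classes (Definition \ref{def ex subcat}), that each $\ct_{i+1}$ is a torsion class of $\ct_i$; part $(b)$ will then follow by passing to $\ca^{\mathrm{op}}$.

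Before that I would record two preliminary facts that make the statement well-posed. First, for every subcategory $\cx$ and every $j$, the subcategory ${}^{\perp_{j}}\cx$ is closed under extensions: applying the long exact sequence of $\Ext_{\ca}^{\bullet}(-,X)$ to a short exact sequence $0 \to A \to B \to C \to 0$ gives an exact piece $\Ext_{\ca}^{j}(C,X) \to \Ext_{\ca}^{j}(B,X) \to \Ext_{\ca}^{j}(A,X)$ whose outer terms vanish when $A,C \in {}^{\perp_{j}}\cx$. Consequently each $\ct_i$, being a finite intersection of extension-closed subcategories, is extension-closed in $\ca$ (and, being defined by the vanishing of additive functors, is additive and closed under direct summands), so that the notion ``torsion class of $\ct_i$'' makes sense; moreover $\ct_{i+1} = \ct_i \cap {}^{\perp_{i}}\cx_{i+1} \subseteq \ct_i$.

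For the base case $\ct_1 \tors \ca$, note that $\ct_1 = {}^{\perp_{0}}\cx_1$ is closed under extensions (above) and under quotients, since $\Ext_{\ca}^{0}(-,X)$ is left exact and carries epimorphisms to monomorphisms; hence $\ct_1$ is a torsion class of $\ca$. For the inductive step I would fix $1 \le i \le n-1$ and verify the two closure conditions for $\ct_{i+1}$ inside $\ct_i$. Closure under conflations is immediate: a conflation $0 \to A \to B \to C \to 0$ in $\ct_i$ with $A,C \in \ct_{i+1}$ has $B \in \ct_i$ and, since ${}^{\perp_{i}}\cx_{i+1}$ is extension-closed, also $B \in {}^{\perp_{i}}\cx_{i+1}$, so $B \in \ct_{i+1}$. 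The crux is closure under admissible quotients. Given a conflation $0 \to A \to B \to C \to 0$ in $\ct_i$ with $B \in \ct_{i+1}$, I would apply $\Ext_{\ca}^{\bullet}(-,X)$ for an arbitrary $X \in \cx_{i+1}$ and read off the exact segment
\[
\Ext_{\ca}^{i-1}(A,X) \longrightarrow \Ext_{\ca}^{i}(C,X) \longrightarrow \Ext_{\ca}^{i}(B,X).
\]
Its right-hand term vanishes because $B \in {}^{\perp_{i}}\cx_{i+1}$, and its left-hand term vanishes because $A \in \ct_i \subseteq {}^{\perp_{i-1}}\cx_i$ together with the chain hypothesis $X \in \cx_{i+1} \subseteq \cx_i$; hence $\Ext_{\ca}^{i}(C,X) = 0$. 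As $C \in \ct_i$ already, this yields $C \in \ct_{i+1}$, as required.

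The essential, and only nontrivial, point is this last step, where the chain condition $\cx_{i+1} \subseteq \cx_i$ is exactly what forces $\Ext_{\ca}^{i-1}(A,X) = 0$; without it the connecting homomorphism could leave $\Ext_{\ca}^{i}(C,X)$ nonzero, and admissible-quotient closure would fail. Finally, part $(b)$ I would deduce formally by duality: identifying $\Ext_{\ca}^{j-1}(\cx_j, M)$ with $\Ext_{\ca^{\mathrm{op}}}^{j-1}(M,\cx_j)$ shows that $\cf_i$ computed in $\ca$ coincides with $\ct_i$ computed in $\ca^{\mathrm{op}}$ for the same chain $\cx_n \subseteq \cdots \subseteq \cx_1$, and a torsion-free class in $\ca$ is precisely a torsion class in $\ca^{\mathrm{op}}$, so $(b)$ is exactly $(a)$ applied to $\ca^{\mathrm{op}}$.
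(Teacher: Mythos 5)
Your proof is correct and follows essentially the same route as the paper: the identical long-exact-sequence computation $\Ext_{\ca}^{i-1}(A,X) \to \Ext_{\ca}^{i}(C,X) \to \Ext_{\ca}^{i}(B,X)$, using the chain hypothesis $\cx_{i+1} \subseteq \cx_{i}$ to kill the left term, and part $(b)$ by duality. Your added well-posedness remarks and the separate base case (which the paper absorbs into the induction via $\ct_{0} := \ca$) are harmless refinements of the same argument.
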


\begin{proof}
  $(a)$. Put $\ct_{0} := \ca$ and show that $\ct_{i+1}$ is a torsion class of $\ct_{i}$ for every $i \in \{ 0, 1, \cdots , n-1\}$.
  Take any conflation $0 \to T' \to E \to T'' \to 0$ in $\ct_{i}$ such that both $T'$ and $T''$ belong to $\ct_{i+1}$ and any object $X$ of $\cx_{i+1}$.
  It follows from the definition of $\ct_{i+1}$ that $\Ext_{\ca}^{i}(T', X) = \Ext_{\ca}^{i}(T'', X) = 0$ and hence $\Ext_{\ca}^{i}(E, X) = 0$ holds by considering the exact sequence induced by the functor $\Ext_{\ca}^{i}(-, X)$.
  Because $E \in \ct_{i}$, we have $E \in \ct_{i+1}$, which means that $\ct_{i+1}$ is closed under conflations in $\ct_{i}$. 

  It remains to show that $\ct_{i+1}$ is closed under admissible quotients in $\ct_{i}$. Let $0 \to K \to T \to M \to 0$ be a conflation in $\ct_{i}$ with $T \in \ct_{i+1}$ and $X$ an object of $\cx_{i+1}$.
  Since $X \in \cx_{i+1} \subset \cx_{i}$ and $K \in \ct_{i}$, it holds that $\Ext^{i-1}_{\ca}(K, X) = 0$. By the definition of $\ct_{i+1}$, $\Ext^{i}_{\ca}(T, X) = 0$ holds. Thus we have $\Ext^{i}_{\ca}(M, X) = 0$ and hence we have $M \in \ct_{i+1}$. 
	This completes the proof. 
  One can show $(b)$, dually.
\end{proof}

For any subcategory of an abelian category, we can construct a canonical $n$-fold torsion(-free) class determined by the subcategory as follows:
\begin{defi}
Let $\ca$ be an abelian category and $\cx$ its subcategory. We set $\F{0}{\cx} := \ca$ and $\F{n+1}{\cx} := \clos{\cx}{\tf}{\F{n}{\cx}}$ for $n \geq 0$. Then 
\[
\cdots \torf \F{n+1}{\cx} \torf \F{n}{\cx} \torf \cdots \torf \F{1}{\cx} \torf \ca (=\F{0}{\cx})
\]holds. 
Each $\F{i}{\cx}$ is, by definition, an $i$-fold torsion-free class of $\ca$ and is called \textit{the $i$-fold torsion-free closure} of $\cx$.
Dually, we can define \textit{the $i$-fold torsion closure} $\T{i}{\cx}$ of $\cx$.
\end{defi}

One may wonder whether calling it the $i$-fold torsion-free ``closure'' is appropriate or not. 
This will be justified in Proposition \ref{prop nfold torf closure}. 

\begin{rema} \label{rem torf clos}
Let $\ca$ be an abelian category and $\cf$ its subcategory. By Proposition \ref{prop clos}, we have
\[
\F{1}{\cx} = \clos{\clos{\cx}{\sub}{}}{\ext}{} \text{ and } \T{1}{\cx} = \clos{\clos{\cx}{\quot}{}}{\ext}{}.
\]
\end{rema}

In the proof of Proposition \ref{prop charc nfold torf}, the following lemma plays an important role.

\begin{lemm}\label{lem torf trans}
Let $\ca$ be an abelian category, $\ce$ its extension-closed subcategory, $\cf$ a subcategory of $\ce$ which is closed under conflations in $\ce$ 
and $\cx$ a subcategory of $\cf$.
If $\cx$ is a torsion-free class of $\ce$, then  $\cx$ is a torsion-free class of $\cf$.
\end{lemm}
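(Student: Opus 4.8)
The plan is to reduce everything to the single observation that a conflation in $\cf$ is automatically a conflation in $\ce$. First I would record that $\cf$ is itself an extension-closed subcategory of $\ca$, so that the phrase ``$\cx$ is a torsion-free class of $\cf$'' is even meaningful: since $\cf$ is closed under conflations in $\ce$ by hypothesis, Lemma \ref{lem extension closed} shows that $\cf$ is closed under extensions in $\ca$, hence extension-closed. Thus $\cf$ inherits the exact structure whose conflations are precisely the exact sequences $0 \to A \to B \to C \to 0$ in $\ca$ with $A, B, C \in \cf$, and Definition \ref{def ex subcat} applies to $\cf$.

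The crucial point is that, because $\cf \subset \ce$, every conflation $0 \to A \to B \to C \to 0$ in $\cf$ has all three terms in $\ce$ and is exact in $\ca$, so it is in particular a conflation in $\ce$. Using this, I would verify the two defining conditions of a torsion-free class of $\cf$ (Definition \ref{def ex subcat}(5)) directly. For closure under conflations: given a conflation $0 \to A \to B \to C \to 0$ in $\cf$ with $A, C \in \cx$, I regard it as a conflation in $\ce$; since $\cx$ is closed under conflations in $\ce$, we obtain $B \in \cx$. For closure under admissible subobjects: given a conflation $0 \to A \to B \to C \to 0$ in $\cf$ with $B \in \cx$, I again regard it as a conflation in $\ce$; since $\cx$ is closed under admissible subobjects in $\ce$, we obtain $A \in \cx$. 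Together these say exactly that $\cx$ is a torsion-free class of $\cf$.

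I do not expect a genuine obstacle here: the statement is a restriction (or transitivity) property, and its entire content lies in recognizing that passing from $\ce$ to the smaller exact category $\cf$ only shrinks the supply of conflations, so both closure properties are automatically inherited. The one place where the hypothesis ``$\cf$ is closed under conflations in $\ce$'' is genuinely needed is the preliminary remark that $\cf$ is extension-closed; it plays no further role, since the two closure conditions themselves use only the inclusion $\cf \subset \ce$.
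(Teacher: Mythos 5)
Your proof is correct, and it is precisely the argument the paper leaves implicit: the paper's own proof of this lemma is just ``It is straightforward.'' Your write-up supplies the two natural steps — that $\cf$ is extension-closed in $\ca$ (via Lemma \ref{lem extension closed}, so that ``torsion-free class of $\cf$'' is well-defined) and that every conflation in $\cf$ is a conflation in $\ce$, from which both closure conditions are inherited — and your closing remark correctly identifies that the conflation-closedness hypothesis on $\cf$ is used only for the first step.
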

\begin{proof}
It is straightforward.
\end{proof}

\begin{prop}\label{prop nfold torf closure}
Let $\ca$ be an abelian category and $\cx$ its subcategory. For a nonnegative integer $n$, 
the $n$-fold torsion-free closure $\F{n}{\cx}$ of $\cx$ is the smallest $n$-fold torsion-free class containing $\cx$.
\end{prop}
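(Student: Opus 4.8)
The construction already shows that $\F{n}{\cx}$ is an $n$-fold torsion-free class of $\ca$ and that it contains $\cx$, so the entire content of the statement is \emph{minimality}: I must show that $\F{n}{\cx} \subseteq \cy$ for every $n$-fold torsion-free class $\cy$ of $\ca$ with $\cx \subseteq \cy$. My plan is to fix such a $\cy$ together with a witnessing chain $\cy = \cy_n \torf \cy_{n-1} \torf \cdots \torf \cy_1 \torf \cy_0 = \ca$ and to prove, by induction on $i$, the uniform comparison $\F{i}{\cx} \subseteq \cy_i$ for all $0 \le i \le n$; the case $i = n$ is then exactly the desired inclusion. Note first that $\cx \subseteq \cy_i$ for every $i$, since the chain is descending and $\cx \subseteq \cy = \cy_n$.

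The base case $i = 0$ is trivial because $\F{0}{\cx} = \ca = \cy_0$. For the inductive step I assume $\F{i}{\cx} \subseteq \cy_i$ and aim to deduce $\F{i+1}{\cx} \subseteq \cy_{i+1}$. Recall that $\F{i+1}{\cx} = \clos{\cx}{\tf}{\F{i}{\cx}}$ is by definition the \emph{smallest} torsion-free class of $\F{i}{\cx}$ containing $\cx$. Hence it suffices to exhibit one torsion-free class of $\F{i}{\cx}$ that contains $\cx$ and is contained in $\cy_{i+1}$; the natural candidate is the intersection $\cy_{i+1} \cap \F{i}{\cx}$.

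The key step is therefore to verify that $\cy_{i+1} \cap \F{i}{\cx}$ is a torsion-free class of $\F{i}{\cx}$. The mechanism is that, by the inductive hypothesis $\F{i}{\cx} \subseteq \cy_i$, every conflation $0 \to A \to B \to C \to 0$ in $\F{i}{\cx}$ is simultaneously a conflation in $\cy_i$. Closure under conflations then transfers: if $A, C \in \cy_{i+1}$ and $B \in \F{i}{\cx}$, then $B \in \cy_{i+1}$ because $\cy_{i+1}$ is closed under conflations in $\cy_i$, whence $B \in \cy_{i+1} \cap \F{i}{\cx}$; closure under admissible subobjects transfers in the same way, using that $A \in \F{i}{\cx}$ automatically (by definition of a conflation) and that $\cy_{i+1}$ is closed under admissible subobjects in $\cy_i$. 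Since moreover $\cx \subseteq \cy_{i+1}$ and $\cx \subseteq \F{i}{\cx}$, the intersection contains $\cx$. Minimality of $\F{i+1}{\cx}$ then yields $\F{i+1}{\cx} \subseteq \cy_{i+1} \cap \F{i}{\cx} \subseteq \cy_{i+1}$, completing the induction.

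The only genuine subtlety, and the step I expect to require the most care, is precisely this transfer of the two closure conditions from the exact structure of $\cy_i$ to that of $\F{i}{\cx}$; everything rests on the inductive hypothesis $\F{i}{\cx} \subseteq \cy_i$, which is exactly what guarantees that conflations in the smaller category $\F{i}{\cx}$ are already conflations in $\cy_i$. (Alternatively, one may view $\cy_{i+1} \cap \F{i}{\cx} \torf \F{i}{\cx}$ as a restriction of the torsion-free class $\cy_{i+1} \torf \cy_i$ along the inclusion $\F{i}{\cx} \subseteq \cy_i$, in the spirit of Lemma \ref{lem torf trans}.) Once this comparison is in place, setting $i = n$ gives $\F{n}{\cx} \subseteq \cy$, and since $\cy$ was an arbitrary $n$-fold torsion-free class containing $\cx$, the closure $\F{n}{\cx}$ is indeed the smallest such class.
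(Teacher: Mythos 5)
Your proof is correct and follows essentially the same inductive strategy as the paper: the paper also inducts on the length of the chain, comparing $\F{n-1}{\cx}$ with $\cf_{n-1}$ and then using minimality of $\clos{\cx}{\tf}{\F{n-1}{\cx}}$. The only difference is that the paper asserts the monotonicity step $\clos{\cx}{\tf}{\F{n-1}{\cx}} \subset \clos{\cx}{\tf}{\cf_{n-1}}$ without comment, whereas you justify exactly this point via the intersection $\cy_{i+1} \cap \F{i}{\cx}$, making your write-up a slightly more detailed version of the same argument.
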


\begin{proof}
	We show it by the induction on $n$. When $n=0$, it is clear by the definition of $\F{0}{\cx}$.
	Suppose that $n \geq 1$. Take any $n$-fold torsion-free class $\cf$ of $\ca$ such that $\cx \subset \cf$.
	By the definition of $n$-fold torsion-free classes, there exists a chain of subcategories of $\ca$
	\[
	\cf = \cf_{n} \torf \cdots \torf \cf_{1} \torf \cf_{0} = \ca.
	\]
	By the induction hypothesis, we have that $\F{n-1}{\cx} \subset \cf_{n-1}$.
	Hence it holds that $\clos{\cx}{\tf}{\F{n-1}{\cx}} \subset \clos{\cx}{\tf}{\cf_{n-1}}$.
	Since $\cf_{n}$ contains $\cx$ and is a torsion class of $\cf_{n-1}$, we obtain $\clos{\cx}{\tf}{\cf_{n-1}} \subset \cf_{n}$.
	In conclusion, we have $\F{n}{\cx} = \clos{\cx}{\tf}{\F{n-1}{\cx}} \subset \cf_{n} = \cf$.
\end{proof}

Now, we state and prove an equivalent condition for a subcategory to be an $n$-fold torsion-free class.

\begin{prop} \label{prop charc nfold torf}
Let $\ca$ be an abelian category, $\cx$ its subcategory and $n$ a positive integer. Then the following are equivalent:
\begin{enumerate}[$(a)$]
\setlength{\itemsep}{0pt} 

	\item $\cx$ is a torsion-free class of $\F{n-1}{\cx}$.

	\item $\cx$ is an $n$-fold torsion-free class of $\ca$.

\end{enumerate}
\end{prop}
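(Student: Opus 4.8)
The plan is to prove the two implications separately, with $(a)\Rightarrow(b)$ being essentially formal and the real content sitting in $(b)\Rightarrow(a)$, where Lemma~\ref{lem torf trans} does the work. Throughout I will use that $\F{n-1}{\cx}$ is, by its very construction, an $(n-1)$-fold torsion-free class of $\ca$ containing $\cx$, so that there is a chain
\[
\F{n-1}{\cx} \torf \cdots \torf \F{1}{\cx} \torf \F{0}{\cx} = \ca .
\]

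For $(a)\Rightarrow(b)$, I would argue directly: if $\cx$ is a torsion-free class of $\F{n-1}{\cx}$, then placing $\cx$ at the top of the chain above produces
\[
\cx \torf \F{n-1}{\cx} \torf \cdots \torf \F{1}{\cx} \torf \ca ,
\]
which is exactly a witness that $\cx$ is an $n$-fold torsion-free class of $\ca$, giving $(b)$.

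For $(b)\Rightarrow(a)$, assume $\cx$ is an $n$-fold torsion-free class, witnessed by $\cx = \cf_n \torf \cf_{n-1} \torf \cdots \torf \cf_1 \torf \cf_0 = \ca$. Two preliminary facts are needed. First, $\cf_{n-1}$ is itself an $(n-1)$-fold torsion-free class of $\ca$ (via $\cf_{n-1}\torf\cdots\torf\cf_0=\ca$) and it contains $\cx=\cf_n$; since Proposition~\ref{prop nfold torf closure} identifies $\F{n-1}{\cx}$ as the \emph{smallest} $(n-1)$-fold torsion-free class containing $\cx$, this yields $\cx \subseteq \F{n-1}{\cx} \subseteq \cf_{n-1}$. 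Second, every member of such a chain is closed under extensions in $\ca$: this follows by an easy induction on the chain length using Lemma~\ref{lem extension closed}, since $\cf_0=\ca$ is extension-closed and a torsion-free class of an extension-closed subcategory is closed under conflations, hence again extension-closed in $\ca$ (alternatively one may invoke Proposition~\ref{prop nfold vs Kn-1Eclosed}). In particular $\cf_{n-1}$ and $\F{n-1}{\cx}$ are extension-closed in $\ca$, and therefore, by Lemma~\ref{lem extension closed} applied inside $\cf_{n-1}$, the subcategory $\F{n-1}{\cx}$ is closed under conflations in $\cf_{n-1}$.

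With these in place I would invoke Lemma~\ref{lem torf trans} with $\ce=\cf_{n-1}$, $\cf=\F{n-1}{\cx}$, and the given $\cx$: indeed $\cx$ is a torsion-free class of $\ce=\cf_{n-1}$ by hypothesis, and $\F{n-1}{\cx}$ is a subcategory of $\cf_{n-1}$ closed under conflations in it with $\cx\subseteq\F{n-1}{\cx}$, so the lemma delivers precisely that $\cx$ is a torsion-free class of $\F{n-1}{\cx}$, which is $(a)$. I do not anticipate a genuine obstacle; the only points requiring care are the bookkeeping of extension-closedness versus conflation-closedness needed to meet the hypotheses of Lemma~\ref{lem torf trans}, and the appeal to the minimality in Proposition~\ref{prop nfold torf closure} to secure the inclusion $\F{n-1}{\cx}\subseteq\cf_{n-1}$ that makes the lemma applicable.
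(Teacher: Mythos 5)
Your proof is correct and follows essentially the same route as the paper: the implication $(a)\Rightarrow(b)$ is immediate from the definition, and for $(b)\Rightarrow(a)$ you use the minimality of $\F{n-1}{\cx}$ from Proposition~\ref{prop nfold torf closure} to get $\cx \subseteq \F{n-1}{\cx} \subseteq \cf_{n-1}$ and then conclude via Lemma~\ref{lem torf trans}, exactly as the paper does. The only difference is that you spell out the extension-closed versus conflation-closed bookkeeping needed for the lemma's hypotheses, which the paper leaves implicit.
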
 

\begin{proof}
\underline{$(a) \Longrightarrow (b)$.} It is obvious by the definition of $n$-fold torsion-free classes.

\underline{$(b) \Longrightarrow (a)$.} Assume that $\cx$ is an $n$-fold torsion-free class. Then, by the definition of $n$-fold torsion-freeness, there exists a chain of subcategories 
\[
\cx = \cf_{n} \torf \cdots \torf \cf_{1} \torf \cf_{0} = \ca.
\]
By Proposition \ref{prop nfold torf closure}, we have $\cx \subset \F{n-1}{\cx} \subset \cf_{n-1}$.
Since $\cx \torf \cf_{n-1}$ holds, by Lemma \ref{lem torf trans}, $\cx$ is a torsion-free class of $\F{n-1}{\cx}$.
\end{proof}

In the end of this section, we show that being an $n$-fold torsion-free class implies being a $\K^{(n-1)}\E$-closed subcategory. 
The proof of the following proposition is inspired by the proof given in \cite[Proposition 3.1, (iii) $\Longrightarrow$ (i)]{kobayashi2024ke}.

\begin{prop} \label{prop nfold vs Kn-1Eclosed}
Let $\ca$ be an abelian category and $n$ a positive integer. 
Every $n$-fold torsion-free class of $\ca$ is a $\K^{(n-1)}\E$-closed subcategory of $\ca$.
\end{prop}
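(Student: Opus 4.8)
The plan is to argue by induction on $n$, isolating the two requirements of $\K^{(n-1)}\E$-closedness---closure under extensions and closure under $(n-1)$-kernels---and deducing each from the single inductive input that $\cf_{n-1}$ is already $\K^{(n-2)}\E$-closed. For the base case $n=1$ there is nothing to do: by Remark~\ref{rem nfold implies n+1fold} a $1$-fold torsion-free class is just a torsion-free class of $\ca$, and closure under $0$-kernels is by definition closure under subobjects, so a torsion-free class is exactly a $\K^{0}\E$-closed subcategory. So I would fix $n \geq 2$, assume the statement for $n-1$, and choose a defining chain
\[
\cx = \cf_{n} \torf \cf_{n-1} \torf \cdots \torf \cf_{1} \torf \cf_{0} = \ca .
\]
Here $\cf_{n-1}$ is itself an $(n-1)$-fold torsion-free class of $\ca$, so the induction hypothesis makes it $\K^{(n-2)}\E$-closed; this is the only external fact I will use, together with the tautology that $\cx$ is a torsion-free class of $\cf_{n-1}$.

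Closure under extensions is the easy half. Since $\cf_{n-1}$ is $\K^{(n-2)}\E$-closed it is in particular extension-closed in $\ca$, while $\cx$, being a torsion-free class of $\cf_{n-1}$, is closed under conflations in $\cf_{n-1}$. Lemma~\ref{lem extension closed} then promotes this to closure under extensions in $\ca$.

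The substance is closure under $(n-1)$-kernels. Given an exact sequence
\[
0 \to M \to X^{0} \xrightarrow{d^{0}} X^{1} \xrightarrow{d^{1}} \cdots \xrightarrow{d^{n-2}} X^{n-1}
\]
with every $X^{i} \in \cx$, I would set $C := \Ima d^{0}$ and record the short exact sequence $0 \to M \to X^{0} \to C \to 0$ coming from the factorization of $d^{0}$. The aim is to exhibit $M$ as an admissible subobject of $X^{0}$ in $\cf_{n-1}$: once the three terms of this short exact sequence lie in $\cf_{n-1}$, it is a conflation in $\cf_{n-1}$ with middle term $X^{0} \in \cx$, and closure of $\cx$ under admissible subobjects in $\cf_{n-1}$ (Definition~\ref{def ex subcat}(5)) forces $M \in \cx$. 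The two outer terms are placed in $\cf_{n-1}$ by two applications of its $(n-2)$-kernel closure. Truncating the long sequence yields an exact $0 \to M \to X^{0} \to \cdots \to X^{n-2}$, an $(n-2)$-kernel diagram (that is, $n-1$ objects $X^{0},\dots,X^{n-2}$) all lying in $\cf_{n-1}$, whence $M \in \cf_{n-1}$. Shifting by one yields the exact sequence $0 \to C \to X^{1} \to \cdots \to X^{n-1}$, where $C \hookrightarrow X^{1}$ is the canonical monomorphism and exactness at $X^{1}$ is precisely the original exactness $\Ker d^{1} = \Ima d^{0} = C$; this is again an $(n-2)$-kernel diagram in $\cf_{n-1}$, so $C \in \cf_{n-1}$. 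For $n=2$ this shifted sequence degenerates to the inclusion $0 \to C \to X^{1}$, a $0$-kernel, and everything goes through verbatim. This would complete the kernel step, and with it the induction.

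The only delicate point---and the place to be careful---is that the truncated and shifted complexes are genuinely exact in the sense demanded by the definition of $n$-kernels and that their lengths are exactly $n-2$. Exactness at the interior nodes is inherited verbatim from the original complex, exactness at $M$ and at $C$ comes from $M = \Ker d^{0}$ and $C = \Ima d^{0} = \Ker d^{1}$, and the terminal node of each truncated complex imposes no condition. I would stress that the argument uses $(n-2)$-kernel closure on complexes of length \emph{exactly} $n-2$ and never appeals to any monotonicity in the index: $\K^{(n-2)}\E$-closure does not imply $\K^{(j)}\E$-closure for $j<n-2$ (one cannot in general pad a shorter complex with zeros and keep it exact), so matching the lengths precisely is the real content of the bookkeeping.
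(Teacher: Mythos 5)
Your proof is correct, and it takes a genuinely different route in its organization. The paper argues directly: it breaks the exact sequence $0 \to M \to X^{0} \to \cdots \to X^{n-1}$ into $n$ short exact sequences and sweeps through the whole chain $\cf_{1}, \ldots, \cf_{n}$, showing at stage $m$ that $M$ and the images $\Ima d^{0}, \ldots, \Ima d^{n-m-2}$ lie in $\cf_{m}$ (so the first $n-m$ short exact sequences are conflations in $\cf_{m}$), with the final admissible-subobject step in $\cf_{n-1}$ giving $M \in \cx$. You instead run an outer induction on $n$ and apply the proposition itself to $\cf_{n-1}$, which is indeed an $(n-1)$-fold torsion-free class of $\ca$; its $\K^{(n-2)}\E$-closedness, applied to the truncated diagram $0 \to M \to X^{0} \to \cdots \to X^{n-2}$ and the shifted diagram $0 \to C \to X^{1} \to \cdots \to X^{n-1}$ with $C = \Ima d^{0}$, places $M$ and $C$ in $\cf_{n-1}$, and then a single admissible-subobject step (the same closing move as the paper's, via Definition \ref{def ex subcat}) finishes; extension-closure is handled identically via Lemma \ref{lem extension closed}. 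Unwinding your induction essentially recovers the paper's staircase, but your packaging only touches the penultimate term of the chain explicitly and localizes all length bookkeeping to two diagrams of length exactly $n-2$, which makes the argument more modular; the paper's direct version, by contrast, makes the full filtration structure visible at once. Your closing caution is also correct and well placed: $\K^{(n-2)}\E$-closedness does not imply $\K^{j}\E$-closedness for $j < n-2$ (already for $j=0$, $\KE$-closed subcategories need not be torsion-free classes---this non-implication is the very subject of the paper), whereas truncation does give the reverse implication, so matching the lengths exactly, as you do, is genuinely necessary rather than pedantry.
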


\begin{proof}
Let $\cx$ be an $n$-fold torsion-free class of $\ca$. 
Since $\cx$ is closed under extensions in $\ca$ by Lemma \ref{lem extension closed}, it suffices to show that $\cx$ is closed under $(n-1)$-kernels in $\ca$. Take  any exact sequence in $\ca$
\[
0 \to M \xrightarrow{d^{-1}} X^{0} \xrightarrow{d^{0}} \cdots \xrightarrow{d^{n-3}} X^{n-2} \xrightarrow{d^{n-2}}  X^{n-1},
\]
where each $X^{i}$ belongs to $\cx$. We obtain the following $n$ short exact sequences in $\ca$:

\[
\begin{tikzpicture}[auto]
  \node (51) at (-3.6, 1.7) {$0$}; \node (52) at (-1.8, 1.7) {$M$};  \node (53) at (0.0, 1.7) {$X^{0}$};  \node (54) at (1.8, 1.7) {$\Ima d^{0}$};  \node (55) at (3.6, 1.7) {$0$}; \node at (3.75, 1.6) {,};
  \node (41) at (-3.6, 0.7) {$0$}; \node (42) at (-1.8, 0.7) {$\Ima d^{0}$};  \node (43) at (0.0, 0.7) {$X^{1}$};  \node (44) at (1.8, 0.7) {$\Ima d^{1}$};  \node (45) at (3.6, 0.7) {$0$}; \node at (3.75, 0.6) {,};
  \node (33) at (0,0) {$\vdots$};
  \node (21) at (-3.6, -0.7) {$0$}; \node (22) at (-1.6, -0.7) {$\Ima d^{n-3}$};  \node (23) at (0.2, -0.7) {$X^{n-2}$};  \node (24) at (2.0, -0.7) {$\Ima d^{n-2}$};  \node (25) at (3.6, -0.7) {$0$}; \node at (3.75, -0.8) {,};
  \node (11) at (-3.6, -1.7) {$0$}; \node (12) at (-1.6, -1.7) {$\Ima d^{n-2}$};  \node (13) at (0.2, -1.7) {$X^{n-1}$};  \node (14) at (2.0, -1.7) {$\Cok d^{n-2}$};  \node (15) at (3.6, -1.7) {$0$}; \node at (3.75, -1.8) {.};

  \draw[->, thick] (51) -- (52); \draw[->, thick] (52) -- (53); \draw[->, thick] (53) -- (54); \draw[->, thick] (54) -- (55);
  \draw[->, thick] (41) -- (42); \draw[->, thick] (42) -- (43); \draw[->, thick] (43) -- (44); \draw[->, thick] (44) -- (45);
  \draw[->, thick] (21) -- (22); \draw[->, thick] (22) -- (23); \draw[->, thick] (23) -- (24); \draw[->, thick] (24) -- (25);
  \draw[->, thick] (11) -- (12); \draw[->, thick] (12) -- (13); \draw[->, thick] (13) -- (14); \draw[->, thick] (14) -- (15);

\end{tikzpicture}
\]
By the definition of $n$-fold torsion-free classes, there exists a chain of subcategories
\[
\cx = \cf_{n} \torf \cdots \torf \cf_{1} \torf \cf_{0} = \ca.
\]
Then we have $M, \Ima d^{j} \in \cf_{1}$ for each $j \in \{0, 1, \cdots, n-2 \}$ because each $X^{i}$ belongs to $\cx \subset \cf_{1}$ and $\cf_{1}$ is closed under subobjects. 
Thus the first $n-1$ short exact sequences are conflations in $\cf_{1}$. 
Next, we have $M, \Ima d^{l} \in \cf_{2}$ for each $l \in \{0, 1, \cdots, n-3 \}$ because each $X^{i}$ belongs to $\cx \subset \cf_{2}$ and $\cf_{2}$ is closed under admissible subobjects in $\cf_{1}$. 
Thus the first $n-2$ short exact sequences are conflations in $\cf_{2}$. 
Inductively, we can show that the first $n-m$ short exact sequences are conflations in $\cf_{m}$ for $1 \leq m \leq n-1$. 
This implies $M \in \cf_{n} = \cx$ since $\cx$ is closed under admissible subobjects in $\cf_{n-1}$.
\end{proof}

It is natural to ask whether the converse holds or not.
\begin{ques} \label{ques the converse}
Let $\ca$ be an abelian category. Suppose $\cx$ is a $\K^{(n-1)}\E$-closed subcategory of $\ca$.  Is $\cx$ an $n$-fold torsion-free class of $\ca$?
\end{ques}

Kobayashi-Saito proved that it is true when $n=2$ in \cite[Proposition 3.1]{kobayashi2024ke}. We do not know this holds in general. However, it holds for a certain case; see Theorem \ref{thm resolving}.


\section{2-fold torsion-free classes and kernel closures} \label{sec 2fold torsionfree classes and kernel closures}

The aim of this section is to prove that Question \ref{ques the converse} is true when $n=2$.
It has already been proved in \cite{kobayashi2024ke}, but we give a different proof. First, we introduce several notations in order to describe the kernel closure of an additive subcategory of an abelian category.

\begin{defi} \label{def ker}
Let $\ca$ be an abelian category and $\cx$ and $\cy$ its additive subcategories.
\begin{enumerate}[(1)]
\setlength{\itemsep}{0pt} 
	\item We set 

$\Ker(\cx, \cy) := \big\{ K \in \ca \ \mid \text{there exists an exact sequence }0 \to K \to X \to Y\text{ in }\ca \\
\hfill \text{with } X \in \cx \text{ and } Y \in \cy \big\}$.

This is an additive subcategory of $\ca$.

	\item We set $\Ker^{0}(\cx) := \cx$ and $\Ker^{n+1}(\cx) := \Ker(\Ker^{n}(\cx), \Ker^{n}(\cx))$, inductively. Since each $\Ker^{n}(\cx)$ contains the zero object, we have $\Ker ^{n}(\cx) \subset \Ker (\Ker ^{n}(\cx), 0) \subset \Ker ^{n+1}(\cx)$, that is, 
\[
\cx =  \Ker^{0}(\cx) \subset \Ker^{1}(\cx) \subset \cdots \subset \Ker^{n}(\cx) \subset \cdots .
\]

	\item We put $\Ker^{0}_{\cy}(\cx) := \cx$ and $\Ker^{n+1}_{\cy}(\cx) := \Ker(\Ker^{n}_{\cy}(\cx), \cy)$, inductively. Since $\cy$ contains the zero object, it holds that $\Ker ^{n}_{\cy}(\cx) \subset \Ker (\Ker ^{n}_{\cy}(\cx), 0) \subset \Ker ^{n+1}_{\cy}(\cx)$, that is, 
\[ 
\cx =  \Ker^{0}_{\cy}(\cx) \subset \Ker_{\cy}^{1}(\cx) \subset \cdots \subset \Ker_{\cy}^{n}(\cx) \subset \cdots .
\]

\end{enumerate}
\end{defi}

\begin{rema} \label{rem inc}
Let $\cx$ be an additive subcategory of an abelian category $\ca$. 
By the definition of $\Ker_{\cx}^{n}(\cx)$ and $\Ker^{n}(\cx)$, we have the following inclusions:

\[
\begin{tikzpicture}[auto]
\node  at (0.5, 0) {$\cx$}; \node at (1.0, 0) {$=$}; \node  at (2.0, 0) {$\Ker^{0}_{\cx}(\cx)$}; \node at (3.0, 0) {$\subset$}; \node at (4.0, 0) {$\Ker^{1}_{\cx}(\cx)$}; \node at (5.0, 0) {$\subset$}; \node at (6.0, 0) {$\Ker^{2}_{\cx}(\cx)$}; \node at (7.0, 0) {$\subset$}; \node at (7.5, 0) {$\cdots$}; \node at (8.0, 0) {$\subset$}; \node at (9.0, 0) {$\Ker^{n}_{\cx}(\cx)$}; \node at (10.0, 0) {$\subset$}; \node at (10.5, 0) {$\cdots$};

\node[rotate=270]  at (0.5, -0.7) {\large $=$}; \node[rotate=270] at (2.0, -0.7) {\large $=$}; \node[rotate=270] at (4.0, -0.7) {\large $=$}; \node[rotate=270] at (6.0, -0.7) {$\subset$}; \node[rotate=270] at (6.0, -0.7) {$\subset$}; \node[rotate=270] at (9.0, -0.7) {$\subset$};

\node  at (0.5, -1.4) {$\cx$}; \node at (1.0, -1.4) {$=$}; \node  at (2.0, -1.4) {$\Ker^{0}(\cx)$}; \node at (3.0, -1.4) {$\subset$}; \node at (4.0,-1.4) {$\Ker^{1}(\cx)$}; \node at (5.0, -1.4) {$\subset$}; \node at (6.0,-1.4) {$\Ker^{2}(\cx)$}; \node at (7.0, -1.4) {$\subset$}; \node at (7.5, -1.4) {$\cdots$}; \node at (8.0, -1.4) {$\subset$}; \node at (9.0, -1.4) {$\Ker^{n}(\cx)$}; \node at (10.0,-1.4) {$\subset$}; \node at (10.5, -1.4) {$\cdots$};
\node at (10.8, -1.5) {.};
\end{tikzpicture}
\]
\end{rema}

Next, we introduce some class of certain subcategories which are closed under these new operation.
\begin{defi} \label{def kery closed}
Let $\ca$ be an abelian category and $\cx$ and $\cy$ its additive subcategories. 
Then $\cx$ is said to be \textit{closed under kernels of morphisms to $\cy$} (or \textit{$\Ker_{\cy}$-closed}) if for any morphism $f \colon X \to Y$ in $\ca$ with $X \in \cx$ and $Y \in \cy$, we obtain $\Ker f \in \cx$ .
 We denote by $\clos{\cx}{\kr_{\cy}}{}$ the smallest $\mathrm{Ker}_{\cy}$-closed subcategory of $\ca$ containing $\cx$.
\end{defi}

\begin{prop} \label{prop ker closure}
Let $\ca$ be an abelian category and $\cx$ and $\cy$ its additive subcategories.
\begin{enumerate}[$(a)$]
\setlength{\itemsep}{0pt} 
	\item we have \[ \clos{\cx}{\kr}{} = \bigcup_{n \ge 0} \Ker^{n}(\cx).\]
	\item we obtain \[ \clos{\cx}{\kr_{\cy}}{} = \bigcup_{n \ge 0} \Ker^{n}_{\cy}(\cx).\]
\end{enumerate}
\end{prop}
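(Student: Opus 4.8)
The plan is to prove each equality by the standard ``smallest closed subcategory equals union of iterates'' argument, establishing two inclusions in each part. For part $(a)$, write $\mathcal{K} := \bigcup_{n \ge 0} \Ker^{n}(\cx)$. For the inclusion $\mathcal{K} \subseteq \clos{\cx}{\kr}{}$, I would argue by induction on $n$ that $\Ker^{n}(\cx) \subseteq \clos{\cx}{\kr}{}$. The base case $n=0$ is immediate since $\Ker^{0}(\cx) = \cx$. For the inductive step, any $K \in \Ker^{n+1}(\cx) = \Ker(\Ker^{n}(\cx), \Ker^{n}(\cx))$ fits in an exact sequence $0 \to K \to X \to X'$ with $X, X' \in \Ker^{n}(\cx) \subseteq \clos{\cx}{\kr}{}$, so $K \cong \Ker(X \to X')$ is the kernel of a morphism between objects of $\clos{\cx}{\kr}{}$; since the latter is kernel-closed and closed under isomorphisms, $K \in \clos{\cx}{\kr}{}$.

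For the reverse inclusion $\clos{\cx}{\kr}{} \subseteq \mathcal{K}$, I would show that $\mathcal{K}$ is itself a kernel-closed subcategory containing $\cx$; minimality of the closure then yields the inclusion. Containment of $\cx$ is clear. Additivity of $\mathcal{K}$ follows from the fact that the chain $\Ker^{0}(\cx) \subseteq \Ker^{1}(\cx) \subseteq \cdots$ is increasing (Remark \ref{rem inc}): any finite family of objects of $\mathcal{K}$ lies in a single $\Ker^{N}(\cx)$, which is additive, so its direct sum again lies in $\Ker^{N}(\cx) \subseteq \mathcal{K}$. The crux is kernel-closedness: given $f \colon A \to B$ with $A, B \in \mathcal{K}$, choose $N$ large enough that $A, B \in \Ker^{N}(\cx)$ (possible because the chain increases), and note that the exact sequence $0 \to \Ker f \to A \to B$ exhibits $\Ker f \in \Ker(\Ker^{N}(\cx), \Ker^{N}(\cx)) = \Ker^{N+1}(\cx) \subseteq \mathcal{K}$.

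Part $(b)$ is entirely parallel, with $\mathcal{K}_{\cy} := \bigcup_{n \ge 0} \Ker^{n}_{\cy}(\cx)$. The $(\subseteq)$ direction uses the analogous induction, invoking $\Ker_{\cy}$-closedness of $\clos{\cx}{\kr_{\cy}}{}$ on morphisms $A \to Y$ with $A \in \Ker^{n}_{\cy}(\cx)$ and $Y \in \cy$. For the reverse inclusion, the only change is that the second slot of $\Ker(-,-)$ is always the fixed subcategory $\cy$, so in the $\Ker_{\cy}$-closedness step a morphism $f \colon A \to Y$ with $A \in \Ker^{n}_{\cy}(\cx)$ and $Y \in \cy$ immediately gives $\Ker f \in \Ker(\Ker^{n}_{\cy}(\cx), \cy) = \Ker^{n+1}_{\cy}(\cx)$, with no need to match levels on both sides.

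I expect the main (mild) obstacle to be getting the kernel-closedness of the union right in part $(a)$: one must use that the chain of iterates is increasing so that two objects occurring at possibly different levels can be pushed into a common level $N$ before taking the kernel, after which the result lands one level higher in $\Ker^{N+1}(\cx)$. Everything else is bookkeeping via the increasing chain of Remark \ref{rem inc} together with the additivity of each $\Ker(\cx, \cy)$ recorded in Definition \ref{def ker}.
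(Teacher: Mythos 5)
Your proof is correct and follows essentially the same route as the paper: both directions are handled exactly as in the paper's argument, namely an induction showing each $\Ker^{n}(\cx)$ lies in $\clos{\cx}{\kr}{}$, and kernel-closedness of the union via pushing two objects into a common level $\Ker^{l}(\cx)$ with $l = \max\{m,n\}$ using the increasing chain, with part $(b)$ treated as the same argument simplified by the fixed second slot $\cy$. Your extra check that the union is additive is harmless but not needed, since the closure is minimal among kernel-closed subcategories containing $\cx$.
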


\begin{proof}
\underline{$(a)$.}  Since $\cx \subset \clos{\cx}{\kr}{}$ and $\clos{\cx}{\kr}{}$ is closed under kernels, $\clos{\cx}{\kr}{}$ contains $\Ker^{1}(\cx)$. 
By the definition of $\Ker^{n}(\cx)$,  for each nonnegative integer $n$, we have $\Ker^{n}(\cx) \subset \clos{\cx}{\kr}{}$ inductively. 

It remains to show that $\cm := \bigcup_{n \ge 0} \Ker^{n}(\cx)$ is closed under kernels. 
Take any morphism $f \colon M \to N$ with $M, N \in \cm$. There exist nonnegative integers $m$ and $n$ such that $M \in \Ker^{m}(\cx)$ and $N \in \Ker^{n}(\cx)$. We put $l := \max\{m, n\}$. Both $M$ and $N$ belong to $\Ker^{l}(\cx)$ and hence we obtain $\Ker f \in \Ker^{l+1}(\cx) \subset \cm$. We have the desired result. 

\underline{$(b)$.} One can show it in the same way as above.
\end{proof}

\begin{lemm} \label{cor ker vs kerx}
Let $\cx$ be an additive subcategory of an abelian category $\ca$. Then the following holds$\colon$
\[
\clos{\cx}{\kr_{\cx}}{} \subset \clos{\cx}{\kr}{}.
\]  
\end{lemm}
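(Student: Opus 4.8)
The plan is to use the explicit union descriptions of the two closures furnished by Proposition~\ref{prop ker closure}, which reduces the inclusion to a termwise comparison of the two filtrations $\Ker^{n}_{\cx}(\cx)$ and $\Ker^{n}(\cx)$ from Definition~\ref{def ker}. First I would record the elementary monotonicity of the operation $\Ker(-,-)$: if $\cx \subset \cx'$ and $\cy \subset \cy'$ are additive subcategories, then $\Ker(\cx, \cy) \subset \Ker(\cx', \cy')$. This is immediate from the definition, since any exact sequence $0 \to K \to X \to Y$ witnessing $K \in \Ker(\cx, \cy)$ (with $X \in \cx$ and $Y \in \cy$) equally witnesses $K \in \Ker(\cx', \cy')$.

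The core step is then the claim that $\Ker^{n}_{\cx}(\cx) \subset \Ker^{n}(\cx)$ for every nonnegative integer $n$, which I would prove by induction on $n$. The base case $n = 0$ is the equality $\cx = \Ker^{0}_{\cx}(\cx) = \Ker^{0}(\cx)$. For the inductive step I use the recursions $\Ker^{n+1}_{\cx}(\cx) = \Ker(\Ker^{n}_{\cx}(\cx), \cx)$ and $\Ker^{n+1}(\cx) = \Ker(\Ker^{n}(\cx), \Ker^{n}(\cx))$. The induction hypothesis controls the first argument, and the inclusion $\cx = \Ker^{0}(\cx) \subset \Ker^{n}(\cx)$ (part of the chain recorded in Definition~\ref{def ker}) controls the second; monotonicity of $\Ker(-,-)$ in both arguments then yields $\Ker^{n+1}_{\cx}(\cx) \subset \Ker^{n+1}(\cx)$.

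Taking the union over all $n$ and invoking Proposition~\ref{prop ker closure} completes the argument:
\[
\clos{\cx}{\kr_{\cx}}{} = \bigcup_{n \ge 0} \Ker^{n}_{\cx}(\cx) \subset \bigcup_{n \ge 0} \Ker^{n}(\cx) = \clos{\cx}{\kr}{}.
\]
There is essentially no obstacle here; the only point requiring any attention is that the fixed target $\cx$ appearing in $\Ker_{\cx}$ must be absorbed into the growing target $\Ker^{n}(\cx)$, which is exactly what the inclusion $\cx \subset \Ker^{n}(\cx)$ supplies.

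Alternatively, and even more directly, I would observe that $\clos{\cx}{\kr}{}$ is itself a $\Ker_{\cx}$-closed subcategory containing $\cx$: given a morphism $f \colon M \to X$ with $M \in \clos{\cx}{\kr}{}$ and $X \in \cx$, the object $X$ lies in $\cx \subset \clos{\cx}{\kr}{}$, so $f$ is a morphism between two objects of the kernel-closed subcategory $\clos{\cx}{\kr}{}$, whence $\Ker f \in \clos{\cx}{\kr}{}$. Since $\clos{\cx}{\kr_{\cx}}{}$ is by definition the smallest $\Ker_{\cx}$-closed subcategory containing $\cx$, the desired inclusion follows from minimality. I would likely present the inductive version in the text, as it matches the tools just established, and leave this second remark as the conceptual reason the statement holds.
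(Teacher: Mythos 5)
Your main argument is correct and is essentially the paper's own proof: the paper deduces the lemma from Remark \ref{rem inc} (the termwise inclusions $\Ker^{n}_{\cx}(\cx) \subset \Ker^{n}(\cx)$, which your induction via monotonicity of $\Ker(-,-)$ and the inclusion $\cx \subset \Ker^{n}(\cx)$ merely makes explicit) combined with the union descriptions of Proposition \ref{prop ker closure}. Your alternative observation---that $\clos{\cx}{\kr}{}$ is itself a $\Ker_{\cx}$-closed subcategory containing $\cx$, so minimality of $\clos{\cx}{\kr_{\cx}}{}$ gives the inclusion outright---is also valid and bypasses the filtrations entirely, though it is not the route the paper takes.
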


\begin{proof}
It follows from Remark \ref{rem inc} and Proposition \ref{prop ker closure}.
\end{proof}

In fact, the two closures in Lemma \ref{cor ker vs kerx} coincide. 
In order to prove it,  we show the following key proposition.

\begin{prop} \label{prop kerx = adsub}
Let $\ca$ be an abelian category, $\cx$ its additive subcategory and $M \in \ca$. 
For a nonnegative integer $n$, an object $M$ of $\ca$ belongs to $\Ker^{n}_{\cx}(\cx)$ if and only if there exists a short exact sequence $0 \to M \to X \to F \to 0$ in $\ca$ with $X \in \cx$ and $F \in (\clos{\cx}{\sub}{})^{*n}$.
\end{prop}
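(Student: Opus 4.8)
The plan is to prove the statement by induction on $n$, establishing both implications simultaneously, since the inductive structure of $\Ker^{n}_{\cx}(\cx)$ and of the filtration $(\clos{\cx}{\sub}{})^{*n}$ match up naturally. The base case $n=0$ is immediate: $\Ker^{0}_{\cx}(\cx) = \cx$, and $(\clos{\cx}{\sub}{})^{*0} = 0$, so the claimed short exact sequence is $0 \to M \to M \to 0 \to 0$ with $M \in \cx$, which exists precisely when $M \in \cx$.

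For the inductive step, I would first prove the ``only if'' direction. Suppose $M \in \Ker^{n+1}_{\cx}(\cx) = \Ker(\Ker^{n}_{\cx}(\cx), \cx)$. By the definition of $\Ker(-,-)$, there is an exact sequence $0 \to M \to K \to X'$ in $\ca$ with $K \in \Ker^{n}_{\cx}(\cx)$ and $X' \in \cx$. The induction hypothesis applied to $K$ yields a short exact sequence $0 \to K \to X \to F \to 0$ with $X \in \cx$ and $F \in (\clos{\cx}{\sub}{})^{*n}$. The goal is then to splice the inclusion $M \hookrightarrow K$ together with $K \hookrightarrow X$ to produce a short exact sequence $0 \to M \to X \to F' \to 0$ with $F' \in (\clos{\cx}{\sub}{})^{*(n+1)}$. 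Concretely, $F' = X/M$ sits in a short exact sequence $0 \to K/M \to X/M \to X/K \to 0$, i.e. $0 \to K/M \to F' \to F \to 0$; since $K/M$ is a subobject of the object $X'/0 = X' \in \cx$ via the map $K \to X'$ (whose kernel is exactly $M$), we get $K/M \in \clos{\cx}{\sub}{}$, and hence $F' \in \clos{\cx}{\sub}{} * (\clos{\cx}{\sub}{})^{*n} = (\clos{\cx}{\sub}{})^{*(n+1)}$.

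For the ``if'' direction, suppose we are given a short exact sequence $0 \to M \to X \to F \to 0$ with $X \in \cx$ and $F \in (\clos{\cx}{\sub}{})^{*(n+1)}$. Writing $F \in \clos{\cx}{\sub}{} * (\clos{\cx}{\sub}{})^{*n}$, there is a short exact sequence $0 \to S \to F \to G \to 0$ with $S \in \clos{\cx}{\sub}{}$ and $G \in (\clos{\cx}{\sub}{})^{*n}$. The idea is to pull this back along $X \twoheadrightarrow F$: let $K$ be the preimage in $X$ of $S$, giving $0 \to M \to K \to S \to 0$ and $0 \to K \to X \to G \to 0$. The second sequence, together with $G \in (\clos{\cx}{\sub}{})^{*n}$ and $X \in \cx$, places $K$ in $\Ker^{n}_{\cx}(\cx)$ by the induction hypothesis. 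Finally, since $S \in \clos{\cx}{\sub}{}$ it is a subobject of some $X'' \in \cx$, so the composite $K \to S \hookrightarrow X''$ realizes $M = \Ker(K \to X'')$, whence $M \in \Ker(\Ker^{n}_{\cx}(\cx), \cx) = \Ker^{n+1}_{\cx}(\cx)$.

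The main obstacle I anticipate is bookkeeping the pullback/pushout diagrams carefully enough to extract exactly the right short exact sequences at each stage, in particular checking that kernels and cokernels match up across the $3 \times 3$ (snake/nine) diagram and that the membership $K/M \in \clos{\cx}{\sub}{}$ (resp.\ the preimage construction) really does follow from the subobject structure rather than requiring $\cx$ itself to be subobject-closed. These are routine diagram chases in an abelian category, but they are the step where an error would most easily creep in; the additivity of $\cx$ is used to guarantee that the zero object and the relevant direct summands behave well, and the characterization of $\clos{\cx}{\sub}{}$ from Definition \ref{def closure in abelian cat} is what lets me pass freely between ``subobject of an object of $\cx$'' and ``object of $\clos{\cx}{\sub}{}$''.
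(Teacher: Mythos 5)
Your proof is correct and follows essentially the same route as the paper's: induction on $n$, peeling off the bottom layer $S \in \clos{\cx}{\sub}{}$ of the filtration of $F$ by pulling back along $X \twoheadrightarrow F$ for the ``if'' direction, and splicing the monomorphisms $M \hookrightarrow K \hookrightarrow X$ for the ``only if'' direction (the paper realizes this splice as a pushout along $K \hookrightarrow X$, you via the third-isomorphism identification of $0 \to K/M \to X/M \to X/K \to 0$ --- the same $3\times 3$ diagram). Your explicit observation that $K/M$, resp.\ $S$, embeds into an object of $\cx$, which is what converts the short exact sequences into membership in $\Ker(-,\cx)$, just makes precise a step the paper states tersely.
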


\begin{proof}
We show it by the induction on $n$.
Suppose that $n = 0$. Then since $\Ker^{0}_{\cx}(\cx) = \cx$ and  $(\clos{\cx}{\sub}{})^{*0} = 0$, the statement is clear. Next, suppose that $n \geq 1$.

(``if''part): Assume that there exists a short exact sequence $0 \to M \to X \to F \to 0$ in $\ca$ with $X \in \cx$ 
and $F \in (\clos{\cx}{\sub}{})^{*n}$. By the definition of $*$, there exists a short exact sequences $0 \to F' \stackrel{f}{\to} F \to F'' \to 0$ in $\ca$ such that $F' \in \clos{\cx}{\sub}{}$ and $F'' \in (\clos{\cx}{\sub}{})^{*(n-1)}$. 
Therefore, by pulling back the exact sequence $0 \to M \to X \to F \to 0$ by $f$, we have the following commutative diagram with exact rows and exact columns:

\[
\begin{tikzpicture}[auto]
\node (01) at (-2.4, 0) {$0$}; \node (a1) at (-1.2, 0) {$M$}; \node (x) at (0, 0) {$X$}; \node (y) at (1.2, 0) {$F$}; \node (02) at (2.4, 0) {$0$};
\node (021) at (-2.4, 1.2) {$0$}; \node (a2) at (-1.2, 1.2) {$M$}; \node (x1) at (0, 1.2) {$M'$}; \node (y1) at (1.2, 1.2) {$F'$}; \node (022) at (2.4, 1.2) {$0$};
\node (031) at (0, 2.4) {$0$}; \node (032) at (1.2, 2.4) {$0$};
\node (y2d) at (0, -1.2) {$F''$}; \node (y2) at (1.2, -1.2) {$F''$};
\node (001) at (0, -2.4) {$0$}; \node (002) at (1.2, -2.4) {$0$};
\node at (0.6, 0.6) {PB};
\node at (1.35, -2.5) {,};

\draw[->, thick] (01) to (a1); \draw[->, thick] (a1) to (x); \draw[->, thick] (x) to (y); \draw[->, thick] (y) to (02);
\draw[->, thick] (021) to (a2); \draw[->, thick] (a2) to (x1); \draw[->, thick] (x1) to (y1); \draw[->, thick] (y1) to (022);
\draw[double distance = 2pt, thick] (y2d) -- (y2);

\draw[->, thick] (031) to (x1); \draw[->, thick] (x1) to (x); \draw[->, thick] (x) to (y2d); \draw[->, thick]  (y2d) to (001);
\draw[->, thick] (032) to (y1); \draw[->, thick] (y1) -- node {$f$} (y); \draw[->, thick] (y) to (y2); \draw[->, thick]  (y2) to (002);
\draw[double distance = 2pt, thick] (a1) -- (a2);

\end{tikzpicture}
\]
where $M'$ is an object of $\ca$. By the exact sequence $0 \to M' \to X \to F'' \to 0$ and the induction hypothesis, we obtain $M' \in \Ker^{n-1}_{\cx}(\cx)$. By the exact sequence  $0 \to M \to M' \to F' \to 0$, we have $M \in \Ker^{n}_{\cx}(\cx)$.

(``only if''part): Assume that $M \in \Ker^{n}_{\cx}(\cx)$. 
By the definition of $\Ker^{n}_{\cx}(\cx)$, there exists a short exact sequence $0 \to M \to M' \to F' \to 0$ in $\ca$ with $M' \in \Ker^{n-1}_{\cx}(\cx)$ and $F' \in \clos{\cx}{\sub}{}$. 
By the induction hypothesis, there exists some short exact sequence $0 \to M' \stackrel{g}{\to} X \to F'' \to 0$ in $\ca$ such that $X \in \cx$ and $F'' \in  (\clos{\cx}{\sub}{})^{*(n-1)}$. 
By the pushing out the exact sequence  $0 \to M \to M' \to F' \to 0$ by $g$, we obtain the following commutative diagram with exact rows and exact columns:

\[
\begin{tikzpicture}[auto]
\node (01) at (-2.4, 0) {$0$}; \node (a1) at (-1.2, 0) {$M$}; \node (x) at (0, 0) {$X$}; \node (y) at (1.2, 0) {$F$}; \node (02) at (2.4, 0) {$0$};
\node (021) at (-2.4, 1.2) {$0$}; \node (a2) at (-1.2, 1.2) {$M$}; \node (x1) at (0, 1.2) {$M'$}; \node (y1) at (1.2, 1.2) {$F'$}; \node (022) at (2.4, 1.2) {$0$};
\node (031) at (0, 2.4) {$0$}; \node (032) at (1.2, 2.4) {$0$};
\node (y2d) at (0, -1.2) {$F''$}; \node (y2) at (1.2, -1.2) {$F''$};
\node (001) at (0, -2.4) {$0$}; \node (002) at (1.2, -2.4) {$0$};
\node at (0.6, 0.6) {PO};
\node at (1.35, -2.5) {,};

\draw[->, thick] (01) to (a1); \draw[->, thick] (a1) to (x); \draw[->, thick] (x) to (y); \draw[->, thick] (y) to (02);
\draw[->, thick] (021) to (a2); \draw[->, thick] (a2) to (x1); \draw[->, thick] (x1) to (y1); \draw[->, thick] (y1) to (022);
\draw[double distance = 2pt, thick] (y2d) -- (y2);

\draw[->, thick] (031) to (x1); \draw[->, thick] (x1) --node[swap] {$g$} (x); \draw[->, thick] (x) to (y2d); \draw[->, thick]  (y2d) to (001);
\draw[->, thick] (032) to (y1); \draw[->, thick] (y1) -- (y); \draw[->, thick] (y) to (y2); \draw[->, thick]  (y2) to (002);
\draw[double distance = 2pt, thick] (a1) -- (a2);

\end{tikzpicture}
\]
where $F$ is an object of $\ca$. By the exact sequence $0 \to F' \to F \to F'' \to 0$, we have $F \in (\clos{\cx}{\sub}{})^{*n}$. This completes the proof.
\end{proof}

\begin{corr} \label{cor kerx equal adsub}
Let $\ca$ be an abelian category and $\cx$ its additive subcategory. Then the following holds$\colon$
\[
\clos{\cx}{\kr_{\cx}}{} = \clos{\cx}{\ads}{\F{1}{\cx}}.
\]
\end{corr}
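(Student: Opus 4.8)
The plan is to reduce the equality to the two descriptions already in hand and then match them term by term. First I would invoke Proposition~\ref{prop ker closure}(b) to write $\clos{\cx}{\kr_{\cx}}{} = \bigcup_{n \ge 0} \Ker^{n}_{\cx}(\cx)$, and feed this into Proposition~\ref{prop kerx = adsub}: an object $M$ lies in $\clos{\cx}{\kr_{\cx}}{}$ precisely when, for some $n$, there is a short exact sequence $0 \to M \to X \to F \to 0$ in $\ca$ with $X \in \cx$ and $F \in (\clos{\cx}{\sub}{})^{*n}$. Since $\bigcup_{n \ge 0}(\clos{\cx}{\sub}{})^{*n} = \clos{\clos{\cx}{\sub}{}}{\ext}{} = \F{1}{\cx}$ by the extension-closure formula of Definition~\ref{def closure in abelian cat}(4) together with Remark~\ref{rem torf clos}, this rephrases as: $M \in \clos{\cx}{\kr_{\cx}}{}$ if and only if there is a short exact sequence $0 \to M \to X \to F \to 0$ with $X \in \cx$ and $F \in \F{1}{\cx}$.

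Next I would recall that, by definition, $\clos{\cx}{\ads}{\F{1}{\cx}}$ consists of those $S \in \F{1}{\cx}$ admitting a conflation $0 \to S \to X \to C \to 0$ in $\F{1}{\cx}$ with $X \in \cx$. The only gap between the two descriptions is the membership requirement on the outer terms, and I would close it using that $\F{1}{\cx}$, being a torsion-free class of $\ca$, is closed under subobjects. Concretely, for the inclusion $\subseteq$: given $M$ on the left with sequence $0 \to M \to X \to F \to 0$, $X \in \cx$ and $F \in \F{1}{\cx}$, the object $M$ is a subobject of $X \in \F{1}{\cx}$, whence $M \in \F{1}{\cx}$; as all three terms now lie in $\F{1}{\cx}$, the sequence is a conflation in $\F{1}{\cx}$ exhibiting $M$ as an admissible subobject of $X \in \cx$, so $M \in \clos{\cx}{\ads}{\F{1}{\cx}}$.

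For the reverse inclusion, given $S \in \clos{\cx}{\ads}{\F{1}{\cx}}$ with conflation $0 \to S \to X \to C \to 0$ in $\F{1}{\cx}$ and $X \in \cx$, I would use $C \in \F{1}{\cx} = \bigcup_{n \ge 0}(\clos{\cx}{\sub}{})^{*n}$ to pick $n$ with $C \in (\clos{\cx}{\sub}{})^{*n}$, and then apply Proposition~\ref{prop kerx = adsub} in the opposite direction to conclude $S \in \Ker^{n}_{\cx}(\cx) \subset \clos{\cx}{\kr_{\cx}}{}$. The two inclusions together yield the claimed equality. I expect no genuine obstacle, since the combinatorial work has been front-loaded into Proposition~\ref{prop kerx = adsub}; the only point demanding care is that the admissible-subobject side insists that all three terms of the conflation sit in $\F{1}{\cx}$, and this is supplied exactly by the subobject-closedness of the torsion-free class $\F{1}{\cx}$.
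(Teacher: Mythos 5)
Your proposal is correct and follows essentially the same route as the paper's own proof, which likewise reduces the equality to Proposition~\ref{prop ker closure}(b) and Proposition~\ref{prop kerx = adsub} via the identification $\F{1}{\cx} = \bigcup_{n \ge 0} (\clos{\cx}{\sub}{})^{*n}$ from Remark~\ref{rem torf clos} and Definition~\ref{def closure in abelian cat}(4). The only difference is that you make explicit the bookkeeping the paper leaves implicit, namely that subobject-closedness of the torsion-free class $\F{1}{\cx}$ forces $M \in \F{1}{\cx}$, so the short exact sequence is genuinely a conflation in $\F{1}{\cx}$ --- a worthwhile clarification, not a new idea.
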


\begin{proof}
	By Proposition \ref{prop ker closure}, 
	it is enough to show that $M \in \Ker ^{n}_{\cx}(\cx)$ for some $n$ if and only if there exists a exact sequence $0 \to M \to X \to F \to 0$ in $\ca$ 
	with $X \in \cx$ and $F \in \F{1}{\cx}$. 
	By Remark \ref{rem torf clos}, it has already been proved in Proposition \ref{prop kerx = adsub}. 
\end{proof}

Finally, we show that the kernel closure of an additive subcategory of an abelian category coincides with the closures in the above corollary.

\begin{theo} \label{thm charac of kernel closure}
Let $\ca$ be an abelian category and $\cx$ its additive subcategory. Then the following three subcategories coincide$\colon$
\begin{enumerate}[$(a)$]
\setlength{\parskip}{0pt}
	\item $\clos{\cx}{\ads}{\F{1}{\cx}}$,
	\item $\clos{\cx}{\kr_{\cx}}{}$ and
	\item $\clos{\cx}{\kr}{}$.
\end{enumerate}
In particular, the following are equivalent$\colon$
\begin{enumerate}[$(a)$]
	\setlength{\parskip}{0pt}
		\item $\cx$ is an admissible-subobject-closed subcategory of $\F{1}{\cx} \mathrm{;}$
		\item $\cx$ is a $\mathrm{Ker}_{\cx}$-closed subcategory of $\ca \mathrm{;}$ and
		\item $\cx$ is a kernel-closed subcategory of $\ca$.
	\end{enumerate}
\end{theo}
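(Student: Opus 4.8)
The plan is to leverage the two facts already available. By Corollary~\ref{cor kerx equal adsub} we have $(a) = (b)$, that is $\clos{\cx}{\kr_{\cx}}{} = \clos{\cx}{\ads}{\F{1}{\cx}}$, and by Lemma~\ref{cor ker vs kerx} we have the inclusion $\clos{\cx}{\kr_{\cx}}{} \subset \clos{\cx}{\kr}{}$, i.e.\ $(b) \subset (c)$. Hence the only thing left to prove is the reverse inclusion $(c) \subset (b)$. Writing $\cm := \clos{\cx}{\kr_{\cx}}{}$, I would reduce this to a single closure statement: it suffices to show that $\cm$ is closed under kernels of \emph{arbitrary} morphisms between its objects. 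Indeed, once that is known, $\cm$ is a kernel-closed subcategory of $\ca$ containing $\cx$, so it contains the smallest such subcategory, namely $\clos{\cx}{\kr}{}$; this gives $(c) \subset (b)$ and therefore equality of all three subcategories.

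To carry out this step I would take a morphism $f \colon M \to N$ with $M, N \in \cm$ and show $\Ker f \in \cm$. The key point is to trade the target for an object of $\cx$: using the equality $(a) = (b)$, the object $N$ is an admissible subobject of some object of $\cx$ inside $\F{1}{\cx}$, so in particular there is a monomorphism $\iota \colon N \hookrightarrow X$ in $\ca$ with $X \in \cx$. Since $\iota$ is monic, the composite $\iota \circ f \colon M \to X$ has the same kernel as $f$, namely $\Ker(\iota \circ f) = \Ker f$. Now $\iota \circ f$ is a morphism from $M \in \cm$ to an object $X \in \cx$, so its kernel lies in $\cm$ exactly because $\cm = \clos{\cx}{\kr_{\cx}}{}$ is by construction $\Ker_{\cx}$-closed. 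Thus $\Ker f = \Ker(\iota \circ f) \in \cm$, which establishes that $\cm$ is kernel-closed and completes the coincidence of $(a)$, $(b)$ and $(c)$.

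The three equivalences in the ``in particular'' part then follow formally: each of the stated conditions asserts that the corresponding closure of $\cx$ (the admissible-subobject closure in $\F{1}{\cx}$, the $\Ker_{\cx}$-closure, and the kernel closure) equals $\cx$, and since these three closures have just been shown to coincide, the three conditions are equivalent. I expect the only genuinely substantive point to be the reduction and the monomorphism trick in the middle paragraph: the apparently stronger closure under \emph{all} kernels is deduced from the weaker defining property of being merely $\Ker_{\cx}$-closed, by replacing the target $N$ with an object of $\cx$ along a monomorphism without altering the kernel. No diagram chase or pullback/pushout construction is required here, in contrast with the proof of Proposition~\ref{prop kerx = adsub}.
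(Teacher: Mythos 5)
Your proposal is correct, and for the one substantive inclusion $\clos{\cx}{\kr}{} \subset \clos{\cx}{\kr_{\cx}}{}$ it takes a genuinely different and notably shorter route than the paper. The paper attacks the admissible-subobject closure directly: given $f \colon M \to M'$ with $M, M' \in \clos{\cx}{\ads}{\F{1}{\cx}}$, it factors $f$ through its image, embeds $M$ and $M'$ into objects $X, X' \in \cx$ with cokernels $F, F' \in \F{1}{\cx}$, and then performs three successive pushout constructions, using that $\F{1}{\cx}$ is a torsion-free class of $\ca$ (closed under extensions and subobjects) to place $\Ker f$ in a conflation $0 \to \Ker f \to X \to N \to 0$ with $N \in \F{1}{\cx}$. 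You replace this entire diagram chase by the single observation that postcomposition with a monomorphism does not change kernels: since every $N \in \cm := \clos{\cx}{\kr_{\cx}}{} = \clos{\cx}{\ads}{\F{1}{\cx}}$ (Corollary \ref{cor kerx equal adsub}) admits an admissible monomorphism $\iota \colon N \rightarrowtail X$ in $\F{1}{\cx}$ with $X \in \cx$ — which is in particular a monomorphism in $\ca$, so $\Ker(\iota f) = \Ker f$ — the kernel of an arbitrary $f \colon M \to N$ with $M, N \in \cm$ is the kernel of a morphism from $\cm$ to $\cx$, hence lies in $\cm$ by the defining $\Ker_{\cx}$-closedness of the closure. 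This is valid and not circular: the pushout work was already done once in Proposition \ref{prop kerx = adsub}, and you reuse it only through the corollary (indeed the embedding could equally be extracted from Proposition \ref{prop kerx = adsub} itself). What each approach buys: the paper's argument establishes kernel-closedness of $\clos{\cx}{\ads}{\F{1}{\cx}}$ autonomously and exercises the pushout technique used throughout the paper, whereas your reduction isolates the conceptual point — that $\Ker_{\cx}$-closedness already implies full kernel-closedness because arbitrary targets can be traded for targets in $\cx$ along a monomorphism. Your treatment of the ``in particular'' part is also fine: each condition says the corresponding closure equals $\cx$, so the equivalences follow formally from the coincidence of the three closures, exactly as in the paper.
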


\begin{proof}
By Lemma \ref{cor ker vs kerx} and Corollary \ref{cor kerx equal adsub}, it remains to show that $\clos{\cx}{\kr}{} \subset \clos{\cx}{\ads}{\F{1}{\cx}}$ holds. It suffices to show that $\clos{\cx}{\ads}{\F{1}{\cx}}$ is closed under kernels. Take any morphism $f \colon M \to M'$ in $\ca$ with $M,M' \in \clos{\cx}{\ads}{\F{1}{\cx}}$. We have the following two short exact sequences:
\[
0 \to \Ker f \to M \to \Ima f \to 0 \text{ and } 0 \to \Ima f \to M' \to \Cok f \to 0.
\]
Since both $M$ and $M'$ belong to $\clos{\cx}{\ads}{\F{1}{\cx}}$, there exist some short exact sequences in $\ca$
\[
0 \to M \stackrel{i}{\to} X \to F \to 0 \text{ and } 0 \to M' \stackrel{i'}{\to} X' \to F' \to 0,
\]
where $X, X' \in \cx$ and $F, F' \in \F{1}{\cx}$. 
By pushing out the exact sequence $0 \to \Ker f \to M \to \Ima f \to 0$ by $i$ and the exact sequence $0 \to \Ima f \to M' \to \Cok f \to 0$ by $i'$, we obtain the following commutative diagrams with exact rows and exact columns:

\[
\begin{tikzpicture}[auto]
\node (01) at (-3.6, 0) {$0$}; \node (a1) at (-1.8, 0) {$\Ker f$}; \node (x) at (0, 0) {$X$}; \node (y) at (1.8, 0) {$N$}; \node (02) at (3.6, 0) {$0$};
\node (021) at (-3.6, 1.8) {$0$}; \node (a2) at (-1.8, 1.8) {$\Ker f$}; \node (x1) at (0, 1.8) {$M$}; \node (y1) at (1.8, 1.8) {$\Ima f$}; \node (022) at (3.6, 1.8) {$0$};
\node (031) at (0, 3.6) {$0$}; \node (032) at (1.8, 3.6) {$0$};
\node (y2d) at (0, -1.8) {$F$}; \node (y2) at (1.8, -1.8) {$F$};
\node (001) at (0, -3.6) {$0$}; \node (002) at (1.8, -3.6) {$0$};
\node at (0.9, 0.9) {\large PO};
\node at (4.05, 0.9) {and};

\node (01') at (4.5, 0) {$0$}; \node (a1') at (6.3, 0) {$\Ima f$}; \node (x') at (8.1, 0) {$X'$}; \node (y') at (9.9, 0) {$N'$}; \node (02') at (11.7, 0) {$0$};
\node (021') at (4.5, 1.8) {$0$}; \node (a2') at (6.3, 1.8) {$\Ima f$}; \node (x1') at (8.1, 1.8) {$M'$}; \node (y1') at (9.9, 1.8) {$\Cok f$}; \node (022') at (11.7, 1.8) {$0$};
\node (031') at (8.1, 3.6) {$0$}; \node (032') at (9.9, 3.6) {$0$};
\node (y2d') at (8.1, -1.8) {$F'$}; \node (y2') at (9.9, -1.8) {$F'$};
\node (001') at (8.1, -3.6) {$0$}; \node (002') at (9.9, -3.6) {$0$};
\node at (9.0, 0.9) {\large PO};
\node at (10.1, -3.7) {.};

\draw[->, thick] (01) to (a1); \draw[->, thick] (a1) to (x); \draw[->, thick] (x) to (y); \draw[->, thick] (y) to (02);
\draw[->, thick] (021) to (a2); \draw[->, thick] (a2) to (x1); \draw[->, thick] (x1) to (y1); \draw[->, thick] (y1) to (022);
\draw[double distance = 2pt, thick] (y2d) -- (y2);

\draw[->, thick] (01') to (a1'); \draw[->, thick] (a1') to (x'); \draw[->, thick] (x') to (y'); \draw[->, thick] (y') to (02');
\draw[->, thick] (021') to (a2'); \draw[->, thick] (a2') to (x1'); \draw[->, thick] (x1') to (y1'); \draw[->, thick] (y1') to (022');
\draw[double distance = 2pt, thick] (y2d') -- (y2');

\draw[->, thick] (031) to (x1); \draw[->, thick] (x1) --node[swap] {$i$} (x); \draw[->, thick] (x) to (y2d); \draw[->, thick]  (y2d) to (001);
\draw[->, thick] (032) to (y1); \draw[->, thick] (y1) --node{$j$} (y); \draw[->, thick] (y) to (y2); \draw[->, thick]  (y2) to (002);
\draw[double distance = 2pt, thick] (a1) -- (a2);

\draw[->, thick] (031') to (x1'); \draw[->, thick] (x1') --node[swap] {$i'$} (x'); \draw[->, thick] (x') to (y2d'); \draw[->, thick]  (y2d') to (001');
\draw[->, thick] (032') to (y1'); \draw[->, thick] (y1') -- (y'); \draw[->, thick] (y') to (y2'); \draw[->, thick]  (y2') to (002');
\draw[double distance = 2pt, thick] (a1') -- (a2');
\end{tikzpicture}
\]
 Moreover, by pushing out the exact sequence $0 \to \Ima f \to X' \to N' \to 0$ by $j$, we have the following commutative diagram with exact rows and exact columns:

\[
\begin{tikzpicture}[auto]
\node (01) at (-3.6, 0) {$0$}; \node (a1) at (-1.8, 0) {$N$}; \node (x) at (0, 0) {$L$}; \node (y) at (1.8, 0) {$N'$}; \node (02) at (3.6, 0) {$0$};
\node (021) at (-3.6, 1.8) {$0$}; \node (a2) at (-1.8, 1.8) {$\Ima f$}; \node (x1) at (0, 1.8) {$X'$}; \node (y1) at (1.8, 1.8) {$N'$}; \node (022) at (3.6, 1.8) {$0$};
\node (031) at (-1.8, 3.6) {$0$}; \node (032) at (0, 3.6) {$0$};
\node (y2d) at (-1.8, -1.8) {$F$}; \node (y2) at (0, -1.8) {$F$};
\node (001) at (-1.8, -3.6) {$0$}; \node (002) at (0, -3.6) {$0$};
\node at (-0.9, 0.9) {\large PO};
\node at (0.2, -3.7) {.};

\draw[->, thick] (01) to (a1); \draw[->, thick] (a1) to (x); \draw[->, thick] (x) to (y); \draw[->, thick] (y) to (02);
\draw[->, thick] (021) to (a2); \draw[->, thick] (a2) to (x1); \draw[->, thick] (x1) to (y1); \draw[->, thick] (y1) to (022);
\draw[double distance = 2pt, thick] (y2d) -- (y2);

\draw[->, thick] (031) to (a2); \draw[->, thick] (x1) to (x); \draw[->, thick] (a1) to (y2d); \draw[->, thick]  (y2d) to (001);
\draw[->, thick] (032) to (x1); \draw[->, thick] (a2) --node[swap] {$j$} (a1); \draw[->, thick] (x) to (y2); \draw[->, thick]  (y2) to (002);
\draw[double distance = 2pt, thick] (y1) -- (y);

\end{tikzpicture}
\]
Since $X' \in \cx \subset \F{1}{\cx}$, $F \in \F{1}{\cx}$ and $\F{1}{\cx}$ is a torsion-free class of $\ca$, by the exact sequence $0 \to X' \to L \to F \to 0$, $L$ belongs to $\F{1}{\cx}$ and so does its subobject $N$. 
Hence, by the exact sequence $0 \to \Ker f \to X \to N \to 0$, we have $\Ker f \in \clos{\cx}{\ads}{\F{1}{\cx}}$.
\end{proof}

With Theorem \ref{thm charac of kernel closure}, we can give another proof to Kobayashi-Saito's result.

\begin{theo}[{\cite[Proposition 3.1]{kobayashi2024ke}}] \label{thm KE=2torf}
The following are equivalent for a subcategory $\cx$ of an abelian category $\ca$.
\begin{enumerate}[$(a)$]
\setlength{\parskip}{0pt}
	\item $\cx$ is a $\KE$-closed subcategory of $\ca$.
	\item $\cx$ is a torsion-free class of $\F{1}{\cx}$.
	\item $\cx$ is a $2$-fold torsion-free class of $\ca$.
\end{enumerate}
\end{theo}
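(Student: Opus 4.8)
The plan is to run the cycle of implications $(a) \Rightarrow (b) \Rightarrow (c) \Rightarrow (a)$, leaning on the structural work already done so that each step becomes a short assembly rather than a fresh argument. Before starting, I would record that a $\KE$-closed subcategory $\cx$ is automatically additive: applying closure under extensions to the split sequence $0 \to A \to A \oplus B \to B \to 0$ shows $\cx$ is closed under finite direct sums, and by Remark \ref{rem knE is direct summands closed} it is closed under direct summands, hence contains $0$ (as a summand of any of its objects). This additivity is what licenses the use of the Section \ref{sec 2fold torsionfree classes and kernel closures} results, all of which are stated for additive subcategories.

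Two of the three links are then essentially citations. The equivalence $(b) \Leftrightarrow (c)$ is exactly Proposition \ref{prop charc nfold torf} specialized to $n = 2$, where $\F{n-1}{\cx}$ becomes $\F{1}{\cx}$. The implication $(c) \Rightarrow (a)$ is immediate from Proposition \ref{prop nfold vs Kn-1Eclosed} with $n = 2$: a $2$-fold torsion-free class is $\K^{1}\E$-closed, which is precisely $\KE$-closed.

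The crux is $(a) \Rightarrow (b)$, and I would handle it in two halves. Assuming $\cx$ is $\KE$-closed, it is in particular closed under extensions in $\ca$; since $\cx \subset \F{1}{\cx}$ and $\F{1}{\cx}$ is extension-closed, Lemma \ref{lem extension closed} converts this into closure under conflations in $\F{1}{\cx}$. It remains to see that $\cx$ is closed under admissible subobjects in $\F{1}{\cx}$. Here the real content is already packaged in Theorem \ref{thm charac of kernel closure}: $\cx$ is closed under kernels (being $\KE$-closed), i.e. kernel-closed, and that theorem's ``in particular'' part identifies kernel-closedness with being admissible-subobject-closed in $\F{1}{\cx}$. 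Combining the two halves yields that $\cx$ is a torsion-free class of $\F{1}{\cx}$, which is $(b)$.

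The genuine obstacle of this theorem — that kernel-closedness forces admissible-subobject-closedness inside $\F{1}{\cx}$ — is thus not faced here but is discharged by Theorem \ref{thm charac of kernel closure}, whose proof carried out the delicate pushout/pullback diagram chases. Given that theorem together with Lemma \ref{lem extension closed} and the two propositions of Section \ref{sec nfold torsionfree classes}, the present statement reduces to matching definitions, so I expect the write-up to be short and the only point demanding care to be the preliminary remark that $\cx$ is additive.
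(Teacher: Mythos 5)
Your proof is correct and takes essentially the same route as the paper: $(b)\Leftrightarrow(c)$ via Proposition \ref{prop charc nfold torf}, $(c)\Rightarrow(a)$ via Proposition \ref{prop nfold vs Kn-1Eclosed}, and $(a)\Rightarrow(b)$ by combining Lemma \ref{lem extension closed} with Theorem \ref{thm charac of kernel closure}. Your preliminary check that a $\KE$-closed subcategory is additive is a small but genuine improvement on the paper's write-up, which applies Theorem \ref{thm charac of kernel closure} (stated only for additive subcategories) without verifying this hypothesis.
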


\begin{proof}
The implication $(b) \Longrightarrow (c)$ follows from Proposition \ref{prop charc nfold torf}. The implication $(c) \Longrightarrow (a)$ follows from Proposition \ref{prop nfold vs Kn-1Eclosed}. Hence 
it remains to show $(a) \Longrightarrow (b)$. Assume that $\cx$ is a $\KE$-closed subcategory of $\ca$. 
By Lemma \ref{lem extension closed}, $\cx$ is closed under conflations in $\F{1}{\cx}$. 
Since $\cx$ is closed under kernels, by Theorem \ref{thm charac of kernel closure}, $\cx = \clos{\cx}{\kr}{} = \clos{\cx}{\ads}{\F{1}{\cx}}$ holds, which means that $\cx$ is closed under admissible subobjects of $\F{1}{\cx}$. 
Therefore, $\cx$ is a torsion-free class of $\F{1}{\cx}$.
\end{proof}

The following corollary follows from Theorem \ref{thm KE=2torf}.
\begin{corr}
Let $\ca$ be an abelian category and $\cx$ its subcategory. Then the $\KE$ closure of $\cx$ coincides with the $2$-fold torsion-free closure of $\cx$, that is, $\clos{\cx}{\KE}{} = \F{2}{\cx}$.
\end{corr}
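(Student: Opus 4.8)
The plan is to deduce the corollary formally from Theorem \ref{thm KE=2torf} together with the minimality of the two closures involved. First I would observe that, by Theorem \ref{thm KE=2torf}, for an arbitrary subcategory of $\ca$ the two properties ``being a $\KE$-closed subcategory'' and ``being a $2$-fold torsion-free class'' are equivalent. Consequently the family of $\KE$-closed subcategories of $\ca$ and the family of $2$-fold torsion-free classes of $\ca$ are literally the same collection of subcategories, so the two closures in question are just the least members of one and the same family containing $\cx$; the equality should then be immediate.

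To make this precise via a double inclusion, I would argue as follows. By definition $\F{2}{\cx}$ is a $2$-fold torsion-free class containing $\cx$; applying the implication $(c) \Longrightarrow (a)$ of Theorem \ref{thm KE=2torf}, it is $\KE$-closed, and since $\clos{\cx}{\KE}{}$ is by definition the smallest $\KE$-closed subcategory containing $\cx$, we obtain $\clos{\cx}{\KE}{} \subset \F{2}{\cx}$. Conversely, $\clos{\cx}{\KE}{}$ is a $\KE$-closed subcategory containing $\cx$; by $(a) \Longrightarrow (c)$ of the same theorem it is a $2$-fold torsion-free class, and since $\F{2}{\cx}$ is the smallest $2$-fold torsion-free class containing $\cx$ by Proposition \ref{prop nfold torf closure}, we get $\F{2}{\cx} \subset \clos{\cx}{\KE}{}$. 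Combining the two inclusions yields the claimed equality $\clos{\cx}{\KE}{} = \F{2}{\cx}$.

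The only point requiring a little care -- and what I would regard as the single substantive ingredient rather than a genuine obstacle -- is the existence and minimality of $\clos{\cx}{\KE}{}$, that is, that there really is a smallest $\KE$-closed subcategory containing $\cx$. This follows because an arbitrary intersection of $\KE$-closed subcategories is again $\KE$-closed (both kernels of morphisms and extensions are preserved under intersection), so $\clos{\cx}{\KE}{}$ may be realized as the intersection of all $\KE$-closed subcategories of $\ca$ containing $\cx$; the corresponding minimality of $\F{2}{\cx}$ is already supplied by Proposition \ref{prop nfold torf closure}. Beyond this verification, the argument is a purely formal comparison of two least elements of the same family, and I expect no real difficulty.
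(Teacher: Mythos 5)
Your proof is correct, and it diverges from the paper's in the reverse inclusion. The inclusion $\clos{\cx}{\KE}{} \subset \F{2}{\cx}$ is argued identically in both: $\F{2}{\cx}$ is $\KE$-closed by Theorem \ref{thm KE=2torf} and contains $\cx$, so minimality of the $\KE$-closure gives the inclusion. For $\F{2}{\cx} \subset \clos{\cx}{\KE}{}$, you invoke $(a) \Longrightarrow (c)$ of Theorem \ref{thm KE=2torf} to see that $\clos{\cx}{\KE}{}$ is a $2$-fold torsion-free class containing $\cx$, then appeal directly to Proposition \ref{prop nfold torf closure}, which states that $\F{2}{\cx}$ is the smallest such class. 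The paper instead works through the concrete definition $\F{2}{\cx} = \clos{\cx}{\tf}{\F{1}{\cx}}$: it uses $(a) \Longrightarrow (b)$ to get $\clos{\cx}{\KE}{} \torf \F{1}{\clos{\cx}{\KE}{}}$, observes that $\F{1}{\cx}$ is itself $\KE$-closed so that $\clos{\cx}{\KE}{} \subset \F{1}{\cx} \subset \F{1}{\clos{\cx}{\KE}{}}$, and then applies Lemma \ref{lem torf trans} to conclude $\clos{\cx}{\KE}{} \torf \F{1}{\cx}$, from which the inclusion follows. Your route is shorter and more formal -- it treats the two closures as least elements of one and the same family, with Proposition \ref{prop nfold torf closure} doing all the work; the paper's route is slightly longer but yields the stronger explicit statement $\clos{\cx}{\KE}{} \torf \F{1}{\cx}$, i.e.\ it exhibits the $\KE$-closure as a torsion-free class of the concrete first-level closure $\F{1}{\cx}$, which is of some independent interest. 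Your aside verifying that $\clos{\cx}{\KE}{}$ exists (arbitrary intersections of $\KE$-closed subcategories are $\KE$-closed) is sound and fills in a point the paper leaves implicit in Definition \ref{def closure in abelian cat}.
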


\begin{proof}
	($\subset$): By Theorem \ref{thm KE=2torf}, $\F{2}{\cx}$ is a $\KE$-closed subcategory of $\ca$ containing $\cx$ 
	and hence we have $\clos{\cx}{\KE}{} \subset \F{2}{\cx}$.

	($\supset$): By Theorem \ref{thm KE=2torf}, we obtain $\clos{\cx}{\KE}{} \torf \F{1}{\clos{\cx}{\KE}{}}$. Since $\F{1}{\cx}$ is a torsion-free class of $\ca$, it is also a $\KE$-closed subcategory of $\ca$.
	Hence it holds that $\clos{\cx}{\KE}{} \subset \F{1}{\cx} \subset \F{1}{\clos{\cx}{\KE}{}}$. By Lemma \ref{lem torf trans}, we have $\clos{\cx}{\KE}{} \torf \F{1}{\cx}$.
	Therefore, $\clos{\cx}{\KE}{} \supset \F{2}{\cx}$ holds.
\end{proof}

Since the opposite category of an abelian category is again abelian, we have the following results.

\begin{prop}[{\cite[Proposition 3.2]{kobayashi2024ke}}] \label{prop CE = 2tors}
The following are equivalent for a subcategory $\cx$ of an abelian category $\ca$.
\begin{enumerate}[$(a)$]
\setlength{\parskip}{0pt}
	\item $\cx$ is a $\CE$-closed subcategory of $\ca$.
	\item $\cx$ is a torsion class of $\T{1}{\cx}$.
	\item $\cx$ is a $2$-fold torsion class of $\ca$.
\end{enumerate}
\end{prop}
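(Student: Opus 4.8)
The plan is to derive Proposition \ref{prop CE = 2tors} entirely from Theorem \ref{thm KE=2torf} by passing to the opposite abelian category $\ca^{\mathrm{op}}$, exactly as the remark preceding the statement suggests. The first step is to record the translation dictionary between operations in $\ca$ and in $\ca^{\mathrm{op}}$: a short exact sequence $0 \to A \to B \to C \to 0$ in $\ca$ is the same datum as a short exact sequence $0 \to C \to B \to A \to 0$ in $\ca^{\mathrm{op}}$, so that subobjects and quotients are interchanged, kernels and cokernels are interchanged, while closure under extensions is self-dual. Consequently a subcategory $\cx$ is a torsion class of $\ca$ if and only if it is a torsion-free class of $\ca^{\mathrm{op}}$, and $\cx$ is $\CE$-closed in $\ca$ if and only if it is $\KE$-closed in $\ca^{\mathrm{op}}$; the latter equivalence identifies condition $(a)$ with condition $(a)$ of Theorem \ref{thm KE=2torf} for $\ca^{\mathrm{op}}$.

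Next I would check that this dictionary is compatible with the closure constructions and with the exact-category notions appearing in $(b)$. Since $\clos{\cx}{\ts}{}$ is the smallest torsion class of $\ca$ containing $\cx$, by the previous paragraph it is the smallest torsion-free class of $\ca^{\mathrm{op}}$ containing $\cx$; iterating, $\T{i}{\cx}$ computed in $\ca$ coincides with $\F{i}{\cx}$ computed in $\ca^{\mathrm{op}}$, so in particular $\T{1}{\cx}$ matches $\F{1}{\cx}$. Moreover, conflations in an extension-closed subcategory are self-dual and admissible quotients in $\ca$ correspond to admissible subobjects in $\ca^{\mathrm{op}}$, so a torsion class of $\T{1}{\cx}$ in $\ca$ is precisely a torsion-free class of $\F{1}{\cx}$ in $\ca^{\mathrm{op}}$; this identifies $(b)$ with condition $(b)$ of Theorem \ref{thm KE=2torf} for $\ca^{\mathrm{op}}$.

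Finally I would translate condition $(c)$: a chain $\cx = \ct_{n} \tors \cdots \tors \ct_{1} \tors \ct_{0} = \ca$ in $\ca$ is, term by term, a chain $\cx = \cf_{n} \torf \cdots \torf \cf_{1} \torf \cf_{0} = \ca^{\mathrm{op}}$ in $\ca^{\mathrm{op}}$, so $\cx$ is a $2$-fold torsion class of $\ca$ exactly when it is a $2$-fold torsion-free class of $\ca^{\mathrm{op}}$; this matches $(c)$ of Theorem \ref{thm KE=2torf}. With all three conditions identified with their counterparts in Theorem \ref{thm KE=2torf}, applying that theorem to the abelian category $\ca^{\mathrm{op}}$ and the subcategory $\cx$ yields the desired equivalences.

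There is no genuine mathematical obstacle here; the only point requiring care is the bookkeeping of the duality, namely confirming that being closed under extensions and being a conflation in an extension-closed subcategory are self-dual while sub/quotient and kernel/cokernel are swapped, and that the torsion-closure operation $\T{1}{-}$ really does go over to $\F{1}{-}$ under $(-)^{\mathrm{op}}$. Once this dictionary is in place, the proof is immediate.
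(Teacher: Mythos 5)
Your proposal is correct and coincides with the paper's own approach: the paper derives Proposition \ref{prop CE = 2tors} from Theorem \ref{thm KE=2torf} precisely by passing to the opposite abelian category, stating only that ``the opposite category of an abelian category is again abelian.'' Your write-up simply makes explicit the duality dictionary (sub/quotient, kernel/cokernel swapped; extensions and conflations self-dual; $\T{1}{-}$ in $\ca$ matching $\F{1}{-}$ in $\ca^{\mathrm{op}}$) that the paper leaves implicit, and all of these verifications are accurate.
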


\begin{corr}
Let $\ca$ be an abelian category and $\cx$ its subcategory. Then the $\CE$ closure of $\cx$ coincides with the $2$-fold torsion closure of $\cx$, that is, $\clos{\cx}{\CE}{} = \T{2}{\cx}$.
\end{corr}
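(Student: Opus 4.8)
The plan is to obtain this as the formal dual of the preceding corollary, passing to the opposite category $\ca^{\mathrm{op}}$ so that torsion classes of $\ca$ correspond to torsion-free classes of $\ca^{\mathrm{op}}$ and $\CE$-closed subcategories correspond to $\KE$-closed subcategories. Concretely, the whole argument mirrors the proof that $\clos{\cx}{\KE}{} = \F{2}{\cx}$, with Proposition \ref{prop CE = 2tors} playing the role that Theorem \ref{thm KE=2torf} plays there.

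For the inclusion $\clos{\cx}{\CE}{} \subset \T{2}{\cx}$, I would first note that by Proposition \ref{prop CE = 2tors} every $2$-fold torsion class is in particular a $\CE$-closed subcategory. Hence $\T{2}{\cx}$ is a $\CE$-closed subcategory of $\ca$ containing $\cx$, and since $\clos{\cx}{\CE}{}$ is by definition the smallest such subcategory, the inclusion follows at once.

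For the reverse inclusion $\T{2}{\cx} \subset \clos{\cx}{\CE}{}$, the strategy is to exhibit $\clos{\cx}{\CE}{}$ as a $2$-fold torsion class containing $\cx$ and then invoke the dual of Proposition \ref{prop nfold torf closure}, which identifies $\T{2}{\cx}$ as the smallest such class. By Proposition \ref{prop CE = 2tors}, $\clos{\cx}{\CE}{}$ is a torsion class of its own first torsion closure $\T{1}{\clos{\cx}{\CE}{}}$. The key step is to descend to the smaller ambient category $\T{1}{\cx}$: since $\T{1}{\cx}$ is a torsion class of $\ca$ it is in particular $\CE$-closed, so $\clos{\cx}{\CE}{} \subset \T{1}{\cx} \subset \T{1}{\clos{\cx}{\CE}{}}$, and the dual of Lemma \ref{lem torf trans} then yields $\clos{\cx}{\CE}{} \tors \T{1}{\cx}$. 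This produces the chain $\clos{\cx}{\CE}{} \tors \T{1}{\cx} \tors \ca$, exhibiting $\clos{\cx}{\CE}{}$ as a $2$-fold torsion class, whence $\T{2}{\cx} \subset \clos{\cx}{\CE}{}$ by minimality.

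The only delicate point, and it is bookkeeping rather than a genuine obstacle, is verifying that the auxiliary results used on the torsion-free side dualize verbatim: the transfer statement of Lemma \ref{lem torf trans} and the minimality in Proposition \ref{prop nfold torf closure}. This is automatic because the opposite of an abelian category is again abelian and the defining closure conditions swap consistently under $(-)^{\mathrm{op}}$, so no new argument or computation is required.
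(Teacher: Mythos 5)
Your proof is correct and takes essentially the same route as the paper, which obtains this corollary purely by duality from the $\KE$ case: your argument is exactly the dual of the paper's proof of the corollary to Theorem \ref{thm KE=2torf}, with Proposition \ref{prop CE = 2tors} giving the inclusion $\clos{\cx}{\CE}{} \subset \T{2}{\cx}$ by minimality of the $\CE$-closure, and the dual of Lemma \ref{lem torf trans} yielding $\clos{\cx}{\CE}{} \tors \T{1}{\cx} \tors \ca$ and hence the reverse inclusion by minimality of $\T{2}{\cx}$. No gaps.
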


For $\IKE$-closed ($\ICE$-closed) subcategories, the following results are known. Propositions \ref{prop IKE = serre in torf} and \ref{prop ICE = serre in tors} will be used in Proposition \ref{prop 2fold = nfold}.

\begin{prop}[{\cite[Corollary 3.4]{kobayashi2024ke}}] \label{prop IKE = serre in torf}
The following are equivalent for a subcategory $\cx$ of an abelian category $\ca$.
\begin{enumerate}[$(a)$]
\setlength{\parskip}{0pt}
	\item $\cx$ is an $\IKE$-closed subcategory of $\ca$.
	\item $\cx$ is a Serre subcategory of $\F{1}{\cx}$.
	\item There exists a torsion-free class $\cf$ of $\ca$ such that $\cx$ is a Serre subcategory of $\cf$.
\end{enumerate}
\end{prop}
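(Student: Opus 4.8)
The plan is to establish the cycle $(a)\Rightarrow(b)\Rightarrow(c)\Rightarrow(a)$, leaning on the characterization of $\KE$-closed subcategories already proved in Theorem~\ref{thm KE=2torf} and on the description $\F{1}{\cx}=\bigcup_{n\ge 0}(\clos{\cx}{\sub}{})^{*n}$ coming from Remark~\ref{rem torf clos} together with Definition~\ref{def closure in abelian cat}. The implication $(b)\Rightarrow(c)$ is immediate: I would simply take $\cf:=\F{1}{\cx}$, which is a torsion-free class of $\ca$ by construction, and the hypothesis of $(b)$ says precisely that $\cx$ is a Serre subcategory of this $\cf$.

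For $(c)\Rightarrow(a)$, suppose $\cx$ is a Serre subcategory of some torsion-free class $\cf$ of $\ca$. Closedness under extensions follows from Lemma~\ref{lem extension closed}. Given a morphism $f\colon X\to Y$ with $X,Y\in\cx$, both $\Ker f$ and $\Ima f$ are subobjects of objects of $\cf$, hence lie in $\cf$ since a torsion-free class is closed under subobjects; thus $0\to\Ker f\to X\to\Ima f\to 0$ is a conflation in $\cf$ whose middle term lies in $\cx$. Admissible-subobject closure then yields $\Ker f\in\cx$ and admissible-quotient closure yields $\Ima f\in\cx$, so $\cx$ is $\IKE$-closed.

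The substantial direction is $(a)\Rightarrow(b)$. Since an $\IKE$-closed subcategory is in particular $\KE$-closed, Theorem~\ref{thm KE=2torf} already shows that $\cx$ is a torsion-free class of $\F{1}{\cx}$, that is, closed under conflations and admissible subobjects. It remains to prove closure under admissible quotients, and for this I would isolate the following claim and prove it by induction on $n$: if $g\colon B\twoheadrightarrow C$ is an epimorphism with $B\in\cx$ and $C\in(\clos{\cx}{\sub}{})^{*n}$, then $C\in\cx$. Granting the claim, any conflation $0\to A\to B\to C\to 0$ in $\F{1}{\cx}$ with $B\in\cx$ has $C\in(\clos{\cx}{\sub}{})^{*n}$ for some $n$, whence $C\in\cx$.

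For the claim, the base case $n\le 1$ uses only image-closure: $C$ embeds into some $X\in\cx$, so $C=\Ima(B\to X)$ lies in $\cx$. In the inductive step I would choose a conflation $0\to C'\to C\xrightarrow{\pi}C''\to 0$ with $C'\in\clos{\cx}{\sub}{}$ and $C''\in(\clos{\cx}{\sub}{})^{*(n-1)}$. The composite $\pi g$ is an epimorphism onto $C''$, so by induction $C''\in\cx$; then $K:=\Ker(\pi g)\in\cx$ by kernel-closure, and restricting $g$ gives an epimorphism $K\twoheadrightarrow C'$, so image-closure (using that $C'$ embeds into an object of $\cx$) forces $C'\in\cx$; finally extension-closure applied to $0\to C'\to C\to C''\to 0$ gives $C\in\cx$. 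The main obstacle is exactly this inductive step, where image-, kernel-, and extension-closure must be interlocked along the $*$-filtration of $\F{1}{\cx}$; getting the preimage object $K$ into $\cx$ and recognizing the restricted map $K\twoheadrightarrow C'$ is the delicate bookkeeping.
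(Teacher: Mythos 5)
Your proof is correct, but note that the paper itself offers no argument to compare against: Proposition \ref{prop IKE = serre in torf} is quoted verbatim from \cite[Corollary 3.4]{kobayashi2024ke} and used as a black box (it only enters later, in Proposition \ref{prop 2fold = nfold}), so what you have written supplies a self-contained proof where the paper has none. Your route is the natural one given the paper's toolkit: $(b)\Rightarrow(c)$ and $(c)\Rightarrow(a)$ are as routine as you indicate, and for $(a)\Rightarrow(b)$ you correctly reduce, via Theorem \ref{thm KE=2torf}, to closure under admissible quotients in $\F{1}{\cx}$, which your induction along the filtration $\F{1}{\cx}=\bigcup_{n\ge 0}(\clos{\cx}{\sub}{})^{*n}$ (Remark \ref{rem torf clos} and Definition \ref{def closure in abelian cat}(4)) handles: the interlocking of image-closure (to get $C''\in\cx$), kernel-closure (to get $K=\Ker(\pi g)\in\cx$, noting $K\twoheadrightarrow C'$ is the pullback of the epimorphism $g$ along $C'\rightarrowtail C$, hence epic), image-closure again (to get $C'\in\cx$), and extension-closure all check out. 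One small point deserves a word: the paper defines $\cy^{*(n+1)}:=\cy^{*n}*\cy$, so your decomposition $0\to C'\to C\to C''\to 0$ with $C'\in\clos{\cx}{\sub}{}$ \emph{at the bottom} and $C''\in(\clos{\cx}{\sub}{})^{*(n-1)}$ requires the identity $\cy*\cy^{*(n-1)}=\cy^{*(n-1)}*\cy$, i.e.\ associativity of the $*$-operation (true in any abelian category, but worth citing or proving); alternatively, your induction runs verbatim with the paper's orientation by swapping the roles of the two ends --- image-closure puts $C''\in\clos{\cx}{\sub}{}$ into $\cx$ via $B\twoheadrightarrow C''$, the induction hypothesis applies to $K\twoheadrightarrow C'$ with $C'\in(\clos{\cx}{\sub}{})^{*(n-1)}$, and extension-closure finishes. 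With that one line added, your argument makes the paper independent of \cite{kobayashi2024ke} for this proposition, in the same spirit as the paper's own alternative proof of Theorem \ref{thm KE=2torf}.
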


\begin{prop}[{\cite[Corollary 3.5]{kobayashi2024ke}}] \label{prop ICE = serre in tors}
The following are equivalent for a subcategory $\cx$ of an abelian category $\ca$.
\begin{enumerate}[$(a)$]
\setlength{\parskip}{0pt}
	\item $\cx$ is an $\ICE$-closed subcategory of $\ca$.
	\item $\cx$ is a Serre subcategory of $\T{1}{\cx}$.
	\item There exists a torsion class $\ct$ of $\ca$ such that $\cx$ is a Serre subcategory of $\ct$.
\end{enumerate}
\end{prop}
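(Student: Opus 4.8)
The quickest route is duality. Proposition \ref{prop ICE = serre in tors} is precisely the opposite-category incarnation of Proposition \ref{prop IKE = serre in torf}, so the plan is to deduce it by passing to $\ca^{\mathrm{op}}$. Under the duality $\ca \rightsquigarrow \ca^{\mathrm{op}}$ the relevant notions translate cleanly: images are self-dual (image and coimage coincide in an abelian category), cokernels become kernels, and closure under extensions is self-dual, so an $\ICE$-closed subcategory of $\ca$ is exactly an $\IKE$-closed subcategory of $\ca^{\mathrm{op}}$; a torsion class of $\ca$ is a torsion-free class of $\ca^{\mathrm{op}}$, whence the torsion closure $\T{1}{\cx}$ in $\ca$ becomes the torsion-free closure $\F{1}{\cx}$ in $\ca^{\mathrm{op}}$; and the notion of Serre subcategory of an extension-closed subcategory (Definition \ref{def ex subcat}) is self-dual, since the three defining conditions are symmetric under interchanging admissible subobjects and admissible quotients. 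Thus conditions $(a)$, $(b)$, $(c)$ of Proposition \ref{prop ICE = serre in tors} correspond termwise to conditions $(a)$, $(b)$, $(c)$ of Proposition \ref{prop IKE = serre in torf} read in $\ca^{\mathrm{op}}$, and the equivalence transfers verbatim. The only thing to verify here is this translation dictionary, which is routine, so strictly speaking there is no obstacle once one trusts Proposition \ref{prop IKE = serre in torf}.

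If instead a direct argument is wanted, I would run the cycle $(b)\Rightarrow(c)\Rightarrow(a)\Rightarrow(b)$. For $(b)\Rightarrow(c)$ one takes $\ct=\T{1}{\cx}$, which is a torsion class of $\ca$ containing $\cx$ by construction. For $(c)\Rightarrow(a)$, suppose $\cx$ is a Serre subcategory of a torsion class $\ct$; closure under extensions in $\ca$ is Lemma \ref{lem extension closed}, and for a morphism $f\colon X\to Y$ with $X,Y\in\cx$ the single short exact sequence $0\to\Ima f\to Y\to\Cok f\to 0$ does all the remaining work: its three terms lie in $\ct$ (indeed $Y\in\cx\subseteq\ct$, while $\Ima f$ and $\Cok f$ are quotients of $X,Y\in\ct$ and $\ct$ is quotient-closed), so it is a conflation in $\ct$ with middle term $Y\in\cx$, and admissible-subobject (resp.\ admissible-quotient) closure of the Serre subcategory $\cx$ yields $\Ima f\in\cx$ (resp.\ $\Cok f\in\cx$).

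The substantive direction is $(a)\Rightarrow(b)$, and this is where I expect the real work. Since an $\ICE$-closed subcategory is in particular $\CE$-closed, Proposition \ref{prop CE = 2tors} already gives that $\cx$ is a torsion class of $\T{1}{\cx}$, i.e.\ closed under conflations and admissible quotients there for free; the remaining point is admissible-subobject closure. So let $0\to A\to B\to C\to 0$ be a conflation in $\T{1}{\cx}$ with $B\in\cx$. By Remark \ref{rem torf clos} we have $A\in\T{1}{\cx}=\clos{\clos{\cx}{\quot}{}}{\ext}{}$, hence $A\in(\clos{\cx}{\quot}{})^{*n}$ for some $n$, and I would induct on $n$ over the statement ``any $S\in(\clos{\cx}{\quot}{})^{*n}$ that is a subobject of an object of $\cx$ lies in $\cx$''. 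For $n=1$, $A$ is a quotient of some $X\in\cx$, so the composite $X\twoheadrightarrow A\hookrightarrow B$ has image $A$ with $X,B\in\cx$, and image-closure gives $A\in\cx$. For the inductive step, write $0\to A'\to A\to A''\to 0$ with $A'\in(\clos{\cx}{\quot}{})^{*(n-1)}$ and $A''\in\clos{\cx}{\quot}{}$; then $A'\hookrightarrow B$ gives $A'\in\cx$ by induction, the cokernel $B/A'=\Cok(A'\hookrightarrow B)\in\cx$ by cokernel-closure, and $A''\hookrightarrow B/A'$ gives $A''\in\cx$ by the base case, so $A\in\cx$ by extension-closure. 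The main obstacle is exactly this induction: the interplay of image-, cokernel-, and extension-closure is what upgrades the torsion-class structure of Proposition \ref{prop CE = 2tors} to the full Serre property, and making the filtration of $A$ inside $\T{1}{\cx}$ cooperate with the embedding $A\hookrightarrow B$ is the delicate bookkeeping that the clean duality argument lets us bypass.
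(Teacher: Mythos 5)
Your proposal is correct, and it in fact contains two complete arguments, so let me compare both with what the paper does. The paper gives no proof of Proposition \ref{prop ICE = serre in tors} at all: it cites \cite[Corollary 3.5]{kobayashi2024ke}, and the only in-paper justification for this dual family of statements is the remark that the opposite category of an abelian category is again abelian — which is precisely your first paragraph. Your translation dictionary is the right one (image and coimage agree in an abelian category, so image-closure is self-dual; cokernels dualize to kernels; torsion classes to torsion-free classes, hence $\T{1}{\cx}$ to $\F{1}{\cx}$; and the Serre condition of Definition \ref{def ex subcat} is symmetric in admissible subobjects and admissible quotients), so the reduction to Proposition \ref{prop IKE = serre in torf} is sound and matches the paper's implicit route. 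Your direct cycle goes beyond what the paper records and is also correct: in $(c)\Rightarrow(a)$ the single conflation $0 \to \Ima f \to Y \to \Cok f \to 0$ in $\ct$ does handle images and cokernels simultaneously (all three terms lie in $\ct$ by quotient-closure, and the Serre property of $\cx$ applies since $Y \in \cx$), and in $(a)\Rightarrow(b)$ your induction over $\T{1}{\cx} = \bigcup_{n \geq 0}(\clos{\cx}{\quot}{})^{*n}$ (Remark \ref{rem torf clos} and Definition \ref{def closure in abelian cat}(4)) is valid: the base case uses image-closure on the composite $X \twoheadrightarrow A \hookrightarrow B$, and the inductive step correctly replaces $A \hookrightarrow B$ by $A/A' \hookrightarrow B/A'$, where $A' \in \cx$ by the induction hypothesis and $B/A' \in \cx$ by cokernel-closure, before concluding with extension-closure. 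Note that your induction proves something slightly stronger than admissible-subobject closure — namely that every subobject of an object of $\cx$ that lies in $\T{1}{\cx}$ already lies in $\cx$ — which is harmless and is essentially the mechanism behind the original Kobayashi--Saito argument. What the duality route buys is brevity, at the cost of trusting Proposition \ref{prop IKE = serre in torf}; what your direct cycle buys is a proof self-contained in this paper's toolkit (Lemma \ref{lem extension closed}, Proposition \ref{prop CE = 2tors}, Remark \ref{rem torf clos}) that outsources neither statement.
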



\section{$n$-fold torsion(-free) classes from the viewpoint of hereditary cotorsion pairs} \label{sec nfold torsionfree classes from the viewpoint of hereditary cotorsion pairs}
The goal of this section is to prove that Question \ref{ques the converse} is true for the left subcategory of a hereditary cotorsion pair.
First, we recall hereditary cotorsion pairs.
\begin{defi}
Let $\ca$ be an abelian category. A \textit{cotorsion pair} in $\ca$ is a pair $(\cx, \cy)$ of subcategories of $\ca$ such that 
\[
\cx = \{ M \in \ca \mid \Ext^{1}_{\ca}(M, Y) = 0 \text{ for any } Y \in \cy \},
\]
\[
\cy = \{ M \in \ca \mid \Ext^{1}_{\ca}(\cx, M) = 0 \text{ for any } X \in \cx \}.
\]
A cotorsion pair $(\cx, \cy)$ in $\ca$ is \textit{hereditary} if $\Ext^{>0}_{\ca}(X, Y) = 0$ for all $X \in \cx$ and $Y \in \cy$.
\end{defi}

There are a lot of examples of hereditary cotorsion pairs.
\begin{exam} \label{ex cotorsion pair}
Let $R$ be a ring.
\begin{enumerate}[(1)]
	\item We denote by $\Proj R$ ($\resp$ $\Inj R$) the subcategory of all projective ($\resp$ injective) modules. 
	Then $(\Proj R, \Mod R)$ and $(\Mod R, \Inj R)$ are hereditary cotorsion pairs in $\Mod R$.
	\item A right $R$-module $M$ is called \textit{cotorsion} if $\Ext^{1}_{R}(\Flat R, M) = 0$, where $\Flat R$ is the subcategory of flat modules. We denote by $\Cot R$ the subcategory of cotorsion right $R$-modules. 
	It is shown that $(\Flat R, \Cot R)$ is a hereditary cotorsion pair in $\Mod R$; see \cite[the proof of Proposition 3.1.2, Lemma 3.4.1]{xu2006flat}. 
	\item Assume that $R$ is \textit{Iwanaga-Gorenstein}, that is, $R$ is a two-sided noetherian ring with $\id R_{R} = \id {}_{R}R < \infty$. 
	A finitely generated right $R$-module $M$ is called \textit{Gorenstein-projective} (or \textit{Cohen-Macaulay}) if $\Ext^{>0}_{R}(M_{R}, R_{R}) = 0$. We denote by $\gproj R$ the subcategory of finitely generated Gorenstein-projective right $R$-modules. 
	It is known that $(\gproj R, \fpd R)$ is a hereditary cotorsion pair in $\md R$, where $\fpd R$ is the subcategory of finite projective-dimensional right $R$-modules of $\md R$; 
	see \cite[Theorem 6.2.4]{krause2021homological}.
\end{enumerate}
\end{exam}
 
A resolving subcategory is a generalization of the subcategory of projective modules and we can take a resolution of a module with respect to the subcategory.
The notion of resolving subcategories is closely related to that of hereditary cotorsion pairs in the sense of Proposition \ref{prop chrac of hereditary}.

\begin{defi} [{\cite[Chapter 3]{auslander1991applications}}] \label{def resolving}
Let $\ca$ be an abelian category with enough projectives and injectives. 
A subcategory $\cx$ of $\ca$ is called \textit{resolving} ($\resp$ \textit{coresolving}) if it satisfies the following conditions:
\begin{enumerate}[(1)]
\setlength{\parskip}{0pt}
	\item $\cx$ contains all projective ($\resp$ injective) objects of $\ca$.
	\item $\cx$ is closed under extensions.
	\item $\cx$ is closed under direct summands.
	\item $\cx$ is closed under epi-kernels ($\resp$ mono-cokernels).
\end{enumerate}
\end{defi}

Let $\ca$ be an abelian category with enough projectives and injectives, $\cx$ its subcategory and $M$ an object of $\ca$. 
If $\cx$ is resolving, then an exact sequence $\cdots \to X_{n} \to \cdots \to X_{1} \to X_{0} \to M \to 0$ in $\ca$ is called 
\textit{$\cx$-resolution} of $M$ if all $X_{i}$ belong to $\cx$.
Since $\cx$ contains all projectives of $\ca$, there exists some $\cx$-resolution of $M$.
We inductively define the \textit{$\cx$-resolution dimension of $M$}, denoted by $\rdim_{\cx} M$, as follows. 
If $M \in \cx$, then  $\rdim_{\cx} M = 0$. If $d \geq 1$, $\rdim_{\cx} M \leq d$ 
if there exists an exact sequence $0 \to X_{d} \to X_{d-1} \to \cdots \to X_{0} \to M \to 0$ with each $X_{j} \in \cx$ for $j \in \{0, 1, \cdots, d\}$. 
The \textit{$\cx$-resolution dimension of $\ca$}, denoted by $\rdim_{\cx} \ca$, is defined by $\rdim_{\cx} \ca := \sup \{ \rdim_{\cx}M \mid M \in \ca \}$.
Dually, if $\cx$ is coresolving, we can define the \textit{$\cx$-coresolution dimension of an object $M \in \ca$ or the category $\ca$}, denoted by $\crdim_{\cx} M$ or $\crdim_{\cx} \ca$ respectively.

The following proposition would be well-known.

\begin{prop}[{\cite[Proposition 6.17]{stovicek2014}}] \label{prop chrac of hereditary}
Let $\ca$ be an abelian category with enough projectives and injectives and $(\cx, \cy)$ a cotorsion pair in $\ca$. The cotorsion pair $(\cx, \cy)$ is hereditary if and only if $\cx$ is resolving if and only if $\cy$ is coresolving.
\end{prop}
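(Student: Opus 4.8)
The plan is to reduce the statement to a single equivalence and then invoke duality. The key observation is that, for \emph{any} cotorsion pair $(\cx, \cy)$, the left subcategory $\cx = \{M \in \ca \mid \Ext^{1}_{\ca}(M, Y) = 0 \text{ for all } Y \in \cy\}$ automatically satisfies three of the four axioms of a resolving subcategory: it contains all projectives (since $\Ext^{1}_{\ca}(P, -) = 0$ for projective $P$), it is closed under extensions (apply the long exact sequence of $\Ext^{1}_{\ca}(-, Y)$ to a conflation whose outer terms lie in $\cx$), and it is closed under direct summands (by additivity of $\Ext^{1}_{\ca}(-, Y)$ in the first variable). Hence $\cx$ is resolving if and only if it is in addition closed under epi-kernels, and the whole statement reduces to proving that $(\cx, \cy)$ is hereditary if and only if $\cx$ is closed under epi-kernels.

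For the implication hereditary $\Longrightarrow$ epi-kernel-closed, I would take a short exact sequence $0 \to K \to X_{1} \to X_{0} \to 0$ with $X_{0}, X_{1} \in \cx$ and, for each $Y \in \cy$, apply $\Ext_{\ca}^{\ast}(-, Y)$ to obtain the exact piece
\[
\Ext^{1}_{\ca}(X_{1}, Y) \to \Ext^{1}_{\ca}(K, Y) \to \Ext^{2}_{\ca}(X_{0}, Y).
\]
The left term vanishes because $X_{1} \in \cx$, and the right term vanishes by hereditariness, so $\Ext^{1}_{\ca}(K, Y) = 0$ for all $Y \in \cy$; that is, $K \in \cx$.

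The converse, epi-kernel-closed $\Longrightarrow$ hereditary, is where the real work lies. I would prove $\Ext^{j}_{\ca}(X, Y) = 0$ for all $j \geq 1$, $X \in \cx$, $Y \in \cy$, by induction on $j$; the case $j = 1$ is exactly the defining property of the cotorsion pair. For the inductive step, since $\ca$ has enough projectives I choose a short exact sequence $0 \to \Omega X \to P \to X \to 0$ with $P$ projective. As $P, X \in \cx$ and $P \twoheadrightarrow X$ is an epimorphism, closure under epi-kernels gives $\Omega X \in \cx$. Dimension shifting along this sequence (using $\Ext^{>0}_{\ca}(P, -) = 0$) yields $\Ext^{j+1}_{\ca}(X, Y) \cong \Ext^{j}_{\ca}(\Omega X, Y)$, which vanishes by the induction hypothesis applied to $\Omega X \in \cx$. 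I expect this bootstrapping to be the crux: closure under epi-kernels is precisely what guarantees that syzygies stay inside $\cx$, which is exactly what allows the vanishing of first extensions to propagate to all higher extensions.

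Finally, the equivalence with ``$\cy$ is coresolving'' follows by duality. Passing to the opposite category $\ca^{\mathrm{op}}$, which again has enough projectives and injectives with the two classes interchanged, the pair $(\cy^{\mathrm{op}}, \cx^{\mathrm{op}})$ is a cotorsion pair, the hereditary condition is self-dual, and $\cy$ being coresolving in $\ca$ is the same as $\cy^{\mathrm{op}}$ being resolving in $\ca^{\mathrm{op}}$. Applying the already-established equivalence in $\ca^{\mathrm{op}}$ then gives that $\cy$ is coresolving if and only if $(\cx, \cy)$ is hereditary, completing the chain of equivalences.
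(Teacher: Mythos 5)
Your proposal is correct and follows essentially the same route as the paper: the forward direction uses the identical exact piece $\Ext_{\ca}^{1}(X_{1}, Y) \to \Ext_{\ca}^{1}(K, Y) \to \Ext_{\ca}^{2}(X_{0}, Y)$, and the converse is the paper's syzygy/dimension-shifting argument, merely phrased as an induction on $j$. Your opening reduction (noting that containment of projectives, extension-closure and summand-closure hold for \emph{any} cotorsion pair, so everything hinges on epi-kernel closure) is a tidy repackaging rather than a new idea, and your explicit opposite-category argument simply makes precise the paper's closing remark that the other equivalence is shown similarly.
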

\begin{proof}
Assume that a pair $(\cx, \cy)$ is hereditary cotorsion pair. We prove the first equivalent condition. We show that $\cx$ satisfy the conditions (1), (2), (3) and (4) of Definition \ref{def resolving}.

$\underline{(1).}$ This follows from the $\mathrm{Ext}^{1}$-vanishing property of projective objects.

$\underline{(2).}$ Take any exact sequence $0 \to X_{1} \to E \to X_{2} \to 0$ with $X_{i} \in \cx$ for $i \in \{1, \  2\}$. It is enough to show that $\Ext_{\ca}^{1}(E, Y) =0$ for any $Y \in \cy$. 

Applying the above exact sequence to the functor $\Ext_{\ca}^{1} (-, Y)$, we have the following exact sequence:
\[
\Ext_{\ca}^{1}(X_{2}, Y) \to \Ext_{\ca}^{1}(E, Y)  \to \Ext_{\ca}^{1}(X_{1}, Y). 
\]
Now, since $(\cx, \cy)$ is a cotorsion pair, the right term and left term of the exact sequence are zero. Thus it holds that $\Ext_{\ca}^{1}(E, Y) = 0$.

$\underline{(3).}$ Let $X_{1} \oplus X_{2} \in \cx$. We have

\[
\Ext_{\ca}^{1} (X_{1}, Y) \oplus \Ext_{\ca}^{1}(X_{2}, Y) \cong \Ext_{\ca}^{1} (X_{1} \oplus X_{2}, Y) = 0
\]
for any $Y \in \cy$.
Therefore, both $X_{1}$ and $X_{2}$ belong to $\cx$ by the definition of cotorsion pairs.

$\underline{(4).}$ Take any exact sequence $0 \to K \to X_{1} \to X_{0} \to 0$ with $X_{i} \in \cx$ for $i \in \{0, 1\}$. The following are exact:
\[
\Ext_{\ca}^{1}(X_{1}, Y) \to \Ext_{\ca}^{1}(K, Y)  \to \Ext_{\ca}^{2}(X_{0}, Y) 
\]
for all $Y \in \cy$.
Since $(\cx, \cy)$ is a hereditary cotorsion pair, the right term and left term of the exact sequence vanish and so does the middle term.

Conversely, assume that $\cx$ is resolving. Let $X \in \cx$ and $Y \in \cy$. Take a projective resolution 
\[
\cdots \xrightarrow{d_{n+1}} P_{n} \xrightarrow{d_{n}} \cdots \xrightarrow{d_{2}} P_{1} \xrightarrow{d_{1}} P_{0} \xrightarrow{d_{0}} X \to 0
\]
of $X$. Then we have the exact sequences
\[
\Ext_{\ca}^{j}(X, Y) \cong \Ext_{\ca}^{j-1}(\Omega^{1} X, Y) \cong \cdots \cong \Ext_{\ca}^{1}(\Omega^{j-1}X, Y)
\]
for $j > 0$, where $\Omega^{k}X := \Ima d_{k}$ for $k \geq 0$. Since $\cx$ contains all projective objects and it is closed under epi-kernels, 
each $\Omega^{k}X$ belongs to $\cx$ and hence $\Ext_{\ca}^{j}(X, Y) \cong  \Ext_{\ca}^{1}(\Omega^{j-1}X, Y) = 0$ holds. 
This completes the proof.

One can show the other equivalence similarly.
\end{proof}

\begin{exam} \label{ex resolving}
Let $R$ be a ring. By Example \ref{ex cotorsion pair} and Proposition \ref{prop chrac of hereditary}, the following hold:
\begin{enumerate}[(1)]
	\item $\Proj R$ ($\resp$ $\Inj R$) is a resolving ($\resp$ coresolving) subcategory of $\Mod R$. For $M \in \Mod R$, $\rdim_{\Proj R} M = \pd M$. ($\resp$ $\crdim_{\Inj R} M = \id M$.) Therefore, the $(\Proj R)$-resolving dimension of $\Mod R$ ($\resp$ $(\Inj R)$-coresolving dimension) is nothing but the right global dimension of $R$.
	\item $\Flat R$ is a resolving subcategory of $\Mod R$. It is clear that $\rdim_{\Flat R} M = \fd M$ for $M \in \Mod R$ and hence the $(\Flat R)$-resolving dimension of $\Mod R$ coincides with the weak global dimension of $R$.
	\item Assume that $R$ is Iwanaga-Gorenstein. Then $\gproj R$ is a resolving subcategory of $\md R$. Then it holds that $\rdim_{\gproj R} \md R = \id R_{R} = \id {}_{R}R$; see, for example, \cite[Corollary 12.3.2]{enochs2011relative}.
\end{enumerate}
\end{exam}

\begin{lemm} \label{lem hereditary}
Let $\ca$ be an abelian category with enough projectives and injectives and $(\cx, \cy)$ a hereditary cotorsion pair in $\ca$. For all $j \geq 0$, put 
\[
\cf_{j} := \{M \in \ca \mid \Ext_{\ca}^{> j}(M, \cy) = 0 \} \subset \cx.
\]
Then the following hold$\colon$
\begin{enumerate}[$(a)$]
\setlength{\itemsep}{0pt} 
	\item $\cf_{k}$ is a torsion-free class of $\cf_{k+1}$ for $k  \geq 0$.
	\item $\cf_{j} = \{M \in \ca \mid \rdim_{\cx}M \leq j \}$ for $j \geq 0$.
\end{enumerate}
\end{lemm}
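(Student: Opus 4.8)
The plan is to prove (b) first, since (a) will then follow from essentially the same long-exact-sequence bookkeeping. Throughout I will use that heredity forces $\cf_{0} = \cx$: the cotorsion pair gives $\cx = \{M \mid \Ext^{1}_{\ca}(M,\cy) = 0\}$, while heredity upgrades this to $\cx = \{M \in \ca \mid \Ext^{>0}_{\ca}(M,\cy) = 0\}$, which is exactly $\cf_{0}$. By Proposition \ref{prop chrac of hereditary} the subcategory $\cx$ is resolving; in particular it contains the projectives and is closed under epi-kernels, which is what makes $\cx$-resolutions and syzygies behave well.

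For (b) I would argue by dimension shifting in both directions. To show $\rdim_{\cx} M \le j \Rightarrow M \in \cf_{j}$, I induct on $j$: the base case $j=0$ is heredity, and for the step I cut an $\cx$-resolution of length $j$ into a short exact sequence $0 \to K \to X_{0} \to M \to 0$ with $X_{0} \in \cx$ and $\rdim_{\cx} K \le j-1$, then read off $\Ext^{i+1}_{\ca}(M,Y) = 0$ for $i \ge j$ from the segment $\Ext^{i}_{\ca}(K,Y) \to \Ext^{i+1}_{\ca}(M,Y) \to \Ext^{i+1}_{\ca}(X_{0},Y)$, the left term vanishing by the induction hypothesis on $K$ and the right term because $X_{0} \in \cx$. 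Conversely, given $M \in \cf_{j}$, I take a projective resolution and set $K := \Omega^{j} M$; shifting across the short exact sequences $0 \to \Omega^{k+1}M \to P_{k} \to \Omega^{k}M \to 0$, with each $P_{k}$ projective hence in $\cx$, yields $\Ext^{1}_{\ca}(\Omega^{j}M, Y) \cong \Ext^{j+1}_{\ca}(M,Y) = 0$, so $K \in \cx$ and $0 \to K \to P_{j-1} \to \cdots \to P_{0} \to M \to 0$ witnesses $\rdim_{\cx} M \le j$.

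For (a) I first record the two closure properties read off from long exact sequences. The chain is increasing, $\cf_{k} \subseteq \cf_{k+1}$, since vanishing of $\Ext^{>k}_{\ca}(M,\cy)$ implies vanishing of $\Ext^{>k+1}_{\ca}(M,\cy)$; and each $\cf_{j}$ is closed under extensions in $\ca$, because applying $\Ext^{i}_{\ca}(-,Y)$ to $0 \to A \to B \to C \to 0$ with $A,C \in \cf_{j}$ gives $\Ext^{i}_{\ca}(B,Y) = 0$ for $i > j$. In particular $\cf_{k+1}$ is extension-closed, so ``torsion-free class of $\cf_{k+1}$'' is meaningful (Definition \ref{def ex subcat}), and by Lemma \ref{lem extension closed} closure under extensions in $\ca$ coincides with closure under conflations in $\cf_{k+1}$; thus $\cf_{k}$ is conflation-closed in $\cf_{k+1}$. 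It then remains to prove $\cf_{k}$ is closed under admissible subobjects in $\cf_{k+1}$: for a conflation $0 \to A \to B \to C \to 0$ with all terms in $\cf_{k+1}$ and $B \in \cf_{k}$, the segment $\Ext^{i}_{\ca}(B,Y) \to \Ext^{i}_{\ca}(A,Y) \to \Ext^{i+1}_{\ca}(C,Y)$ has both outer terms zero for $i > k$ (the left because $B \in \cf_{k}$, the right because $i+1 > k+1$ and $C \in \cf_{k+1}$), forcing $A \in \cf_{k}$. Together these give the two defining conditions of a torsion-free class.

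No step is genuinely deep; the only thing demanding care is the index bookkeeping in (b), namely tracking which $\Ext$-degrees vanish when shifting through syzygies and invoking heredity exactly where the intermediate terms $X_{i}$ (resp.\ $P_{k}$) sit in the long exact sequence. I expect that to be the main place where an off-by-one slip could occur, and I would pin down the indices explicitly rather than rely on the informal phrase ``dimension shifting''.
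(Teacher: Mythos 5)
Your proposal is correct and takes essentially the same route as the paper's proof: both parts rest on dimension shifting along resolutions with terms in $\cx$ (heredity supplying $\Ext_{\ca}^{>0}(\cx,\cy)=0$) together with long exact sequences of $\Ext$, and your inductive cutting of the resolution in (b) and checking of all degrees $i>k$ in (a) are only cosmetic repackagings of the paper's direct shift isomorphisms and its reduction to the single degree $k+1$. No gap.
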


\begin{proof}
$(a)$ It is easy to show $\cf_{k}$ is closed under extensions in $\ca$ for each $k \geq 0$. 
Thus it follows from Lemma \ref{lem extension closed} that $\cf_{k}$ is closed under conflations in $\cf_{k+1}$. 
In the rest, we show that $\cf_{k}$ is closed under admissible subobjects in $\cf_{k + 1}$. Let $0 \to S \to F_{k} \to F_{k+1} \to 0$ be a conflation in $\cf_{k+1}$ such that $F_{k} \in \cf_{k}$. 
By the definition of $\cf_{k}$, it is enough to show that $\Ext_{\ca}^{k+1}(S, \cy) = 0$. This follows from the long exact sequence of $\mathrm{Ext}$.

$(b)$ $(\subset)$: Take any $F \in \cf_{j}$ and fix a $\cx$-resolution ($\cdots \stackrel{d_{n+1}}{\to} X_{n} \stackrel{d_{n}}{\to} \cdots \stackrel{d_{2}}{\to} X_{1} \stackrel{d_{1}}{\to} X_{0} \stackrel{d_{0}}{\to} F \to 0 $) of $F$. Then we have the exact sequences
\[
\Ext_{\ca}^{k}(\Omega^{j}F, Y) \cong \Ext_{\ca}^{k+1}(\Omega^{j-1} F, Y) \cong \cdots \cong \Ext_{\ca}^{j+k}(F, Y) = 0
\]
for any $k > 0$ and $Y \in \cy$, where $\Omega^{l}X := \Ima d_{l}$ for $l \geq 0$. This means that $\Omega^{j}F$ belongs to $\cx$ and hence $\rdim_{\cx} F \leq j$. 

$(\supset)$: Assume that $M$ is an object of $\ca$ such that $\rdim_{\cx} M \leq j$. By the assumption, there exists an  $\cx$-resolution $0 \to  X_{j} \to \cdots \to X_{1} \to X_{0} \to M \to 0 $. Similarly, we have
\[
\Ext_{\ca}^{k}(M, Y) \cong \Ext_{\ca}^{k-j}(X_{j}, Y) = 0
\]
for all $k > j$ and $Y \in \cy$. Therefore, we obtain $M \in \cf_{j}$.
\end{proof}

Now, we prove the main theorem of this section.
\begin{theo} \label{thm resolving}
Let $\ca$ be an abelian category with enough projectives and injectives, $(\cx, \cy)$ a hereditary cotorsion pair in $\ca$ and $n$ a positive integer. Then the following are equivalent$\colon$
\begin{enumerate}[$(a)$]
\setlength{\parskip}{0pt}
	\item $\cx$ is an $n$-fold torsion-free class of $\ca$.
	\item $\cx$ is a $\K^{(n-1)}\E$-closed subcategory of $\ca$.
	\item $\rdim_{\cx} \ca \leq n$.
\end{enumerate}
Dually, the following are equivalent$\colon$
\begin{enumerate}[$(a)^{\mathrm{op}}$]
\setlength{\parskip}{0pt}
	\item $\cy$ is an $n$-fold torsion class of $\ca$.
	\item $\cy$ is a $\Cc^{(n-1)}\E$-closed subcategory of $\ca$.
	\item $\crdim_{\cy} \ca \leq n$.
\end{enumerate}
\end{theo}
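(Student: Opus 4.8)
The plan is to prove the cycle $(a)\Rightarrow(b)\Rightarrow(c)\Rightarrow(a)$ for $\cx$, and then obtain the dual equivalences for $\cy$ by passing to the opposite category $\ca^{\mathrm{op}}$. The implication $(a)\Rightarrow(b)$ requires nothing about the cotorsion pair: it is exactly Proposition \ref{prop nfold vs Kn-1Eclosed}, which already asserts that every $n$-fold torsion-free class is $\K^{(n-1)}\E$-closed.

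The heart of the argument is $(b)\Rightarrow(c)$. First I would record that $\cx$ contains every projective object: since $(\cx,\cy)$ is hereditary, $\cx$ is resolving by Proposition \ref{prop chrac of hereditary}, so $\Proj\ca\subset\cx$. Now fix an arbitrary $M\in\ca$, choose a projective resolution $\cdots\to P_{1}\to P_{0}\to M\to 0$, and let $\Omega^{n}M$ denote its $n$-th syzygy. Truncating gives an exact sequence
\[
0\to\Omega^{n}M\to P_{n-1}\to P_{n-2}\to\cdots\to P_{0}
\]
whose $n$ nonzero terms $P_{n-1},\dots,P_{0}$ all lie in $\cx$. This is precisely an exact sequence of the form $0\to N\to X^{0}\to\cdots\to X^{n-1}$ exhibiting $N=\Omega^{n}M$ as an $(n-1)$-kernel of objects of $\cx$; since $\cx$ is $\K^{(n-1)}\E$-closed it is closed under $(n-1)$-kernels, so $\Omega^{n}M\in\cx$. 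Consequently $0\to\Omega^{n}M\to P_{n-1}\to\cdots\to P_{0}\to M\to 0$ is an $\cx$-resolution of length at most $n$, giving $\rdim_{\cx}M\le n$; as $M$ is arbitrary, $\rdim_{\cx}\ca\le n$.

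For $(c)\Rightarrow(a)$ I would feed the dimension bound into Lemma \ref{lem hereditary}. With $\cf_{j}=\{M\in\ca\mid\Ext_{\ca}^{>j}(M,\cy)=0\}$, heredity of the cotorsion pair gives $\cf_{0}=\cx$ (for $M\in\cx$ one has $\Ext_{\ca}^{>0}(M,\cy)=0$ by heredity, and the reverse inclusion is immediate from the definition of a cotorsion pair), while $\rdim_{\cx}\ca\le n$ together with Lemma \ref{lem hereditary}$(b)$ gives $\cf_{n}=\ca$. By Lemma \ref{lem hereditary}$(a)$ each $\cf_{k}$ is a torsion-free class of $\cf_{k+1}$, so
\[
\cx=\cf_{0}\torf\cf_{1}\torf\cdots\torf\cf_{n}=\ca
\]
is, after the cosmetic reindexing $\cf_{i}\mapsto\cf_{n-i}$ demanded by Definition \ref{def nfold torf}, a chain witnessing that $\cx$ is an $n$-fold torsion-free class.

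The delicate point, and the only place where care is genuinely needed, is the index bookkeeping in $(b)\Rightarrow(c)$: the definition of $\K^{(n-1)}\E$-closedness concerns exact sequences $0\to N\to X^{0}\to\cdots\to X^{n-1}$ with exactly $n$ terms from $\cx$, and this is precisely the length of the truncated resolution whose leftmost term is the $n$-th syzygy, so the homological index and the combinatorial length match up. An off-by-one would invalidate the step; everything else is formal. Finally, the dual statement follows automatically: in $\ca^{\mathrm{op}}$ the pair $(\cy,\cx)$ is again a hereditary cotorsion pair with left subcategory $\cy$, and the three notions in $(a)$--$(c)$ turn into those in $(a)^{\mathrm{op}}$--$(c)^{\mathrm{op}}$ ($n$-fold torsion classes, $\Cc^{(n-1)}\E$-closedness, and $\crdim_{\cy}\ca$), so applying the already-proven equivalence to $\ca^{\mathrm{op}}$ yields $(a)^{\mathrm{op}}\Leftrightarrow(b)^{\mathrm{op}}\Leftrightarrow(c)^{\mathrm{op}}$.
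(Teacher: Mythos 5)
Your proposal is correct and follows essentially the same route as the paper: $(a)\Rightarrow(b)$ via Proposition \ref{prop nfold vs Kn-1Eclosed}, $(b)\Rightarrow(c)$ by truncating a resolution and applying closure under $(n-1)$-kernels to the $n$-th syzygy (the paper uses an arbitrary $\cx$-resolution, which exists precisely because $\cx$ is resolving by Proposition \ref{prop chrac of hereditary}, whereas you specialize to a projective resolution --- an immaterial variation since $\Proj\ca\subset\cx$), and $(c)\Rightarrow(a)$ via the filtration $\cf_{j}=\{M\in\ca\mid\Ext_{\ca}^{>j}(M,\cy)=0\}$ of Lemma \ref{lem hereditary}, with the dual statements obtained by passing to $\ca^{\mathrm{op}}$. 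Your index bookkeeping in $(b)\Rightarrow(c)$ and the identification $\cf_{0}=\cx$ are both verified correctly, so there is no gap.
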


\begin{proof}

\underline{$(a) \Longrightarrow (b)$.} It follows from Proposition \ref{prop nfold vs Kn-1Eclosed}. 

\underline{$(b) \Longrightarrow (c)$.} Assume that $\cx$ is a $\K^{(n-1)}\E$-closed subcategory of $\ca$. 
Let $M \in \ca$. By Proposition \ref{prop chrac of hereditary}, $\cx$ is a resolving subcategory of $\ca$ and hence we can take an $\cx$-resolution ($\cdots \stackrel{d_{n}}{\to} X_{n-1} \to \cdots \to X_{1} \to X_{0} \to M \to 0 $) of $M$. Now, since $\cx$ is closed under $(n-1)$-kernels, $\Ima d_{n} \in \cx$. Therefore, $\rdim_{\cx} M \leq n$.

\underline{$(c) \Longrightarrow (a)$.} Assume that $\rdim_{\cx} \ca \leq n$. By Lemma \ref{lem hereditary}, we have
\[
\cx = \cf_{0} \torf \cf_{1} \torf \cdots \torf \cf_{n-1} \torf \cf_{n} = \ca,
\]
where $\cf_{j} := \{M \in \ca \mid \Ext_{\ca}^{> j}(M, \cy) = 0 \} = \{M \in \ca \mid \rdim_{\cx}M \leq j \}$ for any nonnegative integer $j$.
Thus $\cx$ is an $n$-fold torsion-free class of $\ca$.

By the dual argument, equivalence between $(a)^{\mathrm{op}}$, $(b)^{\mathrm{op}}$ and $(c)^{\mathrm{op}}$ follows.
\end{proof}

\begin{exam} \label{ex KnE}
Let $R$ be a ring and $n$ a positive integer. By using Theorem \ref{thm resolving}, we have the following:

\begin{enumerate}[(1)]
\setlength{\parskip}{0pt}
	\item \textit{The following are equivalent}$\colon$
		\begin{enumerate}[$(a)$]
			\item \textit{$\Proj R$ is an $n$-fold torsion-free class of $\Mod R$.}
			\item \textit{$\Proj R$ is a $\K^{(n-1)}\E$-closed subcategory of $\Mod R$.}
			\item \textit{$\Inj R$ is an $n$-fold torsion class of $\Mod R$.}
			\item \textit{$\Inj R$ is a $\Cc^{(n-1)}\E$-closed subcategory of $\Mod R$.}
			\item \textit{$\rgldim R \leq n$.}

		\end{enumerate}
	\item \textit{The following are equivalent}$\colon$
		\begin{enumerate}[$(a)$]
			\item \textit{$\Flat R$ is an $n$-fold torsion-free class of $\Mod R$.}
			\item \textit{$\Flat R$ is a $\K^{(n-1)}\E$-closed subcategory of $\Mod R$.}
			\item \textit{$\wgldim R \leq n$.}

		\end{enumerate}
	\item \textit{Assume $R$ is Iwanaga-Gorenstein. Then the following are equivalent}$\colon$
		\begin{enumerate}[$(a)$]
			\item \textit{$\gproj R$ is an $n$-fold torsion-free class of $\md R$.}
			\item \textit{$\gproj R$ is a $\K^{(n-1)}\E$-closed subcategory of $\md R$.}
			\item \textit{$\id R_{R} = \id {}_{R}R \leq n$.}
		\end{enumerate}

\end{enumerate}
\end{exam}


\section{Preliminaries for the main theorem} \label{sec Preliminaries for the main theorem}
In this section, we recall notions and properties about the category of all finitely generated modules over a finite dimensional algebra.

In the sequel, assume that $\Lambda$ is a finite dimensional algebra over an algebraically closed field $\mathbb{K}$. 
We denote by $\tau$ the Auslander-Reiten translation. 
For $M \in \md \Lambda$, we denote by $|M|$ the number of pairwise-nonisomorphic indecomposable direct summands of $M$, by $\add M$ the subcategory 
consisting of all modules which are direct sums of direct summands of $M$ and by $\ind M$ the subcategory consisting of 
all indecomposable direct summands of $M$. 

We recall the notion of torsion pairs of $\md \Lambda$.
Let $\ct$ and $\cf$ be subcategoies of $\md \Lambda$.
We call a pair $(\ct, \cf)$ \textit{a torsion pair of $\md \Lambda$} if it hold that $\ct = {}^{\perp_{0}}\cf$ and $\cf = \ct^{\perp_{0}}$.
It is easy to show that if $(\ct, \cf)$ is a torsion pair of $\md \Lambda$, then $\ct$ is a torsion class of $\md \Lambda$ ($\resp$ $\cf$ is a torsion-free class of $\md \Lambda$).
In this case, the converse holds, that is, if $\ct$ ($\resp$ $\cf$) is a torsion class ($\resp$ a torsion-free class) of $\md \Lambda$, then $(\ct, \ct^{\perp_{0}})$ ($\resp$ $({}^{\perp_{0}}\cf, \cf)$) is a torsion pair of $\md \Lambda$.
It is well-known that $(\ct, \cf)$ is a torsion pair of $\md \Lambda$ if and only if $\ct$ and $\cf$ satisfy the following conditions$\colon$
\begin{enumerate}[$(1)$]
\setlength{\itemsep}{0pt}
	\item $\Hom_{\Lambda}(\ct, \cf) = 0$.
	\item $\ct \ast \cf = \md \Lambda$.
\end{enumerate}

\subsection{Approximations and functorially finite subcategories} \label{subsec approximation}
In this subsection, we recall the definition of functorially finite subcategories and properties of left (or right) minimal morphisms and left (or right) approximations in $\md \Lambda$, 
which will be used later.
First, we review functorially finite subcategories.
\begin{defi}
	Let $\cb$ be an additive category and $\cx$ its subcategory.
	\begin{enumerate}[$(1)$]
		\setlength{\itemsep}{0pt}
		\item A morphism $f \colon M \to N$ in $\cb$  is called \textit{left minimal} if every morphism $g \in \End_{\cb} N$ such that $gf = f$ is an isomorphism.
		\item A morphism $f \colon M \to X$ in $\cb$ is called \textit{a left $\cx$-approximation of $M$} if $X \in \cx$ and any morphism from $M$ to an object in $\cx$ factors through $f$. 
		The latter condition means that for any $X' \in \cx$, the induced map $\Hom_{\cb}(f, X') \colon \Hom_{\cb}(X, X') \to \Hom_{\cb}(M, X')$ is surjective. 
		\item A morphism in $\cb$ is called a \textit{left minimal $\cx$-approximation} if it is left minimal and a left $\cx$-approximation.
		\item $\cx$ is called \textit{covariantly finite in $\cb$} if for all objects $M$ of $\cb$, there exists a left $\cx$-approximation of $M$. 
	\end{enumerate}
	Dually, we can define \textit{right minimal morphisms}, \textit{right $\cx$-approximations} and \textit{contravariantly finite subcategories in $\cb$}.
	\begin{enumerate}[$(4)$]
		\item $\cx$ is called \textit{functorially finite in $\cb$} if it is covariantly finite and contravariantly finite.
	\end{enumerate}
\end{defi}

\begin{rema} \label{rem left approx}
Let $\cx$ be a subcategory of an additive category $\cb$ and $M$ an object of $\cb$.
	\begin{enumerate}[(1)]
		\setlength{\itemsep}{0pt}
		\item A left $\cx$-approximation $f \colon M \to X$ of $M$ is not unique but $X$ is often written as $X^{M} := X$.
		\item A left minimal $\cx$-approximation $f \colon M \to X^{M}$ of $M$ is unique up to isomorphisms in the sense that, 
		if $f' \colon M \to X'$ is also left minimal $\cx$-approximation, then  there is an isomorphism (which is not unique) $g \colon X^{M} \to X'$ such that $gf = f'$.
		Indeed, since both $f$ and $f'$ are left $\cx$-approximations of $M$, there exist morphisms $g \colon X^{M} \to X'$ and $g' \colon X' \to X^{M}$ such that $gf = f'$ and $g'f' = f$.
		It holds that $f = g'f'= (g'g)f$ and $f' = gf = (gg')f'$. It follows from the left minimality of $f$ and $f'$ that both $gg'$ and $g'g$ are isomorphisms and hence so is $g$.
	\end{enumerate}
\end{rema}

\begin{exam} \label{ex covariantly fin}
Let $(\ct, \cf)$ be a torsion pair of $\md \Lambda$ and $M \in \md \Lambda$. We can take the unique short exact sequence $0 \to T_{M} \stackrel{\varphi}{\to} M \stackrel{\psi}{\to} F^{M} \to 0$ in $\md \Lambda$ with $T_{M} \in \ct$ and $F^{M} \in \cf$.
Then $\varphi$ is the right  minimal $\ct$-approximation of $M$ and $\psi$ is the left minimal $\cf$-approximation of $M$.
Therefore, $\ct$ is contravariantly finite and $\cf$ is covariantly finite.
\end{exam}

The next proposition is well-known.

\begin{prop}[{see, for example, \cite[Example 2.9 (1)]{takahashi2021resolving}}] \label{prop add is functorially finite}
	Let $M \in \md \Lambda$. Then $\add M$ is functorially finite in $\md \Lambda$.
\end{prop}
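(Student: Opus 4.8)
The plan is to prove contravariant finiteness directly by an explicit construction and then deduce covariant finiteness by a symmetric (dual) argument. The crucial elementary input is that for any two objects $X, Y \in \md \Lambda$ the Hom-space $\Hom_{\Lambda}(X, Y)$ is finite dimensional over $\mathbb{K}$, since it is a $\mathbb{K}$-subspace of the finite dimensional space $\Hom_{\mathbb{K}}(X, Y)$.

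First I would establish that $\add M$ is contravariantly finite. Given $N \in \md \Lambda$, I take a $\mathbb{K}$-basis $f_{1}, \ldots, f_{k}$ of $\Hom_{\Lambda}(M, N)$ and form the morphism $f := (f_{1}, \ldots, f_{k}) \colon M^{k} \to N$, whose source lies in $\add M$. The claim is that $f$ is a right $\add M$-approximation of $N$, i.e. every morphism $g \colon M' \to N$ with $M' \in \add M$ factors through $f$.

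To verify the claim: since $M' \in \add M$, there are morphisms $\iota \colon M' \to M^{n}$ and $\pi \colon M^{n} \to M'$ with $\pi \iota = \mathrm{id}_{M'}$. Writing the composite $g\pi \colon M^{n} \to N$ by its $n$ components $h_{1}, \ldots, h_{n} \colon M \to N$, each $h_{j}$ is a $\mathbb{K}$-linear combination $h_{j} = \sum_{i} c_{ij} f_{i}$ of the chosen basis. The scalar matrix $(c_{ij})$ defines a morphism $c \colon M^{n} \to M^{k}$ satisfying $f c = g\pi$; hence $g = (g\pi)\iota = f(c\iota)$ factors through $f$. This proves that $\add M$ is contravariantly finite in $\md \Lambda$.

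For covariant finiteness, I would run the dual construction: for $N \in \md \Lambda$, choose a $\mathbb{K}$-basis of $\Hom_{\Lambda}(N, M)$ and assemble it into a morphism $N \to M^{l}$, which is a left $\add M$-approximation by the symmetric computation. Alternatively, applying the standard $\mathbb{K}$-duality $D = \Hom_{\mathbb{K}}(-, \mathbb{K}) \colon \md \Lambda \to \md \Lambda^{\mathrm{op}}$, which carries $\add M$ to $\add DM$ and interchanges left and right approximations, reduces covariant finiteness of $\add M$ in $\md \Lambda$ to contravariant finiteness of $\add DM$ in $\md \Lambda^{\mathrm{op}}$, already established. Either way $\add M$ is covariantly finite, hence functorially finite in $\md \Lambda$. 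There is no genuine obstacle here: the only point to watch is the finite dimensionality of the Hom-spaces, which guarantees the approximation is built from finitely many morphisms out of (or into) $M$; the factorization is then purely formal.
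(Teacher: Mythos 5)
Your proof is correct, and since the paper offers no proof of its own here (it simply cites the statement as well-known), your argument is precisely the standard one underlying that citation: assemble a $\mathbb{K}$-basis of the finite dimensional space $\Hom_{\Lambda}(M,N)$ into a single map $M^{k} \to N$, check factorization through a split presentation $M' \hookrightarrow M^{n} \twoheadrightarrow M'$, and dualize (or repeat symmetrically) for left approximations. Both the matrix computation $fc = g\pi$ and the reduction of covariant finiteness via $D = \Hom_{\mathbb{K}}(-,\mathbb{K})$ are sound, so there is nothing to repair.
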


The following necessary and sufficient condition for a monomorphism 
to be left minimal is known. It will be used in Proposition \ref{prop Extprogen of ftors} and \ref{prop existence of Extprogen of 2tors}.

\begin{lemm}[{\cite[the dual of Proposition 1.1]{jasso2019introduction}}] \label{lem charac of left minimal}
  Let $0 \to L \stackrel{f}{\to} M \stackrel{g}{\to} N \to 0$ be an exact sequence in $\md \Lambda$.
  Then  $f$ is left minimal if and only if $g$ belongs to $\rad (M, N)$.
\end{lemm}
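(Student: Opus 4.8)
The plan is to translate left minimality of $f$ into an invertibility condition on endomorphisms of $M$ and then recognize that condition as the defining property of the radical. First I would pin down exactly which endomorphisms of $M$ fix $f$. Since $gf = 0$, every endomorphism of the form $\mathrm{id}_{M} + hg$ with $h \colon N \to M$ satisfies $(\mathrm{id}_{M} + hg)f = f + hgf = f$. Conversely, if $\varphi \in \End_{\Lambda} M$ satisfies $\varphi f = f$, then $(\varphi - \mathrm{id}_{M})f = 0$, so $\varphi - \mathrm{id}_{M}$ vanishes on $\Ima f = \Ker g$; since $g$ is a cokernel of $f$, there is a unique $h \colon N \to M$ with $hg = \varphi - \mathrm{id}_{M}$. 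Hence
\[
\{\varphi \in \End_{\Lambda} M \mid \varphi f = f\} = \{\mathrm{id}_{M} + hg \mid h \in \Hom_{\Lambda}(N, M)\},
\]
so $f$ is left minimal if and only if $\mathrm{id}_{M} + hg$ is an isomorphism for every $h \colon N \to M$. Replacing $h$ by $-h$, this is equivalent to requiring $\mathrm{id}_{M} - hg$ to be an isomorphism for every $h \colon N \to M$.

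The remaining point is to identify this last condition with $g \in \rad(M, N)$. Here I would invoke the standard description of the radical of the Krull--Schmidt category $\md \Lambda$: for $g \colon M \to N$ one has $g \in \rad(M, N)$ if and only if $\mathrm{id}_{M} - hg$ is an isomorphism for all $h \colon N \to M$. The forward direction is immediate, since $\rad$ is a two-sided ideal, so $hg \in \rad(M, M) = J(\End_{\Lambda} M)$ and $\mathrm{id}_{M} - hg$ is then a unit of $\End_{\Lambda} M$. For the converse, given any $p \colon N \to M$ and any $y \in \End_{\Lambda} M$, the map $yp \colon N \to M$ makes $\mathrm{id}_{M} - (yp)g$ invertible by hypothesis, so $pg$ lies in the Jacobson radical $J(\End_{\Lambda} M) = \rad(M,M)$ for all $p$, which is precisely the statement $g \in \rad(M, N)$. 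Combining this equivalence with the first paragraph yields the lemma.

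I expect the main obstacle to be bookkeeping around the definition of the categorical radical rather than anything deep. One must use that $\md \Lambda$ is Krull--Schmidt with finite-dimensional Hom-spaces, so each $\End_{\Lambda} M$ is a finite-dimensional (hence Artinian) algebra whose Jacobson radical consists exactly of the non-units and satisfies the criterion ``$x \in J(\End_{\Lambda} M)$ iff $\mathrm{id}_{M} - yx$ is invertible for all $y$'', together with the fact that the categorical radical $\rad(-,-)$ restricts to $J(\End_{\Lambda} M)$ on endomorphism rings. Once the factorization $\varphi = \mathrm{id}_{M} + hg$ is in hand, no further computation is needed and the equivalence is purely formal.
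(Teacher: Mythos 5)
Your proof is correct. Note that the paper does not actually prove this lemma: it is quoted as the dual of Proposition 1.1 in Jasso's notes, so there is no in-paper argument to compare against. Your two steps are exactly the standard proof underlying the cited result: first, the universal property of the cokernel (using $gf = 0$ and that $g$ is epi) shows that the endomorphisms $\varphi$ of $M$ with $\varphi f = f$ are precisely those of the form $\mathrm{id}_{M} + hg$ with $h \in \Hom_{\Lambda}(N,M)$; second, the ring-theoretic criterion that $x \in J(R)$ if and only if $1 - yx$ is invertible for all $y \in R$, applied to $R = \End_{\Lambda} M$ and $x = pg$, translates ``$\mathrm{id}_{M} - hg$ invertible for all $h$'' into $g \in \rad(M,N)$. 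Both steps are carried out correctly, and the second does not even require Artinianness of $\End_{\Lambda} M$, since the criterion $x \in J(R) \Leftrightarrow 1 - yx$ invertible for all $y$ holds in any ring. One minor inaccuracy in your closing remarks: the Jacobson radical of $\End_{\Lambda} M$ consists exactly of the non-units only when $M$ is indecomposable (so that the endomorphism ring is local); for decomposable $M$ there are non-units, e.g.\ nontrivial idempotents, outside the radical. This slip is harmless because your argument never uses that description, only the $1 - yx$ criterion, but you should strike that phrase.
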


The following proposition seems to be folklore. 

\begin{prop} \label{prop right minimal existence}
 Let $\cx$ be an additive subcategory of $\md \Lambda$ closed under direct summands and $M \in \md \Lambda$.
 If there exists a left $\cx$-approximation of $M$, then the left minimal $\cx$-approximation of $M$ exists.
\end{prop}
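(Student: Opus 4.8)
The plan is to exploit the fact that every object of $\md \Lambda$ is finite-dimensional over $\mathbb{K}$, so that endomorphisms of objects in $\md \Lambda$ are subject to Fitting's lemma. The idea is to produce the left minimal approximation by a dimension-minimization argument rather than by peeling off summands one at a time. Concretely, I would start from the hypothesis that the set of left $\cx$-approximations of $M$ is nonempty, and among all of them choose one $f \colon M \to X$ for which $\dim_{\mathbb{K}} X$ is as small as possible; such a minimum exists because the attainable dimensions form a nonempty set of nonnegative integers. The whole content is then the claim that this particular $f$ is automatically left minimal.

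To prove that claim, I would take an arbitrary $g \in \End_{\Lambda}(X)$ with $gf = f$ and aim to show $g$ is an isomorphism. Since $X$ has finite length, Fitting's lemma provides a $g$-invariant decomposition $X = Y \oplus Z$ with $Y = \Ima(g^{n})$ and $Z = \Ker(g^{n})$ for $n \gg 0$, on which $g|_{Y}$ is an automorphism and $g|_{Z}$ is nilpotent. Writing $f$ in components $f_{Y} \colon M \to Y$ and $f_{Z} \colon M \to Z$, the invariance makes $g$ block-diagonal, so the relation $gf = f$ separates into $g|_{Y} f_{Y} = f_{Y}$ and $g|_{Z} f_{Z} = f_{Z}$. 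Iterating the second identity and using that $g|_{Z}$ is nilpotent forces $f_{Z} = 0$, i.e. $f$ factors as $f = \iota_{Y} f_{Y}$ through the split inclusion $\iota_{Y} \colon Y \hookrightarrow X$.

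Next I would verify that $f_{Y} \colon M \to Y$ is again a left $\cx$-approximation. First, $Y \in \cx$ because it is a direct summand of $X \in \cx$ and $\cx$ is closed under direct summands. Second, given any $h \colon M \to X'$ with $X' \in \cx$, the approximation property of $f$ yields $s \colon X \to X'$ with $sf = h$; composing with $\iota_{Y}$ and using $f = \iota_{Y} f_{Y}$ gives $(s\iota_{Y}) f_{Y} = h$, so $h$ factors through $f_{Y}$. Thus $f_{Y}$ is a left $\cx$-approximation with $\dim_{\mathbb{K}} Y \le \dim_{\mathbb{K}} X$, and minimality of $\dim_{\mathbb{K}} X$ forces $Y = X$, hence $Z = 0$ and $g = g|_{Y}$ is an automorphism. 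This establishes left minimality of $f$, completing the argument.

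The one genuinely delicate step is the Fitting decomposition: it is exactly what upgrades the weak relation $gf = f$ into the concrete statement that $f$ avoids a direct summand of its target, and it is precisely here that closure of $\cx$ under direct summands is indispensable, since without it the smaller candidate target $Y$ need not belong to $\cx$ and the minimization would not take place inside the class of admissible approximations. The remaining checks — existence of the minimizer, that $f_{Y}$ still approximates, and that the dimension count closes the loop — are routine.
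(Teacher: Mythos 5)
Your proof is correct, but note that the paper itself offers no argument here: its proof of this proposition is literally ``It is well-known,'' an appeal to folklore. So there is nothing in the paper to match your route against; what you have done is supply the missing proof, and every step checks out. The Fitting decomposition $X = \Ima(g^{n}) \oplus \Ker(g^{n})$ is indeed $g$-invariant, so $g$ is block-diagonal, the relation $gf = f$ splits componentwise, nilpotency of $g|_{Z}$ kills $f_{Z}$, and the dimension count ($\dim_{\mathbb{K}} Y \le \dim_{\mathbb{K}} X$ with equality forced by minimality, hence $Z = 0$) closes the loop. For comparison, the standard folklore argument (as in Auslander--Smal{\o} or Krause's treatment of Krull--Schmidt categories) instead decomposes $X$ into indecomposables and discards the summands through which $f$ does not ``essentially'' factor, using that endomorphism rings of indecomposables in $\md \Lambda$ are local; your dimension-minimization plus Fitting's lemma is more elementary, avoiding the Krull--Schmidt machinery entirely, and works verbatim in any $\Hom$-finite $\mathbb{K}$-linear abelian category. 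Both arguments invoke closure of $\cx$ under direct summands at exactly the same point --- to keep the pruned target $Y$ inside $\cx$ --- and you correctly identified this as the load-bearing hypothesis. One tiny stylistic remark: rather than saying minimality ``forces $Y = X$,'' it is cleaner to say it forces $\dim_{\mathbb{K}} Y = \dim_{\mathbb{K}} X$, whence $Z = 0$; this is what you mean and what your dimension argument actually delivers.
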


\begin{proof}
	It is well-known.
\end{proof}

The following lemma is called \textit{Wakamatsu's lemma}.

\begin{lemm}[Wakamatsu's lemma] \label{lem wakamatsu}
	Let $\cx$ be an extension-closed subcategory of $\md \Lambda$.
	If $\varphi: X_{M} \to M$ is the right minimal $\cx$-approximation of $M$, then  $\Ext_{\Lambda}^{1}(X, \Ker \varphi) = 0$ for all $X \in \cx$. 
\end{lemm}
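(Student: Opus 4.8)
The plan is to fix $X \in \cx$, set $K := \Ker \varphi$ with inclusion $\iota \colon K \hookrightarrow X_{M}$ (so that $\varphi \iota = 0$), and show that every short exact sequence $0 \to K \xrightarrow{u} E \xrightarrow{v} X \to 0$ representing a class in $\Ext_{\Lambda}^{1}(X, K)$ splits. The main device is a pushout: I would form the pushout $G$ of $u$ along $\iota$, producing morphisms $\alpha \colon E \to G$ and $\beta \colon X_{M} \to G$ with $\beta \iota = \alpha u$, together with a short exact sequence $0 \to X_{M} \xrightarrow{\beta} G \xrightarrow{w} X \to 0$ having the same cokernel $X$. Since $\cx$ is extension-closed and both $X_{M}, X \in \cx$, the middle term $G$ lies in $\cx$ as well; this is the only place the hypothesis on $\cx$ is used.

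Next I would exploit minimality. Because $\varphi \iota = 0$, the zero map $E \to M$ and $\varphi \colon X_{M} \to M$ agree on $K$, so the universal property of the pushout yields a morphism $\bar{\psi} \colon G \to M$ with $\bar{\psi} \beta = \varphi$. As $G \in \cx$, the approximation property of $\varphi$ factors $\bar{\psi}$ as $\bar{\psi} = \varphi \theta$ for some $\theta \colon G \to X_{M}$, whence $\varphi (\theta \beta) = \varphi$. Right minimality of $\varphi$ then forces $\theta \beta \in \End_{\Lambda}(X_{M})$ to be an isomorphism, so $\beta$ is a split monomorphism and the lower sequence splits. In terms of extension classes this says precisely that our chosen class is annihilated by $\iota_{\ast} \colon \Ext_{\Lambda}^{1}(X, K) \to \Ext_{\Lambda}^{1}(X, X_{M})$.

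It remains to see that $\iota_{\ast}$ is injective, which will finish the argument. Applying $\Hom_{\Lambda}(X, -)$ to $0 \to K \xrightarrow{\iota} X_{M} \to \Ima \varphi \to 0$ gives the exact sequence $\Hom_{\Lambda}(X, X_{M}) \to \Hom_{\Lambda}(X, \Ima \varphi) \to \Ext_{\Lambda}^{1}(X, K) \xrightarrow{\iota_{\ast}} \Ext_{\Lambda}^{1}(X, X_{M})$, so $\iota_{\ast}$ is injective once the first map is surjective. But any $j \colon X \to \Ima \varphi$, composed with the inclusion $\Ima \varphi \hookrightarrow M$, is a morphism from $X \in \cx$ to $M$ and hence factors through $\varphi$; since the inclusion is monic this factorization lifts $j$ along the corestriction $X_{M} \twoheadrightarrow \Ima \varphi$, giving the required surjectivity. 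Combining the two steps, the chosen class vanishes, and as $X \in \cx$ was arbitrary we conclude $\Ext_{\Lambda}^{1}(X, \Ker \varphi) = 0$. I expect the pushout-plus-minimality step to be the crux, since it is where right minimality is genuinely used, while the injectivity of $\iota_{\ast}$ is the technical point that must not be overlooked when $\varphi$ fails to be an epimorphism.
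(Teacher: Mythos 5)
Your proof is correct. Note first that the paper does not spell out an argument at all: it simply refers to \cite[Lemma 2.1.1]{xu2006flat}, so you have in effect supplied the proof the paper delegates to the literature. The first half of your argument (pushout of the extension along $\iota$, extension-closure giving $G \in \cx$, the induced map $\bar{\psi} \colon G \to M$ from $\varphi\iota = 0$, factorization $\bar{\psi} = \varphi\theta$ through the approximation, and right minimality forcing $\theta\beta$ to be an isomorphism) is exactly the classical Wakamatsu argument found in the cited source. Where you deviate is the conclusion: the classical proof finishes inside the same diagram, by observing that $\varphi(\theta\alpha) = \bar{\psi}\alpha = 0$, so that $\theta\alpha \colon E \to X_{M}$ corestricts to a map $t' \colon E \to K$ with $\iota t' u = \theta\alpha u = \theta\beta\iota$; after normalizing $\theta\beta = \mathrm{id}$ and cancelling the monomorphism $\iota$, this gives $t'u = \mathrm{id}_{K}$, splitting the \emph{original} sequence directly. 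You instead conclude only that $\iota_{\ast}$ kills the class and then prove injectivity of $\iota_{\ast}$ by a second use of the approximation property (surjectivity of $\pi_{\ast} \colon \Hom_{\Lambda}(X, X_{M}) \to \Hom_{\Lambda}(X, \Ima\varphi)$, using that $\nu$ is monic). That step is carried out correctly and is genuinely needed in your formulation --- you are right that it cannot be skipped when $\varphi$ is not an epimorphism --- but it costs an extra application of the long exact sequence that the direct-retraction ending avoids. Both routes use right minimality in the same single place, so the trade-off is purely one of economy: your version makes the $\Ext$-theoretic content ($\iota_{\ast}$ injective with zero image on the class) explicit, while the classical ending is shorter and diagram-internal.
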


\begin{proof}
See, for example, \cite[Lemma 2.1.1]{xu2006flat}.
\end{proof}

In the end of this subsection, we prove the following proposition, which is used in Propositions \ref{prop covariantly finiteness} and \ref{prop contravariantly finiteness}.
\begin{prop} \label{prop cov trans}
	Let $\ca$ be an abelian category, $\cy$ a covariantly finite additive subcategory of $\ca$ and $\cx$ a subcategory of $\ca$ such that $\cx \subset \cy$.
	Then $\cx$ is covariantly finite in $\ca$ if and only if it is covariantly finite in $\cy$.
\end{prop}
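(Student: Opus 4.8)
The plan is to prove the two implications separately, treating them as instances of the elementary fact that approximations restrict to, and compose along, full subcategories. Throughout I will use that $\cy$ is a full subcategory of $\ca$, so that for objects of $\cy$ the Hom-sets, and hence the factorization conditions defining approximations, are the same whether computed in $\cy$ or in $\ca$.

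First I would dispatch the direction ``$\cx$ covariantly finite in $\ca$ $\Rightarrow$ $\cx$ covariantly finite in $\cy$'', which needs nothing about $\cy$ beyond $\cx \subset \cy$. Given $Y \in \cy$, view it as an object of $\ca$ and take a left $\cx$-approximation $f \colon Y \to X$ in $\ca$; then $X \in \cx \subset \cy$, and since every morphism $Y \to X'$ with $X' \in \cx$ is a morphism of $\ca$ that factors through $f$, the same map $f$ serves as a left $\cx$-approximation of $Y$ computed inside $\cy$.

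For the converse, suppose $\cx$ is covariantly finite in $\cy$ and fix an arbitrary $M \in \ca$. Using covariant finiteness of $\cy$ in $\ca$ I would choose a left $\cy$-approximation $g \colon M \to Y$, and using covariant finiteness of $\cx$ in $\cy$ a left $\cx$-approximation $h \colon Y \to X$ of $Y$ in $\cy$; the candidate approximation of $M$ is the composite $hg \colon M \to X$. To verify it, take any $\varphi \colon M \to X'$ with $X' \in \cx$. Since $X' \in \cy$, the morphism $\varphi$ factors as $\varphi = \psi g$ through the $\cy$-approximation $g$; since $X' \in \cx$, the resulting $\psi \colon Y \to X'$ factors as $\psi = \chi h$ through the $\cx$-approximation $h$ inside $\cy$. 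Combining gives $\varphi = \chi (hg)$, so every such $\varphi$ factors through $hg$, and $hg$ is a left $\cx$-approximation of $M$ in $\ca$.

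There is no genuine obstacle: the whole argument is diagrammatic, and the only point requiring care is to apply each approximation against the correct test objects --- $g$ is probed by objects of $\cy$ (which suffices precisely because $\cx \subset \cy$), while $h$ is probed by objects of $\cx$ within $\cy$. I would finish by noting that, since $M \in \ca$ was arbitrary, this exhibits a left $\cx$-approximation for every object of $\ca$, proving covariant finiteness of $\cx$ in $\ca$.
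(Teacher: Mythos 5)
Your proposal is correct and follows essentially the same route as the paper's proof: the nontrivial direction is handled by composing a left $\cy$-approximation $M \to Y$ with a left $\cx$-approximation $Y \to X$ and verifying the factorization in two stages, exactly as in the paper, while the easy direction (which the paper dismisses as clear since $\cy \subset \ca$) you spell out via fullness of $\cy$. No gaps; nothing further is needed.
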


\begin{proof}
	(``if'' part)$\colon$ Assume that $\cx$ is covariantly finite in $\cy$. Let $M \in \ca$. Since $\cy$ is a covariantly finite in $\ca$, there exists a left $\cy$-approximation $f \colon M \to Y^{M}$ of $M$.
	Because $\cx$ is covariantly finite in $\cy$, there exists a left $\cx$-approximation $g \colon Y^{M} \to X^{M}$ of $Y^{M}$.
	We show that $ gf \colon M \to X^{M}$ is a left $\cx$-approximation of $M$.
	Take any morphism $\varphi \colon M \to X$ with $X \in \cx$. Since $X \in \cx \subset \cy$ and $f$ is a left $\cy$-approximation of $M$, there exists some morphism $h \colon Y^{M} \to X$ such that the following diagram commutes$\colon$

	\[
	\begin{tikzpicture}[auto]
	\node (11) at (0,0) {$M$}; \node (12) at (1.6,0) {$Y^{M}$}; \node (13) at (3.2,0) {$X^{M}$};
	\node (22) at (1.6, -1.6) {$X$};
	\node at (1.8, -1.7) {.};

	\draw[->, thick] (11) --node{$f$} (12); \draw[->, thick] (12) --node{$g$} (13);
	\draw[->, thick] (11) --node[swap]{$\varphi$} (22); \draw[->, dotted, thick] (12) --node{$h$} (22);
	\end{tikzpicture}
	\]

	Since $g$ is a left $\cx$-approximation of $Y^{M}$, there exists some morphism $i \colon X^{M} \to X$ such that the following diagram commutes$\colon$

	\[
		\begin{tikzpicture}[auto]
		\node (11) at (0,0) {$M$}; \node (12) at (1.6,0) {$Y^{M}$}; \node (13) at (3.2,0) {$X^{M}$};
		\node (22) at (1.6, -1.6) {$X$};
		\node at (1.8, -1.7) {.};
	
		\draw[->, thick] (11) --node{$f$} (12); \draw[->, thick] (12) --node{$g$} (13);
		\draw[->, thick] (11) --node[swap]{$\varphi$} (22); \draw[->, thick] (12) --node{$h$} (22);
		\draw[->, dotted, thick] (13) --node{$i$} (22);
		\end{tikzpicture}
	\]
	Therefore, we have $igf = hf = \varphi$, which means that $gf$ is a left $\cx$-approximation of $M$. This completes the proof.
	
	(``only if'' part)$\colon$ Since $\ca$ contains $\cy$, this implication is clear.
\end{proof}

\subsection{The bijection between functorially finite torsion classes and basic support $\tau$-tilting modules} \label{subsec the bijection by AIR}
The aim of this section is to recall a part of $\tau$-tilting theory, in particular, the bijection between functorially finite torsion classes and basic support $\tau$-tilting modules which is shown in \cite{Adachi_Iyama_Reiten_2014}.
We define several classes of modules introduced in \cite{Adachi_Iyama_Reiten_2014}, which play an important role in $\tau$-tilting theory.

\begin{defi}
	Let $M \in \md \Lambda$.
	\begin{enumerate}[$(1)$]
		\setlength{\itemsep}{0pt}
		\item $M$ is \textit{basic} if it does not have a direct summand of the form $N \oplus N$ for any indecomposable module $N$ in $\md \Lambda$.
		\item $M$ is \textit{rigid} if $\Ext_{\Lambda}^{1}(M, M) = 0$.
		\item $M$ is \textit{$\tau$-rigid} if $\Hom_{\Lambda}(M, \tau M) = 0$.
	  \item $M$ is \textit{$\tau$-tilting} if $M$ is $\tau$-rigid and $|M| =  |\Lambda|$.
		\item $M$ is \textit{support $\tau$-tilting} if there exists an idempotent $e$ of $\Lambda$ such that $M$ is a $\tau$-tilting right $(\Lambda / \langle e \rangle)$-module, where 
		$\langle e \rangle$ is the (two-sided) ideal generated by $e$. 
	\end{enumerate}
We denote by $\taurigid \Lambda$ ($\resp$ $\stautilt \Lambda$) the set of all isomorphism classes of basic $\tau$-rigid ($\resp$ basic support $\tau$-tilting) $\Lambda$-modules.
\end{defi}

\begin{defi}
 Let $M \in \md \Lambda$.
	We denote by $\Fac M$ the subcategory of $\md \Lambda$ that consists of all modules $F$ such that there exists an epimorphism from a module in $\add M$ to $F$, that is, 
	\[
	\Fac M := \big\{F \in \md \Lambda \mid \text{there exists an epimorphism } M' \twoheadrightarrow F, \text{ for some } M' \in \add M \big\}.
	\]
	Dually, we can define $\Sub M$.
\end{defi}

\begin{prop}[{\cite[Proposition 5.8]{auslander1981almost}}] \label{prop taurigid}
	Let $M, N \in \md \Lambda$. 
  $\Hom(M, \tau N) = 0$ if and only if $\Ext_{\Lambda}^{1}(N, \Fac M) = 0$. 
	In particular, $\tau$-rigid modules are rigid.
\end{prop}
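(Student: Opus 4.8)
The plan is to reduce to the case where $N$ is indecomposable and then exploit the almost split sequence ending at $N$ to pass between statements about $\Hom_{\Lambda}(-,\tau N)$ and statements about $\Ext^{1}_{\Lambda}(N,-)$ in both directions. First I would reduce to $N$ indecomposable: writing $N=\bigoplus_{i}N_{i}$ with each $N_{i}$ indecomposable, both $\Hom_{\Lambda}(M,\tau N)=\bigoplus_{i}\Hom_{\Lambda}(M,\tau N_{i})$ and $\Ext^{1}_{\Lambda}(N,F)=\bigoplus_{i}\Ext^{1}_{\Lambda}(N_{i},F)$ decompose, and $\Fac M$ does not involve $N$, so it suffices to treat each $N_{i}$. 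If $N_{i}$ is projective then $\tau N_{i}=0$ and $\Ext^{1}_{\Lambda}(N_{i},-)=0$, so both sides vanish; hence I may assume $N$ is indecomposable and non-projective, giving an almost split sequence
\[
0 \to \tau N \xrightarrow{a} E \xrightarrow{b} N \to 0
\]
whose class $s \in \Ext^{1}_{\Lambda}(N,\tau N)$ is nonzero, with $a$ left almost split and $b$ right almost split.

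For the implication ``$\Hom_{\Lambda}(M,\tau N)=0\Rightarrow\Ext^{1}_{\Lambda}(N,\Fac M)=0$'' I would argue contrapositively. Suppose some $F\in\Fac M$ carries a nonzero class, represented by a non-split sequence $0\to F\xrightarrow{u}G\xrightarrow{v}N\to 0$. Then $v$ is not a split epimorphism, so since $b$ is right almost split we may write $v=bw$ for some $w\colon G\to E$. From $bwu=vu=0$ we get $\Ima(wu)\subseteq\ker b=\Ima a$, so there is $\bar{w}\colon F\to\tau N$ with $a\bar{w}=wu$. If $\bar{w}=0$ then $wu=0$, so $w$ factors through $v$, say $w=w'v$; then $v=bw=bw'v$ forces $bw'=\mathrm{id}_{N}$, contradicting non-splitness of the almost split sequence. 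Hence $\bar{w}\neq 0$, and precomposing with an epimorphism $M'\twoheadrightarrow F$ with $M'\in\add M$ yields a nonzero morphism $M'\to\tau N$; since $M'\in\add M$, this gives $\Hom_{\Lambda}(M,\tau N)\neq 0$.

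For the converse ``$\Ext^{1}_{\Lambda}(N,\Fac M)=0\Rightarrow\Hom_{\Lambda}(M,\tau N)=0$'', which I expect to be the main obstacle, I would again argue contrapositively. Given $0\neq\phi\colon M\to\tau N$, set $F:=\Ima\phi$, so that $F\in\Fac M$ and $F\neq 0$, and consider $0\to F\xrightarrow{j}\tau N\xrightarrow{q}C\to 0$ with $C:=\tau N/F$. As $q$ is a surjection with nonzero kernel, it is not a split monomorphism, so since $a$ is left almost split, $q$ factors through $a$; this is precisely the condition that the pushout $q_{\ast}s$ vanishes in $\Ext^{1}_{\Lambda}(N,C)$. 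Applying $\Hom_{\Lambda}(N,-)$, exactness of $\Ext^{1}_{\Lambda}(N,F)\xrightarrow{j_{\ast}}\Ext^{1}_{\Lambda}(N,\tau N)\xrightarrow{q_{\ast}}\Ext^{1}_{\Lambda}(N,C)$ then shows $s\in\Ima j_{\ast}$, whence $\Ext^{1}_{\Lambda}(N,F)\neq 0$ with $F\in\Fac M$, contradicting the hypothesis. The crux is the choice $F=\Ima\phi$ together with the observation that the quotient $q\colon\tau N\twoheadrightarrow\tau N/F$ can never be a split monomorphism, which is exactly what lets the almost split class $s$ be lifted back into $\Ext^{1}_{\Lambda}(N,F)$.

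Finally, the ``in particular'' statement is immediate: if $M$ is $\tau$-rigid then $\Hom_{\Lambda}(M,\tau M)=0$, so taking $N=M$ gives $\Ext^{1}_{\Lambda}(M,\Fac M)=0$, and since $M\in\Fac M$ this yields $\Ext^{1}_{\Lambda}(M,M)=0$, i.e.\ $M$ is rigid.
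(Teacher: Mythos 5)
Your proof is correct. Note first that the paper itself gives no argument here: it simply cites \cite[Proposition 5.8]{auslander1981almost}, so there is no in-paper proof to compare against line by line. The classical proof behind that citation runs through the Auslander--Reiten duality formula $\Ext_{\Lambda}^{1}(N,X)\cong D\overline{\Hom}_{\Lambda}(X,\tau N)$, reducing both directions to statements about stable Hom; your argument instead works directly with the almost split sequence $0\to\tau N\xrightarrow{a}E\xrightarrow{b}N\to 0$ and the two lifting properties of $a$ and $b$, which makes it more elementary and self-contained (no duality or stable-category bookkeeping, only the connecting-map characterization that $f_{*}s=0$ if and only if $f$ factors through $a$). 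All the individual steps check out: the reduction to $N$ indecomposable non-projective is legitimate since $\tau$ and $\Ext^{1}(-,F)$ are additive and both sides vanish on projective summands; in the forward direction, $\bar{w}\neq 0$ follows correctly because $\bar{w}=0$ would force $b w'=\mathrm{id}_{N}$ and split the almost split sequence, and precomposing with an epimorphism from $\add M$ preserves nonvanishing; in the converse, the image trick $F=\Ima\phi$ (which also appears in the classical argument) guarantees $q\colon\tau N\to\tau N/F$ has nonzero kernel, hence is not a split monomorphism, so $q$ factors through the left almost split map $a$, giving $q_{*}s=0$, and exactness of $\Ext^{1}(N,F)\xrightarrow{j_{*}}\Ext^{1}(N,\tau N)\xrightarrow{q_{*}}\Ext^{1}(N,\tau N/F)$ together with $s\neq 0$ forces $\Ext^{1}(N,F)\neq 0$ --- this even handles the degenerate case $F=\tau N$ uniformly. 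The ``in particular'' statement is handled exactly as one should, using $M\in\Fac M$.
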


We can construct a $1$-fold torsion class of $\md \Lambda$ from a $\tau$-rigid module as follows:
\begin{prop} [see, for example, {\cite[Chapter VI, 1.9. Lemma]{assem2006elements}}] \label{prop FacU is 1fold tors}
	Let $U$ be a $\tau$-rigid module in $\md \Lambda$. $\Fac U$ is the $1$-fold torsion closure of $\add U$, namely, 
	\[\T{1}{\add U} = \Fac U \] holds.
\end{prop}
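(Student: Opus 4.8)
The plan is to prove the statement by verifying directly that $\Fac U$ is the smallest torsion class of $\md \Lambda$ containing $\add U$; since $\T{1}{\add U}$ is by definition that smallest torsion class, this gives $\T{1}{\add U} = \Fac U$. Concretely I would check three things: that $\Fac U$ is a torsion class, that $\add U \subset \Fac U$, and that $\Fac U$ is contained in every torsion class containing $\add U$.

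Two of these are formal and require no hypothesis on $U$. Closure of $\Fac U$ under quotients is immediate: given $F \in \Fac U$ with an epimorphism $M' \twoheadrightarrow F$ for some $M' \in \add U$, any further epimorphism $F \twoheadrightarrow F'$ composes to an epimorphism $M' \twoheadrightarrow F'$, so $F' \in \Fac U$. The inclusion $\add U \subset \Fac U$ holds because each $M \in \add U$ is (split) epi image of an object of $\add U$. Minimality is equally easy: if $\ct$ is any torsion class with $\add U \subset \ct$ and $F \in \Fac U$, then $F$ is a quotient of an object of $\add U \subset \ct$, and since torsion classes are closed under quotients we get $F \in \ct$; hence $\Fac U \subset \ct$.

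The only substantive point, and the single place where $\tau$-rigidity is used, is closure of $\Fac U$ under extensions. The key input is Proposition \ref{prop taurigid}: applied with $M = N = U$, the vanishing $\Hom_{\Lambda}(U, \tau U) = 0$ yields $\Ext_{\Lambda}^{1}(U, \Fac U) = 0$, and therefore $\Ext_{\Lambda}^{1}(U', A) = 0$ for all $U' \in \add U$ and all $A \in \Fac U$. Now take a short exact sequence $0 \to A \to B \xrightarrow{g} C \to 0$ with $A, C \in \Fac U$, and choose epimorphisms $p \colon U_{1} \twoheadrightarrow C$ and $U_{0} \twoheadrightarrow A$ with $U_{0}, U_{1} \in \add U$. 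Applying $\Hom_{\Lambda}(U_{1}, -)$ to the sequence and using $\Ext_{\Lambda}^{1}(U_{1}, A) = 0$, the map $\Hom_{\Lambda}(U_{1}, B) \to \Hom_{\Lambda}(U_{1}, C)$ is surjective, so $p$ lifts to some $q \colon U_{1} \to B$ with $g q = p$. Combining $q$ with the composite $U_{0} \twoheadrightarrow A \hookrightarrow B$ gives a morphism $U_{0} \oplus U_{1} \to B$, and a routine diagram chase shows it is epi: for $b \in B$ we have $g(b) = p(u_{1}) = g(q(u_{1}))$ for some $u_{1} \in U_{1}$, so $b - q(u_{1})$ lies in the image of $A$, hence in the image of $U_{0}$. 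Since $U_{0} \oplus U_{1} \in \add U$, this exhibits $B \in \Fac U$.

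The main obstacle is thus exactly the extension-closure step; everything else is formal manipulation of quotient closures. I expect the $\Ext$-vanishing $\Ext_{\Lambda}^{1}(U, \Fac U) = 0$ supplied by Proposition \ref{prop taurigid} to do all the real work, with the surjectivity-of-$(q,\,\text{incl})$ diagram chase being the one place demanding a little care.
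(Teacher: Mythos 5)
Your proof is correct, and it is a genuinely self-contained argument, whereas the paper does not prove this proposition directly: it derives it as the special case $n=1$ of the more general Proposition \ref{prop coknU is n+1fold tors} ($\T{n}{\add U} = \cok_{n-1} U$), which is proved by induction on $n$ using two pullback diagrams. The mathematical engine is identical in both treatments --- Proposition \ref{prop taurigid} converting $\tau$-rigidity into $\Ext_{\Lambda}^{1}(U, \Fac U) = 0$, with everything else formal --- and in fact your lifting step is the element-wise shadow of the paper's diagram: lifting $p \colon U_{1} \twoheadrightarrow C$ through $g$ via the vanishing of $\Ext_{\Lambda}^{1}(U_{1}, A)$ is exactly the splitting of the pulled-back sequence $0 \to T_{1} \to M \to U_{2} \to 0$ in the paper's proof, and your resulting epimorphism $U_{0} \oplus U_{1} \twoheadrightarrow B$ corresponds to the paper's epimorphism $U_{1} \oplus U_{2} \twoheadrightarrow E$ (for $n=1$ the kernels $K_{i}$ land in $\cok_{-1} U = \md \Lambda$, so that part of the paper's induction trivializes). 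What your route buys is a short, direct verification that $\Fac U$ is the smallest torsion class containing $\add U$, independent of the machinery of admissible quotients and the $\cok_{n}$ filtration; what the paper's route buys is the full $n$-fold statement at once, which it needs later (e.g.\ for $\cok_{1} U$ being a $2$-fold torsion class in Theorem \ref{thm charac of 2tors}), at the cost of the heavier double-pullback bookkeeping. One small stylistic note: your minimality step implicitly uses that $\T{1}{\add U}$ is by definition the smallest torsion class of $\md \Lambda$ containing $\add U$ (Proposition \ref{prop nfold torf closure} for $n=1$), which is exactly right and worth citing explicitly.
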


\begin{proof}
	A more general result is shown in Proposition \ref{prop coknU is n+1fold tors}.
\end{proof}

We recall $\mathrm{Ext}$-projective modules and $\mathrm{Ext}$-progenerators of an extension-closed subcategory of $\md \Lambda$. 

\begin{defi}
	Let $\cx$ be an extension-closed subcategory of $\md \Lambda$ and $M$ and $P$ its objects.
 \begin{enumerate}[(1)]
 \setlength{\itemsep}{0pt}
	 \item $M$ is \textit{$\mathrm{Ext}$-projective in $\cx$} if for all $X \in \cx$, $\Ext_{\Lambda}^{1}(M, X) = 0$.
	 \item $M$ is \textit{split $\mathrm{Ext}$-projective in $\cx$} if all epimorphisms $X \twoheadrightarrow M$ with $X \in \cx$  are split.
	 \item $\cx$ \textit{has enough $\mathrm{Ext}$-projectives} if for any $X \in \cx$, there exists some conflation \[0 \to K \to P \to X \to 0\] in $\cx$ 
	 such that $P$ is $\mathrm{Ext}$-projective in $\cx$. 
	 \item $P$ is an \textit{$\mathrm{Ext}$-progenerator of $\cx$} if $P$ is $\mathrm{Ext}$-projective in $\cx$ and for every $X \in \cx$, there exists a conflation $0 \to T \to P' \to X \to 0$ in $\cx$ with $P' \in \add P$.
 \end{enumerate}
 We denote by $\cp (\cx)$ ($\resp$ $\cp_{0} (\cx)$) the subcategory of $\mathrm{Ext}$-projective ($\resp$ split $\mathrm{Ext}$-projective) modules of $\cx$.
\end{defi}

\begin{rema} \label{rem Ext proj}
	Let $\cx$ be an extension-closed subcategory of $\md \Lambda$.
	\begin{enumerate}[$(1)$]
	\setlength{\itemsep}{0pt}
		\item It is obvious that all split $\mathrm{Ext}$-projective modules of an extension-closed subcategory $\cx$ are $\mathrm{Ext}$-projective in $\cx$, that is, $\cp_{0}(\cx) \subset \cp(\cx)$ holds.
		However, in general, it does $\mathbf{NOT}$ hold that $\cp_{0}(\cx) = \cp(\cx)$. Indeed, assume that $\cx$ has enough $\mathrm{Ext}$-projectives and is closed under direct summands. Then, by \cite[Proposition 4.10]{enomoto2022rigid}, $\cp_{0}(\cx) = \cp(\cx)$ holds if and only if $\cx$ is closed under epi-kernels.
		\item If an extension-closed subcategory $\cx$ of $\md \Lambda$ has an $\mathrm{Ext}$-progenerator $P$ of $\cx$, any $\mathrm{Ext}$-projective module $M$ of $\cx$ belongs to $\add P$. 
		Indeed,  by the definition of $P$, there exists some conflation $0 \to X \to P' \to M \to 0$ in $\cx$ such that $P' \in \add P$. 
		Since $\Ext_{\Lambda}^{1}(M, X) = 0$, we have $M \oplus X = P'$. Hence we have $M \in \add P$ and $\add P = \cp(\cx)$.
		\item If $\cx$ has an $\mathrm{Ext}$-progenerator, by definition, it has enough $\mathrm{Ext}$-projectives.
	\end{enumerate}
\end{rema}

In fact, every functorially finite torsion class has an $\mathrm{Ext}$-progenerator.
In order to prove it, we prepare the following two lemmas.

\begin{lemm}[{\cite[Proposition 3.7]{auslander1980preprojective}}] \label{lem cover}
	Let $\cc$ be an additive subcategory of $\md \Lambda$ which is closed under direct summands in $\md \Lambda$. The following are equivalent$\colon$
	\begin{enumerate}[$(a)$]
	 \setlength{\itemsep}{0pt}
	 \item There exists some module $M \in \cc$ such that $\cc \subset \Fac M$.
	 \item There exists a left $\cc$-approximation of $\Lambda$. 
	\end{enumerate}
\end{lemm}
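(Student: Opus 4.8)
The plan is to prove the two implications separately, relying on two structural facts: that the regular module $\Lambda$ is projective (so that maps out of finite direct sums of $\Lambda$ both detect generation and lift along epimorphisms), and that $\add M$ is functorially finite in $\md \Lambda$ by Proposition \ref{prop add is functorially finite}. Throughout I will use that $\cc$, being additive and closed under direct summands, contains $\add N$ for each of its objects $N$.

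For the direction $(b) \Longrightarrow (a)$, I would start from a left $\cc$-approximation $f \colon \Lambda \to C$ with $C \in \cc$ and show that $\cc \subset \Fac C$. Given any $X \in \cc$, finite generation yields an epimorphism $\Lambda^{n} \twoheadrightarrow X$, which I view as a family of morphisms $g_{1}, \dots, g_{n} \colon \Lambda \to X$. Since $X \in \cc$, the approximation property lets me factor each $g_{i}$ through $f$, say $g_{i} = h_{i} f$ with $h_{i} \colon C \to X$. Assembling these into $H \colon C^{n} \to X$, I get $H \circ f^{\oplus n}$ equal to the original epimorphism, so $H$ is itself an epimorphism from $C^{n} \in \add C$ onto $X$; hence $X \in \Fac C$ and $(a)$ holds with $M = C$.

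For $(a) \Longrightarrow (b)$, I would take $M \in \cc$ with $\cc \subset \Fac M$ and invoke functorial finiteness of $\add M$ to obtain a left $\add M$-approximation $f \colon \Lambda \to M^{0}$ of $\Lambda$, where $M^{0} \in \add M \subset \cc$. The claim is that this $f$ is already a left $\cc$-approximation. To verify it, take any $g \colon \Lambda \to X$ with $X \in \cc$; since $X \in \Fac M$, choose an epimorphism $p \colon M' \twoheadrightarrow X$ with $M' \in \add M$, lift $g$ through $p$ using projectivity of $\Lambda$ to get $\tilde{g} \colon \Lambda \to M'$ with $p\tilde{g} = g$, then factor $\tilde{g} = h f$ through the $\add M$-approximation $f$, so that $g = (ph) f$ factors through $f$ as required.

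The argument is essentially routine, so there is no genuinely hard step; the only point demanding care is recognizing that an $\add M$-approximation suffices to approximate the whole of $\cc$, which is exactly where the two hypotheses interact — the $\Fac M$ condition provides the epimorphism $p$ and the projectivity of $\Lambda$ provides the lift $\tilde{g}$. I would make sure to state explicitly that $\add M \subset \cc$ (from additivity and closure under summands) so that the approximating object $M^{0}$ genuinely lies in $\cc$.
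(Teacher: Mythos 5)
Your proof is correct. The paper itself gives no argument for this lemma---it is quoted directly from Auslander--Smal{\o} \cite[Proposition 3.7]{auslander1980preprojective}---and your two implications are exactly the standard proof of that cited result: $(b) \Rightarrow (a)$ by assembling factorizations of the components of an epimorphism $\Lambda^{n} \twoheadrightarrow X$ through the approximation, and $(a) \Rightarrow (b)$ by upgrading a left $(\add M)$-approximation of $\Lambda$ (Proposition \ref{prop add is functorially finite}) to a left $\cc$-approximation via the projectivity of $\Lambda$ and the epimorphism supplied by $\cc \subset \Fac M$; your explicit remark that $\add M \subset \cc$ is precisely the point where additivity and closure under direct summands are used, so no step is missing.
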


\begin{lemm}[{\cite[Proposition 3.7]{auslander1980preprojective}}] \label{lem split projective}
	Let $\cx$ be an extension-closed and direct-summand-closed subcategory of $\md \Lambda$. If there is a left minimal $\cx$-approximation $f \colon \Lambda \to X^{\Lambda}$ of $\Lambda$, then 
	it holds that $\add X^{\Lambda} = \cp_{0}(\cx)$.
\end{lemm}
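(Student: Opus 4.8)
The plan is to prove the two inclusions $\add X^{\Lambda} \subseteq \cp_{0}(\cx)$ and $\cp_{0}(\cx) \subseteq \add X^{\Lambda}$ separately. Before either, I would record the preliminary observation that $\cx \subseteq \Fac X^{\Lambda}$: given $C \in \cx$, pick a surjection $\Lambda^{n} \twoheadrightarrow C$ (possible since $C$ is finitely generated), note that $f^{\oplus n} \colon \Lambda^{n} \to (X^{\Lambda})^{n}$ is again a left $\cx$-approximation because $\cx$ is closed under finite direct sums, and factor the surjection through $f^{\oplus n}$; the resulting map $(X^{\Lambda})^{n} \to C$ is then epic, so $C \in \Fac X^{\Lambda}$. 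This is essentially the content of Lemma \ref{lem cover} with $M = X^{\Lambda}$.

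For the inclusion $\add X^{\Lambda} \subseteq \cp_{0}(\cx)$, the main step is to show that $X^{\Lambda}$ itself is split $\mathrm{Ext}$-projective. Let $p \colon X \twoheadrightarrow X^{\Lambda}$ be any epimorphism with $X \in \cx$. Since $\Lambda$ is projective, $f$ lifts along $p$ to some $\tilde{f} \colon \Lambda \to X$ with $p\tilde{f} = f$; since $X \in \cx$ and $f$ is a left $\cx$-approximation, $\tilde{f}$ factors as $\tilde{f} = hf$ for some $h \colon X^{\Lambda} \to X$. Then $(ph)f = p\tilde{f} = f$, and the left minimality of $f$ forces $ph$ to be an isomorphism, so $h(ph)^{-1}$ is a section of $p$ and $p$ splits. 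This is the crux of the argument and the only place where minimality enters essentially. To pass from $X^{\Lambda}$ to all of $\add X^{\Lambda}$, I would check that $\cp_{0}(\cx)$ is closed under finite direct sums and under direct summands; the latter uses that $\cx$ is closed under direct summands, so that for a decomposition $X^{\Lambda} = Y \oplus Z$ and an epimorphism $X \twoheadrightarrow Y$ with $X \in \cx$, the map $X \oplus Z \twoheadrightarrow Y \oplus Z = X^{\Lambda}$ has $X \oplus Z \in \cx$ and therefore splits, whence the original epimorphism $X \twoheadrightarrow Y$ splits as well.

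For the reverse inclusion $\cp_{0}(\cx) \subseteq \add X^{\Lambda}$, let $P \in \cp_{0}(\cx)$; in particular $P \in \cx$, so by the preliminary observation there is an epimorphism $q \colon (X^{\Lambda})^{m} \twoheadrightarrow P$ with $(X^{\Lambda})^{m} \in \cx$. As $P$ is split $\mathrm{Ext}$-projective, $q$ splits, exhibiting $P$ as a direct summand of $(X^{\Lambda})^{m}$, i.e.\ $P \in \add X^{\Lambda}$. Combining the two inclusions yields $\add X^{\Lambda} = \cp_{0}(\cx)$.

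I expect the main obstacle to be organizational rather than deep: the one genuinely essential idea is the splitting argument in the second paragraph, where projectivity of $\Lambda$ produces a lift, the approximation property produces a factorization, and left minimality upgrades the composite $ph$ to an isomorphism. The surrounding closure properties of $\cp_{0}(\cx)$ and the reduction to $\Fac X^{\Lambda}$ are routine once stated carefully.
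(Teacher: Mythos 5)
Your proof is correct, but it takes a genuinely different route from the paper. The paper does not argue directly with splittings at all: it shows that $\ind X^{\Lambda}$ is a \emph{minimal cover} of $\cx$ in the sense of Auslander--Smal\o{} --- using Lemma \ref{lem cover}, the existence of minimal approximations (Proposition \ref{prop right minimal existence}), and the uniqueness of left minimal approximations (Remark \ref{rem left approx} (2)) --- and then outsources the identification of minimal covers with the indecomposable split $\mathrm{Ext}$-projectives to the cited \cite[Proposition 2.3 (a)]{auslander1980preprojective}. You instead give a self-contained argument whose crux (lift $f$ along an epimorphism $p \colon X \twoheadrightarrow X^{\Lambda}$ using projectivity of $\Lambda$, factor the lift through the approximation $f$, and use left minimality to force $ph$ to be an isomorphism, so that $h(ph)^{-1}$ is a section) is exactly the content of the external result the paper cites, made explicit. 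What your approach buys is transparency: the reader sees precisely where projectivity of $\Lambda$ and left minimality enter, with no appeal to the minimal-cover formalism; what the paper's approach buys is brevity and alignment with the source it attributes the lemma to.

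One step you assert but do not carry out is the closure of $\cp_{0}(\cx)$ under finite direct sums, which you need to get from $X^{\Lambda} \in \cp_{0}(\cx)$ to $(X^{\Lambda})^{n} \in \cp_{0}(\cx)$ before your summand argument applies. This is true but not entirely free: given an epimorphism $X \twoheadrightarrow P \oplus Q$ with $P, Q \in \cp_{0}(\cx)$, one first splits off $Q$, uses that $\cx$ is closed under direct summands to see the complementary summand of $X$ lies in $\cx$, and then splits off $P$. Alternatively, you can bypass it entirely: since $\md \Lambda$ is Krull--Schmidt, $f^{\oplus n} \colon \Lambda^{n} \to (X^{\Lambda})^{n}$ is again a left \emph{minimal} $\cx$-approximation, so your crux argument applies verbatim to $(X^{\Lambda})^{n}$. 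Also note that your summand argument, stated for summands of $X^{\Lambda}$, must be applied to summands of $(X^{\Lambda})^{n}$; it goes through unchanged. With either repair the proof is complete.
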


\begin{proof}
	Using the terminology of \cite{auslander1980preprojective}, we show that $\ind X^{\Lambda}$ is \textit{a minimal cover} for $\cx$, that is, for any direct summand $Y$ of $X^{\Lambda}$, $\cx \subset \Fac Y$ implies that $\add X^{\Lambda} = \add Y$.
	By the definition of $Y$, it holds that $\add X^{\Lambda} \supset \add Y$. It remains to show that $\add X^{\Lambda} \subset \add Y$. 
	Since $\cx \subset \Fac Y$, it follows from Lemma \ref{lem cover} and Proposition \ref{prop right minimal existence} that there exists a left minimal $\cx$-approximation $\Lambda \to Y^{\Lambda}$ of $\Lambda$ such that $Y^{\Lambda} \in \add Y$.
	By Remark \ref{rem left approx} (2), we have $X^{\Lambda} \cong Y^{\Lambda}$ and hence $X^{\Lambda} \in \add Y$.
	This implies that $\add X^{\Lambda} \subset \add Y$.
	Since $\ind X^{\Lambda}$ is a minimal cover for $\cx$, it follows from \cite[Proposition 2.3 (a)]{auslander1980preprojective} that \[\ind X^{\Lambda} = \{ P \in \cp_{0}(\cx) \mid P \text{ is indecomposable} \}.\]
	Therefore, we obtain $\add X^{\Lambda} = \cp_{0}(\cx)$ because $\cp_{0}(\cx)$ is an additive subcategory of $\md \Lambda$ which is closed under direct summands.
\end{proof}
	 
We can construct an $\mathrm{Ext}$-progenerator of a functorially finite torsion class of $\md \Lambda$ as follows.
The proof of the following proposition is due to Haruhisa Enomoto in his lecture on June 13, 2023. 
\begin{prop} [{\cite[Theorem 4.3]{auslander1981almost}}, {\cite[Lemma 3.7]{marks2017torsion}}] \label{prop Extprogen of ftors}
	 Let $\ct$ be a functorially finite torsion class of $\md \Lambda$ and $f \colon \Lambda \to T^{\Lambda}_{0}$ the left minimal $\ct$-approximation of $\Lambda$. 
	 We put $T^{\Lambda}_{1} := \Cok f$. Then  the following hold$\colon$
	 \begin{enumerate}[$(a)$]
	 \setlength{\itemsep}{0pt}
		 \item $\ct = \Fac T_{0}^{\Lambda}$.
		 \item $T^{\Lambda}_{0}$ is split $\mathrm{Ext}$-projective and $T^{\Lambda}_{1}$ is $\mathrm{Ext}$-projective in $\ct$.
		 \item $\ind T_{0}^{\Lambda} \cap \ind T^{\Lambda}_{1} = \emptyset .$
		 \item $T^{\Lambda}_{0} \oplus T^{\Lambda}_{1}$ is an $\mathrm{Ext}$-progenerator of $\ct$.
	 \end{enumerate}
\end{prop}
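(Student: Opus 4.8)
The plan is to read the four assertions off the single morphism $f\colon\Lambda\to T^{\Lambda}_{0}$ in the order $(a)\Rightarrow(b)\Rightarrow(c)\Rightarrow(d)$, using at each stage that $\ct$ is extension-closed, quotient-closed and direct-summand-closed. First note that $f$ exists: $\ct$ is covariantly finite, so there is a left $\ct$-approximation of $\Lambda$, and since $\ct$ is closed under direct summands, Proposition \ref{prop right minimal existence} upgrades it to a left minimal one. For $(a)$, the inclusion $\Fac T^{\Lambda}_{0}\subset\ct$ is immediate from $T^{\Lambda}_{0}\in\ct$ and closure under quotients and finite direct sums. Conversely, for $X\in\ct$ pick an epimorphism $\Lambda^{n}\twoheadrightarrow X$; each of its $n$ components $\Lambda\to X$ factors through $f$ because $f$ is a left $\ct$-approximation and $X\in\ct$, so the epimorphism factors through $f^{\oplus n}\colon\Lambda^{n}\to(T^{\Lambda}_{0})^{n}$, forcing $(T^{\Lambda}_{0})^{n}\twoheadrightarrow X$ and $X\in\Fac T^{\Lambda}_{0}$. (This is exactly Lemma \ref{lem cover} made explicit.)

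\textbf{Part $(b)$.} Since $\ct$ is extension-closed and direct-summand-closed, Lemma \ref{lem split projective} applies to the left minimal $\ct$-approximation $f$ and gives $\add T^{\Lambda}_{0}=\cp_{0}(\ct)$; in particular $T^{\Lambda}_{0}$ is split $\Ext$-projective, hence $\Ext$-projective by Remark \ref{rem Ext proj}(1). For $T^{\Lambda}_{1}=\Cok f$, write $I=\Ima f$ and factor $f=\iota\bar g$ with $\bar g\colon\Lambda\twoheadrightarrow I$ and $\iota\colon I\hookrightarrow T^{\Lambda}_{0}$, so that $0\to I\xrightarrow{\iota}T^{\Lambda}_{0}\xrightarrow{p}T^{\Lambda}_{1}\to0$ is exact. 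One checks that $\iota$ is itself a left minimal $\ct$-approximation of $I$: any $\psi\colon I\to T$ with $T\in\ct$ yields $\psi\bar g\colon\Lambda\to T$, which factors through $f$, and cancelling the epimorphism $\bar g$ shows $\psi$ factors through $\iota$; left minimality is inherited from that of $f$. I would then invoke the dual of Wakamatsu's lemma (the opposite-category form of Lemma \ref{lem wakamatsu}) for the left minimal $\ct$-approximation $\iota$ in the extension-closed category $\ct$, concluding $\Ext^{1}_{\Lambda}(T^{\Lambda}_{1},\ct)=0$. Concretely this is proved by pulling an extension $0\to X\to E\to T^{\Lambda}_{1}\to0$ (with $X\in\ct$) back along $p$, using left minimality of $f$ to split the pullback, and the approximation property to descend the splitting; so $T^{\Lambda}_{1}$ is $\Ext$-projective in $\ct$.

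\textbf{Part $(c)$.} From $\iota$ left minimal and the conflation $0\to I\xrightarrow{\iota}T^{\Lambda}_{0}\xrightarrow{p}T^{\Lambda}_{1}\to0$, Lemma \ref{lem charac of left minimal} gives $p\in\rad(T^{\Lambda}_{0},T^{\Lambda}_{1})$. Suppose an indecomposable $Z$ lay in $\ind T^{\Lambda}_{0}\cap\ind T^{\Lambda}_{1}$. As $Z\mid T^{\Lambda}_{0}$ and $\add T^{\Lambda}_{0}=\cp_{0}(\ct)$, the module $Z$ is split $\Ext$-projective; as $Z\mid T^{\Lambda}_{1}$, the projection $\pi\colon T^{\Lambda}_{1}\twoheadrightarrow Z$ composed with the epimorphism $p$ gives an epimorphism $\pi p\colon T^{\Lambda}_{0}\twoheadrightarrow Z$ with $T^{\Lambda}_{0}\in\ct$, which therefore splits: there is $\sigma$ with $\pi p\sigma=\id_{Z}$. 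But $p\in\rad(T^{\Lambda}_{0},T^{\Lambda}_{1})$ and the radical is a two-sided ideal, so $\pi p\sigma\in\rad\End_{\Lambda}(Z)$, which cannot contain the identity because $\End_{\Lambda}(Z)$ is local. This contradiction yields $\ind T^{\Lambda}_{0}\cap\ind T^{\Lambda}_{1}=\emptyset$.

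\textbf{Part $(d)$ and the main obstacle.} By $(b)$, $T^{\Lambda}_{0}\oplus T^{\Lambda}_{1}$ is $\Ext$-projective, so it remains to produce, for every $X\in\ct$, a conflation $0\to T\to P'\to X\to0$ in $\ct$ with $P'\in\add(T^{\Lambda}_{0}\oplus T^{\Lambda}_{1})$. First observe that $\cp(\ct)$ is extension-closed (an extension of two $\Ext$-projectives lies in $\ct$ and is again $\Ext$-projective by the long exact sequence), and $\add(T^{\Lambda}_{0}\oplus T^{\Lambda}_{1})$ is functorially finite by Proposition \ref{prop add is functorially finite}. Take the right minimal $\add(T^{\Lambda}_{0}\oplus T^{\Lambda}_{1})$-approximation $\phi\colon P'\to X$; it is an epimorphism because $\ct=\Fac T^{\Lambda}_{0}$, and the approximation property together with $\Ext$-projectivity forces $\Ext^{1}_{\Lambda}(T^{\Lambda}_{0}\oplus T^{\Lambda}_{1},\Ker\phi)=0$. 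The heart of the argument, and what I expect to be the main obstacle, is the containment $\Ker\phi\in\ct$: writing $L$ for the torsion submodule of $\Ker\phi$ and pushing out along $\Ker\phi\twoheadrightarrow\Ker\phi/L$ gives $E=P'/L\in\ct$ with a sequence $0\to\Ker\phi/L\to E\to X\to0$ whose kernel lies in $\cf$; comparing the right minimal $\add(T^{\Lambda}_{0}\oplus T^{\Lambda}_{1})$-approximations of $X$ and of $E$ and using right minimality of $\phi$ (a cancellation showing the relevant endomorphism is an isomorphism and that $E$ has no summand of $P'$ in its kernel) forces $\Ker\phi/L=0$. This step is precisely the assertion that a functorially finite torsion class has enough $\Ext$-projectives, and may alternatively be imported from \cite{auslander1981almost}. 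Once $\Ker\phi\in\ct$, the sequence $0\to\Ker\phi\to P'\to X\to0$ is the required conflation, so $T^{\Lambda}_{0}\oplus T^{\Lambda}_{1}$ is an $\Ext$-progenerator of $\ct$; in passing, Remark \ref{rem Ext proj}(2) then identifies $\add(T^{\Lambda}_{0}\oplus T^{\Lambda}_{1})$ with $\cp(\ct)$.
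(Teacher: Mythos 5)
Your parts $(a)$--$(c)$ are correct and essentially identical to the paper's proof: $(a)$ is Lemma \ref{lem cover} plus quotient-closedness, $(b)$ is Lemma \ref{lem split projective} together with the dual of Wakamatsu's lemma (your verification that $\Ima f \to T^{\Lambda}_{0}$ is a left minimal $\ct$-approximation is a harmless elaboration of what the paper uses implicitly in $(c)$), and $(c)$ is the same radical argument via Lemma \ref{lem charac of left minimal}.

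Part $(d)$, however, contains a genuine gap, which you half-acknowledge. The containment $\Ker\phi \in \ct$ is not a technical step one can sketch around: it is \emph{equivalent} to assertion $(d)$ itself. Indeed, if $(d)$ holds, then any conflation $0 \to T \to P'' \to X \to 0$ in $\ct$ with $P'' \in \add(T^{\Lambda}_{0} \oplus T^{\Lambda}_{1})$ is automatically a right $\add(T^{\Lambda}_{0} \oplus T^{\Lambda}_{1})$-approximation (pull back along any $Z \to X$ and split using Ext-projectivity of $Z$), so right minimality makes $\Ker\phi$ a direct summand of $T \in \ct$; conversely $\Ker\phi \in \ct$ immediately gives $(d)$. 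Both of your proposed routes to this containment fail as stated. First, the vanishing $\Ext^{1}_{\Lambda}(T^{\Lambda}_{0} \oplus T^{\Lambda}_{1}, \Ker\phi) = 0$ cannot be obtained from Wakamatsu's lemma (Lemma \ref{lem wakamatsu}), which requires the approximating subcategory to be \emph{extension-closed}; $\add(T^{\Lambda}_{0} \oplus T^{\Lambda}_{1})$ is not known to be extension-closed at this stage --- its identification with $\cp(\ct)$ via Remark \ref{rem Ext proj} (2) is exactly what $(d)$ delivers, so invoking it here is circular, and your observation that $\cp(\ct)$ is extension-closed does not help because the approximation was taken in $\add(T^{\Lambda}_{0} \oplus T^{\Lambda}_{1})$. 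Second, the torsion-submodule comparison does not close: writing $L$ for the torsion submodule of $\Ker\phi$ with respect to the torsion pair $(\ct, \ct^{\perp_{0}})$ and $E := P'/L$, one can indeed check, using $\Hom_{\Lambda}(\ct, \ct^{\perp_{0}}) = 0$, that the quotient map $q \colon P' \to E$ is again a right minimal $\add(T^{\Lambda}_{0} \oplus T^{\Lambda}_{1})$-approximation with kernel $L \in \ct$ --- but this picture is perfectly consistent whether or not $\Ker\phi / L = 0$, and no cancellation of endomorphisms produces a contradiction from $\Ker\phi/L \neq 0$. Deferring the step to \cite{auslander1981almost} amounts to citing the statement being proved (that a functorially finite torsion class has enough $\mathrm{Ext}$-projectives). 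The paper avoids all of this with a direct construction: starting from $0 \to K \to (T^{\Lambda}_{0})^{n} \to T \to 0$, it covers $K$ by an epimorphism $\Lambda^{m} \twoheadrightarrow K$, pushes out along the left approximation $f^{m}$, and reads off the exact sequence $(T^{\Lambda}_{0})^{m} \xrightarrow{h} (T^{\Lambda}_{1})^{m} \oplus (T^{\Lambda}_{0})^{n} \to T \to 0$; here $\Ima h$ lies in $\ct$ automatically, being a quotient of $(T^{\Lambda}_{0})^{m}$, so the required conflation exists with no appeal to right minimal approximations or to Wakamatsu-type vanishing. You should replace your $(d)$ by this (or an equivalent) construction.
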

	 
\begin{proof}
	 $(a)$. It holds from Lemma \ref{lem cover} that $\ct \subset \Fac T_{0}^{\Lambda}$. Thus we obtain $\ct = \Fac T_{0}^{\Lambda}$ because $\ct$ is a torsion class of $\md \Lambda$.
	 	 
	 $(b)$. It follows from Lemma \ref{lem split projective} that $T_{0}^{\Lambda}$ is split $\mathrm{\Ext}$-projective in $\ct$. 
	 Since $T^{\Lambda}_{0} \in \ct$ and $\ct$ is a torsion class, we have $T_{1}^{\Lambda} \in \ct$. 
	 It follows from the dual of Wakamatsu's lemma (Lemma \ref{lem wakamatsu}) that $T_{1}^{\Lambda}$ is $\mathrm{Ext}$-projective in $\ct$.
	 
	 $(c)$. Suppose that there exists a module $M \in \ind T_{0}^{\Lambda} \cap \ind T^{\Lambda}_{1}$. 
	 Let $\pi \colon T_{1}^{\Lambda} \twoheadrightarrow M$ be the projection. The composition $(T_{0}^{\Lambda} \stackrel{\mathrm{cok} f}{\twoheadrightarrow} T_{1}^{\Lambda} \stackrel{\pi}{\twoheadrightarrow} M)$ is, by $(b)$ and Lemma \ref{lem split projective}, split because $M \in \ind T_{0}^{\Lambda}$. 
	 On the other hand, since the morphism $\Ima f \to T^{\Lambda}_{0}$ is left minimal, it follows from Lemma \ref{lem charac of left minimal} that $\mathrm{cok} f$ belongs to $\rad(T^{\Lambda}_{0}, T^{\Lambda}_{1})$ and hence so does the composition $(T_{0}^{\Lambda} \stackrel{\mathrm{cok} f}{\twoheadrightarrow} T_{1}^{\Lambda} \stackrel{\pi}{\twoheadrightarrow} M)$, which contradicts that it is split. 
	 
	 $(d)$. Since both $T^{\Lambda}_{0}$ and $T^{\Lambda}_{1}$ are $\Ext$-projective in $\ct$ by $(b)$, $T^{\Lambda}_{0} \oplus T^{\Lambda}_{1}$ is $\Ext$-projective in $\ct$.
	 Let $T \in \ct$. By $(a)$, there exists an exact sequence $0 \to K \to (T^{\Lambda}_{0})^{n} \to T \to 0$, where $n$ is a positive integer.
	 Take an epimorphism $\pi \colon \Lambda^{m} \twoheadrightarrow K$. Since $f^{m}$ is a left $\ct$-approximation of $\Lambda$, we obtain the following commutative diagram with exact rows:

	 \[
	 \begin{tikzpicture}[auto]
	 \node (01) at (-4.0, 0) {$0$}; \node (11) at (-2.0, 0) {$K$}; 
	 \node (a1) at (0, 0) {$(T^{\Lambda}_{0})^{n}$}; \node (b1) at (2.0, 0) {$T$}; \node(c1) at (4.0, 0) {$0$};
	 \node (12) at (-2.0, 2.0) {$\Lambda^{m}$}; \node (a2) at (0.0, 2.0) {$(T_{0}^{\Lambda})^{m}$}; 
	 \node (b2) at (2.0, 2.0) {$(T_{1}^{\Lambda})^{m}$}; \node (c2) at (4.0, 2.0) {$0$};
	 \node at (1.0, 1.0) {\large $(\star)$}; \node at (4.15, -0.1) {.};
	 
	 \draw[->, thick] (01) -- (11); \draw[->, thick] (11) -- (a1); 
	 \draw[->, thick] (a1) -- (b1); \draw[->, thick] (b1) -- (c1); 
	 \draw[->, thick] (12) --node{$f^{m}$} (a2); \draw[->, thick] (a2) -- (b2);  \draw[->, thick] (b2) -- (c2);

	 \draw[->>, thick] (12) --node[swap]{$\pi$} (11);
	 \draw[->, thick] (a2) -- (a1);
	 \draw[->, thick] (b2) --  (b1);
	 
	 \end{tikzpicture}
	 \]
	 Since $\pi$ is an epimorphism, it follows that the square $(\star)$ is pushout. 
	 Thus we have the following exact sequence:
	 \[
	 (T^{\Lambda}_{0})^{m} \stackrel{h}{\to} (T^{\Lambda}_{1})^{m} \oplus (T^{\Lambda}_{0})^{n} \to T \to 0.
	 \]
	 Since $(T^{\Lambda}_{0})^{m} \in \ct$, its quotient $\Ima h$ is also in $\ct$. Hence $T^{\Lambda}_{0} \oplus T^{\Lambda}_{1}$ is an $\mathrm{Ext}$-progenerator of $\ct$.
\end{proof}
	 
For every functorially finite torsion class $\ct$, we denote by $P(\ct)$ the direct sum of pairwise nonisomorphic indecomposable direct summands of 
the $\mathrm{Ext}$-progenerator constructed by Proposition \ref{prop Extprogen of ftors}.

We have already seen that torsion classes of an abelian category are contravariantly finite (see Example \ref{ex covariantly fin}). It is natural to ask when they are functorially finite.
The following theorem is well-known$\colon$
	 
\begin{theo} [see, for example, {\cite[Proposition 1.1]{Adachi_Iyama_Reiten_2014}, \cite[Theorem 4.8]{treffinger2022tautiltingtheoryintroduction}}] \label{thm charac ftors}
	Let $\ct$ be a torsion class of $\md \Lambda$. Then  the following are equivalent$\colon$
 \begin{enumerate}[$(a)$]
 \setlength{\itemsep}{0pt}
	 \item $\ct$ is functorially finite in $\md \Lambda$.
	 \item $\ct$ has an $\mathrm{Ext}$-progenerator.
	 \item There exists some $M \in \md \Lambda$ such that $\ct = \Fac M$.
 \end{enumerate}
\end{theo}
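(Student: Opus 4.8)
The plan is to prove the cyclic chain $(a) \Rightarrow (b) \Rightarrow (c) \Rightarrow (a)$, which yields the three-way equivalence. The first two implications are nearly formal once the earlier results are available. For $(a) \Rightarrow (b)$, I would argue that if $\ct$ is functorially finite then in particular it is covariantly finite, so $\Lambda$ admits a left $\ct$-approximation; since $\ct$ is a torsion class it is closed under direct summands, so Proposition \ref{prop right minimal existence} upgrades this to a left \emph{minimal} $\ct$-approximation $f \colon \Lambda \to T^{\Lambda}_{0}$. Then the hypotheses of Proposition \ref{prop Extprogen of ftors} are met, and its part $(d)$ exhibits $T^{\Lambda}_{0} \oplus T^{\Lambda}_{1}$ as an $\mathrm{Ext}$-progenerator of $\ct$. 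For $(b) \Rightarrow (c)$, let $P$ be an $\mathrm{Ext}$-progenerator of $\ct$. By definition $P \in \ct$ and every $X \in \ct$ sits in a conflation $0 \to T \to P' \to X \to 0$ with $P' \in \add P$, so $X$ is a quotient of $P'$ and hence $X \in \Fac P$; conversely $\Fac P \subseteq \ct$ because $P \in \ct$ and $\ct$ is closed under finite direct sums and quotients. Thus $\ct = \Fac P$, and we may take $M = P$.

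The real content lies in $(c) \Rightarrow (a)$, so assume $\ct = \Fac M$. Contravariant finiteness comes for free: $\ct$ is a torsion class, and the canonical sequence of the torsion pair $(\ct, \ct^{\perp_{0}})$ furnishes right minimal $\ct$-approximations exactly as in Example \ref{ex covariantly fin}. For covariant finiteness I would first produce a left $\ct$-approximation of the regular module: since $M \in \ct = \Fac M$ and $\ct$ is additive and closed under direct summands, Lemma \ref{lem cover} yields a left $\ct$-approximation $g \colon \Lambda \to T_{0}$, whence $g^{n} \colon \Lambda^{n} \to T_{0}^{n}$ is a left $\ct$-approximation of $\Lambda^{n}$ for every $n$. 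Given an arbitrary $N \in \md \Lambda$, I would fix an epimorphism $p \colon \Lambda^{n} \twoheadrightarrow N$ and form the pushout of $p$ along $g^{n}$, giving an object $C$ with legs $\alpha \colon N \to C$ and $\beta \colon T_{0}^{n} \to C$ satisfying $\alpha p = \beta g^{n}$. Because $p$ is an epimorphism, so is $\beta$, so $C$ is a quotient of $T_{0}^{n} \in \ct$ and therefore $C \in \ct$. I claim $\alpha$ is a left $\ct$-approximation: for any $h \colon N \to X$ with $X \in \ct$, the map $hp \colon \Lambda^{n} \to X$ factors as $u g^{n}$ through $g^{n}$, and the resulting identity $hp = u g^{n}$ is precisely the compatibility required to invoke the universal property of the pushout, producing $w \colon C \to X$ with $w\alpha = h$. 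Hence every $N$ has a left $\ct$-approximation, $\ct$ is covariantly finite, and combined with contravariant finiteness we conclude $\ct$ is functorially finite.

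The main obstacle is exactly this covariant-finiteness step, namely passing from the single left approximation of $\Lambda$ that Lemma \ref{lem cover} provides to a left approximation of an arbitrary $N$. The clean way around it is the pushout above: the universal property carries out the factorization automatically, while closedness of $\ct$ under quotients keeps the target inside $\ct$. It is worth noting that this argument uses neither the minimality of $g$ nor any presentation of $N$ beyond a single surjection from a free module. Everything else amounts to assembling the cited results, so once this step is in place the cycle closes.
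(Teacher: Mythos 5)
Your proposal is correct, and note that the paper itself does not write out a proof of Theorem \ref{thm charac ftors} at all---it cites \cite{Adachi_Iyama_Reiten_2014} and \cite{treffinger2022tautiltingtheoryintroduction}---so the comparison is with the surrounding machinery rather than with an in-text argument. Your implications $(a) \Rightarrow (b)$ and $(b) \Rightarrow (c)$ assemble exactly the ingredients the paper provides: covariant finiteness plus closure of $\ct$ under direct summands gives, via Proposition \ref{prop right minimal existence}, the left minimal $\ct$-approximation $f \colon \Lambda \to T_{0}^{\Lambda}$ needed to invoke Proposition \ref{prop Extprogen of ftors} $(d)$, and $(b) \Rightarrow (c)$ is the formal observation that an $\mathrm{Ext}$-progenerator $P$ satisfies $\ct = \Fac P$, using that a torsion class is closed under finite direct sums and quotients. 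The genuine content is your $(c) \Rightarrow (a)$, and the pushout argument there is sound: the pushout of the epimorphism $p$ along $g^{n}$ is again an epimorphism, so $C \in \Fac T_{0}^{n} \subset \ct$, and the universal property converts the factorization $hp = ug^{n}$ (available since $g^{n}$ is a left $\ct$-approximation of $\Lambda^{n}$) into $w \colon C \to X$ with $w\alpha = h$. Interestingly, this is precisely the device the author deploys one level up in Proposition \ref{prop covariantly finiteness}, where covariant finiteness of $\cok_{1} U$ is obtained by pushing out a conflation $0 \to F \to P^{m} \to T \to 0$ along a left approximation; your argument is the $1$-fold specialization, carried out directly in $\md \Lambda$ with a surjection $\Lambda^{n} \twoheadrightarrow N$ in place of a progenerator presentation, which makes it slightly more elementary (at this step you need only Lemma \ref{lem cover}, not the torsion pair or an $\mathrm{Ext}$-progenerator). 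One cosmetic point: you should state explicitly that $T_{0} \in \ct$, hence $T_{0}^{n} \in \ct$---immediate from the definition of a left $\ct$-approximation---since that is what places the quotient $C$ inside $\ct$; but this is a one-line observation, not a gap.
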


We denote by $\ftors \Lambda$ the set of functorially finite torsion classes of $\md \Lambda$. 
By Proposition \ref{prop Extprogen of ftors}, every functorially finite torsion class $\ct$ of $\md \Lambda$ has the basic $\mathrm{Ext}$-progenerator $P(\ct)$. 
In \cite{Adachi_Iyama_Reiten_2014}, the authors constructed a bijection between $\stautilt \Lambda$ and $\ftors \Lambda$. 
That is precisely the next theorem:

\begin{theo}[{\cite[Theorem 2.7]{Adachi_Iyama_Reiten_2014}}] \label{thm stautilt ftors}
	The following maps are mutually inverse$\colon$
	\[
	\begin{tikzpicture}[auto]
	\node at (-2.4, 1.8) {$\stautilt \Lambda$}; \node  at (2.4, 1.8) {$\ftors \Lambda$}; 
	\node[rotate=90]  at (-2.4, 1.3) {$\in$}; \node[rotate=90]  at (2.4, 1.3) {$\in$}; 
	\node  at (-2.4, 0.8) {$T$}; \node  at (2.4, 0.8) {$\Fac T$}; 
	\node  at (-2.4, 0.2) {$P(\ct)$}; \node  at (2.4, 0.2) {$\ct$}; 
	\node(a1) at (-1.8, 1.9) {}; \node(a2) at (1.8, 1.9) {};
	\node(01) at (-1.8, 1.7) {}; \node(02) at (1.8, 1.7) {};

	\node(11) at (-1.8, 0.8) {}; \node(12) at (1.8, 0.8) {};
	\node(21) at (-1.8, 0.2) {}; \node(22) at (1.8, 0.2) {};
	\node at (2.6,0.1) {.};

	\draw[->, thick] (a1) --node {$\Fac$} (a2); \draw[->, thick] (02) --node {$P(-)$} (01); \draw[|->, thick] (11) to (12); \draw[|->, thick] (22) -- (21); 
		
	\end{tikzpicture}
	\]
\end{theo}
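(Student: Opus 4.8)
The plan is to verify that the two assignments are well-defined maps between $\stautilt \Lambda$ and $\ftors \Lambda$ and that they compose to the identity in both directions, leaning on Proposition \ref{prop Extprogen of ftors} throughout. First I would settle the map $T \mapsto \Fac T$ together with the easy composite. If $T \in \stautilt \Lambda$, then $T$ is $\tau$-rigid, so $\Fac T = \T{1}{\add T}$ is a torsion class by Proposition \ref{prop FacU is 1fold tors}, and it is functorially finite by the implication $(c) \Rightarrow (a)$ of Theorem \ref{thm charac ftors}; hence $\Fac T \in \ftors \Lambda$. For the composite $\ct \mapsto P(\ct) \mapsto \Fac P(\ct)$, Proposition \ref{prop Extprogen of ftors}$(a)$ gives $\ct = \Fac T_{0}^{\Lambda}$ with $T_{0}^{\Lambda} \in \add P(\ct)$, while $P(\ct) \in \ct$ since both $T_{0}^{\Lambda}, T_{1}^{\Lambda} \in \ct$; therefore $\ct = \Fac T_{0}^{\Lambda} \subseteq \Fac P(\ct) \subseteq \ct$, so $\Fac P(\ct) = \ct$.

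Next I would record that $P(\ct)$ is $\tau$-rigid and establish one inclusion of the remaining composite. As an $\mathrm{Ext}$-progenerator, $P(\ct)$ is $\mathrm{Ext}$-projective in $\ct = \Fac P(\ct)$, i.e. $\Ext_{\Lambda}^{1}(P(\ct), \Fac P(\ct)) = 0$; applying Proposition \ref{prop taurigid} with $M = N = P(\ct)$ yields $\Hom_{\Lambda}(P(\ct), \tau P(\ct)) = 0$, so $P(\ct)$ is $\tau$-rigid. Symmetrically, for $T \in \stautilt \Lambda$ the $\tau$-rigidity of $T$ gives $\Ext_{\Lambda}^{1}(T, \Fac T) = 0$ by Proposition \ref{prop taurigid}, so $T$ is $\mathrm{Ext}$-projective in $\Fac T$; since $\cp(\Fac T) = \add P(\Fac T)$ by Remark \ref{rem Ext proj}$(2)$, this forces $\add T \subseteq \add P(\Fac T)$. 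Thus $P(\Fac T) = T$ will follow once I establish the reverse inclusion, and separately that $P(\ct)$ is genuinely support $\tau$-tilting rather than merely basic $\tau$-rigid.

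The hard part will be the counting underlying the support $\tau$-tilting condition: verifying that $P(\ct) \in \stautilt \Lambda$ and that the number $|U|$ of indecomposable summands of a basic support $\tau$-tilting module $U$ is determined by $\Fac U$. Using Proposition \ref{prop Extprogen of ftors}$(c)$, which gives $|P(\ct)| = |T_{0}^{\Lambda}| + |T_{1}^{\Lambda}|$, together with the left minimal $\ct$-approximation $f \colon \Lambda \to T_{0}^{\Lambda}$ and $T_{1}^{\Lambda} = \Cok f$, I would identify the idempotent $e$ cutting out the support of $\ct$ and check $|P(\ct)| = |\Lambda/\langle e \rangle|$, so that $P(\ct)$ is $\tau$-tilting over $\Lambda/\langle e \rangle$; the same analysis shows $|U| = |\Lambda/\langle e \rangle|$ depends only on $\Fac U$. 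Granting this, for $T \in \stautilt \Lambda$ the identities $\Fac P(\Fac T) = \Fac T$ (the first composite applied to $\ct = \Fac T$) and $|T| = |P(\Fac T)|$ upgrade the inclusion $\add T \subseteq \add P(\Fac T)$ to $\add T = \add P(\Fac T)$, hence to $T = P(\Fac T)$ since both modules are basic. Together with $\Fac P(\ct) = \ct$, this shows the two maps are mutually inverse. Since this counting is exactly the content of \cite[Theorem 2.7]{Adachi_Iyama_Reiten_2014}, one may alternatively invoke it directly for this step.
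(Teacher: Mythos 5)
First, a point of comparison: the paper gives no proof of this statement at all --- Theorem \ref{thm stautilt ftors} is recalled verbatim from \cite[Theorem 2.7]{Adachi_Iyama_Reiten_2014}, so your attempt can only be judged on its own merits. The parts you actually work out are correct and use exactly the paper's toolkit: $\Fac T \in \ftors \Lambda$ via Proposition \ref{prop FacU is 1fold tors} and Theorem \ref{thm charac ftors}; $\Fac P(\ct) = \ct$ via Proposition \ref{prop Extprogen of ftors}$(a)$ together with $P(\ct) \in \ct$; $\tau$-rigidity of $P(\ct)$ via $\mathrm{Ext}$-projectivity and Proposition \ref{prop taurigid}; and $\add T \subseteq \add P(\Fac T)$ via Remark \ref{rem Ext proj}$(2)$. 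These reductions are sound and, incidentally, this is essentially how the paper itself uses the AIR bijection downstream (e.g.\ in Proposition \ref{prop prepare charac of 2tors}).

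The genuine gap is your third paragraph, and you half-acknowledge it: the two claims that $P(\ct)$ is actually \emph{support $\tau$-tilting} (not merely basic $\tau$-rigid) and that $|T| = |P(\Fac T)|$ are not formal consequences of anything in this paper. Your sketch --- ``identify the idempotent $e$ cutting out the support and check $|P(\ct)| = |\Lambda/\langle e \rangle|$'' --- is precisely the non-formal core of the AIR theorem: it requires their results that a $\tau$-rigid pair $(M, e\Lambda)$ satisfies $|M| + |e\Lambda| \leq |\Lambda|$, that support $\tau$-tilting pairs are characterized by equality in this bound, and that $\tau$-tilting modules over $\Lambda/\langle e \rangle$ are sincere (so that the support algebra is determined by $\Fac$), none of which is developed here. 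Consequently your fallback of invoking \cite[Theorem 2.7]{Adachi_Iyama_Reiten_2014} ``for this step'' is circular as a proof of the statement: once that counting step is granted, everything remaining is the routine bookkeeping you have already done. Citing AIR for the whole theorem is of course what the paper does, and is legitimate; but as a self-contained proof, your proposal establishes the mutual-inverse property only modulo the theorem's actual substance.
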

$\stautilt \Lambda$ is contained in $\taurigid \Lambda$ and we can naturally construct $2$-fold torsion classes from $\tau$-rigid modules in the sense of Proposition \ref{prop coknU is n+1fold tors}. 
Theorem \ref{thm stautilt ftors} can be extended and that is exactly the main theme of the next section.

\section{A characterization of $2$-fold torsion classes induced by $\tau$-rigid modules} \label{sec charac of 2tors}
As in the previous section, assume that $\Lambda$ is a finite dimensional algebra over an algebraically closed field $\mathbb{K}$. 
The goal in this section is to characterize $2$-fold torsion classes induced by $\tau$-rigid modules.
First, we naturally construct an $n$-fold torsion class of $\md \Lambda$ from a $\tau$-rigid module.

\begin{defi}
	Let $M \in \md \Lambda$ and $n$ a nonnegative integer.
	 We denote by $\cok_{n} M$ the subcategory of $\md \Lambda$ that consists of all modules $C$ such that $C$ is the $n$-cokernel of some $n$-term exact sequence in $\add M$, that is,
	 
	 $\cok_{n} M := \big\{C \in \md \Lambda \mid \text{there is an exact sequence } M_{n} \to \cdots \to M_{0} \to C \to 0 \\ 
	 \hfill \text{ with } M_{i} \in \add M \big\}$.
 
	 We define $\cok_{-1} M$ as $\cok_{-1} M := \md \Lambda$.
	 Note that  we have the following inclusions:
	 \[
		 \cdots \subset \cok_{n} M \subset \cdots \subset \cok_{1} M \subset \cok_{0} M \subset \cok_{-1} M = \md \Lambda.
	 \]
	 Dually, we can define $\sker_{n} M$ and $\sker_{-1} M$.
 \end{defi}
 
 \begin{rema} \label{rem cor and ker}
	 Let $M \in \md \Lambda$.
	 \begin{enumerate}[(1)]
		 \setlength{\itemsep}{0pt}
		 \item By the above definition, $\Fac M = \cok_{0} M$ and $\Sub M = \sker_{0} M$ obviously hold.
		 \item By definition, it is obvious that $\clos{\add M}{\adq}{\cok_{i-1} M} = \cok_{i} M$ and $\clos{\add M}{\ads}{\sker_{i-1} M} = \sker_{i} M$ hold for $i \geq 0$.
	 \end{enumerate}
 \end{rema}

\begin{prop}  \label{prop coknU is n+1fold tors}
	Let $U$ be a $\tau$-rigid module in $\md \Lambda$. For any nonnegative integer $n$,  $\cok_{n-1} U$ is the $n$-fold torsion closure of $\add U$, namely, 
	\[\T{n}{\add U} = \cok_{n-1} U \] holds.
\end{prop}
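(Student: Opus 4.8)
The plan is to prove the identity by induction on $n$, after first isolating the auxiliary fact that every $\cok_{m} U$ (for $m \geq -1$) is closed under extensions in $\md \Lambda$. The base cases $\T{0}{\add U} = \md \Lambda = \cok_{-1} U$, and $\cok_{-1} U = \md \Lambda$ for the auxiliary fact, are immediate. The crucial input from $\tau$-rigidity is the vanishing $\Ext_{\Lambda}^{1}(\add U, \Fac U) = 0$, which holds by Proposition \ref{prop taurigid}; since $\cok_{m} U \subseteq \cok_{0} U = \Fac U$ for $m \geq 0$, every module in $\add U$ behaves like an $\mathrm{Ext}$-projective against each $\cok_{m} U$.

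For the extension-closedness of $\cok_{m} U$ I would run a horseshoe argument, by induction on $m$ starting from the trivial case $m = -1$. Given a short exact sequence $0 \to A \to B \to C \to 0$ with $A, C \in \cok_{m} U$, I first rewrite the memberships as conflations $0 \to K_{A} \to U^{A} \to A \to 0$ and $0 \to K_{C} \to U^{C} \to C \to 0$ with $U^{A}, U^{C} \in \add U$ and $K_{A}, K_{C} \in \cok_{m-1} U$, taking the kernel of the first map of a resolution witnessing membership. Since $A \in \Fac U$ and $U^{C} \in \add U$, we have $\Ext_{\Lambda}^{1}(U^{C}, A) = 0$, so the epimorphism $U^{C} \to C$ lifts through $B \to C$. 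The horseshoe construction then produces an epimorphism $U^{A} \oplus U^{C} \to B$ whose kernel $K_{B}$ fits into a short exact sequence $0 \to K_{A} \to K_{B} \to K_{C} \to 0$. By the inductive hypothesis $\cok_{m-1} U$ is extension-closed, so $K_{B} \in \cok_{m-1} U$, and therefore $B \in \cok_{m} U$.

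With this auxiliary fact established, the main induction proceeds as follows. Assume $\T{n-1}{\add U} = \cok_{n-2} U$; then $\cok_{n-2} U$ is extension-closed, so it is an exact category in which torsion classes make sense, and $\T{n}{\add U} = \clos{\add U}{\ts}{\cok_{n-2} U}$ by definition of the $n$-fold torsion closure. On one hand, Remark \ref{rem cor and ker}(2) gives $\cok_{n-1} U = \clos{\add U}{\adq}{\cok_{n-2} U}$, so $\cok_{n-1} U$ is closed under admissible quotients in $\cok_{n-2} U$; combined with its extension-closedness and Lemma \ref{lem extension closed}, it is closed under conflations in $\cok_{n-2} U$, hence a torsion class of $\cok_{n-2} U$ containing $\add U$. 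This yields $\clos{\add U}{\ts}{\cok_{n-2} U} \subseteq \cok_{n-1} U$. On the other hand, any torsion class of $\cok_{n-2} U$ containing $\add U$ is closed under admissible quotients and thus contains $\clos{\add U}{\adq}{\cok_{n-2} U} = \cok_{n-1} U$, giving the reverse inclusion. Hence $\T{n}{\add U} = \cok_{n-1} U$.

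The main obstacle is the horseshoe step: one must guarantee that $U^{C} \to C$ lifts along $B \to C$, which is exactly where $\tau$-rigidity enters through $\Ext_{\Lambda}^{1}(\add U, \Fac U) = 0$, and one must verify that the induced kernel sequence is genuinely short exact so that the inductive hypothesis on $\cok_{m-1} U$ applies. Everything else is bookkeeping with the defining inclusions $\cok_{n-1} U \subseteq \cok_{n-2} U$ and the two closure operations.
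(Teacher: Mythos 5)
Your proposal is correct and takes essentially the same approach as the paper: induction on $n$, with the substance being the extension-closedness of each $\cok_{m} U$, established through the vanishing $\Ext_{\Lambda}^{1}(\add U, \Fac U) = 0$ from Proposition \ref{prop taurigid} and an epimorphism $U^{A} \oplus U^{C} \twoheadrightarrow B$ whose kernel is an extension of $K_{C}$ by $K_{A}$. The paper packages your horseshoe step as two explicit pullback diagrams (the splitting of $0 \to T_{1} \to M \to U_{2} \to 0$ is exactly your lifting of $U^{C} \to C$ through $B \to C$), so the difference is only diagrammatic.
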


\begin{proof}
	We prove it by the induction on $n$. The case $n = 0$ follows from $\T{0}{\add U} = \md \Lambda = \cok_{-1} U$. 
	Next, suppose that $n > 1$. By induction hypothesis and Remark \ref{rem cor and ker} (2), the following holds:
	\[\clos{\add U}{\adq}{\T{n-1}{U}} = \cok_{n-1} U, \]
	where $\T{n-1}{U} := \T{n-1}{\add U}$. If $\cok_{n-1} U$ is closed under extensions in $\md \Lambda$, then by Lemma \ref{lem extension closed},
	$\cok_{n-1} U$ is closed under conflations in $\T{n-1}{U}$ and hence we have $\cok_{n-1} U = \clos{\add U}{\adq}{\T{n-1} U} = \T{n}{U}$.
	
	Therefore, it is enough to show that $\cok_{n-1} U$ is closed under extensions in $\md \Lambda$. Take any short exact sequence $0 \to T_{1} \to E \to T_{2} \to 0$ with each $T_{i} \in \cok_{n-1} U$.
	By the definition of $\cok_{n-1} U$, for each $i=1,2$, there exists short exact sequence $0 \to K_{i} \to U_{i} \stackrel{q_{i}}{\to} T_{i} \to 0$ such that $U_{i} \in \add U$ and $K_{i} \in \cok_{n-2} U$.
  By pulling back the exact sequence $0 \to T_{1} \to E \to T_{2} \to 0$ by $q_{2}$, we obtain the following commutative diagram with exact rows and columns:

  \[
    \begin{tikzpicture}[auto]
    \node (001) at (0, -1.2) {$0$}; \node (002) at (1.2, -1.2) {$0$};
    \node (01) at (-2.4, 0) {$0$}; \node (a1) at (-1.2, 0) {$T_{1}$}; \node (x) at (0, 0) {$E$}; \node (y) at (1.2, 0) {$T_{2}$}; \node (02) at (2.4, 0) {$0$};
    \node (021) at (-2.4, 1.2) {$0$}; \node (a2) at (-1.2, 1.2) {$T_{1}$}; \node (x1) at (0, 1.2) {$M$}; \node (y1) at (1.2, 1.2) {$U_{2}$}; \node (022) at (2.4, 1.2) {$0$};
    \node (x2) at (0, 2.4) {$K_{2}$}; \node (y2) at (1.2, 2.4) {$K_{2}$};
    \node (x3) at (0, 3.6) {$0$}; \node (y3) at (1.2, 3.6) {$0$};
    \node at (0.6, 0.6) {PB};
    \node at (1.35, -1.3) {.};

    \draw[->, thick] (01) to (a1); \draw[->, thick] (a1) to (x); \draw[->, thick] (x) to (y); \draw[->, thick] (y) to (02);
    \draw[->, thick] (021) to (a2); \draw[->, thick] (a2) to (x1); \draw[->, thick] (x1) to (y1); \draw[->, thick] (y1) to (022);
    \draw[double distance = 2pt, thick] (x2) -- (y2);

    \draw[->, thick] (x1) -- (x); \draw[->, thick] (x3) -- (x2); \draw[->, thick] (x2) -- (x1); 
    \draw[->, thick] (y3) -- (y2); \draw[->, thick] (y2) -- (y1); \draw[->, thick] (x) -- (001);  \draw[->, thick] (y) -- (002);
    \draw[->, thick] (y1) --node{$q_{2}$} (y); \draw[double distance = 2pt, thick] (a2) -- (a1);

    \end{tikzpicture}
\]
Since $T_{1} \in \cok_{n-1} U \subset \Fac U$ and $U$ is $\tau$-rigid, by Proposition \ref{prop taurigid}, the exact sequence $0 \to T_{1} \to M \to U_{2} \to 0$ is split and hence $M = T_{1} \oplus U_{2}$.
By pulling back the exact sequence $0 \to K_{2} \to T_{1} \oplus U_{2} \to E \to 0$ by $f:= q_{1} \oplus 1_{U_{2}}$, we have the following commutative diagram with exact rows and columns:

\[
  \begin{tikzpicture}[auto]
		\node (001) at (-1.6, -1.6) {$0$}; \node (002) at (0, -1.6) {$0$};
		\node (01) at (-3.2, 0) {$0$}; \node (a1) at (-1.6, 0) {$K_{2}$}; \node (x) at (0, 0) {$T_{1} \oplus U_{2}$}; \node (y) at (1.6, 0) {$E$}; \node (02) at (3.2, 0) {$0$};
		\node (021) at (-3.2, 1.6) {$0$}; \node (a2) at (-1.6, 1.6) {$K$}; \node (x1) at (0, 1.6) {$U_{1} \oplus U_{2}$}; \node (y1) at (1.6, 1.6) {$E$}; \node (022) at (3.2, 1.6) {$0$};
		\node (x2) at (-1.6, 3.2) {$K_{1}$}; \node (y2) at (0, 3.2) {$K_{1}$};
		\node (x3) at (-1.6, 4.8) {$0$}; \node (y3) at (0, 4.8) {$0$};
		\node at (-0.8, 0.8) {\large PB};
		\node at (0.15, -1.7) {.};

  \draw[->, thick] (01) to (a1); \draw[->, thick] (a1) to (x); \draw[->, thick] (x) to (y); \draw[->, thick] (y) to (02);
  \draw[->, thick] (021) to (a2); \draw[->, thick] (a2) to (x1); \draw[->, thick] (x1) to (y1); \draw[->, thick] (y1) to (022);
  \draw[double distance = 2pt, thick] (x2) -- (y2);

  \draw[->, thick] (x1) --node{$f$} (x); \draw[->, thick] (x3) -- (x2); \draw[->, thick] (x2) -- (a2); 
  \draw[->, thick] (y3) -- (y2); \draw[->, thick] (y2) -- (x1); \draw[->, thick] (x) -- (002);  \draw[->, thick] (a1) -- (001);
  \draw[double distance = 2pt, thick] (y1) -- (y); \draw[->, thick] (a2) -- (a1);

  \end{tikzpicture}
\] 
Since $K_{1}, K_{2} \in \cok_{n-2} U$ and $\cok_{n-2} U$ is closed under extensions in $\md \Lambda$ by the induction hypothesis, it holds that $K \in \cok_{n-2} U$ 
and hence by the exact sequence $0 \to K \to U_{1} \oplus U_{2} \to E \to 0$, we have $E \in \cok_{n-1} U$. 
This completes the proof.
\end{proof}

Let $U$ be a $\tau$-rigid module. By Proposition \ref{prop coknU is n+1fold tors}, $\cok_{1} U$ is a $2$-fold torsion class of $\md \Lambda$ and it holds $\T{1}{\cok_{1} U} = \T{1}{\T{2}{\add U}} = \T{1}{\add U} = \Fac U$. In addition, $\Fac U$ is functorially finite in $\md \Lambda$ by Theorem \ref{thm charac ftors}. This allows us to construct a map from $\taurigid \Lambda$ to 
$\fltwotors \Lambda := \{2$-fold torsion classes $\cc$ of $\md \Lambda$ with $\T{1}{\cc} \in \ftors \Lambda \}$. We use the notation $\fltwotors \Lambda$, inspired by the notation in \cite[Definition 1.2]{asai2020semibricks}. 
Next, let us construct a map in the opposite direction. 

\begin{prop} \label{prop prepare charac of 2tors}
  There are maps 
  \[
  \begin{tikzpicture}[auto]
		\node at (-2.4, 2.0) {$\taurigid \Lambda$}; \node  at (2.4, 2.0) {$\fltwotors \Lambda$}; 
		\node[rotate=90]  at (-2.4, 1.5) {$\in$}; \node[rotate=90]  at (2.4, 1.5) {$\in$}; 
		\node  at (-2.4, 1.0) {$U$}; \node  at (2.4, 1.0) {$\cok_{1} U$}; 
		\node  at (-2.4, 0.4) {$\Phi(\cc)$}; \node  at (2.4, 0.4) {$\cc$}; 
    \node at (2.6, 0.3) {,};
		\node(011) at (-1.7, 2.1) {}; \node(021) at (1.6, 2.1) {};
    \node(012) at (-1.7, 1.9) {}; \node(022) at (1.6, 1.9) {};
		\node(11) at (-1.7, 1.0) {}; \node(12) at (1.6, 1.0) {};
		\node(21) at (-1.7, 0.4) {}; \node(22) at (1.6, 0.4) {};

		\draw[->, thick] (011) --node{$\cok_{1}$} (021); \draw[->, thick] (022) --node{$\Phi$} (012); \draw[|->, thick] (11) to (12); \draw[|->, thick] (22) -- (21); 
		
	\end{tikzpicture}
	\]
  where $\Phi(\cc)$ is the basic module such that it holds that $\add \Phi(\cc) = \cc \cap \add P(\T{1}{\cc})$. 
  We also have $\Phi \circ \cok_{1} = \mathrm{id}$. 
\end{prop}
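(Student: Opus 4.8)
The forward map is essentially already justified in the discussion preceding the statement: for $U \in \taurigid \Lambda$, Proposition \ref{prop coknU is n+1fold tors} gives that $\cok_{1} U = \T{2}{\add U}$ is a $2$-fold torsion class, and monotonicity of the torsion closure together with $\add U \subset \cok_{1} U \subset \Fac U$ yields $\T{1}{\cok_{1} U} = \T{1}{\add U} = \Fac U$, which is functorially finite by Theorem \ref{thm charac ftors}. Hence $\cok_{1} U \in \fltwotors \Lambda$. It therefore remains to check that $\Phi$ is well-defined and that $\Phi \circ \cok_{1} = \mathrm{id}$.

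For the well-definedness of $\Phi$, let $\cc \in \fltwotors \Lambda$. Being a $2$-fold torsion class, $\cc$ is $\Cc^{1}\E$-closed by the dual of Proposition \ref{prop nfold vs Kn-1Eclosed}, hence closed under direct summands by Remark \ref{rem knE is direct summands closed}. Since $\add P(\T{1}{\cc})$ is closed under direct summands and has only finitely many indecomposable objects, so does the intersection $\cc \cap \add P(\T{1}{\cc})$; thus there is a unique basic module $\Phi(\cc)$ with $\add \Phi(\cc) = \cc \cap \add P(\T{1}{\cc})$. To see that $\Phi(\cc)$ is $\tau$-rigid, note that $\T{1}{\cc}$ is a functorially finite torsion class, so $P(\T{1}{\cc})$ is the basic support $\tau$-tilting module corresponding to it under the Adachi--Iyama--Reiten bijection (Theorem \ref{thm stautilt ftors}); in particular $P(\T{1}{\cc}) \in \taurigid \Lambda$, and its direct summand $\Phi(\cc)$ is again $\tau$-rigid.

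To prove $\Phi \circ \cok_{1} = \mathrm{id}$, fix $U \in \taurigid \Lambda$ and put $\cc := \cok_{1} U$, so that $\T{1}{\cc} = \Fac U$ by the computation above. It suffices to show $\cok_{1} U \cap \add P(\Fac U) = \add U$, since then $\Phi(\cok_{1} U)$ and $U$ are basic modules with the same additive closure. For the inclusion $\supseteq$, every $M \in \add U$ lies in $\cok_{1} U$ via the sequence $0 \to M \to M \to 0$, and $U$ is $\mathrm{Ext}$-projective in $\Fac U$ by Proposition \ref{prop taurigid} (as $\Hom_{\Lambda}(U, \tau U) = 0$ forces $\Ext_{\Lambda}^{1}(U, \Fac U) = 0$), whence $U \in \add P(\Fac U)$ by Remark \ref{rem Ext proj}(2). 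For the reverse inclusion, since both $\cok_{1} U$ and $\add P(\Fac U)$ are closed under direct summands it suffices to treat an indecomposable $X \in \cok_{1} U \cap \add P(\Fac U)$. As a direct summand of the $\mathrm{Ext}$-progenerator $P(\Fac U)$, the module $X$ is $\mathrm{Ext}$-projective in $\Fac U$. Choosing an exact sequence $U_{1} \to U_{0} \xrightarrow{h} X \to 0$ with $U_{i} \in \add U$ and setting $K := \Ker h$, we have that $K$ is a quotient of $U_{1}$ and hence $K \in \Fac U$; the resulting conflation $0 \to K \to U_{0} \to X \to 0$ splits because $\Ext_{\Lambda}^{1}(X, K) = 0$, so $X \in \add U_{0} \subset \add U$. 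This gives $\subseteq$ and completes the argument.

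The only genuinely non-formal step is this last splitting argument: the heart of the matter is recognizing that membership in $\add P(\Fac U)$ confers $\mathrm{Ext}$-projectivity in $\Fac U$, which then collapses any $\cok_{1}$-presentation of $X$ by $\add U$ into a split epimorphism. Everything else is bookkeeping with closure properties and the already-established structure of $P(\Fac U)$ coming from Proposition \ref{prop Extprogen of ftors}.
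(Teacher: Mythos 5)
Your proof is correct and follows the paper's own argument almost step for step: the forward map is justified exactly as in the paper via Proposition \ref{prop coknU is n+1fold tors} and $\T{1}{\cok_{1} U} = \Fac U$, the well-definedness of $\Phi$ comes from $\Phi(\cc)$ being a direct summand of the support $\tau$-tilting module $P(\T{1}{\cc})$, and $\Phi \circ \cok_{1} = \mathrm{id}$ is proved by comparing additive closures using split conflations arising from a $\cok_{1}$-presentation. The one genuine (if small) divergence is in the inclusion $\cok_{1} U \cap \add P(\Fac U) \subset \add U$: the paper splits the conflation $0 \to K \to U' \to V \to 0$ (with $V := \Phi(\cok_{1} U)$) by invoking the $\tau$-rigidity of $V$ together with $\Fac U \subset \Fac V$ and Proposition \ref{prop taurigid}, whereas you split the corresponding sequence using only the $\mathrm{Ext}$-projectivity in $\Fac U$ of any summand of $P(\Fac U)$; your variant is marginally more economical, since it needs neither the $\tau$-rigidity of $V$ nor the comparison $\Fac U \subset \Fac V$, while the paper's version keeps the two inclusions symmetric in that both are driven by Proposition \ref{prop taurigid}.
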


\begin{proof}
  Firstly, we show that $\Phi$ is well-defined, that is, $\Phi(\cc)$ is $\tau$-rigid for any $\cc \in \fltwotors \Lambda$. 
  It follows from Theorem \ref{thm charac ftors} that $P(\T{1}{\cc})$ is a support $\tau$-tilting module in $\md \Lambda$ and it is, in particular, $\tau$-rigid. 
  Since $\Phi(\cc)$ is basic, by the definition of $\Phi(\cc)$, it is a direct summand of $P(\T{1}{\cc})$. Because every direct summands of a $\tau$-rigid modules are $\tau$-rigid modules, $\Phi (\cc)$ belongs to $\taurigid \Lambda$.

  Secondly, we prove that $U = \Phi(\cok_{1} U)$ for every $U \in \taurigid \Lambda$. Let $V := \Phi (\cok_{1} U)$. 
  Since $U \in \cok_{1} U \subset \Fac U$ and $P(\Fac U)$ is an $\mathrm{Ext}$-progenerator of $\Fac U$, there exists a conflation $0 \to F \to P' \to U \to 0$ in $\Fac U$ such that $P' \in \add P(\Fac U)$. 
  It follows from Proposition \ref{prop taurigid} that the conflation is split and hence we obtain $U \in \cok_{1} U \cap \add P(\Fac U)$. 
  Since $\add V = \cok_{1} U \cap \add P(\T{1}{\cok_{1} U})$ by the definition of $V$, we have $\add U \subset \add V$. There exists a conflation $0 \to K \to U' \to V \to 0$ in $\Fac U$ with $U' \in \add U$ because $V \in \cok_{1} U$. 
  Since $V$ is $\tau$-rigid and $\Fac U \subset \Fac V$ holds, by Proposition \ref{prop taurigid}, the conflation $0 \to K \to U' \to V \to 0$ is split and hence we have $U' = K \oplus V$. 
  Thus it holds that $\add U \supset \add V$. This implies $U = V$ because both $U$ and $V$ are basic.
\end{proof}

\begin{exam} \label{exam 2tors vs taurigid}
	In general, the map $\cok_{1}$ is $\mathbf{NOT}$ surjective. Indeed, let $\Lambda := \mathbb{K} Q / \langle ab \rangle$, where $Q$ is the following quiver and $\langle ab \rangle$ is the (two-sided) ideal generated by the path $ab$ of $\mathbb{K} Q$:
  \[
	\begin{tikzpicture}[auto]
		\node (1) at (0, 0) {$1$}; \node (2) at (2.0, 0) {$2$}; \node (3) at (4.0, 0) {$3$};
		\node at (4.15, -0.1) {.};
		\draw[->, thick] (3) --node[swap]{$a$} (2); \draw[->, thick] (2) --node[swap]{$b$} (1); 
	\end{tikzpicture} 
	\]
	The AR quiver of $\md \Lambda$ is as follows:
	\[
	\begin{tikzpicture}[auto]
		\node (1) at (0, 0) {$P_{1}$}; \node (2) at (2.0, 0) {$S_{2}$}; \node (3) at (4.0, 0) {$S_{3}$};
		\node (12) at (1.0, 1.0) {$P_{2}$}; \node (23) at (3.0, 1.0) {$P_{3}$};
		\node at (4.3, -0.1) {.};
		\draw[->, thick, dotted] (3) -- (2); \draw[->, thick, dotted] (2) -- (1);
		\draw[->, thick] (1) -- (12); \draw[->, thick] (12) -- (2); 
		\draw[->, thick] (2) -- (23); \draw[->, thick] (23) -- (3); 
	\end{tikzpicture} 
	\]
	Since $\Lambda$ is representation-finite, for every $2$-fold torsion class of $\md \Lambda$, its $1$-fold torsion closure is functorially finite.  
	The number of basic $\tau$-rigid modules in $\md \Lambda$ is 16 but that of $2$-fold torsion classes belonging to $\fltwotors \Lambda$ is 17 as described in Table \ref{table the number of besic taurigid and 2tors}.
	In fact, we show that $\add(P_{2} \oplus S_{3})$ is not included in the image of $\cok_{1}$ in Example \ref{ex not included in the image of cok1}.

	\begin{table}[hbtp]
		\caption{the list of basic $\tau$-rigid $\Lambda$-modules and $2$-fold torsion classes of $\md \Lambda$ in Example \ref{exam 2tors vs taurigid}}
		\label{table the number of besic taurigid and 2tors}
		\centering
		\begin{tabular}{|c|c|}
			\hline
			basic $\tau$-rigid $\Lambda$-modules & $2$-fold torsion classes of $\md \Lambda$ \\
			\hline
			$0$ & $\{ 0\}$ \\
			$P_{1}$ & $\add P_{1}$ \\
			$S_{2}$ & $\add S_{2}$ \\
			$S_{3}$ & $\add S_{3}$ \\
			$P_{2}$ & $\add P_{2}$ \\
			$P_{3}$ & $\add P_{3}$ \\
	
			$P_{1} \oplus P_{2}$ & $\add (P_{1} \oplus P_{2} \oplus S_{2})$ \\
			$P_{1} \oplus P_{3}$ & $\add (P_{1} \oplus P_{3})$ \\
			$P_{1} \oplus S_{3}$ & $\add (P_{1} \oplus S_{3})$ \\
			$S_{2} \oplus P_{2}$ & $\add (S_{2} \oplus P_{2})$ \\
			$S_{2} \oplus P_{3}$ & $\add (S_{2} \oplus P_{3} \oplus S_{3})$ \\
			$S_{3} \oplus P_{3}$ & $\add (S_{3} \oplus P_{3})$ \\
			$P_{2} \oplus P_{3}$ & $\add (P_{2} \oplus P_{3} \oplus S_{3})$ \\
			$P_{1} \oplus P_{3} \oplus S_{3}$ & $\add (P_{1} \oplus P_{3} \oplus S_{3})$ \\
			$P_{1} \oplus P_{2} \oplus P_{3}$ & $\md \Lambda$ \\
			$P_{2} \oplus S_{2} \oplus P_{3}$ & $\add (P_{2} \oplus S_{2} \oplus P_{3} \oplus S_{3})$ \\
							& $\add (P_{2} \oplus S_{3})$ \\
		\hline
	\end{tabular}
	\end{table}
\end{exam}

Let us determine the image of $\cok_{1}$. We consider the following condition for $\cc \in \fltwotors \Lambda$:

\begin{defi} \label{def condition star}
	Let $\cc \in \fltwotors \Lambda$.
	$\cc$ is said to satisfy condition $\conast$ if for any $C \in \cc$ and the right minimal $(\add P(\T{1}{\cc}))$-approximation $f \colon P_{C} \to C$ of $C$, 
  $P_{C}$ belongs to $\cc$. 
\end{defi}

\begin{rema}
	Let $\cc \in \fltwotors \Lambda$. Note that it holds that $\add P(\T{1}{\cc}) = \cp(\T{1}{\cc})$ by Remark \ref{rem Ext proj} (2). 
\end{rema}

Before we prove that $\cok_{1} U$ for a $\tau$-rigid module $U$ satisfies condition $\conast$, we show the following lemma$\colon$

\begin{lemm} \label{lem often used lemma for a torsion class}
	Let $\ct$ be a torsion class of $\md \Lambda$.
	Assume that the following diagram with exact rows commutes in $\ca \colon$

	\[
	\begin{tikzpicture}[auto]
		\node (11) at (-3.2, 1.6) {$0$}; \node (12) at (-1.6, 1.6) {$T$}; \node (13) at (0, 1.6) {$N$}; \node (14) at (1.6, 1.6) {$C$}; \node (15) at (3.2, 1.6) {$0$};
		\node (21) at (-3.2, 0.0) {$0$}; \node (22) at (-1.6, 0.0) {$M$}; \node (23) at (0, 0.0) {$T'$}; \node (24) at (1.6, 0) {$C$}; \node (25) at (3.2, 0.0) {$0$};
		\node at (3.35, -0.1) {.};

		\draw[->, thick] (11) -- (12); \draw[->, thick] (12) -- (13); \draw[->, thick] (13) -- (14); \draw[->, thick] (14) -- (15);
		\draw[->, thick] (21) -- (22); \draw[->, thick] (22) -- (23); \draw[->, thick] (23) -- (24); \draw[->, thick] (24) -- (25);   
   
		\draw[->, thick] (12) -- (22); \draw[->, thick] (13) -- (23); \draw[double distance = 2pt, thick] (14) --(24); 		
	\end{tikzpicture}
	\]
	If both $T$ and $T'$ belong to $\ct$, then so do $M$ and $N$.
\end{lemm}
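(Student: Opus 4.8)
The plan is to treat $N$ and $M$ separately, the first being immediate and the second requiring a pullback. First I would observe that $C$ is a quotient of $T'$ via the bottom row, so $C \in \ct$ because torsion classes are closed under quotients. Then, applying closure under extensions to the top row $0 \to T \to N \to C \to 0$ with $T, C \in \ct$, I get $N \in \ct$ at once.

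The genuine difficulty is $M$: it occurs only as the kernel (a submodule) of the bottom epimorphism $T' \to C$, and a torsion class need not be closed under submodules, so the naive argument fails. The idea is to exhibit $M$ as a direct summand of an object already known to lie in $\ct$. Writing $p \colon N \to C$ and $q \colon T' \to C$ for the two epimorphisms, I would form the pullback $P$ of $p$ and $q$. The pullback square produces two short exact sequences, namely $0 \to T \to P \to T' \to 0$ (obtained by pulling $\Ker p = T$ back along $q$) and $0 \to M \to P \to N \to 0$ (obtained by pulling $\Ker q = M$ back along $p$). The first sequence, together with $T, T' \in \ct$ and closure under extensions, shows that $P \in \ct$.

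Now I would invoke commutativity of the right-hand square, which says exactly that $q\beta = p$ for the vertical morphism $\beta \colon N \to T'$. This allows me to define a section $\sigma \colon N \to P$ by $n \mapsto (n, \beta(n))$ of the projection $P \to N$ occurring in the second sequence; since $q(\beta(n)) = p(n)$, the pair $(n,\beta(n))$ indeed lies in $P$, and $\sigma$ splits $0 \to M \to P \to N \to 0$, whence $P \cong M \oplus N$. Because $\ct$ is closed under quotients it is in particular closed under direct summands (a summand is the image of a projection, hence a quotient), so $M$ being a summand of $P \in \ct$ forces $M \in \ct$.

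The only real obstacle is the membership of $M$, and it is dissolved entirely by this pullback-plus-splitting device; the exactness of the two pullback sequences and the verification that $\sigma$ is a section are routine diagram chases that I would not spell out in detail. I would also remark that the same argument recovers $N \in \ct$ (as the other summand of $P$), so the preliminary extension argument for $N$ is in fact optional.
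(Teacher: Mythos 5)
Your argument is correct and is essentially the paper's proof in a slightly different dress: your pullback $P$ of $p$ and $q$ is exactly the module $N\oplus M$ that the paper obtains by observing that the left-hand square is bicartesian, and both proofs then conclude from the extension $0 \to T \to M\oplus N \to T' \to 0$ together with closure of $\ct$ under extensions and quotients (summands). The only difference is presentational — you identify $P\cong M\oplus N$ via the section $\sigma = (\mathrm{id},\beta)$ rather than citing the pushout--pullback property directly — and your preliminary derivation of $N\in\ct$ via $C\in\ct$ is, as you note, redundant.
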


\begin{proof}
	Assume that both $T$ and $T'$ belong to $\ct$. The square 
	\[
		\begin{tikzpicture}[auto]
			\node (12) at (-1.6, 1.6) {$T$}; \node (13) at (0, 1.6) {$N$};
			\node (22) at (-1.6, 0.0) {$M$}; \node (23) at (0, 0.0) {$T'$};
			\node at (0.15, -0.1) {.};
	
			\draw[->, thick] (12) -- (13);
			\draw[->, thick] (22) -- (23);
		 
			\draw[->, thick] (12) -- (22); \draw[->, thick] (13) -- (23);
		\end{tikzpicture}
	\]
	is pushout and pullback. Hence, we obtain the following exact sequence$\colon$
	\[
	0 \to T \to N \oplus M \to T' \to 0.
	\]
	Since $T$, $T' \in \ct$ and $\ct$ is a torsion class of $\md \Lambda$, $N \oplus M$ belongs to $\ct$ and hence so do its quotients $M$ and $N$. 
\end{proof}

We verify that $\cok_{1} U$ with $U \in \taurigid \Lambda$ satisfies condition $\conast$.
Note once again that it holds that $\T{1}{\cok_{1} U} = \Fac U$.

\begin{prop} \label{prop star condition}
	Let $U$ be a $\tau$-rigid module in $\md \Lambda$, $C$ a module in $\cok_{1} U$ and $P_{C} \to C$ the right minimal $(\add P(\Fac U))$-approximation of $C$.
	Then $P_{C}$ belongs to $\add U$. In particular, $\cok_{1} U$ satisfy condition $\conast$.
\end{prop}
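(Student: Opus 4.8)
The plan is to produce, from the membership $C \in \cok_{1} U$, a right $(\add P(\Fac U))$-approximation of $C$ whose source already lies in $\add U$, and then to exploit the right minimality of $f$ in order to realize $P_{C}$ as a direct summand of that source. Recall throughout that $\T{1}{\cok_{1} U} = \Fac U$, so $P(\T{1}{\cok_{1} U}) = P(\Fac U)$.

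First I would record a short exact sequence witnessing $C \in \cok_{1} U$. By the definition of $\cok_{1} U$ there is an exact sequence $U_{1} \to U_{0} \to C \to 0$ with $U_{0}, U_{1} \in \add U$; writing $K := \Ker(U_{0} \to C)$, which is a quotient of $U_{1}$, we obtain a short exact sequence
\[
0 \to K \to U_{0} \xrightarrow{\pi} C \to 0
\]
with $U_{0} \in \add U$ and $K \in \Fac U$. I would also recall, as in the proof of Proposition \ref{prop prepare charac of 2tors}, that $U$ is $\mathrm{Ext}$-projective in $\Fac U$ by Proposition \ref{prop taurigid}, whence $\add U \subset \cp(\Fac U) = \add P(\Fac U)$ by Remark \ref{rem Ext proj} (2). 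In particular $U_{0} \in \add P(\Fac U)$.

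Next I would check that $\pi \colon U_{0} \to C$ is itself a right $(\add P(\Fac U))$-approximation of $C$. Since $\add P(\Fac U) = \cp(\Fac U)$, every $Q \in \add P(\Fac U)$ satisfies $\Ext_{\Lambda}^{1}(Q, K) = 0$, because $K \in \Fac U$. Applying $\Hom_{\Lambda}(Q, -)$ to the short exact sequence above yields the exact sequence
\[
\Hom_{\Lambda}(Q, U_{0}) \to \Hom_{\Lambda}(Q, C) \to \Ext_{\Lambda}^{1}(Q, K) = 0,
\]
so every morphism $Q \to C$ factors through $\pi$; together with $U_{0} \in \add P(\Fac U)$ this says that $\pi$ is a right $(\add P(\Fac U))$-approximation of $C$. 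Finally I would compare $\pi$ with the right minimal approximation $f \colon P_{C} \to C$: as both are right $(\add P(\Fac U))$-approximations, there exist $s \colon P_{C} \to U_{0}$ and $t \colon U_{0} \to P_{C}$ with $\pi s = f$ and $f t = \pi$, whence $f(ts) = f$. Right minimality of $f$ forces $ts$ to be an isomorphism, so $s$ is a split monomorphism and $P_{C}$ is a direct summand of $U_{0} \in \add U$; thus $P_{C} \in \add U$. Since $\cok_{1} U$ contains $U$ and is closed under direct summands (Remark \ref{rem knE is direct summands closed}), we have $\add U \subset \cok_{1} U$, so $P_{C} \in \cok_{1} U$, i.e.\ $\cok_{1} U$ satisfies $\conast$.

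The main obstacle is the middle step: recognizing that the defining epimorphism $\pi$ out of $\add U$ is not merely some surjection but an honest right $(\add P(\Fac U))$-approximation. Everything hinges on the facts that all of $\add P(\Fac U)$ is $\mathrm{Ext}$-projective in $\Fac U$ while the kernel $K$ lies in $\Fac U$; once this $\Ext^{1}$-vanishing is in place, the reduction of $P_{C}$ to a direct summand of $U_{0}$ via minimality is formal, being the dual of the uniqueness argument in Remark \ref{rem left approx} (2).
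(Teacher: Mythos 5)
Your proof is correct, but it takes a different route from the paper's. The paper starts from the \emph{right minimal $(\add U)$-approximation} $f \colon U_{C} \to C$ and upgrades it: it shows $U_{C} \in \add P(\Fac U)$ by splitting a conflation from the progenerator, shows $f$ is an epimorphism by factoring the defining surjection of $C$ through it, shows $\Ker f \in \Fac U$ via a two-row comparison diagram and Lemma \ref{lem often used lemma for a torsion class}, and then concludes by $\mathrm{Ext}$-projectivity that $f$ is also the right minimal $(\add P(\Fac U))$-approximation, whence $P_{C} = U_{C} \in \add U$ by uniqueness of minimal approximations (the dual of Remark \ref{rem left approx} (2)). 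You instead work with the raw, not necessarily minimal, presentation $\pi \colon U_{0} \to C$: its kernel lies in $\Fac U$ for free (being a quotient of $U_{1}$), so no analogue of Lemma \ref{lem often used lemma for a torsion class} or of the epimorphism argument for $f$ is needed; you then verify $\pi$ is a right $(\add P(\Fac U))$-approximation by the same $\Ext^{1}$-vanishing, and split $P_{C}$ off $U_{0}$ by the standard minimality argument ($\pi s = f$, $ft = \pi$, so $ts$ is an isomorphism and $s$ is a split monomorphism). Your inclusion $\add U \subset \add P(\Fac U)$ via $\mathrm{Ext}$-projectivity of $U$ in $\Fac U$ and Remark \ref{rem Ext proj} (2) is a clean substitute for the paper's split-conflation step. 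What each approach buys: yours is shorter and more self-contained; the paper's yields the slightly finer statement that $P_{C}$ is precisely the source of the minimal $(\add U)$-approximation of $C$, not merely some direct summand of a presenting module, which is in the spirit of how the proposition is used later. Both establish $P_{C} \in \add U \subset \cok_{1} U$, hence condition $\conast$.
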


\begin{proof}
	Take the right minimal $(\add U)$-approximation $f \colon U_{C} \to C$ of $C$. It suffices to show that $f$ is also right minimal $(\add P(\Fac U))$-approximation of $C$ 
	because a right minimal approximation is unique in the sense of the dual of Remark \ref{rem left approx} (2).

	To begin with, we prove that $U_{C} \in \add P(\Fac U)$. Since $U_{C} \in \Fac U$ and $P(\Fac U)$ is an $\mathrm{Ext}$-progenerator of $\Fac U$, there exists some conflation $0 \to F \to P' \to U_{C} \to 0$ in $\Fac U$ with $P' \in \add P(\Fac U)$. 
	It follows from Proposition \ref{prop taurigid} that the conflation is split and hence $U_{C} \in \add P(\Fac U)$.

	It remains to show that $f$ is a right $(\add P(\Fac U))$-approximation of $C$. 
	Let $g \colon P'' \to C$ be a  morphism such that $P'' \in \add P(\Fac U)$. 
	Since $C$ belongs to $\cok_{1} U$, we can take a conflation $0 \to F \to U_{0} \xrightarrow{h} C \to 0$ in $\Fac U$ with $U_{0} \in \add U$. 
	As $f$ is a right $(\add U)$-approximation of $C$, $h$ passes through $f$. Hence $f$ is an epimorphism because $h$ is an epimorphism.  
	Then we have the following commutative diagram with exact rows$\colon$
	\[
		\begin{tikzpicture}[auto]
			\node (11) at (-3.4, 1.7) {$0$}; \node (12) at (-1.7, 1.7) {$F$}; \node (13) at (0.0, 1.7) {$U_{0}$}; \node (14) at (1.7, 1.7) {$C$}; \node (15) at (3.4, 1.7) {$0$};
			\node (21) at (-3.4, 0.0) {$0$}; \node (22) at (-1.7, 0.0) {$K$}; \node (23) at (0.0, 0.0) {$U_{C}$}; \node (24) at (1.7, 0.0) {$C$}; \node (25) at (3.4, 0.0) {$0$};
			\node at (3.55, -0.1) {.};
		
			\draw[->, thick] (11) -- (12); \draw[->, thick] (12) -- (13); \draw[->, thick] (13) --node{$h$} (14); \draw[->, thick] (14) -- (15);
			\draw[->, thick] (21) -- (22); \draw[->, thick] (22) -- (23); \draw[->, thick] (23) --node[swap] {$f$} (24); \draw[->, thick] (24) -- (25);
		
			\draw[->, thick] (12) -- (22);
			\draw[->, thick] (13) -- (23);
			\draw[double distance = 2pt, thick] (14) -- (24);		
		\end{tikzpicture}	
  \]
	Since $F, U_{C} \in \Fac U$ and $\Fac U$ is a torsion class of $\md \Lambda$, by Lemma \ref{lem often used lemma for a torsion class}, we have $K \in \Fac U$.
	Therefore, the exact sequence $0 \to K \to U_{C} \to C \to 0$ is a conflation in $\Fac U$. 
	By the $\mathrm{Ext}$-projectivity of $P''$, $g$ factors through $f$ as follows$\colon$
	
	\[
	\begin{tikzpicture}[auto]
		\node (24) at (1.7, 1.7) {$P''$};
		\node (11) at (-3.4,0) {$0$}; \node (12) at (-1.7,0) {$K$}; \node (13) at (0,0) {$U_{C}$}; \node (14) at (1.7,0) {$C$}; \node (15) at (3.4,0) {$0$};
		\node at (3.55, -0.1) {.};

		\draw[->, thick] (11) -- (12); \draw[->, thick] (12) -- (13); \draw[->, thick] (13) --node[swap]{$f$} (14); \draw[->, thick] (14) -- (15);
		\draw[->, thick] (24) --node{$g$} (14); \draw[->, dotted, thick] (24) -- (13);
	\end{tikzpicture}
	\]

	This implies that $f$ is a right $(\add P(\Fac U))$-approximation of $C$. 
	The proof is complete.
\end{proof}

We denote by $\fltwotors^{\ast} \Lambda$ the set of $2$-fold torsion classes in $\fltwotors \Lambda$ satisfying condition $\conast$. 
Finally, we prove that $2$-fold torsion classes induced by basic $\tau$-rigid modules are exactly the subcategories in $\fltwotors^{\ast} \Lambda$. 
\footnote{In \cite[Corollary 6.12]{hafezi2024tau} submitted to arXiv on October 23, 2024, the authors claim that $2$-fold torsion classes induced by $\tau$-rigid modules are exactly the ``$\CE$-closed subcategories which have enough projectives and admit an $\Ext$-progenerator with no self-right-extensions'' but the author of this paper is unable to verify the statement.}

\begin{theo} \label{thm charac of 2tors}
The following maps are mutually inverse$\colon$
\[
\begin{tikzpicture}[auto]
	\node at (-2.4, 2.0) {$\taurigid \Lambda$}; \node  at (2.4, 2.0) {$\fltwotors^{\ast} \Lambda$}; 
	\node[rotate=90]  at (-2.4, 1.5) {$\in$}; \node[rotate=90]  at (2.4, 1.5) {$\in$}; 
	\node  at (-2.4, 1.0) {$U$}; \node  at (2.4, 1.0) {$\cok_{1} U$}; 
	\node  at (-2.4, 0.4) {$\Phi(\cc)$}; \node  at (2.4, 0.4) {$\cc$}; 
	\node at (2.6, 0.3) {,};
	\node(011) at (-1.7, 2.1) {}; \node(021) at (1.5, 2.1) {};
	\node(012) at (-1.7, 1.9) {}; \node(022) at (1.5, 1.9) {};
	\node(11) at (-1.7, 1.0) {}; \node(12) at (1.5, 1.0) {};
	\node(21) at (-1.7, 0.4) {}; \node(22) at (1.5, 0.4) {};

	\draw[->, thick] (011) --node{$\cok_{1}$} (021); \draw[->, thick] (022) --node{$\Phi$} (012); \draw[|->, thick] (11) to (12); \draw[|->, thick] (22) -- (21); 
	
\end{tikzpicture}
\]
where $\Phi(\cc)$ is the basic module such that it holds that $\add \Phi(\cc) = \cc \cap \add P(\T{1}{\cc})$. 

These maps restrict to the maps in Theorem \ref{thm stautilt ftors}.
\end{theo}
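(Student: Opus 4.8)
The plan is to leverage Proposition~\ref{prop prepare charac of 2tors}, which already produces both maps and proves $\Phi \circ \cok_{1} = \mathrm{id}$ on $\taurigid \Lambda$, together with Propositions~\ref{prop coknU is n+1fold tors} and~\ref{prop star condition}, which show that $\cok_{1} U$ is a $2$-fold torsion class satisfying $\conast$ and hence that $\cok_{1}$ does land in $\fltwotors^{\ast} \Lambda$. Thus the maps are well defined and one composite is the identity, so everything reduces to the reverse identity $\cok_{1} \circ \Phi = \mathrm{id}$ on $\fltwotors^{\ast} \Lambda$. Fix $\cc \in \fltwotors^{\ast} \Lambda$, put $\ct := \T{1}{\cc}$, let $P := P(\ct)$ be its basic $\mathrm{Ext}$-progenerator (so $\add P = \cp(\ct)$ by Remark~\ref{rem Ext proj}~(2) and $\ct = \Fac P$ by Proposition~\ref{prop Extprogen of ftors}), and set $U := \Phi(\cc)$, so that $\add U = \cc \cap \add P$. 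I then want to prove $\cok_{1} U = \cc$.

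The inclusion $\cok_{1} U \subseteq \cc$ is the formal half: by Proposition~\ref{prop coknU is n+1fold tors} one has $\cok_{1} U = \T{2}{\add U}$, the smallest $2$-fold torsion class containing $\add U$ (dual of Proposition~\ref{prop nfold torf closure}), and $\cc$ is a $2$-fold torsion class containing $\add U$. For the converse I would first establish $\cc \subseteq \Fac U$: for $C \in \cc$, condition $\conast$ places the right minimal $(\add P)$-approximation $f \colon P_{C} \to C$ in $\cc \cap \add P = \add U$, while $C \in \ct = \Fac P$ forces an epimorphism from $\add P$ onto $C$, which factors through $f$ and makes $f$ an epimorphism; hence $C \in \Fac U$. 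Since $\Fac U = \T{1}{\add U}$ is a torsion class (Proposition~\ref{prop FacU is 1fold tors}), the inclusions $\add U \subseteq \cc$ and $\cc \subseteq \Fac U$ give $\ct = \T{1}{\cc} \subseteq \T{1}{\Fac U} = \Fac U = \T{1}{\add U} \subseteq \T{1}{\cc} = \ct$, so I obtain the useful equality $\Fac U = \ct$.

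The core is the inclusion $\cc \subseteq \cok_{1} U$, and the crux is to show that the kernel of the approximation lands in $\Fac U$, not merely in $\ct$; this is where I expect the real work to be, since the naive hope that this kernel lies in $\cc$ already fails. Given $C \in \cc$ and the epimorphism $f \colon P_{C} \twoheadrightarrow C$ above, write $0 \to K \to P_{C} \to C \to 0$ with $P_{C} \in \add U$. Using that $P = P(\ct)$ is an $\mathrm{Ext}$-progenerator I may choose a conflation $0 \to T \to P' \to C \to 0$ in $\ct$ with $P' \in \add P$ and $T \in \ct$; because $\add P = \cp(\ct)$, the epimorphism $P' \to C$ is itself an $(\add P)$-approximation. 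Comparing it with the right minimal $(\add P)$-approximation $f$ via the standard decomposition of an approximation into its right minimal part and a summand on which it vanishes (the matching is furnished by the uniqueness in the dual of Remark~\ref{rem left approx}~(2)), I find that $K = \Ker f$ is a direct summand of $T$, whence $K \in \ct = \Fac U$. Then $K \in \Fac U$ yields an epimorphism $U_{1} \twoheadrightarrow K$ with $U_{1} \in \add U$, and splicing it with $K \hookrightarrow P_{C}$ produces an exact sequence $U_{1} \to P_{C} \to C \to 0$ with $U_{1}, P_{C} \in \add U$, i.e.\ $C \in \cok_{1} U$. This completes $\cok_{1} U = \cc$, and hence mutual inverseness.

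For the last assertion that these maps restrict to those of Theorem~\ref{thm stautilt ftors}, I would argue as follows. If $T \in \stautilt \Lambda \subseteq \taurigid \Lambda$, then $T = P(\Fac T)$ is the $\mathrm{Ext}$-progenerator of the functorially finite torsion class $\Fac T$, so every $C \in \Fac T$ admits a conflation $0 \to L \to T' \to C \to 0$ in $\Fac T$ with $T' \in \add T$; since $L \in \Fac T = \cok_{0} T$ this gives $\Fac T \subseteq \cok_{1} T \subseteq \cok_{0} T = \Fac T$, so $\cok_{1} T = \Fac T$. Conversely, a functorially finite torsion class $\ct \in \ftors \Lambda \subseteq \fltwotors^{\ast} \Lambda$ satisfies $\add P(\ct) \subseteq \ct$, so $\Phi(\ct)$ is the basic module with $\add \Phi(\ct) = \ct \cap \add P(\ct) = \add P(\ct)$, that is $\Phi(\ct) = P(\ct)$. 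Hence $\cok_{1}$ and $\Phi$ restrict to $\Fac$ and $P(-)$, recovering Theorem~\ref{thm stautilt ftors}.
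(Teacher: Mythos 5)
Your proposal is correct, and it follows the paper's overall skeleton — reduce via Propositions \ref{prop prepare charac of 2tors} and \ref{prop star condition} to the identity $\cok_{1} \circ \Phi = \mathrm{id}$, use condition $\conast$ to place the right minimal $(\add P(\T{1}{\cc}))$-approximation $f \colon P_{C} \to C$ in $\add \Phi(\cc)$, show $f$ is an epimorphism by factoring an $\mathrm{Ext}$-progenerator conflation through it, and conclude by putting $\Ker f$ into $\Fac \Phi(\cc)$ — but you handle the crux by a genuinely different key lemma. Where the paper compares the two short exact sequences $0 \to F \to P' \to C \to 0$ and $0 \to \Ker f \to P_{C} \to C \to 0$ and invokes Lemma \ref{lem often used lemma for a torsion class} (the pullback--pushout trick producing $0 \to F \to P_{C} \oplus P' \to \dots$, hence $\Ker f \in \Fac \Phi(\cc)$ from extension- and quotient-closedness alone), you instead observe that $g \colon P' \to C$ is itself a right $(\add P)$-approximation and use the Krull--Schmidt decomposition of an approximation into its right minimal part plus a summand mapping to zero, so that $\Ker f$ is a direct summand of $F \in \T{1}{\cc}$. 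Both are valid; your route is shorter and makes the structural relation between $g$ and $f$ explicit, but it leans on a standard decomposition fact that the paper never states (only its existence counterpart, Proposition \ref{prop right minimal existence}, and the uniqueness in Remark \ref{rem left approx}(2) appear), and it is tied to the Krull--Schmidt property of $\md \Lambda$, whereas the paper's lemma is a self-contained diagram argument valid in any abelian category with a torsion class. Your treatment of the easy inclusion also differs mildly: you get $\cok_{1} \Phi(\cc) \subset \cc$ from minimality of the $2$-fold torsion closure $\T{2}{\add \Phi(\cc)}$ (Proposition \ref{prop coknU is n+1fold tors} plus the dual of Proposition \ref{prop nfold torf closure}), while the paper simply uses cokernel-closedness of $\cc$ via Proposition \ref{prop CE = 2tors}; and your derivation of $\Fac \Phi(\cc) = \T{1}{\cc}$ by a two-sided sandwich matches the paper's minimality argument. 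The restriction statement is handled the same way in both.
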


\begin{proof}
	By Propositions \ref{prop prepare charac of 2tors} and \ref{prop star condition}, in the former part, it remains to show that $\mathsf{cok}_{1} \circ \Phi = \mathrm{id}$. 
	Let $\cc \in \fltwotors^{\ast} \Lambda$. Since $\Phi(\cc) \in \cc$ and $\cc$ is closed under cokernels by Proposition \ref{prop CE = 2tors}, $\cok_{1} \Phi(\cc) \subset \cc$ holds. We show that $\cok_{1} \Phi(\cc) \supset \cc$. 
	Take any $C \in \cc$ and the right minimal $(\add P(\T{1}{\cc}))$-approximation $f \colon P_{C} \to C$ of $C$. It follows from condition $\conast$ that $P_{C} \in \cc$ and hence $P_{C} \in \cc \cap \add P(\T{1}{\cc}) = \add \Phi(\cc)$. 
	Since $C \in \cc \subset \T{1}{\cc}$ and $P(\T{1}{\cc})$ is an $\mathrm{Ext}$-progenerator of $\T{1}{\cc}$, there exists some conflation $0 \to F \to P' \xrightarrow{g} C \to 0$ in $\T{1}{\cc}$ with $P' \in \add P(\T{1}{\cc})$. 
	Because $f$ is a right $(\add P(\T{1}{\cc}))$-approximation of $C$, the epimorphism $g$ factors through $f$. Hence $f$ is an epimorphism.
	Then we have the following commutative diagram with exact rows$\colon$
	\[
		\begin{tikzpicture}[auto]
			\node (11) at (-3.4, 1.7) {$0$}; \node (12) at (-1.7, 1.7) {$F$}; \node (13) at (0.0, 1.7) {$P'$}; \node (14) at (1.7, 1.7) {$C$}; \node (15) at (3.4, 1.7) {$0$};
			\node (21) at (-3.4, 0.0) {$0$}; \node (22) at (-1.7, 0.0) {$K$}; \node (23) at (0.0, 0.0) {$P_{C}$}; \node (24) at (1.7, 0.0) {$C$}; \node (25) at (3.4, 0.0) {$0$};
			\node at (3.55, -0.1) {.};
		
			\draw[->, thick] (11) -- (12); \draw[->, thick] (12) -- (13); \draw[->, thick] (13) --node{$g$} (14); \draw[->, thick] (14) -- (15);
			\draw[->, thick] (21) -- (22); \draw[->, thick] (22) -- (23); \draw[->, thick] (23) --node[swap]{$f$} (24); \draw[->, thick] (24) -- (25);
		
			\draw[->, thick] (12) -- (22); 
			\draw[->, thick] (13) -- (23); 
			\draw[double distance = 2pt, thick] (14) -- (24);
		\end{tikzpicture}	
		\]	
	Thus we have $C \in \Fac \Phi(\cc)$ because $P_{C} \in \add \Phi(\cc)$. This implies that $\cc \subset \Fac \Phi(\cc)$. It follows from the minimality of $\T{1}{\cc}$ that $\T{1}{\cc} = \Fac \Phi(\cc)$	because $\Fac \Phi(\cc)$ is a torsion class of $\md \Lambda$ by Proposition \ref{prop FacU is 1fold tors} and $\cc \subset \Fac \Phi(\cc) \subset \T{1}{\cc}$. 

	It is enough to show that $K := \Ker f$ belongs to $\T{1}{\cc} = \Fac \Phi(\cc)$. 
	Since both $F$ and $P_{C}$ belong to the torsion class $\Fac \Phi(\cc)$, by Lemma \ref{lem often used lemma for a torsion class}, it follows that $K \in \T{1}{\cc} = \Fac \Phi(\cc)$. 
	The former part of the proof is complete.

	(The latter part): Let $T \in \stautilt \Lambda$. Then $T$ is an $\mathrm{Ext}$-progenerator of $\Fac T$ and hence we obtain $\Fac T = \cok_{1} T$. 
	For the opposite direction, let $\ct \in \ftors \Lambda$. Since we have $\T{1}{\ct} = \ct$, it holds $\Phi(\ct) = P(\ct)$ by the definition of $\Phi(\ct)$. This completes the proof. 
\end{proof}

If $U$ is a module in $\taurigid \Lambda$, then we write $\overline{U} := P(\Fac U)$. We call $\overline{U}$ the \textit{co-Bongartz completion} of $U$. 
Then the following holds$\colon$

\begin{prop} \label{prop relationship of AIR and mine}
	The following diagram commutes$\colon$
\[
\begin{tikzpicture}[auto]
\node (0) at (-2.0, 2.0) {$\stautilt \Lambda$}; \node (00) at (2.0, 2.0) {$\ftors \Lambda$}; 
\node (1) at (-2.0, 0.0) {$\taurigid \Lambda$}; \node (01) at (2.0, 0.0) {$\fltwotors^{\ast} \Lambda$}; 
\node (2) at (-2.0, -2.0) {$\stautilt \Lambda$}; \node (02) at (2.0, -2.0) {$\ftors \Lambda$}; 
\node at (2.7, -2.1) {.};
\node(011) at (-1.2, 2.0) {}; \node(021) at (1.3, 2.0) {};
\node(111) at (-1.2, 0.0) {}; \node(121) at (1.1, 0.0) {};
\node(211) at (-1.2, -2.0) {}; \node(221) at (1.3, -2.0) {};

\draw[->, thick] (111) --node{$\cok_{1}$} (121);
\draw[->, thick] (011) --node{$\Fac$} (021); 
\draw[->, thick] (211) --node{$\Fac$} (221);

\draw[{Hooks[right]}->, thick] (0) -- (1); \draw[->, thick] (1) --node[swap]{$\overline{(-)}$} (2);
\draw[{Hooks[right]}->, thick] (00) -- (01); \draw[->, thick] (01) --node{$\T{1}{-}$} (02);
\draw[->, thick] (0) to[bend right=70, edge label'={$\mathrm{id}$}] (2);
\draw[->, thick] (00) to[bend left=70, edge label={$\mathrm{id}$}] (02);
\draw[->, thick] (1) -- (02);
\draw[fill=white, draw=white] (0, -1.0) circle[x radius=10pt, y radius=10pt] ;
\node at (0, -1.0) {$\Fac$};

\end{tikzpicture}
\]
\end{prop}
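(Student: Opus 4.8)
The plan is to read off from the diagram the individual commutativity relations it encodes and to verify each one using results already in hand. Writing the three left-hand objects as $\stautilt \Lambda$, $\taurigid \Lambda$, $\stautilt \Lambda$ (top to bottom) and the three right-hand objects as $\ftors \Lambda$, $\fltwotors^{\ast} \Lambda$, $\ftors \Lambda$, the assertions are: first, the two vertical composites on the left and on the right are the identity; second, the top square and the bottom square commute; and third, the diagonal map $U \mapsto \Fac U$ agrees with both routes through it. I first note that this diagonal is well defined, since for $U \in \taurigid \Lambda$ the subcategory $\Fac U$ lies in $\ftors \Lambda$ by Theorem \ref{thm charac ftors}.

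For the right-hand column, if $\ct \in \ftors \Lambda$ then $\ct$ is already a torsion class of $\md \Lambda$, so $\T{1}{\ct} = \ct$; hence the composite $\fltwotors^{\ast} \Lambda \xrightarrow{\T{1}{-}} \ftors \Lambda$ restricted along the inclusion $\ftors \Lambda \hookrightarrow \fltwotors^{\ast} \Lambda$ is the identity. For the left-hand column, the co-Bongartz completion of a support $\tau$-tilting module $T$ is $\overline{T} = P(\Fac T)$; since $P(-)$ and $\Fac$ are mutually inverse by Theorem \ref{thm stautilt ftors}, I obtain $\overline{T} = P(\Fac T) = T$, so this composite is also the identity.

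For the top square I must check that $\cok_{1} T = \Fac T$ for $T \in \stautilt \Lambda$; this is the content of the latter part of Theorem \ref{thm charac of 2tors}, where $T$ is an $\mathrm{Ext}$-progenerator of $\Fac T$ and hence $\Fac T = \cok_{1} T$. For the bottom square, together with the diagonal, I need the two identities $\T{1}{\cok_{1} U} = \Fac U$ and $\Fac \overline{U} = \Fac U$. The first was observed immediately after Proposition \ref{prop coknU is n+1fold tors}, using $\T{1}{\cok_{1} U} = \T{1}{\T{2}{\add U}} = \T{1}{\add U} = \Fac U$. For the second, $\Fac U$ is a functorially finite torsion class, so applying Theorem \ref{thm stautilt ftors} to $\ct = \Fac U$ yields $\Fac \overline{U} = \Fac P(\Fac U) = \Fac U$. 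Combining these shows that the diagonal $\Fac$ agrees with the route through $\fltwotors^{\ast} \Lambda$ (via $\cok_{1}$ then $\T{1}{-}$) and with the route through the lower $\stautilt \Lambda$ (via $\overline{(-)}$ then $\Fac$), which is exactly the commutativity of the lower half of the diagram.

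Since every relation reduces to a previously established statement, no genuinely new argument is required; the only real work is the bookkeeping of matching each composite of arrows to the correct earlier equality. I expect the least mechanical point to be confirming that the single diagonal labelled $\Fac$ is simultaneously compatible with both triangles beneath it, which is precisely what the chain of identities $\T{1}{\cok_{1} U} = \Fac U = \Fac \overline{U}$ guarantees.
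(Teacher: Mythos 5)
Your proposal is correct and follows essentially the same route as the paper's proof: the same decomposition of the diagram into the two vertical identities, the top square via Theorem \ref{thm charac of 2tors}, and the lower triangles via the chain $\T{1}{\cok_{1} U} = \T{1}{\T{2}{\add U}} = \T{1}{\add U} = \Fac U$ together with $\Fac \overline{U} = \Fac U$. Your derivation of $\Fac \overline{U} = \Fac U$ from Theorem \ref{thm stautilt ftors} applied to $\ct = \Fac U$ is just a restatement of the paper's observation that $\overline{U} = P(\Fac U)$ is an $\mathrm{Ext}$-progenerator of $\Fac U$, so no substantive difference remains.
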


\begin{proof}
 For any $T \in \stautilt \Lambda$, $\overline{T} = P(\Fac T) \cong T$ by Theorem \ref{thm stautilt ftors}. 
 Hence we have \[ (\stautilt \Lambda \hookrightarrow \taurigid \Lambda \xrightarrow{\overline{(-)}} \stautilt \Lambda) = \mathrm{id}. \]

 Let $U \in \taurigid \Lambda$. Since $\overline{U}$ is an $\mathrm{Ext}$-progenerator of $\Fac U$ by the definition of $\overline{U}$, we have $\Fac \overline{U} = \Fac U$.
 Thus we obtain \[ (\taurigid \Lambda \xrightarrow{\overline{(-)}} \stautilt \Lambda \xrightarrow{\Fac} \ftors \Lambda) = (\taurigid \Lambda \xrightarrow{\Fac} \ftors \Lambda). \]
 Since it holds that $\T{1}{\cok_{1} U} = \T{1}{\T{2}{\add U}} = \T{1}{\add U} = \Fac U$, we have 
 \[
 (\taurigid \Lambda \xrightarrow{\cok_{1}} \fltwotors^{\ast} \Lambda \xrightarrow{\T{1}{-}} \ftors \Lambda) = (\taurigid \Lambda \xrightarrow{\Fac} \ftors \Lambda).
 \]

 For any $\ct \in \ftors \Lambda$, it follows from $\T{1}{\ct} = \ct$ by the definition of $\T{1}{\ct}$.
 Therefore,
 \[
 (\ftors \Lambda \hookrightarrow \fltwotors^{\ast} \Lambda \xrightarrow{\T{1}{-}} \ftors \Lambda) = \mathrm{id}
 \]
 follows.

 The remaining commutativity has already been proved in Theorem \ref{thm charac of 2tors}.
\end{proof}

\begin{exam} \label{ex not included in the image of cok1}
	With notations in Example \ref{exam 2tors vs taurigid}, $\cc := \add (P_{2} \oplus S_{3})$ is a $2$-fold torsion class of $\md \Lambda$ (the {\color{red}red} part in the figure below). 
	Then $\T{1}{\cc} = \add (P_{2} \oplus S_{2} \oplus P_{3} \oplus S_{3})$ (the part shaded in {\color{blue} blue} in the figure below) and $P(\T{1}{\cc}) = P_{2} \oplus S_{2} \oplus P_{3}$. 
	The morphism $f \colon P_{3} \twoheadrightarrow S_{3}$ is the right minimal $(\add P(\T{1}{\cc}))$-approximation of $S_{3} \in \cc$ but $P_{3}$ does $\mathbf{NOT}$ belong to $\cc$. Therefore, by Theorem \ref{thm charac of 2tors}, there exist no $\tau$-rigid modules $U$ such that $\cc = \cok_{1} U$.

	\[
	\begin{tikzpicture}[auto]
		\draw[rounded corners=15pt, fill=blue!20] (1.6, -0.3) -- (4.8, -0.3) --(3.6, 1.4) -- (0.3, 1.4) -- cycle;
		\node (1) at (0, 0) {$P_{1}$}; \node (2) at (2.0, 0) {$S_{2}$}; \node (3) at (4.0, 0) {\color{red} $S_{3}$};
		\node (12) at (1.0, 1.0) {\color{red} $P_{2}$}; \node (23) at (3.0, 1.0) {$P_{3}$};
		\draw[->, thick, dotted] (3) -- (2); \draw[->, thick, dotted] (2) -- (1);
		\draw[->, thick] (1) -- (12); \draw[->, thick] (12) -- (2); 
		\draw[->, thick] (2) -- (23); \draw[->, thick] (23) --node{$f$} (3); 
	\end{tikzpicture} 
	\]		 
\end{exam}

\section{Several properties of $2$-fold torsion classes induced by $\tau$-rigid modules} \label{sec several properties}
Continuing from the previous section, assume that $\Lambda$ is a finite dimensional algebra over an algebraically closed field $\mathbb{K}$. 
In this section, we verify that $2$-fold torsion classes induced by $\tau$-rigid modules satisfy several nice properties which $1$-fold torsion classes induced by $\tau$-rigid modules satisfy. 
First, let us show that they are covariantly finite in $\md \Lambda$. 
Recall that for any $\tau$-rigid module $U$, $\Fac U$ is a functorially finite torsion class of $\md \Lambda$ which admits an $\mathrm{Ext}$-progenerator by Proposition \ref{prop FacU is 1fold tors} and Theorem \ref{thm charac ftors}. 

\begin{lemm} \label{lem approx}
	Let $U$ be a $\tau$-rigid module in $\md \Lambda$ and $P$  an $\mathrm{Ext}$-progenerator of $\Fac U$.
	There is a left $(\cok_{1} U)$-approximation of $P$.
\end{lemm}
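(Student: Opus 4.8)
The plan is to exhibit the approximation by hand: I will show that a left $\add U$-approximation of $P$ already serves as a left $\cok_{1} U$-approximation. Since $\add U$ is functorially finite in $\md \Lambda$ by Proposition \ref{prop add is functorially finite}, there is a left $\add U$-approximation $g \colon P \to U^{P}$ with $U^{P} \in \add U$. As $U \in \cok_{1} U$ and $\cok_{1} U$ is additive (it is a $2$-fold torsion class), we have $\add U \subseteq \cok_{1} U$, so $U^{P} \in \cok_{1} U$ and $g$ is a legitimate candidate; it then remains to check the factorization property against an arbitrary object of $\cok_{1} U$.

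So fix any morphism $\varphi \colon P \to X$ with $X \in \cok_{1} U$. By the definition of $\cok_{1} U$ there is an exact sequence $U_{1} \to U_{0} \xrightarrow{\alpha} X \to 0$ with $U_{0}, U_{1} \in \add U$. The key observation is that the kernel $K := \Ker \alpha = \Ima(U_{1} \to U_{0})$, being a quotient of $U_{1} \in \add U$, lies in $\Fac U$; thus $0 \to K \to U_{0} \xrightarrow{\alpha} X \to 0$ is a short exact sequence all of whose terms belong to $\Fac U$. Since $P$ is an $\mathrm{Ext}$-progenerator of $\Fac U$, it is $\mathrm{Ext}$-projective there, so $\Ext_{\Lambda}^{1}(P, K) = 0$. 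Applying $\Hom_{\Lambda}(P, -)$ to this sequence makes $\Hom_{\Lambda}(P, U_{0}) \to \Hom_{\Lambda}(P, X)$ surjective, whence $\varphi$ lifts along $\alpha$: there is $\psi \colon P \to U_{0}$ with $\alpha \psi = \varphi$.

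Finally, since $U_{0} \in \add U$ and $g$ is a left $\add U$-approximation, the lift $\psi$ factors as $\psi = \psi' g$ for some $\psi' \colon U^{P} \to U_{0}$. Hence $\varphi = \alpha \psi = (\alpha \psi') g$ factors through $g$, proving that $g \colon P \to U^{P}$ is a left $\cok_{1} U$-approximation of $P$.

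The genuinely essential point---and the only place where more than formal manipulation is needed---is recognizing that the kernel of the cokernel-presentation of $X$ automatically lands in $\Fac U$ rather than merely in $\md \Lambda$. This is exactly what licenses the use of the $\mathrm{Ext}$-projectivity of $P$; without it the vanishing $\Ext_{\Lambda}^{1}(P, K) = 0$ would be unavailable and the lift $\psi$ could fail to exist. Everything else reduces to the universal property of the left $\add U$-approximation.
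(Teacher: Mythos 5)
Your proof is correct and takes essentially the same route as the paper's: both start from a left $(\add U)$-approximation $P \to U^{P}$ and verify it is already a left $(\cok_{1} U)$-approximation by presenting an arbitrary $X \in \cok_{1} U$ via a short exact sequence $0 \to K \to U_{0} \to X \to 0$ with $K \in \Fac U$ and $U_{0} \in \add U$, lifting the given morphism along it using the $\mathrm{Ext}$-projectivity of $P$ in $\Fac U$, and then factoring the lift through the approximation. The only difference is cosmetic: you spell out why $K$, as a quotient of $U_{1} \in \add U$, lies in $\Fac U$, a step the paper absorbs into its phrasing that the presentation is a conflation in $\Fac U$.
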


\begin{proof}
Let $\cc := \cok_{1} U$ and $f \colon P \to U^{P}$ a left $(\add U)$-approximation of $P$. 
We show that it is a left $\cc$-approximation of $P$. 
Take any morphism $g \colon P \to C$ with $C \in \cc$. 
Since $C \in \cc$, there is a conflation $0 \to F \to U' \stackrel{c}{\to} C \to 0$ in $\Fac U$ with $F \in \Fac U$ and $U' \in \add U$. Thus, by the $\mathrm{Ext}$-projectivity of $P$, we have the following commutative diagram with an exact row:

\[
\begin{tikzpicture}[auto]
\node (01) at (-2.4, 0) {$0$}; \node (a1) at (-1.2, 0) {$F$}; \node (x) at (0, 0) {$U'$}; \node (y) at (1.2, 0) {$C$}; \node (02) at (2.4, 0) {$0$}; \node at (2.55, -0.1) {.};
\node (y1) at (1.2, 1.2) {$P$};

\draw[->, thick] (01) to (a1); \draw[->, thick] (a1) to (x); \draw[->, thick] (x) --node[swap]{$c$} (y); \draw[->, thick] (y) to (02);

\draw[->, thick] (y1) -- node {$g$} (y);

\draw[->, thick] (y1) -- node[swap] {$h$} (x);

\end{tikzpicture}
\]
By the definition of $f$, $h$ factors through $f$ and hence $g$ passes through $f$. This completes the proof.
\end{proof}

\begin{prop} \label{prop covariantly finiteness}
	Let $U$ be a $\tau$-rigid module in $\md \Lambda$. $\cok_{1} U$ is covariantly finite in $\md \Lambda$.
\end{prop}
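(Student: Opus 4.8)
The plan is to reduce to covariant finiteness inside the torsion class $\Fac U$ and then to construct an explicit left approximation by a pushout. By Proposition~\ref{prop FacU is 1fold tors} and Theorem~\ref{thm charac ftors}, $\Fac U$ is a functorially finite torsion class of $\md \Lambda$ which admits an $\mathrm{Ext}$-progenerator $P$; in particular it is a covariantly finite additive subcategory of $\md \Lambda$. Since $\cok_{1} U \subset \cok_{0} U = \Fac U$, Proposition~\ref{prop cov trans} (applied with $\ca = \md \Lambda$, $\cy = \Fac U$ and $\cx = \cok_{1} U$) reduces the claim to showing that $\cok_{1} U$ is covariantly finite in $\Fac U$.

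So I fix $M \in \Fac U$ and produce a left $(\cok_{1} U)$-approximation. Because $P$ is an $\mathrm{Ext}$-progenerator of $\Fac U$, there is a conflation $0 \to L \xrightarrow{i} P_{0} \xrightarrow{g} M \to 0$ in $\Fac U$ with $P_{0} \in \add P$; in particular $L \in \Fac U$. The module $P_{0}$ is $\mathrm{Ext}$-projective in $\Fac U$ (being a summand of a power of $P$), so by the very argument used in the proof of Lemma~\ref{lem approx} its left $(\add U)$-approximation $f_{0} \colon P_{0} \to U_{0}$ (which exists since $\add U$ is functorially finite by Proposition~\ref{prop add is functorially finite}) is already a left $(\cok_{1} U)$-approximation of $P_{0}$. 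Forming the pushout of $g$ along $f_{0}$ yields a module $N = \Cok(f_{0} \circ i) = U_{0}/f_{0}(L)$ together with a morphism $\alpha \colon M \to N$, fitting into the exact sequence
\[
0 \to f_{0}(L) \to U_{0} \to N \to 0 .
\]

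The crucial point is that $N$ lies in $\cok_{1} U$. Indeed, $f_{0}(L)$ is a quotient of $L \in \Fac U$, hence belongs to $\Fac U$ since a torsion class is closed under quotients; choosing an epimorphism $U_{1} \twoheadrightarrow f_{0}(L)$ with $U_{1} \in \add U$ and composing it with the inclusion $f_{0}(L) \hookrightarrow U_{0}$ produces an exact sequence $U_{1} \to U_{0} \to N \to 0$ with $U_{0}, U_{1} \in \add U$, so $N \in \cok_{1} U$. It then remains to check that $\alpha$ is a left $(\cok_{1} U)$-approximation. Given any morphism $h \colon M \to C$ with $C \in \cok_{1} U$, the composite $h g \colon P_{0} \to C$ factors through $f_{0}$ because $f_{0}$ is a left $(\cok_{1} U)$-approximation; the universal property of the pushout then supplies a morphism $u \colon N \to C$ with $u \alpha = h$, so $h$ factors through $\alpha$. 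Hence $\alpha$ is the desired approximation.

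The main obstacle is controlling the pushout term $N$: a priori it is only a quotient of the object $U_{0}$, and $\cok_{1} U$ is \emph{not} closed under arbitrary quotients. What rescues the argument are the two inputs established beforehand---that the $(\add U)$-approximation of an $\mathrm{Ext}$-projective module is automatically a $(\cok_{1} U)$-approximation (the content of Lemma~\ref{lem approx}), and that $\Fac U$ is closed under quotients, which forces $f_{0}(L)$, and therefore $N$, back into $\cok_{1} U$. Once $N \in \cok_{1} U$ is secured, the approximation property falls out formally from the pushout.
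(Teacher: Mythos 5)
Your proof is correct and follows essentially the same route as the paper: reduce to covariant finiteness in $\Fac U$ via Proposition~\ref{prop cov trans}, use Lemma~\ref{lem approx} to get a left $(\cok_{1} U)$-approximation of the $\mathrm{Ext}$-projective term, push out, and conclude by the universal property of the pushout. The only cosmetic differences are that you apply the argument of Lemma~\ref{lem approx} directly to $P_{0} \in \add P$ (harmless, since the lemma's proof uses only $\mathrm{Ext}$-projectivity) and that you verify $N \in \cok_{1} U$ by exhibiting an explicit presentation $U_{1} \to U_{0} \to N \to 0$, where the paper instead invokes that $\cok_{1} U$ is a torsion class of $\Fac U$.
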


\begin{proof}
	Since $\Fac U$ is covariantly finite and $\cok_{1} U \subset \Fac U$, by Proposition \ref{prop cov trans}, it suffices to show that $\cok_{1} U$ is covariantly finite in $\Fac U$. Let $T \in \Fac U$ and $P$ an $\mathrm{Ext}$-progenerator of $\Fac U$.
	As $P$ is an $\mathrm{Ext}$-progenerator of $\Fac U$, there is a conflation $0 \to F \to P^{m} \to T \to 0$ in $\Fac U$, where $m$ is a positive integer.
	By Lemma \ref{lem approx}, there exists a left $(\cok_{1} U)$-approximation $P \to C^{P}$ of $P$. Then $f \colon P^{m} \to {(C^{P})}^{m}$ is also a left $(\cok_{1} U)$-approximation of $(C^{P})^{m}$.
	By pushing out the conflation $0 \to F \to P^{m} \to T \to 0$ in $\Fac U$ by $f$, we obtain the following commutative diagram with exact rows:

\[
\begin{tikzpicture}[auto]
\node (01) at (-2.8, 0) {$0$}; \node (a1) at (-1.4, 0) {$F'$}; \node (x) at (0, 0) {${(C^{P})}^{m}$}; \node (y) at (1.4, 0) {$C$}; \node (02) at (2.8, 0) {$0$};
\node (021) at (-2.8, 1.4) {$0$}; \node (a2) at (-1.4, 1.4) {$F$}; \node (x1) at (0, 1.4) {$P^{m}$}; \node (y1) at (1.4, 1.4) {$T$}; \node (022) at (2.8, 1.4) {$0$};

\node at (0.7, 0.7) {PO};
\node at (2.95, -0.1) {.};

\draw[->, thick] (01) to (a1); \draw[->, thick] (a1) to (x); \draw[->, thick] (x) to (y); \draw[->, thick] (y) to (02);
\draw[->, thick] (021) to (a2); \draw[->, thick] (a2) to (x1); \draw[->, thick] (x1) to (y1); \draw[->, thick] (y1) to (022);

\draw[->, thick] (x1) --node[swap] {$f$} (x);
\draw[->, thick] (y1) --node{$g$} (y); \draw[->>, thick] (a2) --(a1);

\end{tikzpicture}
\]
Since $F \in \Fac U$ and $\Fac U$ is a torsion class of $\md \Lambda$, $F'$ belongs to $\Fac U$. $(C^{P})^{m} \in \cok_{1} U$ implies that $C$ belongs to $\Fac U$. Hence the exact sequence $0 \to F' \to {(C^{P})}^{m} \to C \to 0$ is a conflation in $\Fac U$ with ${(C^{P})}^{m} \in \cok_{1}U$.
This implies $C \in \cok_{1}U$ because $\cok_{1} U$ is a torsion class of $\Fac U$. 
We prove that the $g \colon T \to C$ is a left $(\cok_{1} U)$-approximation of $T$. Take any morphisms $T \to C'$ with $C' \in \cok_{1} U$. Since $f \colon P^{m} \to {(C^{P})}^{m}$ is a left $(\cok_{1} U)$-approximation of $P^{m}$, the composition ($P^{m} \to T \to C'$) factors through $f$. By the universality of pushouts, $T \to C'$ factors through $g$.
We have the desired result.
\end{proof}

Next, we prove that they are also contravariantly finite in $\md \Lambda$ and hence functorially finite. 

\begin{prop} \label{prop contravariantly finiteness}
	Let $U$ be a $\tau$-rigid module in $\md \Lambda$. $\cok_{1} U$ is contravariantly finite in $\md \Lambda$.
\end{prop}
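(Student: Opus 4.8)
The plan is to reduce the problem to the interior of $\Fac U$ and then build a right approximation by hand. Since $\Fac U$ is a torsion class of $\md \Lambda$, it is contravariantly finite in $\md \Lambda$ by Example \ref{ex covariantly fin}, and $\cok_{1} U \subset \Fac U$. Hence, by the dual of Proposition \ref{prop cov trans}, it suffices to prove that $\cok_{1} U$ is contravariantly finite in $\Fac U$: a right $(\cok_{1} U)$-approximation computed inside $\Fac U$, composed with the right $\Fac U$-approximation of a general module (the inclusion of its torsion submodule), will give a right $(\cok_{1} U)$-approximation in $\md \Lambda$. So I would fix $T \in \Fac U$ and construct its approximation there.

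For the construction, since $\add U$ is functorially finite in $\md \Lambda$ by Proposition \ref{prop add is functorially finite}, I would take a right $(\add U)$-approximation $u_{0} \colon U_{0} \to T$; because $T \in \Fac U$ admits an epimorphism from an object of $\add U$ which factors through $u_{0}$, the map $u_{0}$ is itself an epimorphism. Put $K := \Ker u_{0}$ with inclusion $i \colon K \hookrightarrow U_{0}$, and take a right $(\add U)$-approximation $u_{1} \colon U_{1} \to K$. Setting $C := \Cok\!\big(U_{1} \xrightarrow{\, i u_{1}\,} U_{0}\big)$ gives $C \in \cok_{1} U$, and since $\Ima(i u_{1}) \subseteq K$, the canonical projection $q \colon U_{0} \twoheadrightarrow C$ induces an epimorphism $a \colon C \twoheadrightarrow T$ with $a q = u_{0}$.

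Finally I would verify that $a$ is a right $(\cok_{1} U)$-approximation. Given $C' \in \cok_{1} U$ with a presentation $U_{1}' \xrightarrow{d'} U_{0}' \xrightarrow{\pi'} C' \to 0$ in $\add U$ and a map $\phi \colon C' \to T$, first factor $\phi \pi'$ through $u_{0}$ as $\phi \pi' = u_{0} \psi$; then $u_{0}\psi d' = \phi \pi' d' = 0$ forces $\psi d'$ to factor through $K$, and factoring the resulting map $U_{1}' \to K$ through $u_{1}$ yields $\psi d' = i u_{1} s$. Consequently $q \psi d' = 0$, so $q \psi$ descends to $\bar\phi \colon C' \to C$ with $\bar\phi \pi' = q \psi$, and $a \bar\phi \pi' = a q \psi = u_{0}\psi = \phi \pi'$ gives $a \bar\phi = \phi$ after cancelling the epimorphism $\pi'$. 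The main obstacle is precisely that a single $(\add U)$-approximation does not suffice, because maps out of a cokernel need not factor through an object of $\add U$; the two-step approximation producing $C \in \cok_{1} U$ is what fixes this, and the hypothesis $T \in \Fac U$ (guaranteeing $u_{0}$ is epimorphic, so that $T$ is genuinely the $1$-cokernel relating $C$ to $T$) is exactly what the reduction to $\Fac U$ secures. That epimorphicity is the delicate point to get right.
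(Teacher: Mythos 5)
Your proof is correct, but it takes a genuinely different route from the paper. The paper also reduces to $\Fac U$ via the dual of Proposition \ref{prop cov trans}, but it then builds the approximation of $T \in \Fac U$ using the torsion pair $(\Fac U, \cf)$ with $\cf = (\Fac U)^{\perp_{0}}$: it splits $K = \Ker(U_{T} \to T)$ by its canonical sequence $0 \to T_{K} \to K \to F^{K} \to 0$, pushes out along $K \to F^{K}$ to obtain $0 \to F^{K} \to C_{T} \xrightarrow{h} T \to 0$ with $C_{T} \in \cok_{1}U$, and verifies the approximation property through the vanishing $\Hom_{\Lambda}(\Fac U, \cf) = 0$. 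You instead take a second right $(\add U)$-approximation $u_{1} \colon U_{1} \to K$ and set $C := \Cok(i u_{1})$; your verification is a pure diagram chase through the universal properties of the two approximations and the cokernel. This is more elementary, and in fact stronger than you claim: your argument nowhere uses $\tau$-rigidity of $U$, nor that $T \in \Fac U$ --- it shows that $\cok_{1}M$ is contravariantly finite in $\md \Lambda$ for an arbitrary module $M$, so both your reduction to $\Fac U$ and your insistence that the epimorphicity of $u_{0}$ is ``the delicate point'' are actually dispensable (the factorization $a\bar\phi = \phi$ is obtained by cancelling $\pi'$, not $u_{0}$). What the paper's less elementary construction buys is explicit control of the kernel: its sequence exhibits $\Ker h = F^{K} \in (\Fac U)^{\perp_{0}}$ and the paper further checks that $h$ is right minimal (via $\Hom_{\Lambda}(C_{T}, F^{K}) = 0$ and the dual of Lemma \ref{lem charac of left minimal}), and exactly these extra facts are what Corollary \ref{corr ker of right minimal} cites from ``the proof of Proposition \ref{prop contravariantly finiteness}''. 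Your construction proves the proposition as stated, but to recover the corollary you would still have to pass to the minimal approximation and separately establish $\Ker \in (\Fac U)^{\perp_{0}}$, which your $C$ does not provide on its own.
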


\begin{proof}
	Since $\Fac U$ is contravariantly finite and $\cok_{1} U \subset \Fac U$, by the dual of Proposition \ref{prop cov trans}, it suffices to show that $\cok_{1} U$ is contravariantly finite in $\Fac U$.
  Let $T \in \Fac U$ and $f \colon U_{T} \to T$ a right $(\add U)$-approximation of $T$. 
  Since $T$ is a quotient of a module in $\add U$ and $f$ is a right $(\add U)$-approximation of $T$, $f$ is an epimorphism. Put $K := \Ker f$. Since $\Fac U$ is a torsion class of $\md \Lambda$, there exists a subcategory $\cf$ of $\md \Lambda$ such that $(\Fac U, \cf)$ is a torsion pair of $\md \Lambda$.
  Thus there exists a short exact sequence $0 \to T_{K} \to K \stackrel{g}{\to} F^{K} \to 0$ with $T_{K} \in \Fac U$ and $F^{K} \in \cf$. 
  By pushing out the exact sequence $0 \to K \to U_{T} \to T \to 0$ by $g$, we obtain the following commutative diagram with exact rows and columns:

  \[
  \begin{tikzpicture}[auto]
		\node (001) at (-1.4, -1.4) {$0$}; \node (002) at (0, -1.4) {$0$};
		\node (01) at (-2.8, 0) {$0$}; \node (a1) at (-1.4, 0) {$F^{K}$}; \node (x) at (0, 0) {$C_{T}$}; \node (y) at (1.4, 0) {$T$}; \node (02) at (2.8, 0) {$0$};
		\node (021) at (-2.8, 1.4) {$0$}; \node (a2) at (-1.4, 1.4) {$K$}; \node (x1) at (0, 1.4) {$U_{T}$}; \node (y1) at (1.4, 1.4) {$T$}; \node (022) at (2.8, 1.4) {$0$};
		\node (x2) at (-1.4, 2.8) {$T_{K}$}; \node (y2) at (0, 2.8) {$T_{K}$};
		\node (x3) at (-1.4, 4.2) {$0$}; \node (y3) at (0, 4.2) {$0$};
		\node at (-0.7, 0.7) {PO};
		\node at (0.15, -1.5) {.};
	
  \draw[->, thick] (01) to (a1); \draw[->, thick] (a1) to (x); \draw[->, thick] (x) --node[swap]{$h$} (y); \draw[->, thick] (y) to (02);
  \draw[->, thick] (021) to (a2); \draw[->, thick] (a2) to (x1); \draw[->, thick] (x1) --node{$f$} (y1); \draw[->, thick] (y1) to (022);
  \draw[double distance = 2pt, thick] (x2) -- (y2);

  \draw[->, thick] (x1) --node{$v$} (x); \draw[->, thick] (x3) -- (x2); \draw[->, thick] (x2) -- (a2); 
  \draw[->, thick] (y3) -- (y2); \draw[->, thick] (y2) -- (x1); \draw[->, thick] (x) -- (002);  \draw[->, thick] (a1) -- (001);
  \draw[double distance = 2pt, thick] (y1) -- (y); \draw[->, thick] (a2) -- node[swap] {$g$} (a1);

  \end{tikzpicture}
  \]
  By the exact sequence $0 \to T_{K} \to U_{T} \to C_{T} \to 0$, we have $C_{T} \in \cok_{1} U$. 
	
	We show that the morphism $h \colon C_{T} \to T$ is a right $(\cok_{1} U)$-approximation of $T$.
  Take any morphism $\varphi \colon C \to T$ with $C \in \cok_{1} U$. Let $(C \stackrel{\eta}{\twoheadrightarrow} \Ima \varphi \stackrel{\nu}{\rightarrowtail} T)$ be the image factorization of $\varphi$. 
	Since $C \in \cok_{1} U \subset \Fac U$ and $\Fac U$ is a torsion class of $\md \Lambda$, $I:= \Ima \varphi$ belongs to $\Fac U$.
  By the definition of $\cok_{1} U$, there exists a short exact sequence $0 \to \Ker c \stackrel{\kr{c}}{\to} U_{0} \stackrel{c}{\to} C \to 0$ such that $U_{0} \in \add U$ and $\Ker c \in \Fac U$.
  By pulling back the exact sequence $0 \to \Ker \varphi \to C \xrightarrow{\eta} I \to 0$ by $c$, we have the following commutative diagram with exact rows and columns:

  \[
    \begin{tikzpicture}[auto]
			\node (001) at (-1.8, -1.8) {$0$}; \node (002) at (0, -1.8) {$0$};
			\node (01) at (-3.6, 0) {$0$}; \node (a1) at (-1.8, 0) {$\Ker \varphi$}; \node (x) at (0, 0) {$C$}; \node (y) at (1.8, 0) {$I$}; \node (02) at (3.6, 0) {$0$};
			\node (021) at (-3.6, 1.8) {$0$}; \node (a2) at (-1.8, 1.8) {$L$}; \node (x1) at (0, 1.8) {$U_{0}$}; \node (y1) at (1.8, 1.8) {$I$}; \node (022) at (3.6, 1.8) {$0$};
			\node (x2) at (-1.8, 3.6) {$\Ker c$}; \node (y2) at (0, 3.6) {$\Ker c$};
			\node (x3) at (-1.8, 5.4) {$0$}; \node (y3) at (0, 5.4) {$0$};
			\node at (-0.9, 0.9) {\large PB};
			\node at (0.15, -1.9) {.};
		
    \draw[->, thick] (01) to (a1); \draw[->, thick] (a1) -- (x); \draw[->, thick] (x) --node[swap]{$\eta$} (y); \draw[->, thick] (y) to (02);
    \draw[->, thick] (021) to (a2); \draw[->, thick] (a2) --node{$\kr \psi$} (x1); \draw[->, thick] (x1) --node{$\psi$} (y1); \draw[->, thick] (y1) to (022);
    \draw[double distance = 2pt, thick] (x2) -- (y2);

    \draw[->, thick] (x1) --node{$c$} (x); \draw[->, thick] (x3) -- (x2); \draw[->, thick] (x2) --node[swap]{$k$} (a2); 
    \draw[->, thick] (y3) -- (y2); \draw[->, thick] (y2) --node{$\kr c$} (x1); \draw[->, thick] (x) -- (002);  \draw[->, thick] (a1) -- (001);
    \draw[double distance = 2pt, thick] (y1) -- (y); \draw[->, thick] (a2) -- (a1);
  
    \end{tikzpicture}
  \]
  Since $f \colon U_{T} \to T$ is a right $(\add U)$-approximation of $T$, there exists a morphism $u \colon U_{0} \to U_{T}$ such that the following diagram with exact rows commutes:
  \[
    \begin{tikzpicture}[auto]
			\node (01) at (-3.6, 0) {$0$}; \node (a1) at (-1.8, 0) {$K$}; \node (x) at (0, 0) {$U_{T}$}; \node (y) at (1.8, 0) {$T$};
			\node (021) at (-3.6, 1.8) {$0$}; \node (a2) at (-1.8, 1.8) {$L$}; \node (x1) at (0, 1.8) {$U_{0}$}; \node (y1) at (1.8, 1.8) {$T$};
			\node at (2.0, -0.1) {.}; \node (i) at (0.9,0.9) {$I$};
		
    \draw[->, thick] (01) to (a1); \draw[->, thick] (a1) -- (x); \draw[->, thick] (x) --node[swap]{$f$} (y);
    \draw[->, thick] (021) to (a2); \draw[->, thick] (a2) --node{\footnotesize $\kr \psi$} (x1); \draw[->, thick] (x1) --node{$\nu\psi$} (y1);

    \draw[->, thick] (x1) --node[swap]{$u$} (x);
    \draw[double distance = 2pt, thick] (y1) -- (y); \draw[->, thick] (a2) -- (a1);
  
		\draw[->>, thick] (x1) --node[swap]{$\psi$} (i); \draw[>->, thick] (i) --node[swap]{$\nu$} (y1);
    \end{tikzpicture}
  \]

  Thus 
  \begin{align}
  (\Ker c \stackrel{\kr c}{\to} U_{0} \stackrel{u}{\to} U_{T} \stackrel{v}{\to} C_{T}) &= (\Ker c \stackrel{k}{\to} L \stackrel{\kr \psi}{\to} U_{0} \stackrel{u}{\to} U_{T} \stackrel{v}{\to} C_{T}) \notag \\  
                                                                                   &= (\Ker c \stackrel{k}{\to} L \to K \to U_{T} \stackrel{v}{\to} C_{T}) \notag \\
                                                                                   &= (\Ker c \stackrel{k}{\to} L \to K \stackrel{g}{\to} F^{K} \to C_{T}), \notag 
  \end{align}
	which is zero because $\Ker c \in \Fac U$, $F^{K} \in \cf$ and $(\Fac U, \cf)$ is a torsion pair of $\md \Lambda$.
	Since $vu(\kr c) = 0$, by the universality of the cokernel of $c$, there exists a morphism $t \colon C \to C_{T}$ such that $tc = vu$.
  Since $htc = hvu = fu = \nu \psi = \nu \eta c = \varphi c$ and $c$ is an epimorphism, $ht = \varphi$ holds, which means that $\varphi$ factors through $h$.

  In the end of the proof, we prove that $h$ is right minimal because it is used in Corollary \ref{corr ker of right minimal}. Since it holds that $\Hom_{\Lambda} (C_{T}, F^{K}) = 0$, it follows from the dual of Lemma \ref{lem charac of left minimal} that $h$ is right minimal. This completes the proof.
\end{proof}

\begin{corr} \label{corr ker of right minimal}
	Let $U$ be a $\tau$-rigid module in $\md \Lambda$. For any $M \in \md \Lambda$, the kernel of the right minimal $(\cok_{1} U)$-approximation $f \colon C_{M} \to M$ of $M$ 
	belongs to $(\Fac U)^{\perp_{0}} \cap (\cok_{1} U)^{\perp_{1}}$.
\end{corr}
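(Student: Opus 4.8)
The plan is to verify the two membership conditions separately, since they come from quite different sources. The condition $\Ker f \in (\cok_{1} U)^{\perp_{1}}$ is essentially free: by Proposition \ref{prop CE = 2tors} the $2$-fold torsion class $\cok_{1} U$ is $\CE$-closed, hence closed under extensions, so Wakamatsu's lemma (Lemma \ref{lem wakamatsu}) applied directly to the right minimal $(\cok_{1} U)$-approximation $f \colon C_{M} \to M$ yields $\Ext_{\Lambda}^{1}(\cok_{1} U, \Ker f) = 0$, that is, $\Ker f \in (\cok_{1} U)^{\perp_{1}}$. The substantive work lies in showing $\Ker f \in (\Fac U)^{\perp_{0}}$; note that $(\Fac U)^{\perp_{0}} = \cf$, where $(\Fac U, \cf)$ is the torsion pair of $\md \Lambda$ determined by the torsion class $\Fac U$.

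For the torsion-free part I would first reduce from the arbitrary module $M$ to its torsion part. Let $0 \to T_{M} \stackrel{\iota}{\to} M \to F^{M} \to 0$ be the canonical sequence with $T_{M} \in \Fac U$ and $F^{M} \in \cf$. Because $\cok_{1} U \subset \Fac U$ and $\Hom_{\Lambda}(\Fac U, F^{M}) = 0$, every morphism from an object of $\cok_{1} U$ into $M$ is killed upon composing with $M \to F^{M}$, hence factors uniquely through the monomorphism $\iota$; thus $\iota$ induces a bijection $\Hom_{\Lambda}(\cok_{1} U, T_{M}) \xrightarrow{\sim} \Hom_{\Lambda}(\cok_{1} U, M)$. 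Consequently, if $h \colon C_{T_{M}} \to T_{M}$ denotes the right minimal $(\cok_{1} U)$-approximation of $T_{M}$, then $\iota h$ is a right $(\cok_{1} U)$-approximation of $M$, and it is in fact right minimal: any $\varphi \in \End_{\Lambda}(C_{T_{M}})$ with $\iota h \varphi = \iota h$ satisfies $h\varphi = h$ since $\iota$ is monic, so $\varphi$ is an isomorphism by right minimality of $h$. By uniqueness of right minimal approximations (the dual of Remark \ref{rem left approx} (2)) we conclude $\Ker f \cong \Ker(\iota h) = \Ker h$, the last equality again because $\iota$ is a monomorphism.

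It then remains to identify $\Ker h$, and here I would invoke the explicit construction carried out in the proof of Proposition \ref{prop contravariantly finiteness}. Applied to the object $T_{M} \in \Fac U$, that construction produces, via a pushout along the canonical map $K \to F^{K}$ of the kernel $K$ of a right $(\add U)$-approximation of $T_{M}$, a conflation $0 \to F^{K} \to C_{T_{M}} \stackrel{h}{\to} T_{M} \to 0$ in which $F^{K} \in \cf$ is the torsion-free part of $K$; the right minimality of this $h$ is exactly what was verified at the end of that proof. Hence $\Ker h = F^{K} \in \cf = (\Fac U)^{\perp_{0}}$, and combining with the Wakamatsu step gives $\Ker f \in (\Fac U)^{\perp_{0}} \cap (\cok_{1} U)^{\perp_{1}}$, as claimed.

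The main obstacle is the reduction step rather than any computation: one must argue carefully that the right minimal $(\cok_{1} U)$-approximation of the arbitrary module $M$ has the \emph{same} kernel as that of its torsion part $T_{M}$, which is what allows the otherwise unavailable object $M$ to be replaced by an object of $\Fac U$ and lets us read off $\Ker h = F^{K}$ from the construction of Proposition \ref{prop contravariantly finiteness}. Once the identification $\Ker f \cong F^{K}$ is secured through uniqueness of right minimal approximations, both defining conditions of $(\Fac U)^{\perp_{0}} \cap (\cok_{1} U)^{\perp_{1}}$ follow immediately.
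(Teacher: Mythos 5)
Your proposal is correct and takes essentially the same route as the paper: both arguments reduce to the torsion part $T_{M}$ of $M$ via the canonical monomorphism from the torsion pair $(\Fac U, (\Fac U)^{\perp_{0}})$, read off that the kernel is the torsion-free part $F^{K} \in (\Fac U)^{\perp_{0}}$ from the explicit pushout construction in the proof of Proposition \ref{prop contravariantly finiteness}, use the same monomorphism argument to verify right minimality of the composite, and obtain the $(\cok_{1} U)^{\perp_{1}}$ condition from Wakamatsu's lemma (Lemma \ref{lem wakamatsu}). The only cosmetic difference is ordering: the paper applies Wakamatsu to the constructed composite approximation after checking its minimality, while you apply it directly to the given $f$ and then invoke uniqueness of right minimal approximations to identify $\Ker f$ with $\Ker h$.
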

 
\begin{proof}
	Since $\Fac U$ is a torsion class of $\md \Lambda$, by Example \ref{ex covariantly fin}, we can take the right minimal $(\Fac U)$-approximation $f \colon T_{M} \rightarrowtail M$ of $M$.
	By the proof of Proposition \ref{prop contravariantly finiteness}, there exists a right minimal $(\cok_{1} U)$-approximation $g \colon C_{M} \to T_{M}$ of $T_{M}$ such that $\Ker g \in (\Fac U)^{\perp_{0}}$.
	Since $f$ is a monomorphism, $\Ker(fg) = \Ker g \in (\Fac U)^{\perp_{0}}$.
	If $fg$ is right minimal, then, by Wakamatsu's lemma (Lemma \ref{lem wakamatsu}), it holds $\Ker (fg) \in (\cok_{1} U)^{\perp_{1}}$ and hence we have $\Ker (fg) \in (\Fac U)^{\perp_{0}} \cap (\cok_{1} U)^{\perp_{1}}$.
	Therefore, it is enough to show that $fg$ is right minimal.
	Take any morphism $\varphi \in \End C_{M}$ with $fg \varphi = fg$. Because $f$ is a monomorphism, $g \varphi = g$.
	It follows from the right minimality of $g$ that $\varphi$ is an isomorphism. The proof is complete.
\end{proof}

\begin{prop} \label{prop functorially finiteness}
	Let $U$ be a $\tau$-rigid module in $\md \Lambda$. $\cok_{1} U$ is functorially finite.
\end{prop}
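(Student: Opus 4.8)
The plan is to observe that this proposition is the immediate synthesis of the two preceding results, since by definition a subcategory of an additive category is functorially finite precisely when it is both covariantly finite and contravariantly finite. All the substantive work has already been carried out.

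Concretely, I would first invoke Proposition \ref{prop covariantly finiteness} to conclude that $\cok_{1} U$ is covariantly finite in $\md \Lambda$; this is where the left $(\cok_{1} U)$-approximations were constructed, via Lemma \ref{lem approx} (producing a left approximation of an $\mathrm{Ext}$-progenerator $P$ of $\Fac U$) together with a pushout argument along the covariantly finite torsion class $\Fac U$. Next I would invoke Proposition \ref{prop contravariantly finiteness} to conclude that $\cok_{1} U$ is contravariantly finite in $\md \Lambda$; this is the more delicate half, where the right $(\cok_{1} U)$-approximation of an object $T \in \Fac U$ was built from a right $(\add U)$-approximation $U_{T} \to T$, a torsion-pair decomposition of its kernel, and a pushout, followed by a factorization argument using the torsion pair $(\Fac U, \cf)$.

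Combining these two statements, $\cok_{1} U$ is simultaneously covariantly and contravariantly finite in $\md \Lambda$, which is exactly the assertion that it is functorially finite. I do not anticipate any obstacle here: the entire difficulty was already absorbed into Propositions \ref{prop covariantly finiteness} and \ref{prop contravariantly finiteness}, and the remaining step is purely definitional. For completeness I would simply record the one-line deduction.

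\begin{proof}
By Proposition \ref{prop covariantly finiteness}, $\cok_{1} U$ is covariantly finite in $\md \Lambda$, and by Proposition \ref{prop contravariantly finiteness}, it is contravariantly finite in $\md \Lambda$. Therefore $\cok_{1} U$ is functorially finite in $\md \Lambda$.
\end{proof}
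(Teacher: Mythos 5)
Your proof is correct and coincides with the paper's own, which likewise deduces the statement directly from Propositions \ref{prop covariantly finiteness} and \ref{prop contravariantly finiteness}. The remaining step is purely definitional, exactly as you say.
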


\begin{proof}
	It follows from Proposition \ref{prop covariantly finiteness} and \ref{prop contravariantly finiteness}.
\end{proof}

We introduce $n$-fold torsion pairs of $\md \Lambda$, which is a generalization of torsion pairs of $\md \Lambda$.

\begin{defi} \label{def nfold torspair}
	Let $n$ be a positive integer. We call a $2n$-tuple of subcategories 
	\[
	(\ct_{n}, \ct_{n-1}, \cdots, \ct_{1}; \cf_{n}, \cf_{n-1}, \cdots, \cf_{1})
	\] 
	of $\md \Lambda$ an \textit{$n$-fold torsion pair of $\md \Lambda$} 
	if it satisfies the following conditions$\colon$
	\begin{enumerate}[(1)]
		\setlength{\itemsep}{0pt}
		\item For each $i = 1, \cdots, n$, 
		\[
			\ct_{i} = {}^{\perp_{0}}\cf_{1} \cap {}^{\perp_{1}}\cf_{2} \cap \cdots \cap {}^{\perp_{(i-1)}}\cf_{i}.
		\]
		\item For each $i = 1, \cdots, n$, 
		\[
			\cf_{i} = \ct_{1}^{\perp_{0}} \cap \ct_{2}^{\perp_{1}} \cap \cdots \cap \ct_{i}^{\perp_{(i-1)}}.
		\]
	\end{enumerate}
\end{defi}

\begin{rema}
	\begin{enumerate}[$(1)$]
		\setlength{\itemsep}{0pt}
		\item It follows that the notion of torsion pairs is the same as that of $1$-fold torsion pairs in $\md \Lambda$. 
		\item It follows from Proposition \ref{prop how to make nfold tors} that \[\ct_{n} \tors \ct_{n-1} \tors \cdots \tors \ct_{1} \tors \ca \] and \[ \cf_{n} \torf \cf_{n-1} \torf \cdots \torf \cf_{1} \torf \ca. \]
	\end{enumerate}
\end{rema}

Given a $\tau$-rigid module, we can naturally construct a $2$-fold torsion pair of $\md \Lambda$ as follows$\colon$

\begin{prop} \label{prop 2torspair induced by taurigid}
	Let $U$ be a $\tau$-rigid in $\md \Lambda$. Then there are subcategories $\cf_{1}, \cf_{2}$ of $\md \Lambda$ such that $(\cok_{1} U, \Fac U; \cf_{2}, \cf_{1})$ is a $2$-fold torsion pair of $\md \Lambda$. 
\end{prop}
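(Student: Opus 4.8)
The plan is to produce the two torsion-free pieces $\cf_{1}, \cf_{2}$ explicitly, forced by condition (2) of Definition \ref{def nfold torspair}, and then to check condition (1). With $\ct_{2} = \cok_{1} U$ and $\ct_{1} = \Fac U$, the only possible choices are
\[
\cf_{1} := (\Fac U)^{\perp_{0}} \quad \text{and} \quad \cf_{2} := (\Fac U)^{\perp_{0}} \cap (\cok_{1} U)^{\perp_{1}}.
\]
With these definitions condition (2) of Definition \ref{def nfold torspair} holds by construction, so the entire task reduces to verifying condition (1), namely the two equalities $\Fac U = {}^{\perp_{0}}\cf_{1}$ and $\cok_{1} U = {}^{\perp_{0}}\cf_{1} \cap {}^{\perp_{1}}\cf_{2}$.

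The first equality is immediate from the torsion-theoretic input already available: by Proposition \ref{prop FacU is 1fold tors}, $\Fac U$ is a torsion class of $\md \Lambda$, so $(\Fac U, (\Fac U)^{\perp_{0}})$ is a torsion pair and hence ${}^{\perp_{0}}((\Fac U)^{\perp_{0}}) = \Fac U$. Substituting ${}^{\perp_{0}}\cf_{1} = \Fac U$ into the second equality leaves us to prove $\cok_{1} U = \Fac U \cap {}^{\perp_{1}}\cf_{2}$. The inclusion $(\subseteq)$ is routine: every $C \in \cok_{1} U$ lies in $\cok_{0} U = \Fac U$, and $\Ext_{\Lambda}^{1}(C, \cf_{2}) = 0$ because $\cf_{2} \subseteq (\cok_{1} U)^{\perp_{1}}$, so $C \in {}^{\perp_{1}}\cf_{2}$.

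The main obstacle is the reverse inclusion $\Fac U \cap {}^{\perp_{1}}\cf_{2} \subseteq \cok_{1} U$, and here Corollary \ref{corr ker of right minimal} does the decisive work. Given $M \in \Fac U \cap {}^{\perp_{1}}\cf_{2}$, I would take the right minimal $(\cok_{1} U)$-approximation $h \colon C_{M} \to M$, which exists by Proposition \ref{prop contravariantly finiteness}; since $M \in \Fac U$, the construction there makes $h$ an epimorphism, producing a short exact sequence $0 \to \Ker h \to C_{M} \to M \to 0$ with $C_{M} \in \cok_{1} U$ and, by Corollary \ref{corr ker of right minimal}, $\Ker h \in (\Fac U)^{\perp_{0}} \cap (\cok_{1} U)^{\perp_{1}} = \cf_{2}$. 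As $M \in {}^{\perp_{1}}\cf_{2}$ we have $\Ext_{\Lambda}^{1}(M, \Ker h) = 0$, so the sequence splits and $M$ is a direct summand of $C_{M}$; since $\cok_{1} U$ is a $2$-fold torsion class (Proposition \ref{prop coknU is n+1fold tors}) and therefore closed under direct summands by Remark \ref{rem knE is direct summands closed}, we conclude $M \in \cok_{1} U$. The one point demanding care is the surjectivity of $h$ for $M \in \Fac U$ together with the precise location of $\Ker h$ in $\cf_{2}$, both of which rely on the torsion pair $(\Fac U, \cf_{1})$ entering through Corollary \ref{corr ker of right minimal}.
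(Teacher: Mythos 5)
Your proposal is correct and follows essentially the same route as the paper: the same choice of $\cf_{1} = (\Fac U)^{\perp_{0}}$ and $\cf_{2} = (\Fac U)^{\perp_{0}} \cap (\cok_{1} U)^{\perp_{1}}$, the easy inclusion by definition of $\cf_{2}$, and the hard inclusion via the right minimal $(\cok_{1} U)$-approximation being an epimorphism (from the proof of Proposition \ref{prop contravariantly finiteness}), Corollary \ref{corr ker of right minimal} placing its kernel in $\cf_{2}$, the resulting splitting, and closure of $\cok_{1} U$ under direct summands via Remark \ref{rem knE is direct summands closed}. No gaps; this matches the paper's argument step for step.
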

 
\begin{proof}
	We put \[\cf_{1} := (\Fac U)^{\perp_{0}} \text{ and } \cf_{2} := (\Fac U)^{\perp_{0}} \cap (\cok_{1} U)^{\perp_{1}}. \] 	
	We show that \[\Fac U = {}^{\perp_{0}}\cf_{1} \text{ and }\]
	\[\cok_{1} U = {}^{\perp_{0}}{\cf_{1}} \cap {}^{\perp_{1}}{\cf_{2}}.\] 
	Since $\Fac U$ is a torsion class of $\md \Lambda$, the first equation holds. 

	We show that the second equation holds.

	($\subset$): Let $M \in \cok_{1} U$. It holds $M \in \cok_{1} U \subset \Fac U = {}^{\perp_{0}}{\cf_{1}}$. Thus it remains to show that $M \in {}^{\perp_{1}}\cf_{2}$.
	Take any $F \in \cf_{2}$. By the definition of $\cf_{2}$, $F \in (\cok_{1} U)^{\perp_{1}}$ and hence $\Ext_{\Lambda}^{1}(M, F) = 0$. Therefore, we obtain $M \in {}^{\perp_{1}}\cf_{2}$.

	($\supset$):  Let $M \in {}^{\perp_{0}}{\cf_{1}} \cap {}^{\perp_{1}}{\cf_{2}}$ and take the right minimal $(\cok_{1} U)$-approximation $f \colon C_{M} \to M$ of $M$. 
	Since $M \in {}^{\perp_{0}}\cf_{1} = \Fac U$, by the proof of Proposition \ref{prop contravariantly finiteness}, $f$ is an epimorphism.
	It follows from Corollary \ref{corr ker of right minimal} that $\Ker f \in \cf_{2}$. Because $M \in {}^{\perp_{1}}\cf_{2}$, by the exact sequence $0 \to \Ker f \to C_{M} \xrightarrow{f} M \to 0$, $C_{M} = \Ker f \oplus M$ holds.
	Hence $M$ belongs to $\cok_{1} U$ because $\cok_{1} U$ is closed under direct summands by Remark \ref{rem knE is direct summands closed}. 
	We have the desired result.
\end{proof}

The next proposition is used in Proposition \ref{prop existence of Extprogen of 2tors}.

\begin{prop} \label{prop 4term exact sequence of pushout}
	Let $\ca$ be an abelian category.
	Assume that the following diagram with exact rows and exact columns in $\ca$ is commutative$\colon$

	\[
		\begin{tikzpicture}[auto]
		\node (42) at (-1.8, -1.8) {$0$};
		\node (31) at (-3.6, 0) {$0$}; \node (32) at (-1.8, 0) {$X''$}; 
		\node (33) at (0, 0) {$Y''$}; \node (34) at (1.8, 0) {$Z''$}; \node (35) at (3.6, 0) {$0$};
		\node (22) at (-1.8, 1.8) {$X'$}; \node (23) at (0.0, 1.8) {$Y'$}; 
		\node (24) at (1.8, 1.8) {$Z'$}; \node (25) at (3.6, 1.8) {$0$};
		\node (12) at (-1.8, 3.6) {$X$}; \node (13) at (0.0, 3.6) {$Y$}; 
		\node (14) at (1.8, 3.6) {$Z$}; \node (15) at (3.6, 3.6) {$0$};
		\node (02) at (-1.8, 5.4) {$0$}; \node (03) at (0.0, 5.4) {$0$}; 
		\node (04) at (1.8, 5.4) {$0$}; 
		\node at (3.75, -0.1) {.};	
		\draw[->, thick] (12) --node{$f$} (13); \draw[->, thick] (13) --node{$g$} (14); \draw[->, thick] (14) -- (15);
		\draw[->, thick] (22) --node[swap]{$f'$} (23); \draw[->, thick] (23) --node{$g'$} (24); \draw[->, thick] (24) -- (25);
		\draw[->, thick] (31) -- (32); \draw[->, thick] (32) --node[swap]{$f''$} (33); \draw[->, thick] (33) --node[swap]{$g''$} (34); \draw[->, thick] (34) -- (35);

		\draw[->, thick](02) -- (12); \draw[->, thick](12) --node[swap]{$x$} (22); \draw[->, thick](22) --node[swap]{$x'$} (32); \draw[->, thick](32) -- (42);  
		\draw[->, thick](03) -- (13); \draw[->, thick](13) --node{$y$} (23); \draw[->, thick](23) --node[swap]{$y'$} (33);  
		\draw[->, thick](04) -- (14); \draw[->, thick](14) --node{$z$} (24); \draw[->, thick](24) --node{$z'$} (34);  
		
		\draw[->, thick](12) --(23);
		\draw[white, line width=10pt](22) --(13);
	
		\node at(-0.9, 2.7) {$h$};
		\node at (-1.2, 2.4) {$\circlearrowleft$}; \node at (-0.45, 2.9) {$\circlearrowleft$};
		
		\end{tikzpicture}
		\]
		Then we obtain the exact sequence
		\[
		X \stackrel{h}{\longrightarrow}  Y' \xrightarrow{\begin{pmatrix} g' \\ y' \\ \end{pmatrix}} Z' \oplus Y'' \xrightarrow{\begin{pmatrix} -z' & g'' \\ \end{pmatrix}} Z'' \to 0.
		\]
\end{prop}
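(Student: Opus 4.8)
The plan is to prove that
\[
X \xrightarrow{\,h\,} Y' \xrightarrow{\,\beta\,} Z' \oplus Y'' \xrightarrow{\,\gamma\,} Z'' \to 0, \qquad \beta := \begin{pmatrix} g' \\ y' \end{pmatrix}, \quad \gamma := \begin{pmatrix} -z' & g'' \end{pmatrix},
\]
is exact at the three nodes $Y'$, $Z' \oplus Y''$ and $Z''$ by a diagram chase. Since $\ca$ is an arbitrary abelian category, I would first pass to the small abelian subcategory generated by the finitely many objects and morphisms occurring in the diagram and invoke the Freyd--Mitchell embedding theorem, so that the chase can be performed with honest elements (equivalently, one argues with pseudo-elements in the sense of Mac Lane). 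Before chasing I would record the identities the argument uses: from commutativity of the diagram, $z'g' = g''y'$, $y'f' = f''x'$ and $h = f'x = yf$; from exactness of the rows and columns, $g'f' = 0$, $y'y = 0$, $\Ker g' = \Ima f'$, $\Ker g'' = \Ima f''$, $\Ker x' = \Ima x$ and $\Ker y' = \Ima y$; and the epi/mono facts that $g'$, $g''$ and $x'$ are epimorphisms while $f''$ is a monomorphism.

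Exactness at $Z''$ is immediate, since $\gamma$ precomposed with the inclusion $Y'' \hookrightarrow Z' \oplus Y''$ equals $g''$, which is an epimorphism, so $\gamma$ is an epimorphism. For exactness at $Z' \oplus Y''$ I would first check that $\gamma\beta = -z'g' + g''y' = 0$, using the commuting lower-right square, which gives $\Ima\beta \subseteq \Ker\gamma$. For the reverse inclusion I take $(u,v)$ with $\gamma(u,v) = 0$, i.e. $z'(u) = g''(v)$; since $g'$ is epi I lift $u$ to some $w_{0}$ with $g'(w_{0}) = u$, and then $g''(v - y'(w_{0})) = g''(v) - g''y'(w_{0}) = g''(v) - z'g'(w_{0}) = g''(v) - z'(u) = 0$, so $v - y'(w_{0}) = f''(c)$ for some $c \in X''$. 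As $x'$ is epi I write $c = x'(a)$ and set $w := w_{0} + f'(a)$; then $g'(w) = u$ (because $g'f' = 0$) and $y'(w) = y'(w_{0}) + f''x'(a) = y'(w_{0}) + f''(c) = v$ (because $y'f' = f''x'$), so $(u,v) = \beta(w) \in \Ima\beta$.

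Finally, for exactness at $Y'$ I would verify $\beta h = 0$ from $g'h = g'f'x = 0$ and $y'h = y'yf = 0$, giving $\Ima h \subseteq \Ker\beta$. Conversely, if $\beta(w) = 0$ then $g'(w) = 0$ yields $w = f'(a)$ for some $a$, and $y'(w) = y'f'(a) = f''x'(a) = 0$ together with $f''$ being monic forces $x'(a) = 0$, whence $a \in \Ker x' = \Ima x$, say $a = x(b)$, so that $w = f'x(b) = h(b) \in \Ima h$. I do not expect a genuine obstacle here: the content is an elementary chase, and the only points requiring care are keeping the sign in $\gamma$ consistent (this is exactly what makes $\gamma\beta = 0$ hold) and justifying element-wise reasoning in a general abelian category, which the embedding theorem supplies.
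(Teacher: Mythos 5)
Your proof is correct: the three verifications (at $Z''$, at $Z'\oplus Y''$, at $Y'$) use exactly the hypotheses available ($g'$, $g''$, $x'$ epi, $f''$ mono, exactness of the middle row at $Y'$, of the $Y$-column at $Y'$, and of the $X$-column at $X'$), the sign in $\begin{pmatrix} -z' & g'' \end{pmatrix}$ is handled consistently, and the reduction to a module category is legitimate since the abelian subcategory generated by the finitely many objects in the diagram is essentially small and the Freyd--Mitchell embedding is exact and reflects exactness. However, your route differs genuinely from the paper's, which is entirely element-free: there, exactness at $Z'\oplus Y''$ and $Z''$ is obtained in one stroke from the observation that, $x'$ being an epimorphism, the bottom-right square ($g', y', z', g''$) is a pushout, together with the standard correspondence between pushout squares and exact sequences of the form $Y' \to Z'\oplus Y'' \to Z'' \to 0$; your lifting argument (lift $u$ along $g'$, then correct by $f'(a)$ with $c = x'(a)$) is precisely the element-level shadow of that observation. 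For exactness at $Y'$, the paper factors $f$, $f'$, $f''$ through their images, notes that the square comparing $\Ima f \to Y \to Y'$ with $\Ima f \to \Ima f' \to Y'$ is a pullback because $z$ is a monomorphism, and then identifies $\Ker \begin{pmatrix} g' \\ y' \end{pmatrix}$ with $\Ima f = \Ima h$ by producing a comparison map that is shown to be both mono and epi. What each approach buys: yours is shorter and more transparent once element-chasing is licensed, at the cost of an external embedding theorem; the paper's stays internal to $\ca$ and exhibits the structural reason the sequence is exact (a pushout and a pullback hidden in the diagram), at the cost of a more intricate argument with image factorizations. One caution if you prefer pseudo-elements over the embedding: expressions like $v - y'(w_0)$ and $w_0 + f'(a)$ do not literally make sense for members, and must be routed through Mac Lane's subtraction rule, so the Freyd--Mitchell formulation you lead with is the cleaner way to discharge this step.
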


\begin{proof}
	Since $x'$ is an epimorphism, the square
	\[
		\begin{tikzpicture}[auto]
			\node (y') at (0.0, 2.0) {$Y'$}; \node (z') at (2.0, 2.0) {$Z'$};
			\node (y'') at (0.0, 0.0) {$Y''$}; \node (z'') at (2.0, 0.0) {$Z''$};
			\node at (2.2, -0.1) {.};

			\draw[->, thick] (y') --node[swap]{$y'$} (y''); \draw[->, thick] (z') --node{$z '$} (z'');

			\draw[->, thick] (y') --node{$g'$} (z'); \draw[->, thick] (y'') --node[swap]{$g''$} (z'');
			
		\end{tikzpicture}
	\]
	is pushout and hence we have the following exact sequence$\colon$
	\[
	Y' \xrightarrow{\begin{pmatrix} g' \\ y' \\ \end{pmatrix}} Z' \oplus Y'' \xrightarrow{\begin{pmatrix} -z' & g'' \\ \end{pmatrix}} Z'' \to 0.
	\]
	We show that $\Ima h = \Ker \begin{pmatrix} g' \\ y' \\ \end{pmatrix}$. Let $K:= \Ker \begin{pmatrix} g' \\ y' \\ \end{pmatrix}$. Since $h = yf$ holds and $y$ is a monomorphism, we have $\Ima f = \Ima h$.
	Let 
	\[
	(X \stackrel{\varphi}{\twoheadrightarrow} \Ima f \stackrel{\psi}{\rightarrowtail} Y), \ 
	(X' \stackrel{\varphi '}{\twoheadrightarrow} \Ima f' \stackrel{\psi '}{\rightarrowtail} Y') \text{ and }
	(X'' \stackrel{\varphi ''}{\twoheadrightarrow} \Ima f'' \stackrel{\psi ''}{\rightarrowtail} Y'')
	\]
	be the image factorization of $f$, $f'$ and $f''$, respectively.
	Then we have the following commutative diagram with exact rows and exact columns:

	\[
	\begin{tikzpicture}[auto]
		\node (31) at (-3.6, 0) {$0$}; \node (32) at (-1.8, 0) {$\Ima f''$}; 
		\node (33) at (0, 0) {$Y''$}; \node (34) at (1.8, 0) {$Z''$}; \node (35) at (3.6, 0) {$0$};
		\node (21) at (-3.6, 1.8) {$0$};  \node (22) at (-1.8, 1.8) {$\Ima f'$}; \node (23) at (0.0, 1.8) {$Y'$}; 
		\node (24) at (1.8, 1.8) {$Z'$}; \node (25) at (3.6, 1.8) {$0$};
		\node (11) at (-3.6, 3.6) {$0$};  \node (12) at (-1.8, 3.6) {$\Ima f$}; \node (13) at (0.0, 3.6) {$Y$}; 
		\node (14) at (1.8, 3.6) {$Z$}; \node (15) at (3.6, 3.6) {$0$};
		\node (02) at (-1.8, 5.4) {$0$}; \node (03) at (0.0, 5.4) {$0$}; 
		\node (04) at (1.8, 5.4) {$0$}; 
		\node at (3.75, -0.1) {.};	\node at (-0.9, 2.7) {\large $(\star)$};

		\draw[->, thick] (12) --node{$\psi$} (13); \draw[->, thick] (13) --node{$g$} (14); \draw[->, thick] (14) -- (15);
		\draw[->, thick] (22) --node[swap]{$\psi '$} (23); \draw[->, thick] (23) --node[swap]{$g'$} (24); \draw[->, thick] (24) -- (25);
		\draw[->, thick] (31) -- (32); \draw[->, thick] (32) --node[swap]{$\psi ''$} (33); \draw[->, thick] (33) --node[swap]{$g''$} (34); \draw[->, thick] (34) -- (35);
		\draw[->, thick] (21) -- (22); \draw[->, thick] (11) -- (12);

		\draw[->, thick](02) -- (12); \draw[->, thick](12) --node[swap]{$i$} (22); \draw[->, thick](22) --node[swap]{$i'$} (32);  
		\draw[->, thick](03) -- (13); \draw[->, thick](13) --node{$y$} (23); \draw[->, thick](23) --node[swap]{$y'$} (33);  
		\draw[->, thick](04) -- (14); \draw[->, thick](14) --node{$z$} (24); \draw[->, thick](24) --node{$z'$} (34);

		\end{tikzpicture}
		\]
		It follows that the square $(\star)$ is pullback because $z$ is a monomorphism. 

		Put $k := \ker \begin{pmatrix} g' \\ y' \\ \end{pmatrix}$. Then there exist morphisms $a \colon K \to Y$ and $b \colon K \to \Ima f'$ in $\ca$ such that 
		$k = ya$ and $k = \psi ' b$ by the universality of the kernels of $y'$ and $g'$. 
		Since $k$ is a monomorphism, so is $a$.
		As the square $(\star)$ is pullback, we obtain the following commutative diagram:

		\[
		\begin{tikzpicture}[auto]
			\node (k) at (-1.4,3.4) {$K$};
			\node (imf) at (0.0, 2.0) {$\Ima f$}; \node (y) at (2.0, 2.0) {$Y$};
			\node (imf') at (0.0, 0.0) {$\Ima f'$}; \node (y') at (2.0, 0.0) {$Y'$};
			\node at (1.0,1.0) {\large $\mathrm{PB}$}; \node at (2.2, -0.1) {.};

			\draw[->, thick] (imf) --node{$\psi$} (y); \draw[->, thick] (imf') --node[swap]{$\psi '$} (y');

			\draw[->, thick] (imf) --node[swap]{$i$} (imf'); \draw[->, thick] (y) --node{$y$} (y');
			\draw[->, thick] (k) to[bend left=50, edge label={$a$}] (y);
			\draw[->, thick] (k) to[bend right=50, edge label'={$b$}] (imf');
			\draw[->, thick] (k) -- (imf);

			\draw[fill=white, draw=white] (-0.7, 2.7) circle[radius = 5pt];
			\node at (-0.7, 2.7) {$c$};	
			
		\end{tikzpicture}
		\]

		$c$ is a monomorphism because $a$ is a monomorphism. In the rest, we prove that $c$ is an epimorphism. 
		Since $(X \xrightarrow{h} Y' \xrightarrow{\begin{pmatrix} g' \\ y' \\ \end{pmatrix}} Z' \oplus Y'') = 0$, there exists some morphism $d \colon X \to K$ such that $h = kd$ holds.
		Hence we obtain 
		
		\begin{align*}
			(X \stackrel{\varphi}{\twoheadrightarrow} \Ima f \stackrel{i}{\rightarrowtail} \Ima f' \stackrel{\psi '}{\rightarrowtail} Y') &= (X \stackrel{\varphi}{\twoheadrightarrow} \Ima f \stackrel{\psi}{\rightarrowtail} Y \stackrel{y}{\rightarrowtail} Y') \\
			&= (X \stackrel{f}{\to} Y \stackrel{y}{\rightarrowtail} Y') \\
			&= (X \stackrel{h}{\to} Y') \\
			&= (X \stackrel{d}{\to} K \stackrel{k}{\rightarrowtail} Y') \\
			&= (X \stackrel{d}{\to} K \stackrel{a}{\rightarrowtail} Y \stackrel{y}{\rightarrowtail} Y') \\
			&= (X \stackrel{d}{\to} K \stackrel{c}{\to} \Ima f \stackrel{\psi}{\rightarrowtail} Y \stackrel{y}{\rightarrowtail} Y') \\
			&= (X \stackrel{d}{\to} K \stackrel{c}{\to} \Ima f \stackrel{i}{\rightarrowtail} \Ima f' \stackrel{\psi '}{\rightarrowtail} Y'). 
		\end{align*}
		Since both $i$ and $\psi '$ are monomorphisms, it holds that $\varphi = cd$.
		This implies that $c$ is an epimorphism because $\varphi$ is an epimorphism.
\end{proof}

The following proposition is an analog of Proposition \ref{prop Extprogen of ftors}.
\begin{prop} \label{prop existence of Extprogen of 2tors}
	Let $U$ be a $\tau$-rigid module in $\md \Lambda$ and $P$ an $\mathrm{Ext}$-progenerator of $\Fac U$. 
	By Lemma \ref{lem approx} and Proposition \ref{prop right minimal existence}, we can take the left minimal $(\cok_{1} U)$-approximation $f \colon P \to U^{P}$ of $P$ such that $U^{P} \in \add U$. Put $C_{1}^{P} := \Cok f$. 
	Then  the following hold$\colon$
	\begin{enumerate}[$(a)$]
	\setlength{\itemsep}{0pt}
		\item $\Fac U = \Fac U^{P}$. 
		\item Both $U^{P}$ and $C_{1}^{P}$ are $\mathrm{Ext}$-projective in $\cok_{1} U$.
		\item $\ind U^{P} \cap \ind C_{1}^{P} = \emptyset$.
		\item $U^{P} \oplus C_{1}^{P}$ is an $\mathrm{Ext}$-progenerator of $\cok_{1} U$.
	\end{enumerate}
\end{prop}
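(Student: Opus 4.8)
The plan is to regard the statement as the exact-category analogue of Proposition \ref{prop Extprogen of ftors}, with $\Fac U$ playing the role of the ambient abelian category, $P$ the role of the projective generator, and $\cok_{1} U$ the role of the functorially finite torsion class; parts $(a)$--$(d)$ will then mirror the four parts there, proved in that order. For $(a)$, the inclusion $\Fac U^{P}\subseteq\Fac U$ is clear since $U^{P}\in\add U$. For the converse, $U^{P}$ is $\tau$-rigid, so $\Fac U^{P}$ is a torsion class, and $\Fac U=\T{1}{\add U}$ is the smallest torsion class containing $U$ (Proposition \ref{prop FacU is 1fold tors}); hence it suffices to see $U\in\Fac U^{P}$. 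Since $U\in\cok_{1} U$ and $P$ is an $\mathrm{Ext}$-progenerator of $\Fac U$, I fix an epimorphism $e\colon P^{r}\twoheadrightarrow U$; because $U\in\cok_{1}U$, $e$ factors through the left $(\cok_{1} U)$-approximation $f^{r}\colon P^{r}\to (U^{P})^{r}$ as $e=\phi f^{r}$, and then $\phi\colon (U^{P})^{r}\to U$ is epic, giving $U\in\Fac U^{P}$.

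For $(b)$, $U^{P}\in\add U$ is $\tau$-rigid, so $\Ext^{1}(U^{P},\Fac U)=0$ by Proposition \ref{prop taurigid}, whence $U^{P}$ is $\mathrm{Ext}$-projective in $\cok_{1} U$. For $C_{1}^{P}$ I first check $C_{1}^{P}\in\cok_{1} U$: as $\Ima f\in\Fac U$ is a quotient of $P$, there is an epimorphism $U'\twoheadrightarrow\Ima f$ with $U'\in\add U$, and composing with $\Ima f\hookrightarrow U^{P}$ yields an exact sequence $U'\to U^{P}\to C_{1}^{P}\to 0$ with $U',U^{P}\in\add U$. Since $\cok_{1} U$ is closed under extensions, the dual of Wakamatsu's Lemma (Lemma \ref{lem wakamatsu}) applied to the left minimal approximation $f$ gives $\Ext^{1}(C_{1}^{P},\cok_{1} U)=\Ext^{1}(\Cok f,\cok_{1} U)=0$, so $C_{1}^{P}$ is $\mathrm{Ext}$-projective in $\cok_{1} U$ as well.

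For $(c)$, write $f=me$ with $e$ epic and $m\colon\Ima f\hookrightarrow U^{P}$ the image inclusion; then $m$ is left minimal, so Lemma \ref{lem charac of left minimal} gives $p:=\mathrm{cok}\,f\in\rad(U^{P},C_{1}^{P})$. Suppose $M$ were a common indecomposable summand, with projection $\pi_{M}\colon C_{1}^{P}\twoheadrightarrow M$. The kernel of the epimorphism $\pi_{M}p\colon U^{P}\twoheadrightarrow M$ is an extension of $\ker\pi_{M}$ (a summand of $C_{1}^{P}$) by $\ker p=\Ima f$, both of which lie in $\Fac U$; hence $\ker(\pi_{M}p)\in\Fac U$, and since $M$ is $\tau$-rigid, $\Ext^{1}(M,\ker(\pi_{M}p))=0$, so $\pi_{M}p$ is a split epimorphism. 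But $\pi_{M}p\in\rad(U^{P},M)$, so any section $s$ would yield $\mathrm{id}_{M}=\pi_{M}ps\in\rad\End M$, which is absurd for $M\neq 0$ indecomposable. Thus $\ind U^{P}\cap\ind C_{1}^{P}=\emptyset$.

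Part $(d)$ is where I expect the real work. The clean route is first to prove the exact-category analogue of $(a)$, namely $\cok_{1} U=\cok_{1} U^{P}$ (the inclusion $\supseteq$ is clear). For $\subseteq$, take $C\in\cok_{1} U$ and a progenerator presentation $0\to K\to P^{r}\xrightarrow{e}C\to 0$ in $\Fac U$, so $K\in\Fac U$; factoring $e=\phi f^{r}$ as in $(a)$ and writing $\bar f^{r}\colon P^{r}\twoheadrightarrow\Ima f^{r}$ for the corestriction, an image--kernel chase shows $\ker(\phi|_{\Ima f^{r}})=\bar f^{r}(K)$ is a quotient of $K$, hence in $\Fac U$, and that $\ker\phi$ is an extension of $(C_{1}^{P})^{r}$ by this module; therefore $\phi\colon(U^{P})^{r}\twoheadrightarrow C$ is admissible in $\Fac U$ and exhibits $C$ as an admissible quotient of $\add U^{P}$. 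Granting this, $(d)$ mirrors Proposition \ref{prop Extprogen of ftors}$(d)$: for $C\in\cok_{1}U$ I choose an admissible presentation $0\to K\to(U^{P})^{n}\to C\to 0$ in $\Fac U$ with middle term now in $\add U^{P}$, resolve $K$ by $P^{m}$, and assemble the resulting approximation square into a commutative diagram with the sequence $P^{m}\to(U^{P})^{m}\to(C_{1}^{P})^{m}\to 0$; feeding this into Proposition \ref{prop 4term exact sequence of pushout} extracts $0\to T\to (C_{1}^{P})^{m}\oplus(U^{P})^{n}\to C\to 0$ in $\cok_{1}U$, with $T$ a quotient of $(U^{P})^{m}$ and hence in $\cok_{1}U$. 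The delicate point---and the main obstacle---is precisely the reduction $\cok_{1}U=\cok_{1}U^{P}$, which guarantees that the middle term of the presentation lands in $\add U^{P}$ rather than merely in $\add U$; without it the construction would only produce a middle term in $\add(U\oplus C_{1}^{P})$, and it is exactly to bridge this two-layer structure that the four-term sequence of Proposition \ref{prop 4term exact sequence of pushout} is designed.
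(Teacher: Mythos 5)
Your proposal tracks the paper's proof closely in all four parts: $(a)$ by factoring a progenerator presentation through the left $(\cok_{1} U)$-approximation $f^{r}$; $(b)$ by $\tau$-rigidity of $U^{P}$ together with the dual of Wakamatsu's lemma, including the same verification that $C_{1}^{P} \in \cok_{1} U$ via an epimorphism $U' \twoheadrightarrow \Ima f$; $(c)$ by combining the radical membership of $\mathrm{cok}\,f$ (from left minimality and Lemma \ref{lem charac of left minimal}) with a forced splitting; and $(d)$ by assembling the snake-lemma diagram and feeding it into Proposition \ref{prop 4term exact sequence of pushout}. The one genuine variation is in $(d)$: where the paper obtains the presentation $0 \to K \to (U^{P})^{n} \to C \to 0$ with $K \in \Fac U$ by applying Lemma \ref{lem often used lemma for a torsion class} to the two presentations of $C$, you isolate the intermediate identity $\cok_{1} U = \cok_{1} U^{P}$ and prove it by a kernel chase (showing $\Ker \phi$ is an extension of $(C_{1}^{P})^{r}$ by $\bar f^{r}(K)$, a quotient of $K$ and hence in $\Fac U$). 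That chase is correct and yields the same admissible presentation, so the two routes are interchangeable; yours additionally records the reusable equality $\cok_{1} U = \cok_{1} U^{P}$.

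There is, however, one incorrectly justified step at the end of $(d)$: you conclude $T \in \cok_{1} U$ because $T$ is ``a quotient of $(U^{P})^{m}$.'' That inference is false in general, since $\cok_{1} U$ is a torsion class of $\Fac U$, not of $\md \Lambda$, and so is not closed under arbitrary quotients; a quotient of a module in $\add U^{P}$ is only guaranteed to lie in $\Fac U$. For instance, for $\Lambda$ the path algebra of the quiver $1 \leftarrow 2$ and $U = P_{2}$, one has $\cok_{1} U = \add P_{2}$, yet $S_{2}$ is a quotient of $P_{2}$ and $S_{2} \notin \add P_{2}$. The correct argument, which the paper spells out, is that the kernel of the epimorphism $(U^{P})^{m} \twoheadrightarrow T$ is precisely $\Ima h$, and $\Ima h \in \Fac U$ because it is a quotient of $X \in \Fac U$; thus $0 \to \Ima h \to (U^{P})^{m} \to T \to 0$ exhibits $T$ as an admissible quotient of a module in $\add U$, whence $T \in \cok_{1} U$. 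Your construction already contains the needed datum ($X \in \Fac U$ comes from the progenerator presentation of $K$), so the repair is one line. A similar small imprecision occurs in $(c)$: ``since $M$ is $\tau$-rigid, $\Ext_{\Lambda}^{1}(M, \Ker(\pi_{M}p)) = 0$'' should be justified by $\Ext_{\Lambda}^{1}(M, \Fac U) = 0$, which holds because $M \in \add U^{P} \subset \add U$ and $\Ext_{\Lambda}^{1}(U, \Fac U) = 0$ by Proposition \ref{prop taurigid} --- vanishing on $\Fac M$ alone would not suffice, as $\Ker(\pi_{M}p)$ is only known to lie in $\Fac U$.
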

	
\begin{proof}
  $(a).$ Since $U^{P} \in \add U \subset \Fac U$ and $\Fac U$ is a torsion class of $\md \Lambda$, we obtain $\Fac U \supset \Fac U^{P}$.
	It remains to show that $\Fac U \subset \Fac U^{P}$. Since $\T{1}{\cok_{1} U} = \Fac U$ and $\Fac U^{P}$ is a torsion class of $\md \Lambda$ by Proposition \ref{prop FacU is 1fold tors}, it suffices to show that $\cok_{1} U \subset \Fac U^{P}$. 
	Let $C \in \cok_{1} U$. Since $C \in \cok_{1} U \subset \Fac U$ and $P$ is an $\mathrm{Ext}$-progenerator of $\Fac U$, there exists a conflation $0 \to F \to P^{n} \xrightarrow{g} C \to 0$ in $\Fac U$ for some nonnegative integer $n$.
	Because $f$ is a left $(\cok_{1} U)$-approximation of $P$, $f^{n}$ is a left $(\cok_{1} U)$-approximation of $P^{n}$.
	Hence there exists some morphism $h \colon (U^P)^{n} \to C$ such that $hf^{n} = g$.
	Since $g$ is an epimorphism, so is $h$, which means that $C \in \Fac U^{P}$. 

	$(b).$ Since it follows from the $\tau$-rigidity of $U^{P}$ that $\Ext_{\Lambda}^{1}(U^{P}, \Fac U^{P}) = 0$ and $\cok_{1} U \subset \Fac U$, by $(a)$, it holds that $\Ext_{\Lambda}^{1}(U^{P}, \cok_{1} U) = 0$.
	This implies that $U^{P}$ is $\mathrm{Ext}$-projective in $\cok_{1} U$.

	We show that $C_{1}^{P}$ is $\mathrm{Ext}$-projective in $\cok_{1} U$.
	Since $P$ belongs to $\Fac U$ and $U^{P}$ belongs to $\add U$, we have $C_{1}^{P} \in \cok_{1} U$. 
	It follows from the dual of Wakamatsu's lemma (Lemma \ref{lem wakamatsu}) that $\Ext_{\Lambda}^{1}(C^{P}_{1}, \cok_{1} U) = 0$ holds, which means $C^{P}_{1}$ is $\mathrm{Ext}$-projective in $\cok_{1} U$.
	
	$(c).$ Suppose that there exists some object $M \in \ind U^{P} \cap \ind C^{P}_{1} $.
	Since $M$ belongs to $\ind C_{1}^{P}$, there exists a split exact sequence $0 \to N \stackrel{\iota}{\to} C^{P}_{1} \to M \to 0$ with $N \in \cok_{1} U$.
	By pulling back the exact sequence $0 \to \Ima f \to U^{P} \to C^{P}_{1} \to 0$ by  $\iota$, we have the following commutative diagram with exact rows and columns:

	\[
	\begin{tikzpicture}[auto]
		\node (03) at (0, 3.6) {$0$}; \node(04) at (1.8, 3.6) {$0$};
		\node (11) at (-3.6, 1.8) {$0$}; \node (12) at (-1.8, 1.8) {$\Ima f$}; \node (13) at (0.0, 1.8) {$L$}; \node (14) at (1.8, 1.8) {$N$}; \node (15) at (3.6, 1.8) {$0$};
		\node (21) at (-3.6, 0.0) {$0$}; \node (22) at (-1.8, 0.0) {$\Ima f$}; \node (23) at (0.0, 0.0) {$U^{P}$}; \node (24) at (1.8, 0.0) {$C^{P}_{1}$}; \node (25) at (3.6, 0.0) {$0$};
		\node (33) at (0, -1.8) {$M$}; \node(34) at (1.8, -1.8) {$M$};
		\node (43) at (0, -3.6) {$0$}; \node(44) at (1.8, -3.6) {$0$};
		\node at (0.9, 0.9) {\large PB}; \node at (1.95, -3.7) {.};

		\draw[->, thick] (11) -- (12); \draw[->, thick] (12) -- (13); \draw[->, thick] (13) -- (14); \draw[->, thick] (14) -- (15);
		\draw[->, thick] (21) -- (22); \draw[->, thick] (22) -- (23); \draw[->, thick] (23) -- (24); \draw[->, thick] (24) -- (25);
		\draw[double distance = 2pt, thick] (33) -- (34);

		\draw[double distance = 2pt, thick] (12) -- (22);
		\draw[->, thick] (03) -- (13); \draw[->, thick] (13) -- (23); \draw[->, thick] (23) --node{$\pi$} (33); \draw[->, thick] (33) -- (43);
		\draw[->, thick] (04) -- (14); \draw[->, thick] (14) --node{$\iota$} (24); \draw[->, thick] (24) -- (34); \draw[->, thick] (34) -- (44);	

	\end{tikzpicture}
	\]
	Since both $\Ima f$ and $N$ belong to $\Fac U$ and $\Fac U$ is closed under extensions in $\md \Lambda$, we have $L \in \Fac U$. 
	Since $M \in \ind U^{P}$ and it follows from the $\tau$-rigidity of $U^{P}$ that $\Ext_{\Lambda}^{1}(U^{P}, \Fac U^{P}) = 0$, it holds that $\Ext_{\Lambda}^{1}(M, \Fac U^{P}) = 0$. Hence, by $(a)$, $\Ext_{\Lambda}^{1}(M, \Fac U) = 0$. Thus the exact sequence $0 \to L \to U^{P} \to M \to 0$ is split. 
	It follows from the left minimality of the morphism ($\Ima f \to U^{P}$) that the morphism ($U^{P} \to C^{P}_{1}$) belongs to $\rad(U^{P}, C_{1}^{P})$ by Lemma \ref{lem charac of left minimal}. 
	Therefore, the composition $(U^{P} \to C_{1}^{P} \to M) = (U^{P} \xrightarrow{\pi} M) \in \rad(U^{P}, M)$ holds, which contradicts that it is split. 

	$(d)$. By $(b)$, both $U^{P}$ and $C_{1}^{P}$ are $\mathrm{Ext}$-projective in $\cok_{1} U$, so is the direct sum $U^{P} \oplus C_{1}^{P}$. 
	Let $C \in \cok_{1} U$. There exists a conflation $0 \to F \to P^{n} \to C \to 0$ in $\Fac U$, where $n$ is a positive integer because $P$ is an $\mathrm{Ext}$-progenerator of $\Fac U$. 
	Since $f^{n} \colon P^{n} \to (U^{P})^{n}$ is a left $(\cok_{1}U)$-approximation of $P^{n}$, we have the following commutative diagram with exact rows:

	\[
	\begin{tikzpicture}[auto]
		\node (11) at (-3.6, 1.8) {$0$}; \node (12) at (-1.8, 1.8) {$F$}; \node (13) at (0.0, 1.8) {$P^{n}$}; \node (14) at (1.8, 1.8) {$C$}; \node (15) at (3.6, 1.8) {$0$};
		\node (21) at (-3.6, 0.0) {$0$}; \node (22) at (-1.8, 0.0) {$K$}; \node (23) at (0.0, 0.0) {$(U^{P})^{n}$}; \node (24) at (1.8, 0.0) {$C$}; \node (25) at (3.6, 0.0) {$0$};
		\node at (3.75, -0.1) {.};
	
		\draw[->, thick] (11) -- (12); \draw[->, thick] (12) -- (13); \draw[->, thick] (13) -- (14); \draw[->, thick] (14) -- (15);
		\draw[->, thick] (21) -- (22); \draw[->, thick] (22) -- (23); \draw[->, thick] (23) -- (24); \draw[->, thick] (24) -- (25);
	
		\draw[->, thick] (12) -- (22);
		\draw[->, thick] (13) -- (23); 
		\draw[double distance = 2pt, thick] (14) -- (24); 
	
		\end{tikzpicture}	
	\]
	Since both $F$ and $(U^{P})^{n}$ belong to the torsion class $\Fac U$ of $\md \Lambda$, by Lemma \ref{lem often used lemma for a torsion class}, we have $K \in \Fac U$.  
	Thus there exists a conflation $0 \to X \to P^{m} \stackrel{g}{\to}  K \to 0$ in $\Fac U$. Since $f^{m}$ is a left $(\cok_{1} U)$-approximation, we obtain, by the snake lemma, the following commutative diagram with exact rows and exact columns:

	\[
	\begin{tikzpicture}[auto]
	\node (42) at (-1.8, -1.8) {$0$};
	\node (31) at (-3.6, 0) {$0$}; \node (32) at (-1.8, 0) {$K$}; 
	\node (33) at (0, 0) {$(U^{P})^{n}$}; \node (34) at (1.8, 0) {$C$}; \node(35) at (3.6, 0) {$0$};
	\node (22) at (-1.8, 1.8) {$P^{m}$}; \node (23) at (0.0, 1.8) {$(U^{P})^{m}$}; 
	\node (24) at (1.8, 1.8) {$(C_{1}^{P})^{m}$}; \node (25) at (3.6, 1.8) {$0$};
	\node (12) at (-1.8, 3.6) {$X$}; \node (13) at (0.0, 3.6) {$Y$}; 
	\node (14) at (1.8, 3.6) {$Z$}; \node (15) at (3.6, 3.6) {$0$};
	\node (02) at (-1.8, 5.4) {$0$}; \node (03) at (0.0, 5.4) {$0$}; 
	\node (04) at (1.8, 5.4) {$0$}; 
	\node at (3.75, -0.1) {.};	
	\draw[->, thick] (12) -- (13); \draw[->, thick] (13) -- (14); \draw[->, thick] (14) -- (15);
	\draw[->, thick] (22) --node[swap]{$f^{m}$} (23); \draw[->, thick] (23) -- (24); \draw[->, thick] (24) -- (25);
	\draw[->, thick] (31) -- (32); \draw[->, thick] (32) -- (33); \draw[->, thick] (33) -- (34); \draw[->, thick] (34) -- (35);

	\draw[->, thick](02) -- (12); \draw[->, thick](12) -- (22); \draw[->, thick](22) -- (32); \draw[->, thick](32) -- (42);  
	\draw[->, thick](03) -- (13); \draw[->, thick](13) -- (23); \draw[->, thick](23) -- (33);  
	\draw[->, thick](04) -- (14); \draw[->, thick](14) -- (24); \draw[->, thick](24) -- (34);  
	
	\draw[->, thick](12) --(23);
	\draw[white, line width=10pt](22) --(13);

	\node at(-0.9, 2.7) {$h$};
	\node at (-1.2, 2.4) {$\circlearrowleft$}; \node at (-0.45, 2.9) {$\circlearrowleft$};
	
	\end{tikzpicture}
	\]
	By Proposition \ref{prop 4term exact sequence of pushout}, we have the following exact sequence:
	\[
	0 \to \Ima h \to (U^{P})^{m} \to (U^{p})^{n} \oplus (C_{1}^{P})^{m} \to C \to 0.
	\]
	It holds that $\Ima h \in \Fac U$ because $X \in \Fac U$ and $\Fac U$ is a torsion class of $\md \Lambda$.
	Thus the image of the morphism $(U^{P})^{m} \to (U^{p})^{n} \oplus (C_{1}^{P})^{m}$ belongs to $\cok_{1} U$ 
	and hence it follows that $U^{P} \oplus C_{1}^{P}$ is an $\mathrm{Ext}$-progenerator of $\cok_{1} U$. 
\end{proof}

	Consequently, $2$-fold torsion classes induced by $\tau$-rigid modules have the following properties:

\begin{theo} \label{thm properties of 2tors induced by taurigid}
	Let $\cc \in \fltwotors^{\ast} \Lambda$. Then $\cc$ has the following properties$\colon$
	\begin{enumerate}[$(a)$]
		\setlength{\itemsep}{0pt}
		\item $\cc$ is functorially finite in $\md \Lambda$. 
		\item There exist some subcategories $\cf_{1}, \cf_{2}$ of $\md \Lambda$ such that $(\cc, \T{1}{\cc} ; \cf_{2}, \cf_{1})$ is a $2$-fold torsion pair of $\md \Lambda$.
		\item $\cc$ admits an $\mathrm{Ext}$-progenerator.
	\end{enumerate} 
\end{theo}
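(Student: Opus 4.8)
The plan is to use the bijection of Theorem \ref{thm charac of 2tors} to rewrite $\cc$ in the concrete form $\cok_{1} U$ for a $\tau$-rigid module $U$, and then read off each of the three properties from results already established for subcategories of this shape. The whole point is that all the substantive work has been done in Section \ref{sec several properties}; the only task here is to recognise the reduction and to match notation.

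First I would set $U := \Phi(\cc)$. Since $\cc \in \fltwotors^{\ast} \Lambda$, Theorem \ref{thm charac of 2tors} tells us that $U$ is a basic $\tau$-rigid module and that $\cc = \cok_{1} U$. In particular, by the computation recorded just after Proposition \ref{prop coknU is n+1fold tors}, we have $\T{1}{\cc} = \T{1}{\cok_{1} U} = \Fac U$. This identification is what will guarantee that the torsion pair produced below has exactly the required shape.

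With this translation in hand, each property follows directly. Property $(a)$ is precisely the content of Proposition \ref{prop functorially finiteness}, which asserts that $\cok_{1} U$ is functorially finite in $\md \Lambda$. Property $(b)$ follows from Proposition \ref{prop 2torspair induced by taurigid}: there exist subcategories $\cf_{1}, \cf_{2}$ of $\md \Lambda$ with $(\cok_{1} U, \Fac U; \cf_{2}, \cf_{1})$ a $2$-fold torsion pair, and substituting $\cc = \cok_{1} U$ together with $\Fac U = \T{1}{\cc}$ yields exactly the tuple $(\cc, \T{1}{\cc}; \cf_{2}, \cf_{1})$ required in the statement. Property $(c)$ is Proposition \ref{prop existence of Extprogen of 2tors}, which produces the explicit $\mathrm{Ext}$-progenerator $U^{P} \oplus C_{1}^{P}$ of $\cok_{1} U$.

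The main conceptual step, and essentially the only place where any thought is needed, is invoking Theorem \ref{thm charac of 2tors} to pass from an abstractly given $\cc$ satisfying condition $\conast$ to the generator $U = \Phi(\cc)$; once $\cc$ is presented as $\cok_{1} U$, the three assertions are just the conclusions of the three cited propositions, with the equality $\T{1}{\cc} = \Fac U$ ensuring the torsion pair in $(b)$ has the stated form. No genuine obstacle remains beyond this bookkeeping, so the proof is short.
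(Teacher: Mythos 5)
Your proof is correct and is essentially identical to the paper's own argument: the paper's proof of Theorem \ref{thm properties of 2tors induced by taurigid} is a one-line citation of Theorem \ref{thm charac of 2tors} together with Propositions \ref{prop functorially finiteness}, \ref{prop 2torspair induced by taurigid} and \ref{prop existence of Extprogen of 2tors}, which is precisely the reduction $\cc = \cok_{1}\Phi(\cc)$ with $\T{1}{\cc} = \Fac \Phi(\cc)$ that you spell out. Your version merely makes the bookkeeping (the choice $U = \Phi(\cc)$ and the identification of $\T{1}{\cc}$) explicit, which the paper leaves implicit.
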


\begin{proof}
	This follows from Theorem \ref{thm charac of 2tors} and Propositions \ref{prop functorially finiteness}, \ref{prop 2torspair induced by taurigid} and \ref{prop existence of Extprogen of 2tors}.
\end{proof}

\section{The hereditary case} \label{sec the hereditary case}
Throughout this section, assume that $\Lambda$ is a hereditary finite dimensional algebra over an algebraically closed field $\mathbb{K}$.
In this case, there is prior research on $\ICE$-closed subcategories of $\md \Lambda$ in \cite{enomoto2022rigid} and \cite{enomoto2022ice}. 
In \cite{enomoto2022rigid}, Enomoto constructed a bijection between the set of isomorphism classes of basic rigid modules and the set of $\ICE$-closed subcategories of $\md \Lambda$ with enough $\mathrm{Ext}$-projectives (Theorem \ref{thm bijection hereditary case}).
The purpose of this section is to give an alternative proof to it using our results in sections \ref{sec charac of 2tors} and \ref{sec several properties}.
	
\begin{prop} \label{prop rigid=taurigid}
	Let $U \in \md \Lambda$. $U$ is rigid if and only if it is $\tau$-rigid.	
\end{prop}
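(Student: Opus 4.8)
The plan is to translate both the rigidity and the $\tau$-rigidity conditions into $\Ext$-vanishing statements by means of Proposition \ref{prop taurigid}, and then to exploit the fact that a hereditary algebra satisfies $\Ext_{\Lambda}^{2}(-,-) = 0$.

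I would first dispose of the easy implication: over any finite dimensional algebra every $\tau$-rigid module is rigid, which is precisely the last assertion of Proposition \ref{prop taurigid}, so $\tau$-rigid $\Rightarrow$ rigid needs no hypothesis on $\Lambda$.

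For the converse I would assume $U$ is rigid, i.e. $\Ext_{\Lambda}^{1}(U, U) = 0$. Applying Proposition \ref{prop taurigid} with $M = N = U$ shows that $U$ is $\tau$-rigid if and only if $\Ext_{\Lambda}^{1}(U, \Fac U) = 0$, so it suffices to establish this vanishing. To that end I would fix $F \in \Fac U$, choose an epimorphism $U' \twoheadrightarrow F$ with $U' \in \add U$, and form the short exact sequence $0 \to K \to U' \to F \to 0$ in $\md \Lambda$. Applying $\Hom_{\Lambda}(U, -)$ yields the exact sequence
\[
\Ext_{\Lambda}^{1}(U, U') \to \Ext_{\Lambda}^{1}(U, F) \to \Ext_{\Lambda}^{2}(U, K),
\]
whose outer terms both vanish: the left one because $U$ is rigid and $U' \in \add U$, and the right one because $\Lambda$ is hereditary. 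Hence $\Ext_{\Lambda}^{1}(U, F) = 0$, and as $F \in \Fac U$ was arbitrary we conclude $\Ext_{\Lambda}^{1}(U, \Fac U) = 0$, so $U$ is $\tau$-rigid.

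I do not expect any genuine obstacle here: the single place where the hereditary hypothesis enters is the vanishing of $\Ext_{\Lambda}^{2}(U, K)$, and this is exactly the mechanism that collapses the distinction between the two notions. Over a general algebra that term survives, and the two notions genuinely differ.
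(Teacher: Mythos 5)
Your proof is correct, and for the nontrivial direction it is genuinely more self-contained than the paper's. The easy implication ($\tau$-rigid $\Rightarrow$ rigid) is handled identically in both cases, via the last assertion of Proposition \ref{prop taurigid}. For the converse, however, the paper simply cites \cite[Chapter IV, 2.14.~Corollary]{assem2006elements}, which rests on the Auslander--Reiten formula specialized to hereditary algebras (for hereditary $\Lambda$ one has $\Ext_{\Lambda}^{1}(N, M) \cong D\Hom_{\Lambda}(M, \tau N)$, so the two vanishing conditions coincide outright), whereas you give a direct homological argument: you reduce $\tau$-rigidity to the vanishing $\Ext_{\Lambda}^{1}(U, \Fac U) = 0$ via Proposition \ref{prop taurigid}, present an arbitrary $F \in \Fac U$ by a short exact sequence $0 \to K \to U' \to F \to 0$ with $U' \in \add U$, and trap $\Ext_{\Lambda}^{1}(U, F)$ between $\Ext_{\Lambda}^{1}(U, U') = 0$ (rigidity plus additivity of $\Ext$ over $\add U$) and $\Ext_{\Lambda}^{2}(U, K) = 0$ (heredity). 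Each step checks out, and your closing remark is also accurate: the hereditary hypothesis enters only through the $\Ext^{2}$-vanishing, and without it the two notions indeed diverge (e.g.\ in Example \ref{exam 2tors vs taurigid} the algebra has global dimension $2$). What the textbook route buys is brevity and a stronger statement (a functorial isomorphism rather than a vanishing equivalence); what your route buys is a proof that needs nothing beyond Proposition \ref{prop taurigid} and the definition of hereditary, avoiding the AR formula entirely.
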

	
\begin{proof}
	(``if''part): This follows from Proposition \ref{prop taurigid}.

	(``only if'' part): See, for example, \cite[Chapter IV, 2.14. Corollary]{assem2006elements}.
\end{proof}
	
The author learned the following proposition from Shunya Saito and Arashi Sakai.
\begin{prop} [{\cite[Proposition 3.2]{Stanley2012tstructuresOH}}] \label{prop Iclosed = direct summands closed}
	Let $\cx$ be an extension-closed subcategory of $\md \Lambda$. $\cx$ is closed under images if and only if it is closed under direct summands.
\end{prop}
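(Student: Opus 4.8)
The plan is to prove the two implications separately, noting that only the implication ``closed under direct summands $\Rightarrow$ closed under images'' will actually use the hereditary hypothesis. For the direction ``closed under images $\Rightarrow$ closed under direct summands'', suppose $\cx$ is closed under images and let $A \oplus B \in \cx$. I would consider the idempotent endomorphism $e \colon A \oplus B \to A \oplus B$ given by the composite of the projection onto $A$ with the inclusion of $A$. Both its source and target equal $A \oplus B \in \cx$, and $\Ima e \cong A$, so closure under images forces $A \in \cx$. This direction needs neither extension-closedness nor hereditariness.

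The substance is the converse. Assume $\cx$ is extension-closed and closed under direct summands, take a morphism $f \colon X \to Y$ with $X, Y \in \cx$, and set $I := \Ima f$, $K := \Ker f$ and $C := \Cok f$. Factoring $f = \iota \pi$ through $I$ yields two short exact sequences, $\sigma \colon 0 \to K \to X \xrightarrow{\pi} I \to 0$ and $\tau \colon 0 \to I \xrightarrow{\iota} Y \to C \to 0$. The naive hope of realizing $I$ directly as a subobject or quotient of an extension of $X$ and $Y$ fails, because there is no reason for $K$ or $C$ to lie in $\cx$; this is exactly the main obstacle, and it is where the hereditary hypothesis must enter.

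To circumvent it, I would use that $\Lambda$ hereditary gives $\Ext_{\Lambda}^{2}(-,-) = 0$. Applying $\Hom_{\Lambda}(C, -)$ to $\sigma$ produces the exact sequence $\Ext_{\Lambda}^{1}(C, X) \xrightarrow{\pi_{*}} \Ext_{\Lambda}^{1}(C, I) \to \Ext_{\Lambda}^{2}(C, K) = 0$, so $\pi_{*}$ is surjective. Hence the class $[\tau] \in \Ext_{\Lambda}^{1}(C, I)$ of $\tau$ lifts to some $\xi \in \Ext_{\Lambda}^{1}(C, X)$. Realizing $\xi$ as a short exact sequence $0 \to X \xrightarrow{a} E \to C \to 0$ and using $\pi_{*}\xi = [\tau]$, the resulting morphism of extensions has left-hand square a pushout of the span $I \xleftarrow{\pi} X \xrightarrow{a} E$ whose remaining corner is $Y$.

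From this pushout square, and the fact that $a$ is a monomorphism, I would extract the Mayer--Vietoris short exact sequence $0 \to X \to I \oplus E \to Y \to 0$. Since $X, Y \in \cx$ and $\cx$ is extension-closed, the middle term $I \oplus E$ lies in $\cx$, and since $\cx$ is closed under direct summands, I conclude $I \in \cx$, as desired. A symmetric argument using the surjectivity of $\iota^{*} \colon \Ext_{\Lambda}^{1}(Y, K) \to \Ext_{\Lambda}^{1}(I, K)$ and a pullback along $\iota$ works equally well. The only delicate point to verify carefully is that the pushout genuinely yields the displayed conflation with outer terms exactly $X$ and $Y$, which follows from $\pi_{*}\xi = [\tau]$ together with the standard exact sequence attached to a pushout along a monomorphism.
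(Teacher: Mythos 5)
Your proof is correct and essentially the same as the paper's: both directions match, with the easy implication via the idempotent $\begin{pmatrix} 1 & 0 \\ 0 & 0 \end{pmatrix}$ and the hard one via hereditariness forcing an $\Ext^{1}$-surjectivity (from $\Ext^{2}(\Cok f, \Ker f)=0$), lifting an extension class, and extracting the pushout--pullback sequence $0 \to X \to \Ima f \oplus E \to Y \to 0$ before applying extension- and summand-closure. The only cosmetic difference is that you lift $[\tau]$ along $\pi_{*} \colon \Ext^{1}_{\Lambda}(\Cok f, X) \to \Ext^{1}_{\Lambda}(\Cok f, \Ima f)$, whereas the paper lifts $[\sigma]$ along $\Ext^{1}_{\Lambda}(\nu, \Ker f)$ --- precisely the ``symmetric argument'' you yourself note works equally well.
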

	
\begin{proof}
	(``only if'' part): It holds in general. Indeed, let $X \oplus Y \in \cx$. Consider the morphism $\begin{pmatrix} 1 & 0 \\ 0 & 0 \\ \end{pmatrix} \colon X \oplus Y \to X \oplus Y$. Then the image of the morphism is $X$. 
	Hence it holds from the assumption that $X \in \cx$.
	
	(``if'' part): Assume that $\cx$ is closed under direct summands. Take any morphism $f \colon X \to Y$ in $\cx$. 
	Since $\Lambda$ is hereditary, the map $\Ext_{\Lambda}^{1}(\nu ,\Ker f) \colon \Ext_{\Lambda}^{1}(Y, \Ker f) \to \Ext_{\Lambda}^{1}(\Ima f, \Ker f)$ induced by the inclusion $\nu \colon \Ima f \hookrightarrow Y$ is surjective. 
	Hence we have the following commutative diagram with exact rows and exact columns:
	
\[
\begin{tikzpicture}[auto]
	\node (01) at (-3.6, 0) {$0$}; \node (a1) at (-1.8, 0) {$\Ker f$}; \node (x) at (0, 0) {$M$}; \node (y) at (1.8, 0) {$Y$}; \node (02) at (3.6, 0) {$0$};
	\node (021) at (-3.6, 1.8) {$0$}; \node (a2) at (-1.8, 1.8) {$\Ker f$}; \node (x1) at (0, 1.8) {$X$}; \node (y1) at (1.8, 1.8) {$\Ima f$}; \node (022) at (3.6, 1.8) {$0$};
	\node (031) at (0, 3.6) {$0$}; \node (032) at (1.8, 3.6) {$0$};
	\node (y2d) at (0, -1.8) {$\Cok f$}; \node (y2) at (1.8, -1.8) {$\Cok f$};
	\node (001) at (0, -3.6) {$0$}; \node (002) at (1.8, -3.6) {$0$};
	\node at (0.9, 0.9) {\large $(\star)$}; \node at (1.95, -3.7) {.};
	
	\draw[->, thick] (01) to (a1); \draw[->, thick] (a1) to (x); \draw[->, thick] (x) to (y); \draw[->, thick] (y) to (02);
	\draw[->, thick] (021) to (a2); \draw[->, thick] (a2) to (x1); \draw[->, thick] (x1) to (y1); \draw[->, thick] (y1) to (022);
	\draw[double distance = 2pt, thick] (y2d) -- (y2);
	
	\draw[->, thick] (031) to (x1); \draw[->, thick] (x1) -- (x); \draw[->, thick] (x) to (y2d); \draw[->, thick]  (y2d) to (001);
	\draw[->, thick] (032) to (y1); \draw[->, thick] (y1) --node{$\nu$} (y); \draw[->, thick] (y) to (y2); \draw[->, thick]  (y2) to (002);
	\draw[double distance = 2pt, thick] (a1) -- (a2);
	
\end{tikzpicture}
\]
Then $(\star)$ is pushout and pullback and hence we obtain an exact sequence $0 \to X \to M \oplus \Ima f \to Y \to 0$. 
This implies that $\Ima f \in \cx$ because $X, Y \in \cx$ and $\cx$ is closed under extensions and direct summands.
\end{proof}
	
\begin{corr} \label{cor ICE=CE}
	Let $\cx$ be a subcategory of $\md \Lambda$. $\cx$ is a $\CE$-closed subcategory if and only if it is an $\mathrm{ICE}$-closed subcategory.
\end{corr}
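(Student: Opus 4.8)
The plan is to reduce the statement to Proposition \ref{prop Iclosed = direct summands closed}, exploiting the fact that $\CE$-closed subcategories are automatically closed under direct summands. First I would dispose of the trivial implication: by Definition \ref{def abelian subcat}, an $\ICE$-closed subcategory of $\md \Lambda$ is in particular closed under cokernels and extensions, hence $\CE$-closed. This direction uses nothing about $\Lambda$.

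For the converse, suppose $\cx$ is $\CE$-closed. Since being $\CE$-closed is the same as being $\Cc^{1}\E$-closed, Remark \ref{rem knE is direct summands closed} tells us that $\cx$ is closed under direct summands; moreover $\cx$ is extension-closed by definition. I would then invoke Proposition \ref{prop Iclosed = direct summands closed}, which asserts that an extension-closed subcategory of $\md \Lambda$ is closed under images if and only if it is closed under direct summands. Applying this to $\cx$ yields that $\cx$ is closed under images. Together with its cokernel- and extension-closure, this shows $\cx$ is $\ICE$-closed, as desired.

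The only substantive input is Proposition \ref{prop Iclosed = direct summands closed}, in whose proof the hereditariness of $\Lambda$ is essential (it guarantees that the comparison map $\Ext_{\Lambda}^{1}(\nu, \Ker f)$ is surjective, so that each image fits into a conflation between two objects of $\cx$). Consequently there is no genuine obstacle remaining at the level of this corollary: the hereditary hypothesis has already been absorbed into that proposition, and what is left is the purely formal bookkeeping between the defining closure conditions of $\CE$ and $\ICE$.
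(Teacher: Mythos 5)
Your proof is correct and matches the paper's argument exactly: the paper likewise dismisses the ``if'' direction as definitional and, for the converse, combines Remark \ref{rem knE is direct summands closed} (a $\CE$-closed subcategory is closed under direct summands) with Proposition \ref{prop Iclosed = direct summands closed} to conclude closure under images. Your added remark that the hereditary hypothesis is entirely absorbed into that proposition is accurate.
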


\begin{proof}
	(``if'' part)$\colon$ It is clear by definition.

	(``only if'' part)$\colon$ By Remark \ref{rem knE is direct summands closed}, every $\CE$-closed subcategory is closed under direct summands and hence, by Proposition \ref{prop Iclosed = direct summands closed}, it is closed under images. 
\end{proof}

The following proposition plays an important role in the proof of Theorem \ref{thm bijection hereditary case}.

\begin{prop}[{\cite[Proposition 3.1]{enomoto2022rigid}}] \label{prop properties of CEclosed in hereditary}
	Let $\cc$ be a $\CE$-closed subcategory of $\md \Lambda$. Then the following hold$\colon$
	\begin{enumerate}[$(a)$]
		\setlength{\itemsep}{0pt}
		\item Every $\mathrm{\Ext}$-projective module in $\cc$ is rigid.
		\item There are only finitely many indecomposable $\mathrm{Ext}$-projective modules in $\cc$ up to isomorphism.
		\item If $\cc$ has enough $\mathrm{Ext}$-projectives, then the basic $\mathrm{Ext}$-progenerator $P(\cc)$ of $\cc$ exists and $\cc = \cok_{1} P(\cc)$ holds.
	\end{enumerate}
\end{prop}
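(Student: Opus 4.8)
The plan is to handle the three statements in turn, with the finiteness in $(b)$ being the substantive part. For $(a)$, the observation is that the conclusion is essentially immediate and requires neither heredity nor finiteness: if $M$ is $\mathrm{Ext}$-projective in $\cc$, then $M$ is in particular an object of $\cc$, so specializing the defining vanishing $\Ext_{\Lambda}^{1}(M,X)=0$ (all $X\in\cc$) to $X=M$ gives $\Ext_{\Lambda}^{1}(M,M)=0$, i.e. $M$ is rigid.

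For $(b)$ I would first record that any finite direct sum $M_{1}\oplus\cdots\oplus M_{k}$ of indecomposable $\mathrm{Ext}$-projective modules of $\cc$ is again $\mathrm{Ext}$-projective, by additivity of $\Ext_{\Lambda}^{1}(-,X)$, and lies in $\cc$, since $\cc$ is additive and closed under direct summands by Remark \ref{rem knE is direct summands closed}. Hence $(a)$ applies to the sum itself: it is rigid, and therefore $\tau$-rigid by Proposition \ref{prop rigid=taurigid}. If the $M_{i}$ are pairwise non-isomorphic, this sum is basic, so the fundamental bound of $\tau$-tilting theory \cite{Adachi_Iyama_Reiten_2014} forces $k\leq|\Lambda|$. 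Since this holds for every finite family of pairwise non-isomorphic indecomposable $\mathrm{Ext}$-projectives, there are at most $|\Lambda|$ of them up to isomorphism. This reduction --- converting the finiteness of $\mathrm{Ext}$-projectives into the classical bound on $\tau$-rigid modules via $(a)$ and the equivalence rigid $=$ $\tau$-rigid --- is the main obstacle, and is precisely where the hereditary hypothesis is used.

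For $(c)$, assuming $\cc$ has enough $\mathrm{Ext}$-projectives, I let $P(\cc)$ be the basic module whose indecomposable summands are the finitely many (by $(b)$) indecomposable $\mathrm{Ext}$-projectives of $\cc$. I would first verify that $P(\cc)$ is an $\mathrm{Ext}$-progenerator: it is $\mathrm{Ext}$-projective as a finite direct sum of $\mathrm{Ext}$-projectives, and for $X\in\cc$ the hypothesis supplies a conflation $0\to K\to P\to X\to 0$ in $\cc$ with $P$ $\mathrm{Ext}$-projective; every indecomposable summand of $P$ is again an indecomposable $\mathrm{Ext}$-projective of $\cc$ (summands of $\mathrm{Ext}$-projectives are $\mathrm{Ext}$-projective, and $\cc$ is summand-closed), hence isomorphic to a summand of $P(\cc)$, so $P\in\add P(\cc)$. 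Finally I would prove $\cc=\cok_{1}P(\cc)$ by double inclusion. The inclusion $\cok_{1}P(\cc)\subseteq\cc$ holds because $\add P(\cc)\subseteq\cc$ and $\cc$ is closed under cokernels, so any cokernel of a morphism between objects of $\add P(\cc)$ lies in $\cc$. Conversely, for $X\in\cc$ I apply the progenerator property twice, obtaining conflations $0\to K\to P_{0}\to X\to 0$ and $0\to K'\to P_{1}\to K\to 0$ in $\cc$ with $P_{0},P_{1}\in\add P(\cc)$, and splice them into an exact sequence $P_{1}\to P_{0}\to X\to 0$, which exhibits $X\in\cok_{1}P(\cc)$. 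This last part is routine once $(b)$ and the progenerator property are in place.
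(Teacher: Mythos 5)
The paper itself does not prove this proposition: it is imported verbatim from Enomoto (\cite[Proposition 3.1]{enomoto2022rigid}) and used as a black box in Section 9, so there is no internal proof to compare against. Judged on its own merits, your argument is correct and is essentially the standard one. Part $(a)$ is indeed immediate from the paper's definition of $\mathrm{Ext}$-projectivity (which already requires $M \in \cc$), with no hereditary hypothesis. In $(b)$ your reduction is sound: a finite direct sum of pairwise non-isomorphic indecomposable $\mathrm{Ext}$-projectives is a basic rigid, hence (by Proposition \ref{prop rigid=taurigid}, which is where heredity enters) $\tau$-rigid module, and the bound $|M| \leq |\Lambda|$ for basic $\tau$-rigid modules from \cite{Adachi_Iyama_Reiten_2014} caps the count; this is equivalent to Enomoto's original route via the classical bound on partial tilting modules over hereditary algebras. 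One small mis-attribution: the fact that the direct sum lies in $\cc$ follows from closure under extensions (the split sequence $0 \to A \to A \oplus C \to C \to 0$), not from Remark \ref{rem knE is direct summands closed}; summand-closedness is what you correctly need later, in $(c)$, to see that indecomposable summands of an $\mathrm{Ext}$-projective $P$ are themselves objects of $\cc$. Part $(c)$ is complete: the verification that $P(\cc)$ is an $\mathrm{Ext}$-progenerator uses exactly $(b)$ plus summand-closedness, the inclusion $\cok_{1} P(\cc) \subseteq \cc$ uses cokernel-closedness, and the reverse inclusion by splicing two conflations (valid because a conflation in $\cc$ has all three terms in $\cc$, so the progenerator property can be applied to the kernel term) is the expected argument.
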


 We denote by $\rigid \Lambda$ the set of isomorphism classes of basic rigid $\Lambda$-modules and by $\icep \Lambda$ the set of $\ICE$-closed subcategories of $\md \Lambda$ with enough $\mathrm{Ext}$-projectives.
By Proposition \ref{prop properties of CEclosed in hereditary}, every subcategory $\cc$ in $\icep\Lambda$ admits the basic $\mathrm{Ext}$-progenerator $P(\cc)$.
Enomoto proved the following theorem$\colon$
\begin{theo}[{\cite[Theorem 2.3]{enomoto2022rigid}}] \label{thm bijection hereditary case}
	Assume that $\Lambda$ is hereditary.
  The following maps are mutually inverse$\colon$
\[
\begin{tikzpicture}[auto]
	\node at (-2.4, 2.0) {$\rigid \Lambda$}; \node  at (2.4, 2.0) {$\icep \Lambda$}; 
	\node[rotate=90]  at (-2.4, 1.5) {$\in$}; \node[rotate=90]  at (2.4, 1.5) {$\in$}; 
	\node  at (-2.4, 1.0) {$U$}; \node  at (2.4, 1.0) {$\cok_{1} U$}; 
	\node  at (-2.4, 0.4) {$P(\cc)$}; \node  at (2.4, 0.4) {$\cc$}; 
	\node at (2.6, 0.3) {.};
	\node(011) at (-1.7, 2.1) {}; \node(021) at (1.6, 2.1) {};
	\node(012) at (-1.7, 1.9) {}; \node(022) at (1.6, 1.9) {};
	\node(11) at (-1.7, 1.0) {}; \node(12) at (1.6, 1.0) {};
	\node(21) at (-1.7, 0.4) {}; \node(22) at (1.6, 0.4) {};	

	\draw[->, thick] (011) --node{$\cok_{1}$} (021); \draw[->, thick] (022) --node{$P(-)$} (012); \draw[|->, thick] (11) to (12); \draw[|->, thick] (22) -- (21); 
	
\end{tikzpicture}
\]
\end{theo}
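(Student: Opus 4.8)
The plan is to deduce this theorem from the main result Theorem~\ref{thm charac of 2tors} by matching its bijection, under hereditary-specific identifications of the source and target sets. Concretely, I would first identify the domains via $\rigid \Lambda = \taurigid \Lambda$, then identify the codomains via $\icep \Lambda = \fltwotors^{\ast} \Lambda$, and finally verify that Enomoto's maps $U \mapsto \cok_{1} U$ and $\cc \mapsto P(\cc)$ coincide with the maps $\cok_{1}$ and $\Phi$ of Theorem~\ref{thm charac of 2tors}. Once these three identifications are in place, the mutual-inverse property is inherited directly from that theorem.

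For the domains, Proposition~\ref{prop rigid=taurigid} shows that a module in $\md \Lambda$ is rigid if and only if it is $\tau$-rigid, so the two sets of isomorphism classes of basic modules coincide, giving $\rigid \Lambda = \taurigid \Lambda$. For the codomains, I would prove $\icep \Lambda = \fltwotors^{\ast} \Lambda$ by a double inclusion. Given $\cc \in \fltwotors^{\ast} \Lambda$, Theorem~\ref{thm charac of 2tors} writes $\cc = \cok_{1} U$ for a $\tau$-rigid $U$, so $\cc$ is a $2$-fold torsion class by Proposition~\ref{prop coknU is n+1fold tors}, hence $\CE$-closed by Proposition~\ref{prop CE = 2tors}, hence $\ICE$-closed by Corollary~\ref{cor ICE=CE}; moreover $\cc$ admits an $\mathrm{Ext}$-progenerator by Theorem~\ref{thm properties of 2tors induced by taurigid}~$(c)$, so it has enough $\mathrm{Ext}$-projectives by Remark~\ref{rem Ext proj}~$(3)$, whence $\cc \in \icep \Lambda$. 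Conversely, if $\cc \in \icep \Lambda$, then $\cc$ is $\CE$-closed by Corollary~\ref{cor ICE=CE}, so Proposition~\ref{prop properties of CEclosed in hereditary}~$(c)$ yields the basic $\mathrm{Ext}$-progenerator $P(\cc)$ together with $\cc = \cok_{1} P(\cc)$; since $P(\cc)$ is rigid by Proposition~\ref{prop properties of CEclosed in hereditary}~$(a)$, it is $\tau$-rigid by Proposition~\ref{prop rigid=taurigid}, and therefore $\cc$ lies in the image of $\cok_{1} \colon \taurigid \Lambda \to \fltwotors^{\ast} \Lambda$, which is all of $\fltwotors^{\ast} \Lambda$ by Theorem~\ref{thm charac of 2tors}.

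It remains to match the maps. The forward map is literally $\cok_{1}$ in both theorems, and it is well-defined by the codomain identification just established. For the backward map, I would argue that $P(\cc) = \Phi(\cc)$ for every $\cc \in \icep \Lambda = \fltwotors^{\ast} \Lambda$: both $P(\cc)$ and $\Phi(\cc)$ are basic $\tau$-rigid modules, and they satisfy $\cok_{1} P(\cc) = \cc = \cok_{1} \Phi(\cc)$, the first equality by Proposition~\ref{prop properties of CEclosed in hereditary}~$(c)$ and the second by Theorem~\ref{thm charac of 2tors}, which gives $\cok_{1} \circ \Phi = \mathrm{id}$. Since $\cok_{1}$ is injective on $\taurigid \Lambda$ by Theorem~\ref{thm charac of 2tors}, the two modules agree. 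Hence Enomoto's pair of maps is exactly the mutually inverse pair $(\cok_{1}, \Phi)$ of Theorem~\ref{thm charac of 2tors}, completing the proof.

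Since the substantive content has already been established in the earlier sections, I do not expect a genuine obstacle; the main point requiring care is to invoke the hereditary hypothesis at precisely the two places where it is needed—through Corollary~\ref{cor ICE=CE} to pass between $\ICE$- and $\CE$-closedness, and through Proposition~\ref{prop rigid=taurigid} to pass between rigidity and $\tau$-rigidity. I would also resist the temptation to recompute the $\mathrm{Ext}$-progenerator directly: the identification $P(\cc) = \Phi(\cc)$ is best obtained formally from the injectivity of $\cok_{1}$ rather than from the explicit construction in Proposition~\ref{prop existence of Extprogen of 2tors}.
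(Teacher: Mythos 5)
Your proposal is correct, and its overall skeleton coincides with the paper's: the paper likewise proves $\rigid \Lambda = \taurigid \Lambda$ via Proposition~\ref{prop rigid=taurigid} and $\icep \Lambda = \fltwotors^{\ast} \Lambda$ via a double inclusion (its Lemma~\ref{lemm fltwotors = icep}), with essentially the same chain of citations you give, and then deduces the bijection from Theorem~\ref{thm charac of 2tors}. The one genuine difference is your treatment of the backward map. The paper devotes a separate lemma (Lemma~\ref{lem Phi = P}) to proving $\Phi(\cc) = P(\cc)$ by a direct comparison of $\add P(\cc)$ and $\add \Phi(\cc)$: it shows $\add P(\cc) \supset \add \Phi(\cc)$ using Ext-projectivity in $\T{1}{\cc}$ and split conflations, and $\add P(\cc) \subset \add \Phi(\cc)$ by first identifying $\T{1}{\cc} = \Fac P(\cc)$ and then splitting a conflation $0 \to F \to Q' \to P(\cc) \to 0$ via Proposition~\ref{prop taurigid}. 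You instead obtain the same identity formally: both modules are basic $\tau$-rigid, both are sent to $\cc$ by $\cok_{1}$ (the first by Proposition~\ref{prop properties of CEclosed in hereditary}~$(c)$, the second by $\cok_{1} \circ \Phi = \mathrm{id}$), and $\cok_{1}$ is injective on $\taurigid \Lambda$ since $\Phi \circ \cok_{1} = \mathrm{id}$ already holds by Proposition~\ref{prop prepare charac of 2tors}. This is a legitimate and slicker route: it buys you economy by recycling the injectivity already contained in Theorem~\ref{thm charac of 2tors} instead of redoing approximation-theoretic work, at the cost of being less informative—the paper's explicit argument exhibits \emph{why} $\Phi(\cc)$ is the Ext-progenerator, which is of independent interest, whereas yours only certifies the equality. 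Note that the hereditary hypothesis enters your shortcut in the same place it enters the paper's lemma, namely through Proposition~\ref{prop properties of CEclosed in hereditary}~$(c)$, so no heredity input is saved; but as a proof of the stated theorem your argument is complete.
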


In order to prove Theorem \ref{thm bijection hereditary case}, we show two lemmas.

\begin{lemm} \label{lemm fltwotors = icep}
	Let $\cc$ be a subcategory of $\md \Lambda$. $\cc$ belongs to $\icep \Lambda$ if and only if it belongs to $\fltwotors^{\ast} \Lambda$.
\end{lemm}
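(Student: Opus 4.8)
The plan is to prove the two inclusions separately, in each case passing through the bijection of Theorem \ref{thm charac of 2tors} so that both sides are recognized as subcategories of the form $\cok_{1} U$ for a $\tau$-rigid module $U$. The key observation is that, over a hereditary algebra, the defining conditions of $\icep \Lambda$ and $\fltwotors^{\ast} \Lambda$ both amount to saying that $\cc$ arises as $\cok_{1}$ of a $\tau$-rigid module.

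For the inclusion $\icep \Lambda \subseteq \fltwotors^{\ast} \Lambda$, I would start from $\cc$ an $\ICE$-closed subcategory with enough $\mathrm{Ext}$-projectives. Since an $\ICE$-closed subcategory is in particular $\CE$-closed, Proposition \ref{prop properties of CEclosed in hereditary}~$(c)$ yields that the basic $\mathrm{Ext}$-progenerator $P(\cc)$ exists and that $\cc = \cok_{1} P(\cc)$. By part $(a)$ of the same proposition, $P(\cc)$, being $\mathrm{Ext}$-projective in $\cc$, is rigid, hence $\tau$-rigid by Proposition \ref{prop rigid=taurigid}, so $P(\cc) \in \taurigid \Lambda$. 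Thus $\cc$ lies in the image of the map $\cok_{1} \colon \taurigid \Lambda \to \fltwotors^{\ast} \Lambda$ from Theorem \ref{thm charac of 2tors}, giving $\cc \in \fltwotors^{\ast} \Lambda$.

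For the reverse inclusion, I would take $\cc \in \fltwotors^{\ast} \Lambda$ and use Theorem \ref{thm charac of 2tors} to write $\cc = \cok_{1} U$ with $U = \Phi(\cc) \in \taurigid \Lambda$. Being a $2$-fold torsion class, $\cc$ is $\CE$-closed by Proposition \ref{prop CE = 2tors}, and since $\Lambda$ is hereditary, Corollary \ref{cor ICE=CE} upgrades this to $\ICE$-closed. Finally, Proposition \ref{prop existence of Extprogen of 2tors} produces an $\mathrm{Ext}$-progenerator $U^{P} \oplus C_{1}^{P}$ of $\cc$, and Remark \ref{rem Ext proj}~$(3)$ shows that admitting an $\mathrm{Ext}$-progenerator forces $\cc$ to have enough $\mathrm{Ext}$-projectives. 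Hence $\cc \in \icep \Lambda$.

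The content here is essentially bookkeeping assembled from the structural results already established, and no single step is technically difficult. The one point that must be handled with care is the identification of $\CE$-closedness with $\ICE$-closedness, which is special to the hereditary setting and rests on Corollary \ref{cor ICE=CE} (ultimately on Proposition \ref{prop Iclosed = direct summands closed}); without it the reverse inclusion would not close up, since a $2$-fold torsion class is only guaranteed to be $\CE$-closed. A second point to keep straight is that the $\mathrm{Ext}$-progenerator $P(\cc)$ supplied by Proposition \ref{prop properties of CEclosed in hereditary} in the forward direction is genuinely $\tau$-rigid, so that it lies in the domain $\taurigid \Lambda$ of the bijection of Theorem \ref{thm charac of 2tors}; this is exactly where Proposition \ref{prop rigid=taurigid} is indispensable.
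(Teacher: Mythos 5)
Your proof is correct and follows essentially the same route as the paper: both directions pass through the bijection of Theorem \ref{thm charac of 2tors}, using Proposition \ref{prop properties of CEclosed in hereditary} together with Proposition \ref{prop rigid=taurigid} for the inclusion $\icep \Lambda \subseteq \fltwotors^{\ast} \Lambda$, and Proposition \ref{prop CE = 2tors} with Corollary \ref{cor ICE=CE} plus the existence of an $\mathrm{Ext}$-progenerator for the converse. The only cosmetic difference is that you cite Proposition \ref{prop existence of Extprogen of 2tors} directly where the paper invokes the summarizing Theorem \ref{thm properties of 2tors induced by taurigid}~$(c)$, which rests on the same proposition.
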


\begin{proof}
	(``if'' part)$\colon$ Assume that $\cc$ belongs to $\fltwotors^{\ast} \Lambda$. 
	Since $\cc$ is a $2$-fold torsion class of $\md \Lambda$, by Proposition \ref{prop CE = 2tors}, it is a $\CE$-closed subcategory of $\md \Lambda$.
	Hence, by Corollary \ref{cor ICE=CE}, $\cc$ is an $\ICE$-closed subcategory of $\md \Lambda$.
	It follows from Theorem \ref{thm properties of 2tors induced by taurigid} $(c)$ that $\cc$ has an $\mathrm{Ext}$-progenerator 
	and hence it has enough $\mathrm{Ext}$-projectives. Therefore, $\cc$ belongs to $\icep \Lambda$. 

	(``only if'' part)$\colon$ Assume that $\cc$ belongs to $\icep \Lambda$. Since $\cc$ is a $\CE$-closed subcategory of $\md \Lambda$ with enough $\mathrm{Ext}$-projectives, by Proposition \ref{prop properties of CEclosed in hereditary} $(c)$, $\cc$ has the basic $\mathrm{Ext}$-progenerator $P(\cc)$ 
	and $\cc = \cok_{1} P(\cc)$ holds. By Propositions \ref{prop properties of CEclosed in hereditary} $(a)$ and \ref{prop rigid=taurigid}, $P(\cc)$ is $\tau$-rigid.
	Thus, by Theorem \ref{thm charac of 2tors}, $\cc$ belongs to $\fltwotors^{\ast} \Lambda$.
\end{proof}

\begin{lemm} \label{lem Phi = P}
	Let $\cc \in \fltwotors^{\ast} \Lambda$. $\Phi (\cc)$ is the basic $\mathrm{Ext}$-progenerator of $\cc$, 
	where $\Phi$ is the map which appears in Theorem \ref{thm charac of 2tors}.
\end{lemm}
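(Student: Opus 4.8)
The plan is to identify $\Phi(\cc)$ with the basic $\mathrm{Ext}$-progenerator furnished by the hereditary theory of $\CE$-closed subcategories, and then to read off the desired equality directly from the bijection of Theorem \ref{thm charac of 2tors}. The main idea is that I never need to inspect the internal structure of the progenerator constructed in Proposition \ref{prop existence of Extprogen of 2tors}; instead I exploit the fact that $\Phi$ and $\cok_{1}$ are mutually inverse, so that any $\tau$-rigid module $U$ with $\cok_{1} U = \cc$ must coincide with $\Phi(\cc)$.

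First I would record that, by Lemma \ref{lemm fltwotors = icep}, the subcategory $\cc$ belongs to $\icep \Lambda$; in particular $\cc$ is $\ICE$-closed, hence $\CE$-closed, and has enough $\mathrm{Ext}$-projectives. Consequently Proposition \ref{prop properties of CEclosed in hereditary} $(c)$ applies and yields a basic $\mathrm{Ext}$-progenerator $P(\cc)$ of $\cc$ together with the identity $\cc = \cok_{1} P(\cc)$. Next I would check that $P(\cc)$ lands in $\taurigid \Lambda$, so that the inverse bijection can legitimately be applied to it: since $P(\cc)$ is $\mathrm{Ext}$-projective in the $\CE$-closed subcategory $\cc$, Proposition \ref{prop properties of CEclosed in hereditary} $(a)$ shows that $P(\cc)$ is rigid, and Proposition \ref{prop rigid=taurigid} upgrades this to $\tau$-rigidity. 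As $P(\cc)$ is basic by construction, we obtain $P(\cc) \in \taurigid \Lambda$.

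Finally I would conclude by applying $\Phi$ to the relation $\cc = \cok_{1} P(\cc)$. Because $\Phi$ and $\cok_{1}$ are mutually inverse on $\fltwotors^{\ast} \Lambda$ and $\taurigid \Lambda$ by Theorem \ref{thm charac of 2tors} (so that $\Phi \circ \cok_{1} = \mathrm{id}$), this gives
\[
\Phi(\cc) = \Phi\bigl(\cok_{1} P(\cc)\bigr) = P(\cc),
\]
which is exactly the asserted basic $\mathrm{Ext}$-progenerator of $\cc$.

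I do not anticipate a genuine obstacle here, since every nontrivial input is already supplied by the cited results. The only point that requires care is routing the argument through Proposition \ref{prop properties of CEclosed in hereditary} rather than through Proposition \ref{prop existence of Extprogen of 2tors}: the latter would force me to prove by hand that the constructed progenerator $U^{P} \oplus C_{1}^{P}$ has the same additive closure as $U = \Phi(\cc)$, which is considerably more laborious. By contrast, the chosen route reduces everything to the already-established facts that $\cc = \cok_{1} P(\cc)$ in the hereditary setting and that $\Phi \circ \cok_{1} = \mathrm{id}$ on $\taurigid \Lambda$.
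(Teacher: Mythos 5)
Your proof is correct, and it takes a genuinely different route from the paper's. The paper proves the equality $\add P(\cc) = \add \Phi(\cc)$ by hand in two inclusions: for $\supset$ it uses that each $M \in \add \Phi(\cc)$ is $\mathrm{Ext}$-projective in $\T{1}{\cc}$ to split a conflation $0 \to C \to P' \to M \to 0$ supplied by the progenerator $P(\cc)$; for $\subset$ it first identifies $\T{1}{\cc} = \Fac P(\cc)$ (using $\cc = \cok_{1} P(\cc)$ and minimality of the torsion closure) and then splits a conflation $0 \to F \to Q' \to P(\cc) \to 0$ via Proposition \ref{prop taurigid} to conclude $P(\cc) \in \cc \cap \add P(\T{1}{\cc})$. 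You instead observe that once Proposition \ref{prop properties of CEclosed in hereditary} $(c)$ gives $\cc = \cok_{1} P(\cc)$, and part $(a)$ together with Proposition \ref{prop rigid=taurigid} places $P(\cc)$ in $\taurigid \Lambda$, the identity $\Phi \circ \cok_{1} = \mathrm{id}$ from Proposition \ref{prop prepare charac of 2tors} immediately forces $\Phi(\cc) = \Phi(\cok_{1} P(\cc)) = P(\cc)$. Both arguments consume the same external inputs (Lemma \ref{lemm fltwotors = icep} and Proposition \ref{prop properties of CEclosed in hereditary}), and there is no circularity, since none of the results you cite depend on Lemma \ref{lem Phi = P}. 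What your route buys is economy: the splitting arguments the paper performs are in substance already contained in the proof that $\Phi(\cok_{1} U) = U$, so you avoid re-deriving them; what the paper's explicit computation buys is the intermediate identification $\T{1}{\cc} = \Fac P(\cc)$, which your argument never records but which is conceptually clarifying. Your checking that $P(\cc) \in \taurigid \Lambda$ before applying the inverse identity is exactly the point requiring care, and you handled it correctly.
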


\begin{proof}
	Let $P:= P(\cc)$ and $Q := P(\T{1}{\cc})$.
	To begin with, we prove that $\add P \supset \add \Phi(\cc)$.
	Take any $M \in \add \Phi(\cc)$. By the definition of $\Phi(\cc)$, $M$ is $\mathrm{Ext}$-projective in $\T{1}{\cc}$. 
	Since $M \in \cc$ and $P$ is an $\mathrm{Ext}$-progenerator of $\cc$, there exists some conflation $0 \to C \to P' \to M \to 0$ in $\cc$ with $P' \in \add P$.
	Because $\cc \subset \T{1}{\cc}$ and $M$ is $\mathrm{Ext}$-projective in $\T{1}{\cc}$, the conflation is split and hence it holds $M \oplus C = P'$.
	Thus $M \in \add P$. This implies $\add P \supset \add \Phi(\cc)$.

	Next, we show that $\add P \subset \add \Phi(\cc)$. It is enough to show that $P \in \add Q$ because $P \in \cc$ and $\add \Phi(\cc) = \cc \cap \add Q$.
	By Propositions \ref{prop properties of CEclosed in hereditary} $(a)$ and \ref{prop rigid=taurigid}, $P$ is $\tau$-rigid.
	Hence it follows from Proposition \ref{prop FacU is 1fold tors} that $\Fac P$ is a torsion class of $\md \Lambda$.
	Since $\cc = \cok_{1} P \subset \Fac P$ and $\Fac P$ is a torsion class of $\md \Lambda$, by the minimality of $\T{1}{\cc}$, we obtain $\T{1}{\cc} = \Fac P$.
	There exists some conflation $0 \to F \to Q' \to P \to 0$ in $\T{1}{\cc}$ with $Q' \in \add Q$ because $P \in \cc \subset \T{1}{\cc}$ and $Q$ is an $\mathrm{Ext}$-progenerator of $\T{1}{\cc}$.
	Since $F \in \T{1}{\cc} = \Fac P$ and $P$ is $\tau$-rigid, the conflation is split by Proposition \ref{prop taurigid} and hence $P \oplus F = Q'$.
	Thus we have $P \in \cc \cap \add Q = \add \Phi (\cc)$.
	Consequently, $\add P = \add \Phi(\cc)$ and hence it holds $P = \Phi(\cc)$ because both $P$ and $\Phi(\cc)$ are basic.
\end{proof}

\begin{proof}[Proof of Theorem \ref{thm bijection hereditary case}]
	By Proposition \ref{prop rigid=taurigid}, we have $\taurigid \Lambda = \rigid \Lambda$. It follows from Lemma \ref{lemm fltwotors = icep} that $\fltwotors^{\ast} \Lambda = \icep \Lambda$.
	By Theorem \ref{thm charac of 2tors} and Lemma \ref{lem Phi = P}, the proof is complete.
\end{proof}

By combining \cite[Proposition 5.6]{enomoto2022ice} and results of this paper, we obtain the next proposition.
\begin{prop} 
 Assume that $\Lambda$ is hereditary. Let $\cc$ be a $2$-fold torsion class of $\md \Lambda$. 
 The following are equivalent$\colon$
 \begin{enumerate}[$(a)$]
	\setlength{\itemsep}{0pt}
	\item $\cc$ is functorially finite in $\md \Lambda$.
	\item $\cc$ has an $\mathrm{Ext}$-progenerator.
	\item $\cc$ has enough $\mathrm{Ext}$-projectives.
	\item $\T{1}{\cc}$ is functorially finite in $\md \Lambda$ and $\cc$ satisfies condition $\conast$.
 \end{enumerate}
\end{prop}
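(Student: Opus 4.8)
The plan is to organize the four conditions around membership in $\fltwotors^{\ast}\Lambda$, exploiting that in the hereditary case a $2$-fold torsion class is automatically $\ICE$-closed. First I would record a standing observation: since $\cc$ is a $2$-fold torsion class of $\md\Lambda$, Proposition \ref{prop CE = 2tors} shows that $\cc$ is $\CE$-closed, and Corollary \ref{cor ICE=CE} upgrades this to $\ICE$-closed. In particular, $\cc$ lies in $\icep\Lambda$ if and only if it has enough $\mathrm{Ext}$-projectives, that is, exactly when $(c)$ holds.

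The core equivalence is $(c)\Leftrightarrow(d)$, which is immediate from Lemma \ref{lemm fltwotors = icep}. Indeed, condition $(d)$ says precisely that $\cc\in\fltwotors^{\ast}\Lambda$, namely that $\cc$ is a $2$-fold torsion class whose $1$-fold torsion closure $\T{1}{\cc}$ is functorially finite and which satisfies condition $\conast$; by the previous paragraph, $(c)$ says that $\cc\in\icep\Lambda$. Lemma \ref{lemm fltwotors = icep} identifies these two sets, so $(c)\Leftrightarrow(d)$ follows at once.

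Next I would dispatch the remaining implications using the results of this paper. From $(d)$, i.e.\ $\cc\in\fltwotors^{\ast}\Lambda$, Theorem \ref{thm properties of 2tors induced by taurigid} provides both functorial finiteness $(a)$ and the existence of an $\mathrm{Ext}$-progenerator $(b)$. The implication $(b)\Rightarrow(c)$ is Remark \ref{rem Ext proj} $(3)$: an $\mathrm{Ext}$-progenerator witnesses enough $\mathrm{Ext}$-projectives. At this stage $(b),(c),(d)$ are mutually equivalent and each of them implies $(a)$.

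The one implication requiring input from outside this paper is $(a)\Rightarrow(c)$: a functorially finite $\ICE$-closed subcategory of $\md\Lambda$ has enough $\mathrm{Ext}$-projectives, which is exactly where \cite[Proposition 5.6]{enomoto2022ice} is invoked. Granting it, the cycle $(a)\Rightarrow(c)\Leftrightarrow(d)\Rightarrow(a)$ together with $(d)\Rightarrow(b)\Rightarrow(c)$ yields the equivalence of all four conditions. The main, and essentially only, genuine obstacle is this last step; everything else is a repackaging of Lemma \ref{lemm fltwotors = icep}, Theorem \ref{thm properties of 2tors induced by taurigid}, Remark \ref{rem Ext proj}, and the standing $\ICE$-closedness of $\cc$ guaranteed by Corollary \ref{cor ICE=CE}.
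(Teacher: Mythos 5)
Your proposal is correct and follows essentially the same route as the paper's proof: both rest on the standing observation from Proposition \ref{prop CE = 2tors} and Corollary \ref{cor ICE=CE} that $\cc$ is $\ICE$-closed, on Lemma \ref{lemm fltwotors = icep} for $(c)\Leftrightarrow(d)$, and on \cite[Proposition 5.6]{enomoto2022ice} for the functorial-finiteness input. The only (harmless) difference is bookkeeping: the paper cites \cite[Proposition 5.6]{enomoto2022ice} for the full chain $(a)\Leftrightarrow(b)\Leftrightarrow(c)$, whereas you use Theorem \ref{thm properties of 2tors induced by taurigid} and Remark \ref{rem Ext proj}~$(3)$ to obtain $(d)\Rightarrow(a),(b)$ and $(b)\Rightarrow(c)$ internally, so the external result is invoked only for the single implication $(a)\Rightarrow(c)$.
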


\begin{proof}
	By Proposition \ref{prop CE = 2tors}, every $2$-fold torsion class of $\md \Lambda$ is a $\CE$-closed subcategory of $\md \Lambda$.
	The implications $(a) \Longleftrightarrow (b) \Longleftrightarrow (c)$ follows from Corollary \ref{cor ICE=CE} and \cite[Proposition 5.6]{enomoto2022ice}.

	By Corollary \ref{cor ICE=CE} and Lemma \ref{lemm fltwotors = icep}, we have $(c) \Longleftrightarrow (d)$. This completes the proof.
\end{proof}

 In the hereditary case, there are no non-trivial $n$-fold torsion(-free) classes of $\md \Lambda$ for $n > 2$ in the following sense:

\begin{prop} \label{prop 2fold = nfold}
	Assume that $\Lambda$ is hereditary.
	Let $\cx$ be a subcategory of $\md \Lambda$ and $n$ an integer greater than $2$.
	$\cx$ is a $2$-fold torsion(-free) class of $\md \Lambda$ if and only if it is an $n$-fold torsion(-free) class of $\md \Lambda$.
\end{prop}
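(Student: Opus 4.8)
The plan is to prove the two directions separately and to reduce the torsion case to the torsion-free case by duality. The easy direction is immediate: iterating Remark~\ref{rem nfold implies n+1fold}, every $2$-fold torsion(-free) class of $\md \Lambda$ is an $n$-fold torsion(-free) class for every $n \geq 2$, with no hereditariness needed. So the content lies in the converse, and I would treat the torsion-free version, the torsion version following by passing to the opposite category (which is again hereditary).

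The key lemma I would isolate is purely homological and holds in any abelian category: if $m \geq 1$ and $\cx$ is a $\K^m\E$-closed subcategory, then $\cx$ is closed under epi-kernels. Indeed, given an epimorphism $g \colon X^0 \twoheadrightarrow X^1$ with $X^0, X^1 \in \cx$ and $M := \Ker g$, the sequence $0 \to M \to X^0 \xrightarrow{g} X^1 \to 0 \to \cdots \to 0$, padded with $m-1$ copies of the zero object, is an exact sequence $0 \to M \to X^0 \to \cdots \to X^m$ all of whose terms lie in $\cx$ (exactness at $X^1$ uses that $g$ is an epimorphism, and at the padded zeros it is automatic). Hence $M \in \cx$ by $m$-kernel closedness. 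Dually, a $\Cc^m\E$-closed subcategory is closed under mono-cokernels.

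Now suppose $\cx$ is an $n$-fold torsion-free class with $n > 2$. By Proposition~\ref{prop nfold vs Kn-1Eclosed} it is $\K^{(n-1)}\E$-closed, and here $m := n-1 \geq 2 \geq 1$, so the lemma applies and $\cx$ is closed under epi-kernels. By Remark~\ref{rem knE is direct summands closed}, $\cx$ is extension-closed and closed under direct summands; since $\Lambda$ is hereditary, Proposition~\ref{prop Iclosed = direct summands closed} then yields that $\cx$ is closed under images. These two closure properties combine to closure under kernels: for any $f \colon X \to Y$ in $\cx$ one has $\Ima f \in \cx$, and the epimorphism $X \twoheadrightarrow \Ima f$ has the same kernel $\Ker f$, which therefore lies in $\cx$. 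Thus $\cx$ is $\KE$-closed, and Theorem~\ref{thm KE=2torf} shows that $\cx$ is a $2$-fold torsion-free class. The torsion statement follows dually, using the mono-cokernel version of the lemma, Proposition~\ref{prop Iclosed = direct summands closed} once more (to pass from mono-cokernels to cokernels via images), and Proposition~\ref{prop CE = 2tors} in place of Theorem~\ref{thm KE=2torf}.

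The main obstacle, and the only place where $n > 2$ and hereditariness genuinely enter, is bridging from the weak closure coming out of Proposition~\ref{prop nfold vs Kn-1Eclosed} to honest kernel closure. The zero-padding lemma gives epi-kernel closure for free (for any $m \geq 1$), but this is strictly weaker than kernel closure in general; it is precisely the hereditary hypothesis, through Proposition~\ref{prop Iclosed = direct summands closed}, that supplies image-closedness and lets one reduce an arbitrary kernel to an epi-kernel. I would take particular care to verify that the zero-padding sequence really meets the exactness requirements of Definition~\ref{def abelian operation}(8), noting that no exactness is demanded at the terminal term, since that is the subtle point on which the whole reduction rests.
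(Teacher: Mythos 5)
Your proof is correct, but it takes a genuinely different route from the paper's. The paper reduces by induction to showing that every $3$-fold torsion(-free) class is $2$-fold and works with the filtration itself: given $\cx \tors \ct_{2} \tors \ct_{1} \tors \md \Lambda$, it invokes Proposition \ref{prop CE = 2tors}, Corollary \ref{cor ICE=CE} and Proposition \ref{prop ICE = serre in tors} to realize the $2$-fold torsion class $\ct_{2}$ as a Serre subcategory of a torsion class $\ct_{1}$, and then checks directly that any conflation in $\ct_{1}$ with middle term in $\cx \subset \ct_{2}$ already lies in $\ct_{2}$, so that $\cx \tors \ct_{1}$ and $\cx$ is $2$-fold; hereditariness enters only through Corollary \ref{cor ICE=CE}. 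You instead pass through the closure characterizations: Proposition \ref{prop nfold vs Kn-1Eclosed} gives $\K^{(n-1)}\E$-closedness, your zero-padding lemma extracts epi-kernel closedness (and it is valid: Definition \ref{def abelian operation} (8) imposes no exactness condition at the terminal term, while exactness at $X^{1}$ is precisely surjectivity of $g$, as you note), and Remark \ref{rem knE is direct summands closed} together with Proposition \ref{prop Iclosed = direct summands closed} supplies image-closedness, upgrading epi-kernels to full kernels, whence Theorem \ref{thm KE=2torf} concludes. Your route avoids the Serre-subcategory machinery of Propositions \ref{prop IKE = serre in torf} and \ref{prop ICE = serre in tors} entirely, and it proves something strictly stronger along the way: over a hereditary algebra, every $\K^{m}\E$-closed subcategory with $m \geq 1$ is already $\KE$-closed, which answers Question \ref{ques the converse} affirmatively for all $n$ in the hereditary case --- a byproduct the paper's argument does not yield, since it only compares $n$-fold classes with one another. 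What the paper's approach buys in exchange is that it stays entirely within the exact-structure formalism, flattening the chain one torsion class at a time, and so needs no observation beyond the quoted structure results; your padding lemma, though elementary, is an extra general-purpose ingredient the paper never states.
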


\begin{proof}
	(``if'' part): It suffices to show that every $3$-fold torsion ($\resp$ torsion-free) class of $\md \Lambda$ is a $2$-fold torsion ($\resp$ torsion-free) class of $\md \Lambda$. 
	Assume that $\cx$ is a $3$-fold torsion ($\resp$ torsion-free) class and then there exists a $2$-fold torsion ($\resp$ torsion-free) class $\ct_{2}$ ($\resp$ $\cf_{2}$) of $\md \Lambda$ such that $\cx \tors \ct_{2}$ ($\resp$ $\cx \torf \cf_{2}$). 
	It follows from Proposition \ref{prop CE = 2tors} ($\resp$ Theorem \ref{thm KE=2torf}), Proposition \ref{prop ICE = serre in tors} ($\resp$ Proposition \ref{prop IKE = serre in torf}) and Corollary \ref{cor ICE=CE} ($\resp$ the dual of it) that there exists some $1$-fold torsion ($\resp$ torsion-free) class $\ct_{1}$ ($\resp$ $\cf_{1}$) of $\md \Lambda$ such that 
	$\ct_{2}$ ($\resp$ $\cf_{2}$) is a Serre subcategory of $\ct_{1}$ ($\resp$ $\cf_{1}$). We show that $\cx$ is a torsion ($\resp$ torsion-free) class of $\ct_{1}$ ($\resp$ $\cf_{1}$). By Lemma \ref{lem extension closed}, $\cx$ is closed under conflations in $\ct_{1}$ ($\resp$ $\cf_{1}$). 
	It remains to show that $\cx$ is closed under admissible quotients in $\ct_{1}$ ($\resp$ admissible subobjects in $\cf_{1}$). Take any conflation $0 \to T \to X \to Q \to 0$ in $\ct_{1}$ ($\resp$ $0 \to S \to X \to F \to 0$ in $\cf_{1}$) with $X \in \cx$. Since $X \in \cx \subset \ct_{2}$ ($\resp$ $X \in \cx \subset \cf_{2}$) and $\ct_{2}$ ($\resp$ $\cf_{2}$) is a Serre subcategory of $\ct_{1}$ ($\resp$ $\cf_{1}$), 
	the conflation $0 \to T \to X \to Q \to 0$ ($\resp$ $0 \to S \to X \to F \to 0$) is a conflation in $\ct_{2}$ ($\resp$ $\cf_{2}$). This implies $Q \in \cx$ ($\resp$ $S \in \cx$) because $\cx$ is a torsion ($\resp$ torsion-free) class of $\ct_{2}$ ($\resp$ $\cf_{2}$). This completes the proof.

	(``only if'' part): This follows from Remark \ref{rem nfold implies n+1fold} (2).
\end{proof}

\section{Examples} \label{sec example}
Let $\Lambda$ be a finite dimensional algebra over an algebraically closed field $\mathbb{K}$.

\begin{exam}
	Let $m$ be a positive integer greater than 2 and $\Lambda := \mathbb{K} \mathsf{A}_{m} / I$, where $\mathsf{A}_{m}$ is the following quiver and $I$ is the ideal generated by all paths whose length is 2 of $\md \mathbb{K} \mathsf{A}_{m}$:
  \[
	\begin{tikzpicture}[auto]
		\node (1) at (0, 0) {$1$}; \node (2) at (2.0, 0) {$2$}; \node (3) at (4.0, 0) {}; 
		\node (4) at (5.0, 0) {\large $\cdots$}; \node (5) at (6.0, 0) {}; \node (6) at (8.0, 0) {$m$};
		\node at (8.2, -0.1) {.};
		\draw[->, thick] (6) -- (5); \draw[->, thick] (3) -- (2); \draw[->, thick] (2) -- (1); 
	\end{tikzpicture} 
	\]
	The AR quiver of $\md \Lambda$ is as follows:

	\[
	\begin{tikzpicture}[auto]
		\node (1) at (0, 0) {$P_{1}$}; \node (2) at (2.0, 0) {$S_{2}$}; \node (3) at (4.0, 0) {};
		\node (12) at (1.0, 1.0) {$P_{2}$}; \node (23) at (3.0, 1.0) {$P_{3}$};
		\node  at (5.0, 0.5) {\large $\cdots$}; \node (m-1) at (6.0, 0.0) {};
		\node (m-1m) at (7.0, 1.0) {$P_{m}$}; \node (m) at (8.0, 0.0) {$S_{m}$};
		\node at (8.3, -0.1) {.};

		\draw[->, thick, dotted] (3) -- (2); \draw[->, thick, dotted] (2) -- (1);
		\draw[->, thick] (1) -- (12); \draw[->, thick] (12) -- (2); \draw[->, thick] (23) -- (3);
		\draw[->, thick] (2) -- (23); \draw[->, thick] (m-1) -- (m-1m); \draw[->, thick] (m-1m) -- (m);
		\draw[->, thick, dotted] (m) -- (m-1);
	\end{tikzpicture} 
	\]
	Then $\cc := \add (P_{2} \oplus P_{3} \oplus \cdots \oplus P_{m})$ is $\mathbf{NOT}$ an $(m-1)$-fold torsion-free class but an $m$-fold torsion-free class of $\md \Lambda$. 

	Indeed, there is an exact sequence $0 \to P_{1} \to P_{2} \to \cdots \to P_{m}$ in $\md \Lambda$ and hence $\cc$ is not a $\K^{(m-2)}\E$-closed subcategory of $\md \Lambda$; in particular, not an $(m-1)$-fold torsion-free class of $\md \Lambda$.
	Since the global dimension of $\Lambda$ is $m-1$, in a similar way as the proof of Example \ref{ex KnE} (1) $(e) \Longrightarrow (a)$, it follows that $\cd := \add (P_{1} \oplus P_{2} \oplus \cdots \oplus P_{m})$ is precisely an $(m-1)$-fold torsion-free class of $\md \Lambda$. 
	We show that $\cc$ is a torsion-free class of $\cd$. Since $\cc$ consists of projective $\Lambda$-modules, it is closed under extensions in $\md \Lambda$. Thus it is closed under conflations in $\cd$ by Lemma \ref{lem extension closed}. It remains to show that $\cc$ is closed under admissible subobjects in $\cd$. 
	Take any conflation $0 \to D' \to C \to D'' \to 0$ in $\cd$ with $C \in \cc$. Since $D''$ is projective in $\md \Lambda$, the conflation is split and hence $C = D' \oplus D''$. 
	This implies $D' \in \cc$. We have the desired results.
\end{exam}


\small
\bibliographystyle{amsalpha}
\bibliography{myreferences}
\end{document}